\journal{Journal des Math\'{e}matiques Pures et
Appliqu\'{e}es}
\numberwithin{equation}{section}
\def\sqr#1#2{{\vcenter{\vbox{\hrule height.#2pt
              \hbox{\vrule width.#2pt height#1pt \kern#1pt \vrule
width.#2pt}
              \hrule height.#2pt}}}}
\def\3n{\negthinspace \negthinspace \negthinspace }
\def\2n{\negthinspace \negthinspace }
\def\1n{\negthinspace }
\def\ep{\epsilon}
\def\bn{\mathcal{N}}
\def\ba{\mathcal{A}}
\def\be{\mathcal{E}}
\def\supp{\hbox{\rm supp$\,$}}
\def\({\Big (}
\def\){\Big )}
\def\[{\Big[}
\def\]{\Big]}
\def\={\buildrel \triangle \over =}
\def\be{\begin{equation}}
\def\bel{\begin{equation}\label}
\def\ee{\end{equation}}
\def\bea{\begin{eqnarray}}
\def\eea{\end{eqnarray}}
\def\bt{\begin{theorem}}
\def\et{\end{theorem}}
\def\bc{\begin{corollary}}
\def\ec{\end{corollary}}
\def\bl{\begin{lemma}}
\def\el{\end{lemma}}
\def\bp{\begin{proposition}}
\def\ep{\end{proposition}}
\def\br{\begin{remark}}
\def\er{\end{remark}}
\def\ba{\begin{array}}
\def\ea{\end{array}}
\def\bd{\begin{definition}}
\def\ed{\end{definition}}
\newtheorem{lemma}{Lemma}[section]
\newtheorem{remark}{Remark}[section]
\newtheorem{theorem}{Theorem}[section]
\newtheorem{corollary}{Corollary}[section]
\newtheorem{definition}{Definition}[section]
\newtheorem{proposition}{Proposition}[section]
\newtheorem{condition}{Condition}[section]
\journal{...}
\begin{document}

\begin{frontmatter}

\title{$L^2$ estimates and existence theorems for the $\overline{\partial}$ operators in infinite dimensions, II}


\author[0]{Zhouzhe Wang}
\address[0]{School of Mathematics,
Sichuan University, Chengdu, 610064, China. E-mail
address: wangzhouzhe@stu.scu.edu.cn.}

\author[1]{Jiayang Yu}
\address[1]{School of Mathematics,
Sichuan University, Chengdu, 610064, China. E-mail
address: jiayangyu@scu.edu.cn.}

\author[2]{Xu Zhang}
\address[2]{School of Mathematics,
Sichuan University, Chengdu, 610064, China. E-mail
address: zhang\_xu@scu.edu.cn.}


\begin{abstract}
This paper is the second part of our series of works to establish $L^2$ estimates and existence theorems for the $\overline{\partial}$ operators in infinite dimensions. In this part, we consider the most difficult
case, i.e., the underlying space is a general pseudo-convex domain. In order to solve this longstanding open problem, we introduce several new concepts and techniques, which have independent interest and pave the way for
research that investigates some other issues in infinite-dimensional analysis.
\end{abstract}

\begin{keyword}
$\overline{\partial}$ equation \sep Gaussian measure \sep $L^2$ method  \sep pseudo-convex domain \sep plurisubharmonic
exhaustion function
\MSC[2020] 46G20 \sep 26E15 \sep 46G05
\end{keyword}

\end{frontmatter}



\section{Introduction}

Quite many of rich, deep and beautiful results in modern mathematics are limited to finite dimensional spaces
and their subspaces/submanifolds or function spaces defined on them. A fundamental reason for this is
that, in finite dimensional spaces there exist Lebesgue measures (or more generally Haar measures on locally compact Hausdorff topological groups), which enjoy nearly perfect properties. Unfortunately, in the setting of infinite dimensions, though after the works of great mathematicians such as
D.~Hilbert (\cite{Hil}), A.~N.~Kolmogorov (\cite{Kol56}), N.~Wiener (\cite{Wiener}), J.~von~Neumann (\cite{Neumann55}) and I.~M.~Gel'fand (\cite{Gel64}), so far people have NOT yet found an ideal substitute
for Lebesaue/Haar measures so that in-depth studies of infinite-dimensional analysis are always possible. Roughly
speaking, functional analysis is in some sense the part of linear algebra in infinite-dimensional spaces; while
the part of calculus in infinite-dimensional spaces, i.e., infinite-dimensional analysis, to our opinion, very likely, requires the
efforts of mathematicians of several generations or even many generations to be satisfactory. As a reference, the long process of classical calculus (on finite dimensions) from its birth to maturity, from I. Newton and G.~W.~Leibniz to H.~L.~Lebesgue, has lasted more than 200 years.

Infinite-dimensional analysis has a wide range of application backgrounds:
\begin{itemize}
\item[{\rm (1)}] In probability theory, many useful random variables and stochastic processes are actually defined on infinite-dimensional spaces (e.g., \cite{Gel64, Kol56, Wiener}), including typically the classical Brownian motion defined on the space of all continuous functions on
$[0, 1]$ which vanish at $0$, i.e. the Wiener space. Also, it is well-known that one of the most fundamental concepts in probability theory is the so-called ``independence", for which a proper measure for constructing nontrivial sequences of independent random variables is, simply to use the product measure of some given sequence of measures in finite dimensions;

\item[{\rm (2)}] In control theory, the value functions of optimal control problems for distributed parameter systems (including particularly controlled partial differential equations) are functions of infinitely many variables, and hence the corresponding Hamilton-Jacobi-Bellman equations are nonlinear partial differential equations with infinite number of variables (\cite{DaZ02, Li-Yong, Lions88}). On the other hand, quite naturally, functions of infinitely many variables are extensively used in the field of stochastic control, in both finite and infinite dimensions (\cite{Lu-Zhang, YZ99});

\item[{\rm (3)}] In physics, functions with infinite number of variables appear extensively in continuum mechanics, quantum mechanics, quantum field theory (\cite{Glimm, Neumann55}). The  most relevant example in this respect is the Feynman path integrals (\cite{Feynman}). As pointed by N.~Wiener (\cite[p. 131]{Wiener}), ``the physicist is equally concerned with systems the dimensionality of which, if not infinite, is so large that it invites the use of limit-process in which it is treated as infinite. These systems are the systems of statistical mechanics...".
\end{itemize}

In \cite[p. 14]{Atiyah}, M. Atiyah remarked that ``the 21st century might be the era of quantum
mathematics or, if you like, of infinite-dimensional mathematics". Similarly to the role of analysis in mathematics, infinite-dimensional analysis is of fundamental importance in infinite-dimensional mathematics, especially when precise computation or estimate is indispensable. Similarly to the finite dimensional setting, infinite-dimensional analysis is
the basis of infinite-dimensional mathematics.

Historically, infinite-dimensional analysis originated to a large extent from the study of probability theory and related fields (e.g., \cite{Kol56, Wiener} and so on). In the recent decades, thanks to the important and fundamental works by some leading probability experts including L. Gross (for abstract Wiener spaces, \cite{Gro65}), T. Hida (for white noise analysis, \cite{Hida}) and P. Malliavin (for Malliavin calculus, \cite{Mall}), significant progresses have been made in infinite-dimensional analysis.
Nevertheless, as far as we know, due to the lack of nontrivial translation invariance measure and local compactness in the setting of infinite dimensions,
there is still a long way to go before establishing an effective setting for infinite-dimensional analysis.
In addition, most of these probabilistic experts' works consider infinite-dimensional analysis on quite general probability spaces (or even abstract probability spaces), whose results inevitably appear not powerful enough to solve some complicated problems in this respect.

In our opinion, infinite-dimensional analysis has the significance of being independent of probability theory, and should and can fully become an independent discipline and mathematical branch. In particular, ``randomness" should be removed to a certain extent (of course, probability theory will always be one of the most important application backgrounds of infinite-dimensional analysis), and ``(mathematical) analysis nature" should be returned to. As a reference, functional analysis originated (to a large extent in history) from the study of integral equations, but as an independent branch of mathematics, the great development of functional analysis is possible only after getting rid of the shackles of integral equations.

As the first step of returning to the above mentioned ``(mathematical) analysis nature", according to our understanding, one should first deeply study analytic functions in infinite-dimensional space, that is, infinite-dimensional complex analysis. This is because, similarly to the finite dimensions, analytic functions in infinite-dimensional spaces are these functions enjoying many elegant and elaborate (or, in some sense even the best) analysis properties on these spaces. Without a deep understanding of this class of functions, the in-depth study of infinite-dimensional analysis seems difficult to be carried out.

Since the last sixties, infinite-dimensional complex analysis has been a rapid developing field, in which one can find many works, for example, the early survey by L.~Nachbin (\cite{Nachbin}), the monographes by A. Bayoumi (\cite{Bayoumi}), S.~B.~Chae (\cite{Chae}), G.~Coeur\'e (\cite{Coeure}), J.~Colombeau (\cite{Col}), S.~Dineen (\cite{Din81, Din99}), M.~Herv\'e (\cite{Herve}), J.-M. Isidro and L. L. Stach\'o (\cite{IS85}), P.~Mazet (\cite{Maz}), J.~Mujica (\cite{Mujica}) and P.~Noverraz (\cite{Noverraz}), and the rich references cited therein. In particular, around 2000, L.~Lempert revisited this field and made some remarkable progresses in a series of works (\cite{Lem98, Lem99, Lem00, Lem03}). On the other hand, infinite-dimensional complex analysis is also strongly linked with many other branches of mathematics (e.g., \cite{DG21, SS16}, just to mention a few).

Although great advance has already been made, to the best of our knowledge, compared with the situation of finite dimensions, infinite-dimensional complex analysis is still far from reaching the level of precision analysis. So far, a large number of results in infinite-dimensional complex analysis are still in the category of abstract analysis or soft analysis, which can be said to be very similarly to the situation of several complex analysis before the 1960s, as we shall explain below a little more.

As in the setting of several complex analysis, one of the most fundamental problems in infinite-dimensional complex analysis is the solvability of the following $\overline{\partial}$ equation:
\begin{equation}\label{d-bar equation}
\overline{\partial}u=f,
\end{equation}
where $f$ is an inhomogeneous term with $\overline{\partial}f=0$ and $u$ is an unknown, defined on some infinite-dimensional complex space or its suitable open subset. L. Lempert pointed out at \cite[p. 775]{Lem99} that, ``{\sl up to now not a single infinite dimensional Banach space and an open subset therein have been proposed where the equation $(\ref{d-bar equation})$ could be proved to be solvable under reasonably general conditions on $f$}". In our previous work \cite{YZ} (See \cite[Theorem 5.1, p. 542]{YZ} particularly), we provided an approach to solve (\ref{d-bar equation}) in the case of quite general forms $f$ on
the infinite-dimensional space $\ell^p$ (for $ p \in [1, \infty)$). However, our approach therein applies only to the whole space of $\ell^p$, while the case of general pseudo-convex domains is much more difficult and interesting, which turns out to be the main concern of the present paper.

In finite dimensions, a powerful basic tool to solve the counterpart of (\ref{d-bar equation}) is the $L^2$ estimates, developed systematically by C.~B.~Morrey (\cite{Mor}), J.~J.~Kohn (\cite{Koh}) and particularly by
L.~H\"omander in \cite{Hor65, Hor90} (See also two recent very interesting books by T.~Ohsawa (\cite{Ohsawa}) and by E.~J.~Straube (\cite{Straube})). Since the $L^2$ estimates found by L. H\"ormander are dimension-free, starting from the 1970s, very naturally, people (See P. Raboin \cite{Rab79} and the references therein)
have been looking forward to extending it to infinite dimensions, but without complete success for more than 40 years. Indeed, it is a longstanding unsolved problem to establish the $L^2$ estimates in infinite dimensions, particularly for general pseudo-convex domains therein (See \cite[Theorem 3.1 at p. 533 and Corollary 5.2 at p. 544]{YZ} for some partial positive results in this respect for the whole space $\ell^p$).

Based on the above observations, as a key step for our research project on infinite-dimensional analysis, we shall focus first on the solvability of the $\overline{\partial}$ equations over infinite-dimensional pseudo-convex domains, and especially the $L^2$ estimates for them. We shall choose the Hilbert space $\ell^2$ as a basic space. In some sense, $\ell^2$  is the simplest infinite-dimensional space, closest to Euclidean spaces. Note that the essential difficulties for analysis problems on $\ell^2$ are by no means reduced too much though sometimes working in this space does provide some convenience. To see this, we note that, similarly to the proof of \cite[Theorem 3.1, p. 533]{YZ}, it is easy to derive an $L^2$ estimate for smooth functions/forms on any pseudo-convex domain in $\ell^2$. In order to solve the corresponding $\overline{\partial}$ equation, people need to extend such an estimate to all functions/forms with suitably weak smoothness only. In the case of finite dimensions, this extension
was achieved by L.~H\"omander (See \cite[Proposition 2.1.1 at p. 100 and Theorem 2.1.4 at p. 104]{Hor65} and \cite[Lemma 4.1.3 at p. 80, and Lemma 4.2.1 and Theorem 4.2.2 at p. 84]{Hor90} for two different ways) with the classical Friedrichs mollifier technique which relies heavily upon an
argument not available in any infinite-dimensional space (including $\ell^2$, of course), namely, the translation invariance of the Lebesgue measure. To overcome this difficulty, we have to construct some suitable working spaces (See $L^{2}_{\left( s,t\right)  }\left( V,loc\right)$ and so on in Section \ref{secx3}) so that the involved problems in infinite dimensions can be reduced to that in finite dimensions, which is possible by introducing a simple but very useful reduction transformation in (\ref{Integration reduce dimension}) (See Proposition \ref{Reduce diemension} for its main properties). In some sense, such an idea to establish the desired  $L^2$ estimates in infinite dimensions is quite natural but the full process to realize it is delicate. Indeed, as mentioned before, there are two ways to derive the corresponding $L^2$ estimates (for functions/forms with some weak smoothness only) in finite dimensions, both of which are possible to be generalized to the setting of infinite dimensions:
\begin{itemize}
\item[{\rm (1)}] One way is that in \cite{Hor65} (See \cite[Proposition 2.1.1 at p. 100 and Theorem 2.1.4 at p. 104]{Hor65}), which is based on another key preliminary, i.e., \cite[Proposition 1.2.4, p. 96]{Hor65} to show the identity of weak and strong extensions of first order differential operators with some complicated boundary conditions for which one has to use the localization and flattening techniques (based respectively on partitions of unity and local coordinate systems) and in particular introduce some delicate modification of the Friedrichs mollifier technique (See \cite[Proof of Proposition 1.2.4, pp. 97--98]{Hor65}). We have tried a long time to extend \cite[Proposition 1.2.4, p. 96]{Hor65} to infinite dimensions but failed, for which the main difficulty is that the above mentioned techniques do not seem to be compatible with our reduction transformation (\ref{Integration reduce dimension}).

\item[{\rm (2)}] Another way is that in \cite{Hor90} (See \cite[Lemma 4.1.3 at p. 80, and Lemma 4.2.1 and Theorem 4.2.2 at p. 84]{Hor90}, which is based on a delicate choice of weight functions (\cite[(4.2.2) at p. 82 and Proof of Theorem 4.2.2 at p. 85]{Hor90}) so that for the corresponding $L^2$ estimate one only needs to consider those functions/forms with compact supports, and hence the boundary conditions for them become quite simple. Due to the lack of local compactness, when trying to extend such an argument to infinite dimensions, we have to modify essentially the usual spaces of smooth functions on Euclidean spaces. For this purpose, we introduce the concept of uniform inclusion for two sets in $\ell^2$ (See Definition \ref{def of bounded contained}), and via which we may introduce the desired spaces $C^{\infty}_0(V)$ (and also $C^{\infty}_{F}(V)$ and so on) of smooth functions (in a certain new sense) on any nonempty open set V of $\ell^2$ (See Subsection \ref{definition of basic symbols} for the details). We may then define pseudo-convex domains in $\ell^2$ by means of plurisubharmonic exhaustion functions (See Definition \ref{pseudocovexdomain}), via which the desired weight functions can be constructed (See (\ref{weight function}) and (\ref{230503e1})). With these machines in hand, for the corresponding $L^2$ estimate in infinite dimensions we only need to consider those functions/forms which vanish on the boundaries, and hence our reduction transformation (\ref{Integration reduce dimension}) works well to reduce the original infinite-dimensional problems to a sequence of approximate ones in finite dimensions so that the Friedrichs mollifier technique can be employed (See the proof of Theorem \ref{general density4}). In this way, we succeed in establishing the desired $L^2$ estimates for the $\overline{\partial}$ operators in general pseudo-convex domains of $\ell^2$, and via which we obtain the solvability of the corresponding infinite-dimensional $\overline{\partial}$ equations.
\end{itemize}

 The strategy adopted in this work (i.e., choosing $\ell^2$ and/or its suitable subsets as typical underlying spaces) can be refined to propose a convenient framework for infinite-dimensional analysis (including real and complex analysis), in which differentiation (in some weak sense) and integration operations can be easily performed, integration by parts can be conveniently established under rather weak conditions (\cite{WYZ0}),
 and especially some nice properties and consequences obtained by convolution (or even the Fourier transformation) in Euclidean spaces can be extended to infinite-dimensional spaces in some sense by taking the limit (See \cite{YZ-b} for more details). Compared to the existing tools in infinite-dimensional analysis (including abstract Wiener space, white noise analysis, Malliavin calculus and so on), our framework enjoys more convenient and clearer links with that of finite dimensions, and hence it is more suitable for computation and studying some analysis problems in infinite-dimensional spaces. Except for the $L^2$ estimates for the $\overline{\partial}$ operators (in general pseudo-convex domains of $\ell^2$) obtained in this paper, by modifying suitably the above strategy, we can also solve several other longstanding open problems, including Holmgren type theorem with infinite number of variables (\cite{YZ-a}) and the equivalence of Sobolev spaces over quite general domains of infinite-dimensional spaces (\cite{YZZ}).

It is well-known that the concepts of smooth functions in infinite-dimensional spaces are much more complicated than that in finite dimensions. The usual Fr\'echet and G\^ateaux derivatives, especially those of higher order, are actually quite inconvenient to use. In our framework, instead we use the partial derivatives (equivalent to some specific directional derivatives, which have lower requirements than all directional derivatives, i.e. the G\^ateaux derivative).
The related regularity issues for solutions to the $\overline{\partial}$ equations considered in this paper are much more difficult than that in finite dimensions, which,
as well as its applications in infinite-dimensional complex analysis, will be presented in our forthcoming papers.

The rest of this paper is organized as follows. Section \ref{secx2} is of preliminary nature, in which we introduce some basic notations and notions, including some important spaces of functions and particularly the definition of pseudo-convex domains of $\ell^2$.
In Section \ref{secx3}, we define the $\overline{\partial}$ operators on our working spaces and present some properties of these operators. Section \ref{secx4} is devoted to establishing the key $L^2$ estimates for these operators, while Section \ref{sec5} is for solving the $\overline{\partial}$ equations on pseudo-convex domains of $\ell^{2}$.

\section{Preliminaries}\label{secx2}

\subsection{Notations and definitions}\label{definition of basic symbols}
To begin with, we introduce some notations and definitions. Suppose that $G$ is a
nonempty set and $(G,\cal J)$ is a topological space on $G$, then the smallest $\sigma$-algebra
generated by all sets in $\cal J$ is called the
Borel $\sigma$-algebra of
$G$, denoted by $\mathscr{B}(G)$. For any $\sigma$-finite Borel measure $\nu$ on $G$ and $p\in [1,\infty)$, denote by $L^p(G,\nu)$ the Banach space of all
(equivalence classes of) complex-valued Borel-measurable functions $f$ on $G$ for which $\int_G|f|^pd\nu<\infty$ (Particularly, $L^2(G,\nu)$ is a Hilbert space with the canonical inner product).
For any subset $A$ of $G$, denote by $\overline{A}$ the closure of $A$ in $G$, by $\partial A$ the boundary of $A$, and  by $\chi_A(\cdot)$ the characteristic function of $A$ in $G$. Recall that $G$ is called a Lindel\"{o}f space if every open cover of $G$ has a countable subcover (e.g., \cite[p. 50]{Kel}).

Denote by $\mathbb{R}$ and $\mathbb{C}$ respectively the sets (or fields, or Euclidean spaces with the usual norms) of all real and complex numbers, and by $\mathbb{N}$ the set of all positive integers. Write $\mathbb{N}_0\triangleq \mathbb{N}\cup \{0\}$ and $\mathbb{C}^{\infty}=\prod\limits_{i=1}^{\infty}\mathbb{C}$, which is the countable Cartesian product of $\mathbb{C}$, i.e., the space of sequences of complex numbers (Nevertheless, in the sequel we shall write an element $\textbf{z}\in \mathbb{C}^{\infty }$ explicitly as $\textbf{z}=( z_{i})^{\infty }_{i=1}$ rather than $\textbf{z}=\{ z_{i}\}^{\infty }_{i=1}$, where $z_i\in\mathbb{C}$ for each $i\in \mathbb{N}$). Clearly, $\mathbb{C}^{\infty}$ can be identified with $(\mathbb{R}^2)^{\infty}=\prod\limits_{i=1}^{\infty}\mathbb{R}^2$. Set
$$
\ell^{2}\triangleq\left\{( z_{i})^{\infty }_{i=1} \in \mathbb{C}^{\infty } :\; \sum\limits^{\infty }_{i=1} \left| z_{i}\right|^{2}  <\infty \right\}.
$$
Then, $\ell^2$ is a separable Hilbert space with the canonic inner product $\left( \cdot,\cdot\right)$ (and hence  the canonic norm $\left| \left|\;\cdot\;\right|  \right|$). From \cite[Theorem 15, p. 49]{Kel}, we see that every separable metric space is a Lindel\"{o}f space. Hence, in particular every open subset of $\ell^2$ is a Lindel\"{o}f space. For any $\textbf{a}\in \ell^{2}$ and $r\in(0,+\infty)$, set
$$
B_{r}(\textbf{a})\triangleq \left\{ \textbf{z}\in \ell^{2} :\;\left| \left| \textbf{z}-\textbf{a}\right|  \right|<r\right\}  ,
$$
and we simply write $B_{r}$ for $B_{r}(0)$. For any $z\in \mathbb{C}$, denote by $\overline{z}$ the complex conjugation of $z$.

In the rest of this paper, unless otherwise stated, we will fix a nonempty open set $V$ of $\ell^{2}$ and a sequence $\{a_i\}_{i=1}^{\infty}$ of positive numbers  such that
$$\sum\limits_{i=1}^{\infty} a_i <1.
$$

Suppose that $f$ is a complex-valued function defined on $V$. As in \cite[p. 528]{YZ}, for each $\textbf{z}=(z_j)_{j=1}^{\infty}=(x_j+\sqrt{-1}y_j)_{j=1}^{\infty}\in V$ (here and henceforth $x_j,y_j\in \mathbb{R}$ for each $j\in \mathbb{N}$), we define the partial derivatives of $f(\cdot)$ (at $\textbf{z}$ with respect to $x_j$ and $y_j$):
$$
\begin{array}{ll}
\displaystyle\frac{\partial f(\textbf{z})}{\partial x_j}\triangleq\lim_{\mathbb{R}\ni \tau\to 0}\frac{f(z_1,\cdots,z_{j-1},z_j+\tau,z_{j+1},\cdots)-f(\textbf{z})}{\tau},\\[3mm]
\displaystyle\frac{\partial f(\textbf{z})}{\partial y_j}\triangleq\lim_{\mathbb{R}\ni \tau\to 0}\frac{f(z_1,\cdots,z_{j-1},z_j+\sqrt{-1}\tau,z_{j+1},\cdots)-f(\textbf{z})}{\tau},
\end{array}
$$
provided that the above limits exist.

In the sequel, for each $k\in\mathbb{N}_0$, we denote by $C^{k} ( V)$ the set of all complex-valued functions $f$ on $V$ for which $f$ itself and all of its  partial derivatives up to order $k$ are continuous on $V$. Write
$$
C^{\infty} ( V)\triangleq \bigcap_{j=1}^{\infty}C^{j} ( V).
$$
Note that throughout this work we use partial derivatives instead of the usual Fr\'echet derivatives. The following simple result reveals a big difference between these two notions of derivatives in infinite dimensions.
\begin{proposition}\label{SMOOTH BUT NOT Frechet}
There exists $f^0\in C^{\infty}(\ell^2)$, which is not Fr\'echet differentiable at some point $\textbf{z}_0\in\ell^2$.
\end{proposition}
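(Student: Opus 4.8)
The plan is to exhibit an explicit $f^0$ assembled from a series of single-variable bumps, one per coordinate, with scales shrinking fast enough to force $C^\infty$ smoothness in the coordinatewise sense, yet with bump heights chosen so that the first-order Taylor remainder at the origin fails to be $o(\|\cdot\|)$. First I would fix $\psi\in C^\infty(\mathbb{R})$ with $\supp\psi\subset[1,2]$, $0\le\psi\le 1$ and $\psi(3/2)=1$, set $\phi_n(t)\triangleq a_n\psi(t/a_n)$ using the fixed sequence $\{a_n\}$ (recall $\sum_n a_n<1$, so in particular $a_n\to 0$), and define
$$
f^0(\mathbf{z})\triangleq\sum_{n=1}^\infty\phi_n(\Re z_n),\qquad \mathbf{z}=(z_n)_{n=1}^\infty\in\ell^2 .
$$
Since $|\phi_n|\le a_n$ and $\sum_n a_n<\infty$, the Weierstrass $M$-test gives uniform convergence of a series of continuous functions, so $f^0$ is well defined and continuous, and plainly $f^0(\mathbf{0})=0$ because $\psi(0)=0$.

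Next I would check that $f^0\in C^\infty(\ell^2)$. As $f^0$ depends on $\mathbf{z}$ only through the real parts $x_n=\Re z_n$ and the coordinates decouple, every partial in a $y_j$ direction vanishes identically and every iterated mixed partial across two distinct indices vanishes; the only surviving derivatives are the pure ones $\partial^k f^0/\partial x_n^k(\mathbf{z})=\phi_n^{(k)}(x_n)=a_n^{1-k}\psi^{(k)}(x_n/a_n)$, which for each fixed $n$ and $k$ is a smooth function of the single continuous coordinate map $\mathbf{z}\mapsto x_n$, hence continuous on $\ell^2$. Thus all partial derivatives of all orders exist and are continuous, so $f^0\in C^\infty(\ell^2)$. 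The point to stress is that it is irrelevant that these derivatives blow up as $n\to\infty$: the definition of $C^k(V)$ only requires each individual partial to be a continuous function, not any bound uniform in $n$.

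Then I would prove failure of (real) Fr\'echet differentiability at $\mathbf{z}_0=\mathbf{0}$, viewing $f^0$ as a map between the real Banach spaces $\ell^2$ and $\mathbb{C}\cong\mathbb{R}^2$. Arguing by contradiction, if $f^0$ were Fr\'echet differentiable at $\mathbf{0}$ with derivative $L$, then the directional derivative along any unit vector $v$ would equal $Lv$; in particular $L e_n=\partial f^0/\partial x_n(\mathbf{0})=\psi'(0)=0$ for all $n$ (as $\psi\equiv 0$ near $0$), and $L$ annihilates the imaginary basis directions as well. Boundedness and real-linearity of $L$, together with the density in $\ell^2$ of the real span of these basis vectors, force $L=0$, so differentiability would require $f^0(\mathbf{h})/\|\mathbf{h}\|\to 0$ as $\mathbf{h}\to\mathbf{0}$. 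To contradict this I would test along $\mathbf{h}_n\triangleq\tfrac{3}{2}a_n e_n$: since $a_n\to 0$ we have $\mathbf{h}_n\to\mathbf{0}$, while $f^0(\mathbf{h}_n)=\phi_n(\tfrac{3}{2}a_n)=a_n\psi(3/2)=a_n$ and $\|\mathbf{h}_n\|=\tfrac{3}{2}a_n$, whence $f^0(\mathbf{h}_n)/\|\mathbf{h}_n\|=2/3\not\to 0$, a contradiction.

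The crux, and the one place where infinite dimensionality is essential, is reconciling $C^\infty$ smoothness with non-differentiability: in finite dimensions continuous partials already imply $C^1$ and hence Fr\'echet differentiability, and the construction evades this precisely because the bumps march off to infinity along successive axes with shrinking support $[a_n,2a_n]$, so the coordinatewise continuity of the maps $\partial f^0/\partial x_n$ is not uniform in $n$. I expect the only delicate point is choosing the scales so as to have simultaneously (i) summability, for convergence and continuity of $f^0$, and (ii) $a_n\to 0$, so that the test points $\mathbf{h}_n$ genuinely approach the origin while the normalized values stay bounded below; the prescribed sequence with $\sum_n a_n<1$ supplies both at once.
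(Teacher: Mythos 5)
Your proof is correct, but it takes a genuinely different route from the paper's. The paper's example is the infinite product $f^0(\textbf{z})=\prod_{j=1}^{\infty}(1+x_j^2\sin(2j^2\pi x_j))(1+y_j^2\sin(2j^2\pi y_j))$, and non-differentiability is detected at the interior point $\textbf{z}_0=(1/j)_{j=1}^{\infty}$: there every partial $\partial f^0(\textbf{z}_0)/\partial x_i$ equals $2\pi$, so the formal gradient fails to be square-summable, whereas Fr\'echet differentiability would force $\sum_i\big(|\partial f^0(\textbf{z}_0)/\partial x_i|^2+|\partial f^0(\textbf{z}_0)/\partial y_i|^2\big)<\infty$. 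Your example instead arranges that the only candidate derivative exists and equals zero (all partials vanish at the origin, and a bounded real-linear map vanishing on the dense span of the coordinate directions is zero), and then kills Fr\'echet differentiability by showing the first-order remainder is not $o(\|\textbf{h}\|)$ along $\textbf{h}_n=\frac{3}{2}a_n\textbf{e}_n$. So the two constructions exhibit two distinct failure modes: in the paper the putative gradient is not an $\ell^2$ vector at all; in yours the gradient is a perfectly good $\ell^2$ vector (namely $0$) but the linear approximation fails to be uniform over directions, precisely because the bumps march off along successive axes. Your verification of smoothness and continuity is also more elementary and self-contained: since the coordinates decouple, each difference quotient is exactly a one-variable difference quotient (no interchange of limits is needed), and continuity of $f^0$ follows from the Weierstrass $M$-test on all of $\ell^2$, whereas the paper invokes the convergence theory of infinite products and uniform convergence on compact sets via Lempert's proposition. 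Both arguments correctly exploit the fact that membership in $C^{\infty}(\ell^2)$, in the paper's sense, only requires each individual partial derivative to be continuous, with no uniformity in the coordinate index.
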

\begin{proof}
Let
$$
f^0(\textbf{z})\triangleq \prod\limits_{j=1}^{\infty}(1+x_j^2\sin(2j^2\pi x_j))(1+y_j^2\sin(2j^2\pi y_j)),\quad \forall\;\textbf{z}=(x_j+\sqrt{-1}y_j)_{j=1}^{\infty}\in\ell^2.
$$
Note that $|x_j^2\sin(2j^2\pi x_j)|+|y_j^2\sin(2j^2\pi y_j)|\leqslant x_j^2+y_j^2$ for each $j\in\mathbb{N}$ and $\sum\limits_{j=1}^{\infty}(x_j^2+ y_j^2)<\infty$, by \cite[Theorem 15.4, p. 299]{Rud87}, the infinite product $\prod\limits_{j=1}^{\infty}(1+x_j^2\sin(2j^2\pi x_j))(1+y_j^2\sin(2j^2\pi y_j))$ exists for each $\textbf{z}=(x_j+\sqrt{-1}y_j)_{j=1}^{\infty}\in\ell^2.$ It is easy to see that all order of partial derivatives of $f^0$ exist and $\frac{\partial f^0(\textbf{x})}{\partial x_i}=2(x_i\sin(2i^2\pi x_i)+i^2\pi x_i^2\cos(2i^2\pi x_i))\prod\limits_{j\neq i}(1+x_j^2\sin(2j^2\pi x_j))\prod\limits_{j=1}^{\infty}(1+y_j^2\sin(2j^2\pi y_j))$. Put $\textbf{z}_0\triangleq\left(\frac{1}{j}\right)_{j=1}^{\infty}\in\ell^2$. Then we have $\frac{\partial f^0(\textbf{z}_0)}{\partial x_i}=2\pi$ for each $i\in\mathbb{N}$. Let us use the contradiction argument to show that $f^0$ is not Fr\'{e}chet differentiable at $\textbf{z}_0$. If $f^0$ was Fr\'{e}chet differentiable at $\textbf{z}_0$, then it would hold that $\sum\limits_{i=1}^{\infty}\left(\left|\frac{\partial f^0(\textbf{z}_0)}{\partial x_i}\right|^2+\left|\frac{\partial f^0(\textbf{z}_0)}{\partial y_i}\right|^2\right)<\infty$,  and hence $\lim\limits_{i\to\infty}\frac{\partial f^0(\textbf{z}_0)}{\partial x_i}=0$, which is a contradiction.

For simplicity, we only prove that $f^0\in C^0(\ell^2)$. For each $n\in\mathbb{N}$, let
$$
f^0_n(\textbf{z})\triangleq \prod\limits_{j=1}^{n}(1+x_j^2\sin(2j^2\pi x_j))(1+y_j^2\sin(2j^2\pi y_j)),\quad \forall\;\textbf{z}=(x_j+\sqrt{-1}y_j)_{j=1}^{\infty}\in\ell^2.
$$
Obviously, $\{f^0_n\}_{n=1}^{\infty}\subset C^0(\ell^2)$. By \cite[Proposition 2.1, p. 778]{Lem99}, we only need to prove that $f^0_n\to f^0$ uniformly on each compact subset $K$ of $\ell^2$. Firstly, since $K$ is compact and
$$|x_n^2\sin(2n^2\pi x_n)|+|y_n^2\sin(2n^2\pi y_n)|\leqslant x_n^2+y_n^2\leqslant ||\textbf{z}||^2,\quad \forall\;n\in\mathbb{N},
 $$
we have $\sup\limits_{\textbf{z}\in K}||\textbf{z}||<\infty$ and hence both $\{x_n^2\sin(2n^2\pi x_n)\}_{n=1}^{\infty}$ and $\{y_n^2\sin(2n^2\pi y_n)\}_{n=1}^{\infty}$ are sequences of bounded functions on $K$. By \cite[Theorem 15.4, p. 299]{Rud87} again, we only need to prove that the series $\sum\limits_{j=1}^{\infty}(x_j^2+y_j^2)$ converges uniformly on $K$. Since $K$ is compact, for any given $\varepsilon>0$, one can find $k_1\in\mathbb{N}$ and $\textbf{z}_1=(x_j^1+\sqrt{-1}y_j^1)_{j=1}^{\infty}\in K,\cdots,\textbf{z}_{k_1}=(x_j^{k_1}+\sqrt{-1}y_j^{k_1})_{j=1}^{\infty}\in K$ such that for any $\textbf{z}=(x_j+\sqrt{-1}y_j)_{j=1}^{\infty}\in K$ there exists an integer $k$ with $1\leqslant k\leqslant k_1$ such that $||\textbf{z}_k-\textbf{z}||<\frac{\sqrt{\varepsilon}}{2}$. Note that there exists $N\in\mathbb{N}$ such that $\sum\limits_{j=N}^{\infty}\left((x_j^l)^2+(y_j^l)^2\right)<\frac{\varepsilon}{4}$ for all integers $l$ satisfying $1\leqslant l\leqslant k_1$. Therefore, by the triangle inequality, we have
$$
 \sum\limits_{j=N}^{\infty}(x_j^2+y_j^2)\leqslant \left(\sqrt{\sum\limits_{j=N}^{\infty}\left((x_j^k)^2+(y_j^k)^2\right)}+\sqrt{\sum\limits_{j=N}^{\infty}\left[(x_j-x_j^k)^2+(y_j-y_j^k)^2\right]}\right)^2
 \leqslant \left(\frac{\sqrt{\varepsilon}}{2}+||\textbf{z}_k-\textbf{z}||\right)^2<\varepsilon
$$
for all $\textbf{z}\in K$, which implies that the series $\sum\limits_{i=1}^{\infty}(x_i^2+y_i^2)$ converges uniformly on $K$. This completes the proof of Proposition \ref{SMOOTH BUT NOT Frechet}.
\end{proof}

Recall that any continuous function on a finite dimensional space is bounded on each bounded and closed subset of its domain. However, this is NOT true anymore for continuous functions on $\ell^2$, as shown in the following result.
\begin{proposition}\label{Frechet D but NOT bounded on boudned set}
There exists a continuous function (on $\ell^2$) which is unbounded on some bounded, nonempty subset of $\ell^2$.
\end{proposition}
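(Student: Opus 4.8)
The plan is to exploit the single most important difference between $\ell^2$ and any finite-dimensional space: the closed unit ball of $\ell^2$ is bounded but not relatively compact. Concretely, I would take the standard orthonormal basis $\{\mathbf{e}_n\}_{n=1}^{\infty}$ of $\ell^2$, where $\mathbf{e}_n$ has $1$ in the $n$-th slot and $0$ elsewhere. This set lies on the unit sphere, hence is bounded and nonempty, yet it is \emph{uniformly discrete}, since $\|\mathbf{e}_n-\mathbf{e}_m\|=\sqrt{2}$ whenever $n\neq m$; in particular it has no accumulation point in $\ell^2$. The idea is then to build a continuous function $g$ that prescribes the diverging values $g(\mathbf{e}_n)=n$, which forces $g$ to be unbounded on the bounded set $\{\mathbf{e}_n\}$.

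For the construction I would set, for each $n$, a bump $\psi_n(\mathbf{z})\triangleq\max\{0,\,1-4\|\mathbf{z}-\mathbf{e}_n\|\}$, a continuous function supported in the closed ball $\overline{B_{1/4}(\mathbf{e}_n)}$ with $\psi_n(\mathbf{e}_n)=1$, and define $g(\mathbf{z})\triangleq\sum_{n=1}^{\infty}n\,\psi_n(\mathbf{z})$. The point that makes $g$ well defined is that the supports are pairwise disjoint: if $\psi_n(\mathbf{z})>0$ and $\psi_m(\mathbf{z})>0$ with $n\neq m$, then $\|\mathbf{e}_n-\mathbf{e}_m\|<1/2<\sqrt2$, a contradiction. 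Hence at every point at most one summand is nonzero, the series reduces to a single term, and $g$ is a genuine function with $g(\mathbf{e}_n)=n$. For continuity, I would fix an arbitrary $\mathbf{z}_0$ and observe that, by the triangle inequality, the ball $B_{1/4}(\mathbf{z}_0)$ can meet the support $\overline{B_{1/4}(\mathbf{e}_n)}$ of at most one index $n$ (two indices would give $\|\mathbf{e}_n-\mathbf{e}_m\|<1<\sqrt2$). Thus on $B_{1/4}(\mathbf{z}_0)$ the function $g$ coincides either with the single continuous function $n\,\psi_n$ or with the constant $0$; either way $g$ is continuous at $\mathbf{z}_0$. Since $\mathbf{z}_0$ is arbitrary, $g\in C^0(\ell^2)$, while $g(\mathbf{e}_n)=n\to\infty$ shows that $g$ is unbounded on the bounded, nonempty set $\{\mathbf{e}_n\}_{n=1}^{\infty}$.

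The only place that requires care, and the heart of the matter, is the continuity of an infinite sum of bumps whose supports might a priori accumulate. In finite dimensions this construction is impossible precisely because a bounded infinite set must accumulate; here the uniform discreteness of the orthonormal basis makes the family of supports \emph{locally finite} (indeed, each point sees at most one support), which is exactly what rescues continuity and constitutes the main obstacle to address cleanly. If one prefers to argue in the style used above for Proposition \ref{SMOOTH BUT NOT Frechet}, one could instead verify $g\in C^0(\ell^2)$ through \cite[Proposition 2.1, p. 778]{Lem99}, by checking that the partial sums converge to $g$ uniformly on every compact subset $K$ of $\ell^2$: since the supports are pairwise at distance at least $\sqrt2-1/2>0$, such a $K$ meets only finitely many of them (otherwise it would contain a uniformly separated infinite sequence with no convergent subsequence), so on $K$ the partial sums are eventually exact and the convergence is trivial. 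Either route completes the proof.
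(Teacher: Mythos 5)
Your proof is correct and takes essentially the same approach as the paper's: a sum $\sum_n n\,\psi_n$ of bump functions centered at the orthonormal basis vectors, whose pairwise-disjoint, uniformly separated supports make the series locally a single term (hence continuous), while the prescribed values $g(\mathbf{e}_n)=n$ give unboundedness on the bounded set $\{\mathbf{e}_n\}_{n=1}^{\infty}$. The only differences are cosmetic: you use Lipschitz tent bumps instead of the paper's smooth cutoffs, and you correctly note $\|\mathbf{e}_n-\mathbf{e}_m\|=\sqrt{2}$ where the paper's proof misstates this distance as $2$.
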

\begin{proof}
Let $\{\textbf{e}_k\}_{k=1}^{\infty}$ be an orthonormal basis on $\ell^2$. Then, $||\textbf{e}_k||=1$ for each $k\in\mathbb{N}$ and $||\textbf{e}_k-\textbf{e}_l||=\sqrt 2$ for all $k,l\in\mathbb{N}$ with $k\neq l$. Choose $\varphi\in C^{\infty}(\mathbb{R})$ such that $0\leqslant \varphi\leqslant 1$, $\varphi(x)=1$ for all $|x|\leqslant \frac{1}{256}$ and $\varphi(x)=0$ for all $|x|\geqslant \frac{1}{64}$. Let
$$
 f^0(\textbf{z})\triangleq \sum_{k=1}^{\infty}k\cdot \varphi(||\textbf{z}-\textbf{e}_k||^2),\quad \forall\;\textbf{z}\in\ell^2.
$$
If $\textbf{z}\in B(\textbf{e}_{k_0},\frac{1}{4})$ for some $k_0\in\mathbb{N}$, then $\textbf{z}\notin \bigcup\limits_{k\neq k_0}B(\textbf{e}_{k},\frac{1}{8})$ and hence $f^0(\textbf{z})=k_0\cdot\varphi(||\textbf{z}-\textbf{e}_{k_0}||^2)$. If $\textbf{z}\in \ell^2\setminus \overline{\bigcup\limits_{k=1}^{\infty}B(\textbf{e}_{k},\frac{1}{8})}$, then $f^0(\textbf{z})=0$. Thus, $f^0$ is continuous at each point of $\ell^2$ and $f^0(\textbf{e}_k)=k$ for each $k\in\mathbb{N}$, which implies that $\sup\limits_{||\textbf{z}||\leqslant 1}|f^0(\textbf{z})|=\infty$. The proof of Proposition \ref{Frechet D but NOT bounded on boudned set} is completed.
\end{proof}

\begin{remark}
 For any infinite-dimensional Banach space $X$ with the norm $||\cdot||_X$, by the Riesz Lemma (e.g., \cite[Lemma 2, p. 218]{Riesz}), there exists a sequence $\{h_k\}_{k=1}^{\infty}\subset X$ such that $||h_k||_X=1$ for each $k\in\mathbb{N}$ and $||h_k-h_l||_X>\frac{1}{2}$ for all $k,l\in\mathbb{N}$ with $k\neq l$. Then, similarly to the proof of Proposition \ref{Frechet D but NOT bounded on boudned set}, one can find a continuous function (on $X$) which is unbounded on some bounded, nonempty subset of $X$.
\end{remark}

Because of Proposition \ref{Frechet D but NOT bounded on boudned set}, we shall consider bounded and continuous functions on $\ell^2$ in this paper. More precisely, in the sequel, for each $k\in\mathbb{N}_0$, we denote by $C^{k}_{b}\left( V\right)$  the set of all complex-valued functions $f$ on $V$ for which $f$ itself and all of its  partial derivatives up to order $k$ are bounded and continuous on $V$. Write
$$C^{\infty}_b(V)\triangleq \bigcap_{j=1}^{\infty}C^{j}_b( V).
 $$

The following notion (of uniform inclusion) will play a fundamental role in the sequel.
\begin{definition}\label{def of bounded contained}
A set $S\subset \ell^2$ is said to be uniformly included in $V$, denoted by $S\stackrel{\circ}{\subset} V$, if there exist $r,R\in(0,+\infty)$ such that $\cup_{\textbf{z}\in S}B_r(\textbf{z})\subset V$ and $S\subset B_R$.
\end{definition}

For any set $S\subset \ell^2$, one can show that $S\stackrel{\circ}{\subset} V$ if and only if $S$ is a bounded subset of $V$ lying at a positive distance from the boundary of $V$ (See Lemma \ref{221018lem1} in Subsection \ref{subsection000}). On the other hand, it is easy to see that $S\stackrel{\circ}{\subset} V$ if and only if $\overline{S} \stackrel{\circ}{\subset} V$.

\begin{remark}
The above notion of uniform inclusion (or maybe its variants) has been used in previous literatures, e.g., \cite[p. 543]{AGGM} (addressing to some other issue in infinite dimensional analysis) and \cite[Definition 0.2, p. 1]{IS85} (for the case that $V$ is
a bounded domain). Due to its fundamental importance in the sequel, we give it a specified, illuminating new name and notation for convenience.
\end{remark}

For $f\in C^0(V)$, we call $\supp f\triangleq\overline{\{\textbf{z}\in V:\;f(\textbf{z})\neq 0\}}$ the support of $f$. For $k\in\mathbb{N}_0$, write
$$
C^{k}_{0}\left( V\right)  \triangleq \left\{ f\in C^{k}_{b}\left( V\right)  :\;\supp f\stackrel{\circ}{\subset} V\right\},\quad C^{\infty}_0(V)\triangleq \bigcap_{j=1}^{\infty}C^{j}_0( V).
$$
For each $n\in\mathbb{N}$, we denote by $C^{k}\left( \mathbb{C}^n\right)$  the set of all complex-valued functions $f$ on $\mathbb{C}^n$ for which $f$ itself and all of its  partial derivatives up to order $k$ are continuous on $\mathbb{C}^n$. For any $f\in C^0\left( \mathbb{C}^n\right)$, we may define $\supp f$ similarly as above. Write
$$
C^{k}_{c}\left( \mathbb{C}^n\right)  \triangleq \left\{ f\in C^{k}\left( \mathbb{C}^n\right)  :\;\supp f\hbox{ is compact in } \mathbb{C}^n\right\},\quad C^{\infty}_c\left(\mathbb{C}^n\right)\triangleq \bigcap_{j=1}^{\infty}C^{j}_c\left( \mathbb{C}^n\right).
$$
In a similar way, we denote by $C^{\infty}_c\left(\mathbb{R}^n\right)$ the set of all complex-valued functions $f$ on $\mathbb{R}^n$ for which $f$ itself and all of its  partial derivatives are continuous on $\mathbb{R}^n$, and $\supp f$ is compact in $\mathbb{R}^n$.

\begin{remark}
Note that, since $V$ is assumed to be a nonempty open set of the infinite-dimensional Hilbert space $\ell^2$, any non-zero function $g$ in $C^{0}_{0}\left( V\right)$ does NOT have a compact support in $V$. Nevertheless, such a function $g$ is bounded and continuous on $V$ and $\supp g\stackrel{\circ}{\subset} V$, and therefore, in a suitable sense and to a certain extend, functions in $C^{0}_{0}\left( V\right)$ can be regarded as ideal (infinite-dimensional) counterparts of compactly supported continuous functions on nonempty open sets of Euclidean spaces.
\end{remark}

For $f\in C^1(V)$ and $i\in\mathbb{N}$, write
\begin{equation}\label{eq2301161}
\begin{array}{ll}
\displaystyle D_{x_{i}}f\triangleq\frac{\partial f}{\partial x_{i}},\quad D_{y_{i}}f\triangleq\frac{\partial f}{\partial y_{i}},\quad  \partial_{i} f\triangleq\frac{1}{2}( D_{x_{i}}f-\sqrt{-1}D_{y_{i}}f),\\[3mm]
\displaystyle\overline{\partial }_{i} f\triangleq\frac{1}{2}( D_{x_{i}}f +\sqrt{-1}D_{y_{i}}f), \quad \delta_{i} f\triangleq \partial_{i} f-\frac{\overline{z_{i}} }{2a^{2}_{i}} \cdot f,\quad \overline{\delta_{i} } f\triangleq \overline{\partial_{i} } f-\frac{z_{i}}{2a^{2}_{i}}\cdot f.
\end{array}
\end{equation}
Denote by $\mathbb{N}_0^{(\mathbb{N})}$ the set of all finitely supported sequences of nonnegative integers, i.e., for each $\alpha =(\alpha_{j} )_{j=1}^\infty\in \mathbb{N}^{\left( \mathbb{N} \right)  }_{0} $ with $\alpha_{m}\in\mathbb{N}_{0}$ for any $ m\in\mathbb{N}$, there exists $n\in\mathbb{N}$ such that $\alpha_j=0$ for all $j\geqslant n$. Set $k\triangleq \left| \alpha \right|  \triangleq \sum\limits^{\infty }_{j=1} \alpha_{j}$. For $f\in C^k(V)$, write
$$
\delta^{\alpha }f  \triangleq \delta^{\alpha_{1} }_{1} \  \delta^{\alpha_{n} }_{2} \cdots \  \delta^{\alpha_{n} }_{n} f,\qquad  \overline{\delta }^{\alpha  } f\triangleq \overline{\delta }^{\alpha _{1} }_{1} \  \overline{\delta }^{\alpha _{2} }_{2} \  \cdots \  \overline{\delta }^{\alpha_{n} }_{n}  f,
$$
$$
\partial_{\alpha } f\triangleq \partial^{\alpha_{1} }_{1} \partial^{\alpha_{2} }_{2} \cdots \partial^{\alpha_{n} }_{n} f ,\qquad  \overline{\partial}_{\alpha } f \triangleq \overline{\partial}_{1} ^{\alpha_{1}}  \overline{\partial}_{2} ^{\alpha _{2} } \cdots \overline{\partial}_{n} ^{\alpha _{n} }f.
$$

Put
$$
C^{1}_{F}\left( V\right)\triangleq \left\{f\in C^{1}\left( V\right):\;\sup_{E} \left( \left| f\right|  +\sum^{\infty }_{i=1} \bigg(\left| D_{x_i} f\right|^{2}  +\left|D_{y_i} f\right|^{2}\bigg)  \right) <\infty,\; \forall\; E\stackrel{\circ}{\subset}V\right\}.
$$
Clearly, $C^{1}_{F}\left( V\right)\not\subset C^{1}_{b}\left( V\right)$ and $C^{1}_{b}\left( V\right)\not\subset C^{1}_{F}\left( V\right)$. For $k\in\mathbb{N}\cup \{\infty\}$, we write
$$
\begin{array}{ll}
\displaystyle C^{k}_{F}\left( V\right)\triangleq C^{k}\left( V\right)\cap C^{1}_{F}\left( V\right),\quad
C^{k}_{b,F}\left( V\right)\triangleq C^{k}_{b}\left( V\right)\cap C^{1}_{F}\left( V\right),\quad
C^{k}_{0,F}\left( V\right)\triangleq C^{k}_{0}\left( V\right)\cap C^{1}_{F}\left( V\right),\\[3mm]
\displaystyle C^{k}_{F^{k}}\left( V\right)\triangleq \left\{f\in C^{k}(V):\;\partial_{a} \overline{\partial_{\beta } } f\in C^{1}_{F}\left( V\right)\text{ for all }\alpha, \beta\in \mathbb{N}^{\left( \mathbb{N} \right)  }_{0}\text{ with }|\alpha|+|\beta|<k\right\},\\[3mm]
\displaystyle C^{k}_{b,F^{k}}\left( V\right)\triangleq C^{k}_{b}\left( V\right)\cap C^{k}_{F^{k}}\left( V\right),\quad
C^{k}_{0,F^{k}}\left( V\right)\triangleq C^{k}_{0}\left( V\right)\cap C^{k}_{F^{k}}\left( V\right).
\end{array}
$$
It is obvious that
$$
C^{k}_{F}\left( V\right)\supset C^{k}_{b,F}\left( V\right)\supset C^{k}_{0,F}\left( V\right),\quad
C^{k}_{F^{k}}\left( V\right)\supset C^{k}_{b,F^{k}}\left( V\right)\supset C^{k}_{0,F^{k}}\left( V\right).
$$

\subsection{Multiplicative System Theorems}
The material of this short subsection is from \cite[pp. 111--112]{Dri} (with some modification).

Suppose that $\Omega$ is a nonempty set and $\mathbb{H}$ is a set of some bounded $\mathbb{R}$-valued (or $\mathbb{C}$-valued) functions on $\Omega$. We call $\{f_n\}_{n=1}^{\infty}\subset \mathbb{H}$ a bounded convergent sequence in $\mathbb{H}$ if
$\displaystyle\sup_{(n,\omega)\in\mathbb{N}\times\Omega}|f_n(\omega)|< \infty$, and
$f(\omega)\triangleq\lim\limits_{n\to\infty}f_n(\omega)$ exists for every $\omega\in\Omega$.
We say that $\mathbb{H}$ is closed under the bounded convergence if the limit of every bounded convergent sequence in $\mathbb{H}$ is still in $\mathbb{H}$.
 A subset $\mathbb{M}$ of $\mathbb{H}$ is called a multiplicative system in $\mathbb{H}$ if  for any $f,g\in\mathbb{M}$, it holds that $f\cdot g\in\mathbb{M}$. We denote by $\sigma(\mathbb{M})$ the smallest $\sigma$-algebra on $\Omega$ such that for any $f\in \mathbb{M}$, $f$ is a $\sigma(\mathbb{M})$-measurable function. It is easy to see that, for the case of $\mathbb{R}$-valued ({\it resp.} $\mathbb{C}$-valued) functions, $\sigma(\mathbb{M})$ is exactly the $\sigma$-algebra generated by all subsets (of $\Omega$) in the form $h^{-1}(A)$, where $h\in \mathbb{M}$ and $A\in \mathscr{B}(\mathbb{R})$ ({\it resp.} $A\in \mathscr{B}(\mathbb{C})$).

\begin{theorem}\label{Dynkin's Multiplicative System Theorem}
\textbf{(Dynkin's Multiplicative System Theorem).} If $\mathbb{H}$  is a real linear space of some bounded $\mathbb{R}$-valued functions on $\Omega$, $1\in \mathbb{H}$, $\mathbb{H}$ is closed under the bounded convergence and $\mathbb{M}$ is a multiplicative system in $\mathbb{H}$ , then $\mathbb{H}$ contains all bounded $\mathbb{R}$-valued $\sigma(\mathbb{M})$-measurable functions.
\end{theorem}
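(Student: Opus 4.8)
The plan is to run the functional form of Dynkin's $\pi$--$\lambda$ argument, reducing everything to a statement about indicator functions. First I would replace $\mathbb{M}$ by the linear span $\mathcal{A}$ of $\{1\}\cup\mathbb{M}$. Because $\mathbb{M}$ is multiplicative and $\mathbb{H}$ is a real linear space containing $1$, one checks directly that $\mathcal{A}$ is an \emph{algebra} of bounded functions with $\mathcal{A}\subset\mathbb{H}$ and $\sigma(\mathcal{A})=\sigma(\mathbb{M})$; passing from a mere multiplicative set to an algebra is what will make the subsequent approximation arguments go through. I would also record at the outset that $\mathbb{H}$ is closed under uniform convergence, since a uniformly convergent sequence in $\mathbb{H}$ with bounded limit is in particular a bounded pointwise convergent sequence, whose limit lies in $\mathbb{H}$ by hypothesis.

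The central step is to show that continuous functions of finitely many elements of $\mathcal{A}$ remain in $\mathbb{H}$: given $f_1,\dots,f_k\in\mathcal{A}$ with $|f_i|\leqslant M$ and any continuous $\Psi\colon[-M,M]^k\to\mathbb{R}$, I claim $\Psi(f_1,\dots,f_k)\in\mathbb{H}$. Indeed, by the multivariate Weierstrass theorem $\Psi$ is a uniform limit on the cube of polynomials $P_m$; each $P_m(f_1,\dots,f_k)$ lies in $\mathcal{A}\subset\mathbb{H}$ because $\mathcal{A}$ is an algebra, and $P_m(f_1,\dots,f_k)\to\Psi(f_1,\dots,f_k)$ uniformly on $\Omega$, so the limit lies in $\mathbb{H}$ by the closure under uniform convergence noted above. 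I expect this to be the main obstacle, and it is exactly where the multiplicative hypothesis is genuinely used: $\mathbb{H}$ itself is only a linear space, so without the algebra $\mathcal{A}$ one could not form the products that are needed later to realize indicators of intersections.

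Next I would pass from functions to sets via $\mathcal{L}\triangleq\{A\subset\Omega:\chi_A\in\mathbb{H}\}$. Choosing $\Psi$ to be a product of continuous ramps $\phi_n$ with $\phi_n\uparrow\chi_{(c_i,\infty)}$, the previous step gives $\prod_i\phi_n(f_i)\in\mathbb{H}$, and letting $n\to\infty$ (using boundedness and closure under bounded convergence) yields $\chi_{\{f_1>c_1\}\cap\cdots\cap\{f_k>c_k\}}\in\mathbb{H}$ for all $f_i\in\mathcal{A}$ and $c_i\in\mathbb{R}$. Hence the $\pi$-system $\mathcal{P}$ of all such finite intersections satisfies $\mathcal{P}\subset\mathcal{L}$, and $\mathcal{P}$ generates $\sigma(\mathcal{A})=\sigma(\mathbb{M})$. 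A short verification shows $\mathcal{L}$ is a $\lambda$-system: $\Omega\in\mathcal{L}$ since $\chi_\Omega=1\in\mathbb{H}$; it is stable under proper differences because $\chi_{B\setminus A}=\chi_B-\chi_A$ and $\mathbb{H}$ is linear; and it is stable under increasing unions because $\mathbb{H}$ is closed under bounded convergence.

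Finally I would invoke the classical Dynkin $\pi$--$\lambda$ theorem to conclude $\sigma(\mathbb{M})=\sigma(\mathcal{P})\subset\mathcal{L}$, that is, $\chi_A\in\mathbb{H}$ for every $A\in\sigma(\mathbb{M})$. Any bounded $\sigma(\mathbb{M})$-measurable $g$ is then a bounded pointwise limit of simple functions assembled from such indicators, each of which lies in $\mathbb{H}$ by linearity, so closure under bounded convergence gives $g\in\mathbb{H}$, which completes the proof.
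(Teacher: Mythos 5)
Your proof is correct, but there is no in-paper argument to compare it against: the paper states Theorem \ref{Dynkin's Multiplicative System Theorem} without proof, importing it (with slight modification) from \cite[pp.~111--112]{Dri}. Judged on its own, your argument is the standard functional monotone class proof and every step holds up. The linear span $\mathcal{A}$ of $\{1\}\cup\mathbb{M}$ is indeed an algebra contained in $\mathbb{H}$ (multiplicativity of $\mathbb{M}$ is exactly what makes products of spanning elements stay in the span) with $\sigma(\mathcal{A})=\sigma(\mathbb{M})$; closure of $\mathbb{H}$ under uniform convergence does follow from closure under bounded convergence, since a uniformly convergent sequence of bounded functions is automatically uniformly bounded; the Weierstrass step then places $\Psi(f_1,\dots,f_k)$ in $\mathbb{H}$ for every continuous $\Psi$ on the cube, the increasing ramps give $\chi_{\{f_1>c_1\}\cap\cdots\cap\{f_k>c_k\}}\in\mathbb{H}$ by bounded convergence, these sets form a $\pi$-system generating $\sigma(\mathcal{A})=\sigma(\mathbb{M})$, and $\mathcal{L}=\{A\subset\Omega:\chi_A\in\mathbb{H}\}$ is a $\lambda$-system by exactly the three verifications you list. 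Dynkin's $\pi$--$\lambda$ theorem plus dyadic approximation of a bounded measurable function by simple functions then finishes the proof.

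For context, the treatment in the cited source \cite{Dri} is organized somewhat differently: it works with the smallest linear space containing $\mathbb{M}\cup\{1\}$ that is closed under bounded convergence and shows by a minimality argument (considering, for fixed $f$, the space $\{g\in\mathbb{H}:fg\in\mathbb{H}\}$) that this smallest space is itself an algebra, extracting indicator functions afterwards. Your route trades that minimality trick for the classical set-level $\pi$--$\lambda$ theorem and polynomial approximation, which makes the measure-theoretic reduction more transparent; both are standard and complete.
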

The following is a complex version of Theorem \ref{Dynkin's Multiplicative System Theorem}.
\begin{theorem}\label{Complex Multiplicative System Theorem}
\textbf{(Complex Multiplicative System Theorem).}
If $\mathbb{H}$  is a complex linear space of some bounded $\mathbb{C}$-valued functions on $\Omega$, $1\in \mathbb{H}$, $\mathbb{H}$  is closed under the bounded convergence, $\mathbb{M}$ is a multiplicative system in $\mathbb{H}$, and both  $\mathbb{H}$  and $\mathbb{M}$ are closed under the complex conjugation, then $\mathbb{H}$ contains all bounded $\mathbb{C}$-valued $\sigma(\mathbb{M})$-measurable functions.
\end{theorem}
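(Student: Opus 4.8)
The plan is to reduce the complex statement to the real Dynkin theorem (Theorem \ref{Dynkin's Multiplicative System Theorem}) by passing to real and imaginary parts. First I would introduce $\mathbb{H}_{\mathbb{R}}\triangleq\{f\in\mathbb{H}:\;f\text{ is }\mathbb{R}\text{-valued}\}$. Since $\mathbb{H}$ is a complex linear space closed under complex conjugation, for every $f\in\mathbb{H}$ both $\frac{1}{2}(f+\overline{f})$ and $\frac{1}{2\sqrt{-1}}(f-\overline{f})$ belong to $\mathbb{H}$; hence $\mathbb{H}_{\mathbb{R}}$ is precisely the set of real parts of members of $\mathbb{H}$, it is a real linear space of bounded $\mathbb{R}$-valued functions, and $1\in\mathbb{H}_{\mathbb{R}}$. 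It is closed under bounded convergence because a bounded convergent sequence of real-valued functions in $\mathbb{H}$ has its limit in $\mathbb{H}$ (by hypothesis on $\mathbb{H}$) and that limit is again real-valued, so it lands in $\mathbb{H}_{\mathbb{R}}$.

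The heart of the argument is to manufacture a real multiplicative system sitting inside $\mathbb{H}_{\mathbb{R}}$ and generating the correct $\sigma$-algebra. I would set $\mathcal{R}\triangleq\{\frac{1}{2}(f+\overline{f}),\;\frac{1}{2\sqrt{-1}}(f-\overline{f}):\;f\in\mathbb{M}\}$, the collection of real and imaginary parts of elements of $\mathbb{M}$, and take $\mathbb{M}_{\mathbb{R}}$ to be the set of all finite products of members of $\mathcal{R}$, which is manifestly a multiplicative system of bounded $\mathbb{R}$-valued functions. The crucial point is the inclusion $\mathbb{M}_{\mathbb{R}}\subset\mathbb{H}_{\mathbb{R}}$: expanding any such finite product, whose factors each have the form $\frac{1}{2}(f_i+\overline{f_i})$ or $\frac{1}{2\sqrt{-1}}(f_i-\overline{f_i})$, produces a finite complex linear combination of products $g_1g_2\cdots g_m$ with every $g_j\in\{f_i,\overline{f_i}\}\subset\mathbb{M}$. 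Because $\mathbb{M}$ is closed under both conjugation and multiplication, each product $g_1\cdots g_m$ lies in $\mathbb{M}\subset\mathbb{H}$, so the whole linear combination lies in $\mathbb{H}$, and being real-valued it lies in $\mathbb{H}_{\mathbb{R}}$. This step, where the two closure hypotheses on $\mathbb{M}$ and the complex linearity of $\mathbb{H}$ are invoked simultaneously, is the main (and essentially the only nontrivial) obstacle.

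With this in hand the remainder is bookkeeping. Since a complex-valued function is $\sigma(\mathbb{M})$-measurable if and only if its real and imaginary parts are, and since $\mathcal{R}\subset\mathbb{M}_{\mathbb{R}}$, I would check $\sigma(\mathbb{M}_{\mathbb{R}})=\sigma(\mathcal{R})=\sigma(\mathbb{M})$. Applying Theorem \ref{Dynkin's Multiplicative System Theorem} to the pair $(\mathbb{H}_{\mathbb{R}},\mathbb{M}_{\mathbb{R}})$ then yields that $\mathbb{H}_{\mathbb{R}}$ contains every bounded $\mathbb{R}$-valued $\sigma(\mathbb{M})$-measurable function. Finally, given any bounded $\mathbb{C}$-valued $\sigma(\mathbb{M})$-measurable $h$, its parts $\Re h$ and $\Im h$ are bounded, real-valued and $\sigma(\mathbb{M})$-measurable, hence belong to $\mathbb{H}_{\mathbb{R}}\subset\mathbb{H}$; as $\mathbb{H}$ is a complex linear space, $h=\Re h+\sqrt{-1}\,\Im h\in\mathbb{H}$, completing the proof.
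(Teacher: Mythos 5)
Your proof is correct. The paper itself gives no proof of this theorem (it is quoted from \cite{Dri} with modification), but your argument---reducing to Theorem \ref{Dynkin's Multiplicative System Theorem} by passing to the real subspace $\mathbb{H}_{\mathbb{R}}$ and to the multiplicative system generated by real and imaginary parts of elements of $\mathbb{M}$, with the conjugation-closure of $\mathbb{M}$ and the complex linearity of $\mathbb{H}$ guaranteeing that the expanded products land in $\mathbb{H}_{\mathbb{R}}$ and that $\sigma(\mathbb{M}_{\mathbb{R}})=\sigma(\mathbb{M})$---is precisely the standard reduction used in that reference, so there is nothing to correct.
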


\subsection{A Borel probability measure on $\ell^2$}\label{s2-3}
For any given $a>0$, we define a probability measure $\bn_a$ in $(\mathbb{C},\mathscr{B}(\mathbb{C}))$ by
$$
 \bn_a(E)\triangleq \frac{1}{ 2\pi a^2 }\int_{E}e^{-\frac{x^2+y^2}{2a^2}}\,\mathrm{d}x\mathrm{d}y,\quad\,\forall\;E\in \mathscr{B}(\mathbb{C}).
$$
We need to use a product measure on the space $\mathbb{C}^{\infty}$, endowed with the usual product topology.  By the discussion in \cite[p. 9]{Da}, $\mathscr{B}(\mathbb{C}^{\infty})$ is precisely the product $\sigma$-algebra  generated by the following sets
$$
\bigcup_{k=1}^{\infty}\Bigg\{E_1\times E_2\times \cdots \times E_k\times \Bigg(\prod_{i=k+1}^{\infty}\mathbb{C}\Bigg):\;E_i\in \mathscr{B}(\mathbb{C}),\,1\leqslant i\leqslant k\Bigg\}.
$$
Let
$$
 \bn\triangleq\prod_{i=1}^{\infty}\bn_{a_i}
$$
be the product measure on $(\mathbb{C}^{\infty},\mathscr{B}(\mathbb{C}^{\infty}))$. Just as \cite[Exercise 1.10, p. 12]{Da}, one can show that every closed ball in $\ell^2$ lies in $\mathscr{B}(\mathbb{C}^{\infty})$ and
$\mathscr{B}(\ell^2)=\{E\cap\ell^2:\; E\in \mathscr{B}(\mathbb{C}^{\infty})\}$.
We recall the following known simple but useful result (See \cite[Proposition 1.11, p. 12]{Da}).

\begin{proposition}\label{lem1}
$\bn(\ell^2)=1$.
\end{proposition}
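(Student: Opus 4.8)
The plan is to recognize the statement $\bn(\ell^{2})=1$ as an almost-sure finiteness assertion for a nonnegative random sum. Consider the function $S(\textbf{z})\triangleq\sum_{i=1}^{\infty}|z_i|^2$ on $\mathbb{C}^{\infty}$. Each partial sum $\sum_{i=1}^{n}|z_i|^2$ is $\mathscr{B}(\mathbb{C}^{\infty})$-measurable, so $S$, being their increasing pointwise limit, is a measurable $[0,+\infty]$-valued function; moreover $\{S<\infty\}$ is exactly $\ell^{2}$. Since $\ell^{2}=\bigcup_{n=1}^{\infty}\overline{B_n}$ is a countable union of closed balls, and each closed ball lies in $\mathscr{B}(\mathbb{C}^{\infty})$ as recalled just above the statement, we have $\ell^{2}\in\mathscr{B}(\mathbb{C}^{\infty})$; thus it suffices to prove $\bn(\{S=\infty\})=0$.

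First I would compute the single-coordinate integral. Because $\bn=\prod_{i=1}^{\infty}\bn_{a_i}$ is a product measure, the $i$-th coordinate has marginal $\bn_{a_i}$, and therefore
$$\int_{\mathbb{C}^{\infty}}|z_i|^2\,\mathrm{d}\bn=\int_{\mathbb{C}}|z|^2\,\mathrm{d}\bn_{a_i}(z)=\frac{1}{2\pi a_i^2}\int_{\mathbb{R}^2}(x^2+y^2)e^{-\frac{x^2+y^2}{2a_i^2}}\,\mathrm{d}x\mathrm{d}y=2a_i^2,$$
using that under $\bn_{a_i}$ the real and imaginary parts are independent centered Gaussians of variance $a_i^2$.

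Next, since $S\geq 0$, Tonelli's theorem permits interchanging the sum and the integral, giving
$$\int_{\mathbb{C}^{\infty}}S(\textbf{z})\,\mathrm{d}\bn=\sum_{i=1}^{\infty}\int_{\mathbb{C}^{\infty}}|z_i|^2\,\mathrm{d}\bn=2\sum_{i=1}^{\infty}a_i^2.$$
The standing hypothesis $\sum_{i=1}^{\infty}a_i<1$ forces $\sum_{i=1}^{\infty}a_i^2\leq\big(\sum_{i=1}^{\infty}a_i\big)^2<1$ (the cross terms being nonnegative), so $\int_{\mathbb{C}^{\infty}} S\,\mathrm{d}\bn\leq 2<\infty$. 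A nonnegative measurable function with finite integral is finite almost everywhere, whence $\bn(\{S=\infty\})=0$, i.e. $\bn(\ell^{2})=1$.

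The computation carries essentially no serious obstacle; the only points requiring care are bookkeeping ones. One must confirm that $\ell^{2}$ is genuinely a measurable set (handled by writing it as a countable union of closed balls and invoking the measurability of closed balls noted in the text), verify that the marginal distribution of the $i$-th coordinate under the product measure $\bn$ is indeed $\bn_{a_i}$ so that the elementary single-variable Gaussian integral applies, and observe that the nonnegativity of $S$ is what legitimizes the sum--integral interchange through Tonelli rather than Fubini. No integrability needs to be established in advance precisely because everything in sight is nonnegative.
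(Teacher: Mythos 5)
Your proof is correct: the measurability of $S(\textbf{z})=\sum_{i=1}^{\infty}|z_i|^2$, the marginal computation $\int_{\mathbb{C}}|z|^2\,\mathrm{d}\bn_{a_i}=2a_i^2$, the Tonelli interchange, and the bound $\sum_{i=1}^{\infty}a_i^2\leqslant\big(\sum_{i=1}^{\infty}a_i\big)^2<1$ are all sound, and finiteness of $\int S\,\mathrm{d}\bn$ indeed forces $\bn(\{S<\infty\})=\bn(\ell^2)=1$. The paper itself gives no proof but defers to \cite[Proposition 1.11, p. 12]{Da}, and your first-moment argument is essentially the standard one found there, so this is the same approach rather than a genuinely different route.
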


Thanks to Proposition \ref{lem1}, we obtain a Borel probability measure $P$ on $\ell^2$ by setting
$$
P(E)\triangleq \bn(F),\quad\forall\;E\in \mathscr{B}(\ell^2),
$$
where $F$ is any subset in $\mathscr{B}(\mathbb{C}^{\infty})$ so that $E=F\cap\ell^2$.
For any $n\in\mathbb{N}$, we denote by $C_c^{\infty}(\mathbb{C}^n)$ the set of all $\mathbb{R}$-valued, $C^{\infty}$-functions on $\mathbb{C}^n(\equiv \mathbb{R}^{2n})$ with compact supports. Clearly, each function in $C_c^{\infty}(\mathbb{C}^n)$ can also be viewed as a cylinder function on $\ell^2$. Set
$$
\mathscr {C}_c^{\infty}(\mathbb{C}^n)\triangleq \left\{f+\sqrt{-1}g:\;f,g\in C_c^{\infty}(\mathbb{R}^{2n})\right\},\quad\mathscr {C}_c^{\infty}\triangleq \bigcup_{n=1}^{\infty}\mathscr {C}_c^{\infty}(\mathbb{C}^n).
$$
By \cite[Proposition 2.4, p. 528]{YZ}, $\mathscr {C}_c^{\infty}$ is dense in $L^2(\ell^2,P)$.

We shall need the following result.

\begin{lemma}\label{strictly positive measure}
For every nonempty open subset $O$ of $\ell^2$, it holds that $P(O)>0$.
\end{lemma}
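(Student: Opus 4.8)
The plan is to reduce the statement to a lower bound on the measure of a single open ball and then to exhibit, inside that ball, an explicit product ``box'' of positive measure. Since $O$ is nonempty and open, it contains some ball $B_r(\textbf{b})$ with $\textbf{b}=(b_i)_{i=1}^{\infty}\in\ell^2$ and $r>0$, and $P(O)\ge P(B_r(\textbf{b}))$, so it suffices to prove $P(B_r(\textbf{b}))>0$. For each $i$ I would pick a radius $\delta_i>0$, set $D_i\triangleq\{w\in\mathbb{C}:|w-b_i|<\delta_i\}$, and form $A\triangleq\prod_{i=1}^{\infty}D_i$. If $\sum_{i=1}^{\infty}\delta_i^2<r^2$, then every $\textbf{z}\in A$ satisfies $\|\textbf{z}-\textbf{b}\|^2=\sum_i|z_i-b_i|^2\le\sum_i\delta_i^2<r^2$; in particular $\textbf{z}\in\ell^2$, so $A\subset B_r(\textbf{b})\subset\ell^2$ and $A\in\mathscr{B}(\ell^2)$. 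By the defining product structure of $\bn$ one has $P(A)=\bn(A)=\prod_{i=1}^{\infty}\bn_{a_i}(D_i)$, the infinite product being the limit of the measures of the decreasing cylinders $\prod_{i\le n}D_i\times\prod_{i>n}\mathbb{C}$. Everything thus reduces to choosing the $\delta_i$ so that this product is strictly positive while keeping $\sum_i\delta_i^2<r^2$.

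For the estimate I would use the elementary Gaussian tail bound $\bn_{a}(\{w:|w|\ge t\})=e^{-t^2/(2a^2)}$, immediate from the explicit density since $|w|^2/a^2$ is $\chi^2_2$-distributed. Whenever $\delta_i>|b_i|$ one has $\{w\in\mathbb{C}:|w|<\delta_i-|b_i|\}\subset D_i$, hence
\[
\bn_{a_i}(\mathbb{C}\setminus D_i)\le \bn_{a_i}\big(\{|w|\ge \delta_i-|b_i|\}\big)=e^{-(\delta_i-|b_i|)^2/(2a_i^2)}.
\]
An infinite product $\prod_i(1-q_i)$ with $q_i\triangleq\bn_{a_i}(\mathbb{C}\setminus D_i)\in[0,1)$ is strictly positive precisely when $\sum_i q_i<\infty$, so I must arrange $\sum_i e^{-(\delta_i-|b_i|)^2/(2a_i^2)}<\infty$ together with $\sum_i\delta_i^2<r^2$.

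The construction splits the index set. Since $\sum_i|b_i|^2<\infty$ (as $\textbf{b}\in\ell^2$) and, because $\sum_i a_i<1$ forces $a_i\to0$, one checks $\sum_i a_i^2\log(1/a_i)<\infty$ (for $i$ large, $a_i\log(1/a_i)<1$, whence $a_i^2\log(1/a_i)<a_i$), I can fix $N$ so large that $\sum_{i>N}\big(2|b_i|^2+8a_i^2\log(1/a_i)\big)<r^2/2$. For $i>N$ I put $\delta_i\triangleq|b_i|+2a_i\sqrt{\log(1/a_i)}$; then $\delta_i>|b_i|$, the tail bound gives $\bn_{a_i}(\mathbb{C}\setminus D_i)\le e^{-2\log(1/a_i)}=a_i^2$, so $\sum_{i>N}q_i\le\sum_{i>N}a_i^2<\infty$, while $\sum_{i>N}\delta_i^2<r^2/2$. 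For the finitely many $i\le N$ I take any small $\delta_i>0$ with $\sum_{i\le N}\delta_i^2<r^2/2$; since each $\bn_{a_i}$ has a strictly positive density on all of $\mathbb{C}$, every such $D_i$ has $\bn_{a_i}(D_i)>0$. Consequently $\sum_i\delta_i^2<r^2$ and $\prod_i\bn_{a_i}(D_i)>0$, whence $P(B_r(\textbf{b}))\ge P(A)>0$.

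I expect the genuine difficulty to be exactly the balancing in the last paragraph: the containment $A\subset B_r(\textbf{b})$ demands $\sum_i\delta_i^2$ small, whereas product positivity demands each $D_i$ capture almost all the mass of $\bn_{a_i}$, i.e. $(\delta_i-|b_i|)/a_i\to\infty$. Reconciling these forces one to use quantitatively that $a_i\to0$ fast (from $\sum_i a_i<1$) and that $\textbf{b}\in\ell^2$; the choice $\delta_i-|b_i|\sim a_i\sqrt{\log(1/a_i)}$ is what makes both series converge simultaneously.
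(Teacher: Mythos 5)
Your proof is correct, but it takes a genuinely different route from the paper. The paper argues by contradiction: assuming $P(O)=0$, it covers $\ell^2$ by the countably many translates $\textbf{r}+O$ with $\textbf{r}$ ranging over finitely supported rational vectors (a dense countable set), invokes the quasi-invariance of the Gaussian product measure under such translations (conclusion (ii) of Da Prato's Theorem 2.8, i.e.\ the Cameron--Martin theorem) to conclude that every translate is also null, and derives the contradiction $1=P(\ell^2)\leqslant 0$. You instead give a direct, self-contained construction: inside any ball $B_r(\textbf{b})\subset O$ you exhibit an infinite product of disks $A=\prod_i D_i$ with $\sum_i\delta_i^2<r^2$, and you force $\prod_i \bn_{a_i}(D_i)>0$ by the exact Gaussian tail identity $\bn_{a}(\{|w|\geqslant t\})=e^{-t^2/(2a^2)}$ together with the choice $\delta_i=|b_i|+2a_i\sqrt{\log(1/a_i)}$, whose feasibility rests on $\textbf{b}\in\ell^2$ and on $\sum_i a_i^2\log(1/a_i)<\infty$ (a consequence of the standing assumption $\sum_i a_i<1$); all the quantitative checks in your construction are sound, including the product-positivity criterion $\sum_i q_i<\infty$ and the membership $A\in\mathscr{B}(\ell^2)$. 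What each approach buys: the paper's argument is shorter and softer but leans on quasi-invariance as a black box and is purely qualitative; yours avoids any appeal to Cameron--Martin theory, makes explicit where the hypotheses on $\textbf{b}$ and on the decay of the $a_i$ enter, and in effect proves the stronger, quantitative statement that the Gaussian measure $P$ has full support with an explicit lower-bound mechanism for the mass of balls.
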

\begin{proof}
We use the contradiction argument. Suppose that $P(O)>0$ was not true. Then, we would have $P(O)=0$.
Let
$$
\mathscr{Q}\triangleq \bigcup\limits_{n=1}^{\infty}\left\{(r_1+\sqrt{-1}r_2,\cdots,r_{2n-1}+\sqrt{-1}r_{2n},0,\cdots,0,\cdots):\;r_1,\cdots,r_{2n}\hbox{ are real rational numbers}\right\}.
  $$
One can show that $\mathscr{Q}$ is dense in $\ell^2$, and $\ell^2=\bigcup\limits_{\textbf{r}\in \mathscr{Q}}(\textbf{r}+O)$, where $\textbf{r}+O=\{\textbf{r}+\textbf{z}:\;\textbf{z}\in O\}$. By the conclusion (ii) of \cite[Theorem 2.8, p. 31]{Da} and $P(O)=0$,  for each $\textbf{r}\in \mathscr{Q}$, we have $P(\textbf{r}+O)=0$. Since $\mathscr{Q}$ is a countable set, this leads to $1=P(\ell^2)\leqslant \sum\limits_{\textbf{r}\in \mathscr{Q}}P(\textbf{r}+O)=0$, which is a contradiction. This completes the proof of Lemma \ref{strictly positive measure}.
\end{proof}

\begin{remark}
As in \cite[p. 125]{Com Neg}, a probability measure with the property stated in Lemma \ref{strictly positive measure} is called a strictly positive measure.
\end{remark}

As a consequence of Lemma \ref{strictly positive measure}, it is easy to see the following property.
\begin{corollary}\label{property of strictly positive measure}
Suppose that $E$ is a Borel subset of $\ell^2$ such that $P(E)=1$. Then $E$ is dense in $\ell^2$.
\end{corollary}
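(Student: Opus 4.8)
The plan is to argue by contradiction, deducing the density of $E$ directly from Lemma \ref{strictly positive measure}. Suppose, for contradiction, that $E$ is not dense in $\ell^2$. Then its closure $\overline{E}$ is a proper closed subset of $\ell^2$, so the complement $O\triangleq \ell^2\setminus \overline{E}$ is a nonempty open subset of $\ell^2$. First I would record the key inclusion: since $E\subset \overline{E}$, we have $O=\ell^2\setminus\overline{E}\subset \ell^2\setminus E$, and in particular $O\cap E=\emptyset$.

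Next I would compute the measure of $O$ in two incompatible ways. On the one hand, because $P$ is a probability measure on $\ell^2$ and $P(E)=1$, the complement $\ell^2\setminus E$ is $P$-null, i.e. $P(\ell^2\setminus E)=1-P(E)=0$; by monotonicity of $P$ together with the inclusion $O\subset \ell^2\setminus E$ established above, this forces $P(O)=0$. On the other hand, $O$ is a nonempty open subset of $\ell^2$, so Lemma \ref{strictly positive measure} yields $P(O)>0$. These two statements contradict each other, so the assumption that $E$ is not dense is untenable, and the corollary follows.

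There is essentially no genuine obstacle here: the statement is an immediate consequence of the strict positivity of $P$ on nonempty open sets. The only points requiring a little care are purely set-theoretic bookkeeping, namely passing to the open complement of the closure $\overline{E}$ (rather than to the complement of $E$ itself, which need not be open) so that Lemma \ref{strictly positive measure} applies, and correctly invoking $P(\ell^2\setminus E)=0$ from $P(E)=1$. Both are routine, so I would present the contradiction argument in a single short paragraph once the proof is written in full.
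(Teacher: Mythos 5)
Your proof is correct and follows exactly the route the paper intends: the paper states this corollary as an immediate consequence of Lemma \ref{strictly positive measure} without writing out details, and your contradiction argument (passing to the nonempty open complement of $\overline{E}$, which must have positive measure yet is contained in the $P$-null set $\ell^2\setminus E$) is precisely that easy deduction.
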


For each $n\in\mathbb{N}$, we define a probability measure $\bn^n$ in $(\mathbb{C}^n,\mathscr{B}(\mathbb{C}^n))$ by setting
$$
\bn^n\triangleq \prod_{i=1}^{n}\bn_{a_i}
$$
and a probability measure ${\widehat{ \bn}^n}$ in $(\mathbb{C}^{\infty},\mathscr{B}(\mathbb{C}^\infty))$ by setting
$$
\widehat{\bn}^n\triangleq \prod_{i=n+1}^{\infty}\bn_{a_i}.
$$
Let
\begin{equation}\label{220817e1zx}
P_n(E)\triangleq \widehat{\bn}^n(F),\quad\forall\;E\in \mathscr{B}(\ell^2),
\end{equation}
where $F$ is any subset in $\mathscr{B}(\mathbb{C}^{\infty})$ so that $E=F\cap\ell^2$.
Then, by Proposition \ref{lem1} again, we obtain a Borel probability measure $P_n$ on $\ell^2$. Obviously,
$$P=\bn^n\times P_n,\quad\forall\; n\in \mathbb{N}.$$
Further, for any $f\in L^2(\ell^2, P)$, we define
\begin{equation}\label{Integration reduce dimension}
f_n(\textbf{z}_n)\triangleq \int f(\textbf{z}_n,\textbf{z}^n)\,\mathrm{d}P_n(\textbf{z}^n),\quad \textbf{z}_n=(x_{i} +\sqrt{-1}y_{i})_{i=1}^{n}\in \mathbb{C}^n,
\end{equation}
where $\textbf{z}^n=(x_{i} +\sqrt{-1}y_{i})_{i=n+1}^\infty\in \ell^2$. The following result will play a crucial role in the sequel.

\begin{proposition}\label{Reduce diemension}
For $f\in L^{2} ( \ell^{2} ,P )$ and $n\in\mathbb{N}$, the function $f_n$ given by (\ref{Integration reduce dimension}) can be viewed as a cylinder function on $\ell^2$ with the following properties:
\begin{itemize}
\item[{\rm (1)}]\label{z1}
		\quad $\left| \left| f_{n}\right|  \right|_{L^{2} ( \ell^{2} ,P )  }  \leqslant \left| \left| f\right|  \right|_{L^{2} ( \ell^{2} ,P )  } $;
\item[{\rm (2)}] \label{z2}
		\quad $\lim\limits_{n\rightarrow \infty } \left| \left| f_{n}-f\right|  \right|_{L^{2} ( \ell^{2} ,P)  }  =0$;
\item[{\rm (3)}] \label{z3}
	\quad $\lim\limits_{n\rightarrow \infty } \int_{\ell^2}\big|| f_{n}|^{2}  -| f|^{2}\big|\,\mathrm{d}P =0.$	
\end{itemize}
\end{proposition}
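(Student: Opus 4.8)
The plan is to recognize the map $f\mapsto f_n$ as (essentially) the conditional expectation of $f$ onto the first $n$ complex coordinates, and to exploit three facts in sequence: its $L^2$-contractivity, its exact action as the identity on cylinder functions, and the density of $\mathscr{C}_c^\infty$ in $L^2(\ell^2,P)$. For property (1) I would start from the factorization $P=\bn^n\times P_n$ and apply the Cauchy--Schwarz inequality to the inner integral, using that $P_n$ is a probability measure: for $\bn^n$-a.e.\ $\textbf{z}_n$,
$$|f_n(\textbf{z}_n)|^2=\Big|\int f(\textbf{z}_n,\textbf{z}^n)\,\mathrm{d}P_n(\textbf{z}^n)\Big|^2\le \int |f(\textbf{z}_n,\textbf{z}^n)|^2\,\mathrm{d}P_n(\textbf{z}^n).$$
Tonelli's theorem guarantees that the right-hand side is finite for $\bn^n$-a.e.\ $\textbf{z}_n$, which simultaneously settles the measurability and a.e.-finiteness needed to view $f_n$ as an element of $L^2(\ell^2,P)$. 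Integrating the displayed inequality against $\bn^n$ and using Fubini yields $\|f_n\|_{L^2(\ell^2,P)}^2\le \|f\|_{L^2(\ell^2,P)}^2$, so the linear map $f\mapsto f_n$ is a contraction on $L^2(\ell^2,P)$, with operator norm $\le 1$ uniformly in $n$.

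For property (2), the crucial observation is that cylinder functions are eventually fixed by $f\mapsto f_n$: if $g\in\mathscr{C}_c^\infty(\mathbb{C}^m)$ depends only on the first $m$ coordinates, then for every $n\ge m$ the integration over $\textbf{z}^n$ in (\ref{Integration reduce dimension}) leaves $g$ untouched (again because $P_n$ is a probability measure), so $g_n=g$ and hence $\|g_n-g\|_{L^2(\ell^2,P)}=0$. Given $f\in L^2(\ell^2,P)$ and $\varepsilon>0$, I would use the density of $\mathscr{C}_c^\infty$ to choose $g\in\mathscr{C}_c^\infty(\mathbb{C}^m)$ with $\|f-g\|_{L^2(\ell^2,P)}<\varepsilon$, split $f_n-f=(f-g)_n+(g_n-g)+(g-f)$, and invoke linearity together with the contraction bound from (1) and $g_n=g$ to get
$$\|f_n-f\|_{L^2(\ell^2,P)}\le \|(f-g)_n\|_{L^2(\ell^2,P)}+\|g_n-g\|_{L^2(\ell^2,P)}+\|g-f\|_{L^2(\ell^2,P)}<2\varepsilon$$
for all $n\ge m$. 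This is the three-$\varepsilon$ argument and it proves (2).

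Property (3) should then follow formally from (2) and (1). Using the elementary estimate $\big||f_n|^2-|f|^2\big|\le |f_n-f|\,(|f_n|+|f|)$ (which rests on $\big||f_n|-|f|\big|\le|f_n-f|$), the Cauchy--Schwarz inequality in $L^2(\ell^2,P)$, and the bound $\||f_n|+|f|\|_{L^2(\ell^2,P)}\le \|f_n\|_{L^2(\ell^2,P)}+\|f\|_{L^2(\ell^2,P)}\le 2\|f\|_{L^2(\ell^2,P)}$ coming from (1), I obtain
$$\int_{\ell^2}\big||f_n|^2-|f|^2\big|\,\mathrm{d}P\le 2\|f\|_{L^2(\ell^2,P)}\,\|f_n-f\|_{L^2(\ell^2,P)}\to 0.$$
The conceptual heart of the proof is property (2), and within it the identification of the correct dense class on which the operator acts as the identity; the only genuine technical subtlety is the measurability and a.e.-finiteness of $f_n$, which is dispatched by Tonelli in the first step. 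Once the contraction (1) and the exact invariance $g_n=g$ are in hand, everything else is routine. (I note in passing that $f_n$ is precisely $\mathbb{E}[f\mid\mathcal{F}_n]$ for the filtration generated by the first $n$ coordinates, so (2) is also an instance of $L^2$ martingale convergence; I prefer the density argument above as it is self-contained within the framework already developed.)
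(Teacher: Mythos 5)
Your proposal is correct and follows essentially the same route as the paper's own proof: Jensen/Cauchy--Schwarz on the inner integral together with $P=\bn^n\times P_n$ for the contraction (1), the density of $\mathscr{C}_c^\infty$ plus the invariance $g_n=g$ for cylinder functions in a three-$\varepsilon$ argument for (2), and the factorization $\big||f_n|^2-|f|^2\big|\le|f_n-f|\,(|f_n|+|f|)$ with Cauchy--Schwarz for (3). The only cosmetic difference is your explicit framing of $f\mapsto f_n$ as a conditional expectation, which the paper leaves implicit.
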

\begin{proof}
(1) Note that
$$
\begin{array}{ll}
\displaystyle \int_{\ell^2}|f_{n}|^2\,\mathrm{d}P=\int_{\mathbb{C}^n}|f_n(\textbf{z}_n)|^2\,\mathrm{d}\mathcal{N}^n(\textbf{z}_n)\\[5mm]
\displaystyle =\int_{\mathbb{C}^n}\bigg|\int f(\textbf{z}_n,\textbf{z}^n)
\mathrm{d}P_n(\textbf{z}^n)\bigg|^2\mathrm{d}\mathcal{N}^n(\textbf{z}_n)\\[5mm]
\displaystyle \leqslant \int_{\mathbb{C}^n} \int |f(\textbf{z}_n,\textbf{z}^n)|^2
\mathrm{d}P_n(\textbf{z}^n)\mathrm{d}\mathcal{N}^n(\textbf{z}_n)\\[5mm]
\displaystyle =\int_{\ell^2}|f|^2\,\mathrm{d}P,
\end{array}
$$
where the inequality follows from the Jensen inequality (See \cite[Theorem 3.3, p. 62]{Rud87}) and the first and the third equalities follows from the fact that $P=\bn^n\times P_n$.

\medskip

(2) Since $\mathscr {C}_c^{\infty}$ is dense in $L^2(\ell^2,P)$, for any $\varepsilon>0$, there exists $m\in\mathbb{N}$ and $g\in \mathscr{C}_c^{\infty}(\mathbb{C}^m)$ such that
$$
\bigg(\int_{\ell^2} |f-g|^2\mathrm{d}P\bigg)^{\frac{1}{2}}<\frac{\varepsilon}{2}.
$$
For any $n\geqslant m$, we define $g_n$ as that in (\ref{Integration reduce dimension}), i.e.,
$
g_n(\cdot)\triangleq \int g(\cdot,\textbf{z}^n)\,\mathrm{d}P_n(\textbf{z}^n)$.
It is easy to check that $g_n=g$
and
$$
\begin{array}{ll}
\displaystyle \bigg(\int_{\ell^2} |f_n-f|^2\mathrm{d}P\bigg)^{\frac{1}{2}}
=\bigg(\int_{\ell^2} |f_n-g_n+g-f|^2\mathrm{d}P\bigg)^{\frac{1}{2}}\\[3mm]
\displaystyle \leqslant\bigg(\int_{\ell^2} |f_n-g_n|^2\mathrm{d}P\bigg)^{\frac{1}{2}}+\bigg(\int |g-f|^2\mathrm{d}P\bigg)^{\frac{1}{2}}\\[3mm]
\displaystyle \leqslant 2\bigg(\int_{\ell^2} |g-f|^2\mathrm{d}P\bigg)^{\frac{1}{2}}<\varepsilon,
\end{array}
$$
where the second inequality follows from the conclusion (1).

\medskip

(3) Note that
$$
\begin{array}{ll}
\displaystyle
\int_{\ell^2}\big|| f_{n}|^{2}  -| f|^{2}\big|\,\mathrm{d}P  =
\int_{\ell^2}\big|| f_{n}|-| f|\big|\cdot(| f_{n}|+| f|)\,\mathrm{d}P\\[3mm]
\displaystyle \leqslant \big|\big| |f_{n}|-| f|\big|\big|_{L^{2}\left( \ell^{2} ,P\right)}\cdot \big|\big| |f_{n}|+| f|\big|\big|_{L^{2}\left( \ell^{2} ,P\right)}\\[3mm]
\displaystyle \leqslant  || f_{n}- f||_{L^{2}\left( \ell^{2} ,P\right)}\cdot(|| f_n||_{L^{2}\left( \ell^{2} ,P\right)}+ || f||_{L^{2}\left( \ell^{2} ,P\right)})\\[3mm]
\displaystyle \leqslant 2|| f_{n}- f||_{L^{2}\left( \ell^{2} ,P\right)}\cdot || f||_{L^{2}\left( \ell^{2} ,P\right)},
\end{array}
$$
where the first inequality follows from the Cauchy-Schwarz inequality. The above inequality, combined with the conclusion (2), gives the desired conclusion (3). This completes the proof of Proposition \ref{Reduce diemension}.
\end{proof}

We shall need the following infinite-dimensional Gauss-Green type Theorem.

\begin{theorem}\label{Gauss-Green Theorem}
 If $f\in C^{1}_{0} ( \ell^2 )$, then for each $m\in\mathbb{N}$,
$$
\int_{\ell^2}D_{x_m}f\,\mathrm{d}P
=\int_{\ell^2}\frac{x_m }{a_m^2}\cdot f \,\mathrm{d}P.
$$
\end{theorem}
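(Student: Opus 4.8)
The plan is to reduce this infinite-dimensional identity to a single one-dimensional Gaussian integration by parts in the variable $x_m$, with the passage between the two dimensions handled by Fubini's theorem for the product measure $P$. Fix $m\in\mathbb{N}$. The starting observation is that the factor $\bn_{a_m}$ on $\mathbb{C}=\mathbb{R}^2_{(x_m,y_m)}$ splits as a product of two one-dimensional Gaussians, so that, writing $\gamma(t)\triangleq\frac{1}{\sqrt{2\pi}\,a_m}e^{-t^2/(2a_m^2)}$ for the one-dimensional density in the $x_m$-slot, the whole measure factors as $P=\gamma(x_m)\,\mathrm{d}x_m\otimes Q$, where $Q$ is the product of the $y_m$-Gaussian together with all the factors $\bn_{a_i}$ for $i\neq m$. (Since $P$ is carried by $\ell^2$ by Proposition \ref{lem1}, I would extend $f$ and $D_{x_m}f$ by $0$ off $\ell^2$ so as to integrate against the product measure $\bn$ on $\mathbb{C}^{\infty}$, whose product $\sigma$-algebra is $\mathscr{B}(\mathbb{C}^{\infty})$.) Both integrands are $P$-integrable: $f$ and $D_{x_m}f$ are bounded because $f\in C^1_0(\ell^2)\subset C^1_b(\ell^2)$, $P$ is a probability measure, and $x_m$ has a finite first moment under the Gaussian, so $\frac{x_m}{a_m^2}f\in L^1(\ell^2,P)$. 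Fubini's theorem therefore applies, and it suffices to prove, for $Q$-almost every fixed choice of the remaining coordinates, the one-dimensional identity $\int_{\mathbb{R}}D_{x_m}f\,\gamma(x_m)\,\mathrm{d}x_m=\int_{\mathbb{R}}\frac{x_m}{a_m^2}\,f\,\gamma(x_m)\,\mathrm{d}x_m$.

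Next I would verify that the one-dimensional section is genuinely amenable to integration by parts. For $Q$-almost every choice of the other coordinates the sum of their squared moduli is finite, and since varying $x_m\in\mathbb{R}$ changes only a single coordinate by a finite amount, the entire line $t\mapsto(\ldots,t+\sqrt{-1}y_m,\ldots)$ lies in $\ell^2$, so $f$ is well-defined along it. By the very definition of the partial derivative $D_{x_m}f$ together with the continuity of $D_{x_m}f$ on $\ell^2$, the restriction $t\mapsto f(\ldots,t+\sqrt{-1}y_m,\ldots)$ is of class $C^1$ on $\mathbb{R}$, with derivative equal to $D_{x_m}f$ evaluated along the line. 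The hypothesis $f\in C^1_0(\ell^2)$ enters precisely here: by Definition \ref{def of bounded contained} one has $\supp f\subset B_R$ for some $R\in(0,+\infty)$, and since $\lVert\textbf{z}\rVert\geqslant|x_m|$ the restriction vanishes whenever $|x_m|\geqslant R$. Thus the one-dimensional section has compact support contained in $[-R,R]$.

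The one-dimensional step is now routine: integrating by parts over $\mathbb{R}$, the boundary terms vanish by compact support, and using $\gamma'(t)=-\frac{t}{a_m^2}\gamma(t)$ one obtains $\int_{\mathbb{R}}D_{x_m}f\,\gamma(x_m)\,\mathrm{d}x_m=-\int_{\mathbb{R}}f\,\gamma'(x_m)\,\mathrm{d}x_m=\int_{\mathbb{R}}\frac{x_m}{a_m^2}f\,\gamma(x_m)\,\mathrm{d}x_m$, which is exactly the sectionwise identity sought above. Reassembling by integrating this identity against $Q$ and invoking Fubini once more yields $\int_{\ell^2}D_{x_m}f\,\mathrm{d}P=\int_{\ell^2}\frac{x_m}{a_m^2}f\,\mathrm{d}P$, as claimed.

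The step I expect to be the main obstacle is the rigorous bookkeeping around Fubini: legitimizing the decomposition $P=\gamma(x_m)\,\mathrm{d}x_m\otimes Q$ that singles out the scalar coordinate $x_m$ out of an infinite product, and establishing the two ``almost every line'' facts simultaneously, namely that $Q$-almost every section line lies entirely in $\ell^2$ and that along it $f$ has compact support. Everything else (integrability, the definitional identification of $D_{x_m}f$ with the line-derivative, and the scalar integration by parts) is comparatively soft. An alternative route would first prove the identity for cylinder functions, where it reduces to finite-dimensional Gaussian integration by parts, and then pass to the general case through the reduction map of Proposition \ref{Reduce diemension}; however, this would require commuting the reduction $f\mapsto f_n$ with the differentiation $D_{x_m}$ and controlling the convergence of the derivatives, which seems more delicate than the direct Fubini argument outlined here.
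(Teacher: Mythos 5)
Your proof is correct and follows essentially the same route as the paper's: both split off the $x_m$-coordinate via Fubini (the paper's $\mathbb{R}\times\ell^{2,1}$ with $P^{\widehat{x_1}}$ is exactly your $\gamma(x_m)\,\mathrm{d}x_m\otimes Q$), both use $\supp f\subset B_R$ together with $\lVert\textbf{z}\rVert\geqslant|x_m|$ to confine each one-dimensional section to $[-R,R]$, and both finish with the classical one-dimensional Gaussian integration by parts before reassembling. The bookkeeping you flagged as the main obstacle is handled in the paper precisely as you outline, by introducing the slice space explicitly and applying Fubini for the product measure.
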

\begin{proof}
For simplicity, we consider only $m=1$. Since $f\in C^{1}_{0} ( \ell^2 )$, there exists $R\in(0,+\infty)$ such that $\supp f\subset B_R$. Write
$$
\ell^{2,1}\triangleq \left\{(\sqrt{-1}y_1,x_{2}+\sqrt{-1}y_{2},\cdots):\;y_{1},x_i,y_i\in\mathbb{R}\hbox{ for }i=2,3,\cdots,|y_{1}|^2+\sum_{i\not=1}(|x_i|^2+|y_i|^2)<\infty\right\}.
$$
For any $\textbf{z}=(x_1+\sqrt{-1}y_1,x_{2}+\sqrt{-1}y_{2},\cdots)\in\ell^2$, where $x_i,y_i\in\mathbb{R}$ for each $i\in\mathbb{N}$, write
$$
\textbf{z}^{x_1}\triangleq (\sqrt{-1}y_1,x_{2}+\sqrt{-1}y_{2},\cdots).
$$
Clearly, $\textbf{z}^{x_{1}}\in\ell^{2,1}$ and $\ell^{2,1}$ is a real Hilbert space (with the canonical inner product). Let
$$F(\textbf{z})\triangleq D_{x_1}\left(f(\textbf{z})\right)\cdot \frac{e^{-\frac{x_1^2}{2a_1^2}}}{\sqrt{2\pi a_1^2}}.$$
Then,
$$
\int_{\ell^2}D_{x_1}f\,\mathrm{d}P
=\int_{\mathbb{R}\times \ell^{2,1}}F(\textbf{z})\,\mathrm{d}x_1\mathrm{d}P^{\widehat{x_1}}(\textbf{z}^{x_1}),
$$
where $P^{\widehat{x_{1}}}$ is the product measure $\prod_{i=1}^{\infty}\bn_{a_i}$ without the $x_{1}$-component, i.e., it is the restriction of the product measure $\frac{1}{\sqrt{2\pi a_{1}^2}}\cdot e^{-\frac{ y_{1}^2}{2a_{1}^2}}\,\mathrm{d}y_{1}\times\Pi_{i\not= 1}\bn_{a_i}$ on $\left(\ell^{2,1},\mathscr{B}\big(\ell^{2,1}\big)\right)$ (similarly to that in (\ref{220817e1zx})). Applying Fubini's theorem \cite[Theorem 8.8, p. 164]{Rud87}, we have
$$
\int_{\mathbb{R}\times \ell^{2,1}}F(\textbf{z})\,\mathrm{d}x_1\mathrm{d}P^{\widehat{x_1}}(\textbf{z}^{x_1})=\int_{\ell^{2,1}}\left(\int_{\mathbb{R}}F_{\textbf{z}^{x_1}}(x_1)\,\mathrm{d}x_1\right)\mathrm{d}P^{\widehat{x_1}}(\textbf{z}^{x_1}),
$$
where $F_{\textbf{z}^{x_1}}(x_1)\triangleq F((x_i+\sqrt{-1}y_{i})_{i=1}^\infty)$. Then, by our assumption, it follows that $F_{\textbf{z}^{x_1}}\in C^1(\mathbb{R})$ and $\supp F_{\textbf{z}^{x_1}}\subset [-R,R]$ for each $\textbf{z}^{x_1}\in \ell^{2,1}$. Therefore,
$$
\begin{array}{ll}
\displaystyle \int_{\ell^{2,1}}\left(\int_{\mathbb{R}}F_{\textbf{z}^{x_1}}(x_1)\,\mathrm{d}x_1\right)\mathrm{d}P^{\widehat{x_1}}(\textbf{z}^{x_1})
=\int_{\ell^{2,1}}\left(\int_{-R}^{R}F_{\textbf{z}^{x_1}}(x_1)\,\mathrm{d}x_1\right)\mathrm{d}P^{\widehat{x_1}}(\textbf{z}^{x_1})\\[3mm]
\displaystyle =\int_{\ell^{2,1}}\left(\int_{-R}^{R}D_{x_1}\left(f(\textbf{z})\right)\cdot \frac{e^{-\frac{x_1^2}{2a_1^2}}}{\sqrt{2\pi a_1^2}}\,\mathrm{d}x_1\right)\mathrm{d}P^{\widehat{x_1}}(\textbf{z}^{x_1})
=\int_{\ell^{2,1}}\left(\int_{-R}^{R} \frac{x_1}{a_1^2}\cdot f(\textbf{z}) \cdot \frac{e^{-\frac{x_1^2}{2a_1^2}}}{\sqrt{2\pi a_1^2}}\,\mathrm{d}x_1\right)\mathrm{d}P^{\widehat{x_1}}(\textbf{z}^{x_1})\\[3mm]
\displaystyle =\int_{\ell^{2,1}}\left(\int_{\mathbb{R}}  \frac{x_1}{a_1^2}\cdot f(\textbf{z}) \cdot \frac{e^{-\frac{x_1^2}{2a_1^2}}}{\sqrt{2\pi a_1^2}}\,\mathrm{d}x_1\right)\mathrm{d}P^{\widehat{x_1}}(\textbf{z}^{x_1})
=\int_{\ell^{2}}  \frac{x_1}{a_1^2}\cdot f(\textbf{z}) \,\mathrm{d}P (\textbf{z}),
\end{array}
$$
where the third equality follows from the classical Newton-Leibniz formula. The proof of Theorem \ref{Gauss-Green Theorem} is completed.
\end{proof}

Recall (\ref{eq2301161}) for the definition of $\overline{\partial}_{i} $ and $\delta_i$. As a consequence of Theorem \ref{Gauss-Green Theorem}, we have the following result:
\begin{corollary}\label{integration by Parts or deltai}
For any $f,g\in C^{1}_{0}\left( V\right)$, it holds that
$$
\int_{V} \overline{\partial}_{i}  f\cdot \bar{g}  \,\mathrm{d}P=-\int_{V} f\cdot\overline{ \delta_{i} g}  \;\mathrm{d}P,\quad \forall\; i\in\mathbb{N}.
$$
\end{corollary}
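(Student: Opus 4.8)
The plan is to reduce the identity over the general open set $V$ to an application of the Gauss--Green type Theorem \ref{Gauss-Green Theorem} on the whole space $\ell^2$, after which the corollary becomes a direct computation based on expanding the definitions in (\ref{eq2301161}). The starting observation is that, since $f,g\in C^{1}_{0}(V)$, their supports are uniformly included in $V$; consequently the product $f\bar g$ and each of the functions $(D_{x_i}f)\bar g$ and $(D_{y_i}f)\bar g$ are continuously differentiable, bounded together with their first-order partial derivatives, and supported in $\supp f\cap\supp g\stackrel{\circ}{\subset}V$. Extending these functions by zero outside $V$ therefore yields elements of $C^{1}_{0}(\ell^2)$ for which integration over $V$ against $\mathrm{d}P$ coincides with integration over $\ell^2$, so that Theorem \ref{Gauss-Green Theorem} becomes applicable.

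First I would record the $D_{y_m}$-analogue of Theorem \ref{Gauss-Green Theorem}, namely $\int_{\ell^2}D_{y_m}h\,\mathrm{d}P=\int_{\ell^2}\frac{y_m}{a_m^2}h\,\mathrm{d}P$ for $h\in C^{1}_{0}(\ell^2)$; this follows verbatim from the proof of Theorem \ref{Gauss-Green Theorem} with the roles of $x_m$ and $y_m$ interchanged, since the Gaussian factor $\bn_{a_m}$ is symmetric in its two real coordinates. Next, writing $\overline{\partial}_i f=\tfrac12(D_{x_i}f+\sqrt{-1}D_{y_i}f)$ and applying the Leibniz rule $D_{x_i}(f\bar g)=(D_{x_i}f)\bar g+f\,D_{x_i}\bar g$ (and similarly for $D_{y_i}$), I would integrate the exact-derivative terms by the two versions of Gauss--Green to obtain
$$\int_V\overline{\partial}_i f\cdot\bar g\,\mathrm{d}P=\int_V\frac{x_i+\sqrt{-1}y_i}{2a_i^2}\,f\bar g\,\mathrm{d}P-\int_V f\cdot\tfrac12\big(D_{x_i}\bar g+\sqrt{-1}D_{y_i}\bar g\big)\,\mathrm{d}P.$$
Finally, recognizing $x_i+\sqrt{-1}y_i=z_i$ and $\tfrac12(D_{x_i}\bar g+\sqrt{-1}D_{y_i}\bar g)=\overline{\partial_i g}=\overline{\delta_i g}+\frac{z_i}{2a_i^2}\bar g$, the two $\frac{z_i}{2a_i^2}f\bar g$ contributions cancel and leave exactly $-\int_V f\cdot\overline{\delta_i g}\,\mathrm{d}P$, which is the claimed identity.

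The computation itself is routine once the reduction is in place, so the only genuine point requiring care is the first step: justifying that extension by zero sends $f\bar g$ (and the derivative products) into $C^{1}_{0}(\ell^2)$. This rests on the uniform-inclusion hypothesis $\supp f,\supp g\stackrel{\circ}{\subset}V$, which guarantees that these functions vanish on a fixed-radius neighborhood of $\partial V$ and hence extend across the boundary in a $C^1$ manner, while the bound $\supp f\subset B_R$ keeps their supports uniformly included in $\ell^2$. I expect this verification, together with the remark that the measure's symmetry yields the $D_{y_m}$ version of Theorem \ref{Gauss-Green Theorem}, to be the main (though modest) obstacle; the remaining manipulations are just bookkeeping with the definitions in (\ref{eq2301161}).
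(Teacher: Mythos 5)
Your proposal is correct and follows essentially the same route as the paper's proof: extend by zero across $\partial V$ (justified by the uniform inclusion of the supports), apply Theorem \ref{Gauss-Green Theorem} together with its $D_{y_i}$ analogue and the Leibniz rule to the product $f\bar g$, and cancel the two $\frac{z_i}{2a_i^2}f\bar g$ terms --- the paper merely leaves the $y$-analogue and the Leibniz step implicit where you spell them out. The only slip is your claim that $(D_{x_i}f)\bar g$ and $(D_{y_i}f)\bar g$ are continuously differentiable (they are merely continuous since $f\in C^1$), but the argument never needs more than their boundedness and continuity, so this is immaterial.
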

\begin{proof}
Since $f,g\in C^{1}_{0}\left( V\right)$, there exists $r,R\in(0,+\infty)$ such that $\displaystyle\bigcup_{\textbf{z}\in  \supp f\bigcup \supp g}B_r(\textbf{z})\subset V$ and  $\supp f\cup\supp g\subset B_R$. Then both $f$ and $g$ can be viewed as elements in $ C^{1}_{0} ( \ell^2 )$ by extending their values to $\ell^2\setminus (\supp f\cup \supp g)$ by 0. Thus for each $i\in\mathbb{N}$, it holds that
$$
\int_{V} \overline{\partial}_{i}  f\cdot \overline{g} \,\mathrm{d}P
=\int_{\supp f\bigcup \supp g} \overline{\partial}_{i}  f\cdot \overline{g} \,\mathrm{d}P
=\int_{\ell^2} \overline{\partial}_{i}  f\cdot \overline{g}  \,\mathrm{d}P
=\int_{\ell^2}\frac{1}{2}( D_{x_{i}}f +\sqrt{-1}D_{y_{i}}f)\cdot \overline{g}  \,\mathrm{d}P.
$$
Then, by Theorem \ref{Gauss-Green Theorem}, we have
$$
\begin{array}{ll}
\displaystyle \int_{V} \overline{\partial}_{i}  f\cdot\overline{g} \,\mathrm{d}P\\[5mm]
\displaystyle
= -\int_{\ell^2}f\cdot \overline{ \frac{1}{2}( D_{x_{i}}g -\sqrt{-1}D_{y_{i}}g)}  \,\mathrm{d}P
+\int_{\ell^2}f\cdot \overline{ \frac{x_i-\sqrt{-1}y_i}{2a_i^2} g}  \,\mathrm{d}P
= \int_{\ell^2}f\cdot \overline{ \left(-\partial_{i}g  +\frac{\overline{z_i}}{2a_i^2}\cdot g\right) }\,\mathrm{d}P\\[5mm]
\displaystyle
=-\int_{\ell^2}f\cdot \overline{\delta_i g}\,\mathrm{d}P=-\int_{\supp f\bigcup \supp g}f\cdot \overline{\delta_i g}\,\mathrm{d}P
=-\int_{V}f\cdot \overline{\delta_i g}\,\mathrm{d}P.
\end{array}
$$
This completes the proof of Corollary \ref{integration by Parts or deltai}.
\end{proof}

\begin{remark}
We refer to \cite{WYZ0} for a more general infinite-dimensional Gauss-Green type Theorem.
\end{remark}

Now, we fix a real-valued function $\varphi \in C^{2}_{F}\left( V\right)$. For any $\phi \in C^{1}(V)$ and $i\in\mathbb{N}$,  write
\begin{equation}\label{230419e11}
\sigma_{i} \phi\triangleq \delta_{i} \phi-\phi \cdot\partial_{i}\varphi.
\end{equation}
Clearly, for any $i,j\in\mathbb{N}$,
\begin{equation}\label{commutaor formula}
\left( \overline{\partial_i }\sigma_j-\sigma_j \overline{\partial_i }\right)h=-h\cdot (\overline{\partial_i } \partial_j \varphi) -\frac{ \overline{\partial_i}( \overline{z_j})}{2a_j^2}\cdot h,\quad\forall\;h\in C^{2}(V).
\end{equation}
We have the following simple result:
\begin{lemma}\label{integration by Parts}
For $f,g\in C^{1}_{0}\left( V\right)$,
$$
\int_{V} (\overline{\partial_{i} } f)\cdot \overline{g}\cdot e^{-\varphi }\,\mathrm{d}P=-\int_{V} f\cdot\overline{(\sigma_{i} g)}\cdot e^{-\varphi }\,\mathrm{d}P,\quad \forall\; i\in\mathbb{N}.
$$
\end{lemma}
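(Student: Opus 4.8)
The plan is to deduce the weighted identity directly from the unweighted integration-by-parts formula of Corollary \ref{integration by Parts or deltai}, by absorbing the weight $e^{-\varphi}$ into the second factor. Concretely, I would apply Corollary \ref{integration by Parts or deltai} not to the pair $(f,g)$ but to the pair $(f,\,g\,e^{-\varphi})$, and then redistribute the factor $e^{-\varphi}$ using the product and chain rules together with the defining relation (\ref{230419e11}) of $\sigma_i$ and the real-valuedness of $\varphi$.

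First I would check that $g\,e^{-\varphi}\in C^{1}_{0}(V)$, so that Corollary \ref{integration by Parts or deltai} is applicable with $g$ replaced by $g\,e^{-\varphi}$. Since $g\in C^{1}_{0}(V)$, its support $S\triangleq\supp g$ satisfies $S\stackrel{\circ}{\subset}V$; because $\varphi\in C^{2}_{F}(V)\subset C^{1}_{F}(V)$, both $\varphi$ and its first-order partial derivatives are bounded on $S$. Hence $e^{-\varphi}$ is bounded and continuous on $S$, and by the chain rule so are $D_{x_i}(e^{-\varphi})=-(D_{x_i}\varphi)e^{-\varphi}$ and $D_{y_i}(e^{-\varphi})=-(D_{y_i}\varphi)e^{-\varphi}$. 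The product $g\,e^{-\varphi}$ therefore lies in $C^{1}_{b}(V)$, it vanishes off $S$, and $\supp(g\,e^{-\varphi})\subset S\stackrel{\circ}{\subset}V$, whence $g\,e^{-\varphi}\in C^{1}_{0}(V)$.

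Next I would record the key algebraic identity. Using that $\partial_i$ is a derivation (being a complex-linear combination of $D_{x_i}$ and $D_{y_i}$, see (\ref{eq2301161})) together with $\partial_i(e^{-\varphi})=-(\partial_i\varphi)e^{-\varphi}$, a direct computation from the definitions in (\ref{eq2301161}) gives
\begin{equation*}
\delta_i\big(g\,e^{-\varphi}\big)=e^{-\varphi}\Big(\partial_i g-\frac{\overline{z_i}}{2a_i^2}\,g-g\,\partial_i\varphi\Big)=e^{-\varphi}\,\sigma_i g,
\end{equation*}
where the last equality is exactly the definition (\ref{230419e11}) of $\sigma_i g$.

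Finally I would substitute $g\,e^{-\varphi}$ for $g$ in Corollary \ref{integration by Parts or deltai}. Since $\varphi$ is real-valued we have $\overline{e^{-\varphi}}=e^{-\varphi}$, so the left-hand side becomes $\int_V(\overline{\partial_i}f)\cdot\overline{g}\cdot e^{-\varphi}\,\mathrm{d}P$, while the right-hand side becomes $-\int_V f\cdot\overline{e^{-\varphi}\,\sigma_i g}\,\mathrm{d}P=-\int_V f\cdot\overline{\sigma_i g}\cdot e^{-\varphi}\,\mathrm{d}P$, which is precisely the claimed formula. The only genuinely non-routine point is the first step, namely confirming that multiplication by $e^{-\varphi}$ keeps us inside $C^{1}_{0}(V)$; this is where the $F$-type uniform boundedness built into $C^{2}_{F}(V)$ is essential, and everything else reduces to the product and chain rules plus the real-valuedness of $\varphi$.
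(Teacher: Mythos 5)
Your proof is correct and follows essentially the same route as the paper: both arguments reduce the weighted identity to Corollary \ref{integration by Parts or deltai} by absorbing the factor $e^{-\varphi}$ into one of the two functions via the Leibniz rule and the definition (\ref{230419e11}) of $\sigma_i$. The only difference is cosmetic---the paper attaches the weight to $f$, writing $(\overline{\partial_{i}}f)\,e^{-\varphi}=\overline{\partial_{i}}(f e^{-\varphi})+(\overline{\partial_{i}}\varphi)\,f e^{-\varphi}$ and applying the Corollary to the pair $(f e^{-\varphi},g)$, whereas you attach it to $g$ through the identity $\delta_i(g e^{-\varphi})=e^{-\varphi}\sigma_i g$; your explicit verification that $g e^{-\varphi}\in C^{1}_{0}(V)$ is a point the paper leaves implicit (it needs the analogous fact for $f e^{-\varphi}$).
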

\begin{proof}
By Corollary \ref{integration by Parts or deltai}, it follows that
$$
\begin{array}{ll}
\displaystyle
\int_{V} (\overline{\partial_{i} } f)\cdot \overline{g}\cdot e^{-\varphi }\,\mathrm{d}P
=\int_{V} \overline{\partial_{i} } (f\cdot e^{-\varphi })\cdot \overline{g}\,\mathrm{d}P
+\int_{V} (\overline{\partial_{i} }\varphi)\cdot f\cdot e^{-\varphi }\cdot \overline{g}\,\mathrm{d}P\\[3mm]
\displaystyle =-\int_{V}f\cdot e^{-\varphi }\cdot \overline{\delta_i g}\,\mathrm{d}P
+\int_{V} (\overline{\partial_{i} }\varphi)\cdot f\cdot e^{-\varphi }\cdot \overline{g}\,\mathrm{d}P
=-\int_{V} f\cdot\overline{(\sigma_{i} g)}\cdot e^{-\varphi }\,\mathrm{d}P,
\end{array}
$$
which completes the proof of Lemma \ref{integration by Parts}.
\end{proof}

\subsection{Pseudo-convex domains in $\ell^{2}$}\label{subsection000}

In this subsection, we shall introduce the concept of pseudo-convex domains in $\ell^{2}$ and present some of their elementary properties.

For any two subsets $E_1,E_2\subset \ell^2$, we define the distance $d(E_1,E_2)$ between $E_1$ and $E_2$ by
 $$
 d(E_1,E_2)\triangleq
 \left\{
 \begin{array}{ll}
 \infty, &\hbox{if }E_1=\emptyset\hbox{ or }E_2=\emptyset,\\
 \inf\big\{||\textbf{z}_1-\textbf{z}_2||:\;\textbf{z}_1\in E_1,\,\textbf{z}_2\in E_2\big\},\quad\;&\hbox{otherwise}.
 \end{array}\right.
  $$
For a nonempty subset $S$ of $V$, set
$$
d_V(S)\triangleq \min\bigg\{d(S, \partial V),\;\frac{1}{\sup\limits_{\textbf{z}\in S}||\textbf{z}||}\bigg\}.
$$
Here and henceforth, we use the convention that $\frac{1}{0}=\infty$ and $\frac{1}{\infty}=0$. Particularly, for $\textbf{z}\in V$, let
$$
d_V(\textbf{z})\triangleq \min\bigg\{d(\{\textbf{z}\}, \partial V),\frac{1}{||\textbf{z}||}\bigg\}.
$$
It is clear that $d_V(\textbf{z})\geqslant d_V(S)$ for each $\textbf{z}\in S$. The following simple result is a characterization of $S \stackrel{\circ}{\subset}V$.

\begin{lemma}\label{221018lem1}
Suppose that $S$ is a nonempty subset of $V$. Then, $d_V(S)>0$ if and only if $S \stackrel{\circ}{\subset}V$.
\end{lemma}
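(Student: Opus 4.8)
The plan is to peel off the two conditions on each side and reduce the whole statement to a single geometric equivalence about $r$-neighborhoods. First I would record the meaning of each side under the stated conventions $\frac{1}{0}=\infty$, $\frac{1}{\infty}=0$. Since $d_V(S)$ is the minimum of $d(S,\partial V)$ and $\frac{1}{\sup_{\textbf{z}\in S}||\textbf{z}||}$, the inequality $d_V(S)>0$ holds if and only if \emph{both} $d(S,\partial V)>0$ and $\sup_{\textbf{z}\in S}||\textbf{z}||<\infty$. On the other side, $S\stackrel{\circ}{\subset}V$ requires $r,R\in(0,+\infty)$ with $\bigcup_{\textbf{z}\in S}B_r(\textbf{z})\subset V$ and $S\subset B_R$. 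The boundedness halves match immediately: $\sup_{\textbf{z}\in S}||\textbf{z}||<\infty$ is equivalent to $S\subset B_R$ for some $R>0$ (take $R=\sup_{\textbf{z}\in S}||\textbf{z}||+1$ in one direction, and $\sup_{\textbf{z}\in S}||\textbf{z}||\leqslant R$ in the other). Thus everything reduces to proving that $d(S,\partial V)>0$ if and only if there exists $r>0$ with $\bigcup_{\textbf{z}\in S}B_r(\textbf{z})\subset V$.

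For the implication from uniform inclusion to positivity of the distance, I would argue directly. If $\bigcup_{\textbf{z}\in S}B_r(\textbf{z})\subset V$ and $\partial V\neq\emptyset$, then no $p\in\partial V$ can satisfy $||\textbf{z}-p||<r$ for any $\textbf{z}\in S$: otherwise $p\in B_r(\textbf{z})\subset V$, contradicting $p\in\partial V$ together with the openness of $V$ (which gives $\partial V\cap V=\emptyset$). Hence $||\textbf{z}-p||\geqslant r$ for all $\textbf{z}\in S$ and $p\in\partial V$, so $d(S,\partial V)\geqslant r>0$; and when $\partial V=\emptyset$ one has $d(S,\partial V)=\infty$ by convention. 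This direction is routine.

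The converse is the heart of the matter. Assuming $\rho:=d(S,\partial V)>0$, I would first isolate the degenerate case $\partial V=\emptyset$: then $\overline{V}$ and $\overline{\ell^2\setminus V}$ are disjoint closed sets whose union is $\ell^2$, so connectedness of $\ell^2$ forces $V=\ell^2$, and any $r>0$ works. When $\partial V\neq\emptyset$, the number $\rho$ is a positive \emph{real} (pick any $\textbf{z}_0\in S$, $p_0\in\partial V$ to bound it), and I would set $r=\rho$ and show $B_r(\textbf{z})\subset V$ for every $\textbf{z}\in S$. The key observation is that $B_r(\textbf{z})$ is disjoint from $\partial V$: a point $w\in B_r(\textbf{z})\cap\partial V$ would satisfy both $||w-\textbf{z}||<r=\rho$ and $||w-\textbf{z}||\geqslant d(S,\partial V)=\rho$, which is impossible. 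Since $B_r(\textbf{z})$ is connected (a ball in a normed space), contains $\textbf{z}\in V$, and avoids $\partial V$, the partition $\ell^2=V\sqcup\partial V\sqcup(\ell^2\setminus\overline{V})$ into $V$, its boundary, and the open exterior shows that the connected set $B_r(\textbf{z})$ lies entirely inside $V$; taking the union over $\textbf{z}\in S$ gives $\bigcup_{\textbf{z}\in S}B_r(\textbf{z})\subset V$.

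The step I expect to demand the most care is precisely this converse: in $\ell^2$ one cannot invoke compactness, so the argument must rest on connectedness of balls and of $\ell^2$ combined with the clean distance estimate, and the edge case $\partial V=\emptyset$ (forcing $V=\ell^2$) must be handled separately rather than swept into the generic estimate. Everything else is bookkeeping with the conventions for $\frac{1}{0}$ and $\frac{1}{\infty}$ and the elementary identity $\bigcup_{\textbf{z}\in S}B_r(\textbf{z})=\{w\in\ell^2:\ d(\{w\},S)<r\}$, which makes the neighborhood reformulation transparent.
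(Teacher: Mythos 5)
Your proof is correct and follows essentially the same route as the paper: both directions are obtained by unwinding the definitions, matching the boundedness halves ($S\subset B_R$ versus $\sup_{\textbf{z}\in S}||\textbf{z}||<\infty$) and the distance halves ($\bigcup_{\textbf{z}\in S}B_r(\textbf{z})\subset V$ versus $d(S,\partial V)>0$) separately. The one difference is that the paper's converse simply asserts, without comment, the implication you identify as the heart of the matter --- that $d(S,\partial V)>r$ forces $\bigcup_{\textbf{z}\in S}B_r(\textbf{z})\subset V$ --- whereas you actually prove it, via connectedness of balls, the tripartition $\ell^2=V\sqcup\partial V\sqcup(\ell^2\setminus\overline{V})$, and the separate edge case $\partial V=\emptyset$ (where connectedness of $\ell^2$ gives $V=\ell^2$); this fills a small but genuine gap in the paper's one-line argument, and the same step could alternatively be settled by convexity of balls (a segment from $\textbf{z}\in S$ to a point of $B_r(\textbf{z})\setminus V$ would have to cross $\partial V$ inside the ball).
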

\begin{proof}
If $S \stackrel{\circ}{\subset}V$, then by Definition \ref{def of bounded contained}, there exist $r,R\in(0,+\infty)$ such that $\cup_{\textbf{z}\in S}B_r(\textbf{z})\subset V$ and $S\subset B_R$. Thus $d(S, \partial V)\geqslant r$, $\frac{1}{\sup\limits_{\textbf{z}\in S}||\textbf{z}||}\geqslant \frac{1}{R}$ and hence $d_V(S)\geqslant \min\left\{r,\frac{1}{R}\right\}>0.$

Conversely, if $d_V(S)>0$, then for any $r\in \left(0,d_V(S)\right)$, it holds that $d(S, \partial V)> r$ and $\frac{1}{\sup\limits_{\textbf{z}\in S}||\textbf{z}||}> r$. Therefore, we have  $\cup_{\textbf{z}\in S}B_r(\textbf{z})\subset V$ and $S\subset B_{\frac{1}{r}}$, and hence $S \stackrel{\circ}{\subset}V$. This completes the proof of Lemma \ref{221018lem1}.
\end{proof}

The following result is useful to construct many subsets which are uniformly contained in $V$ (Recall Definition \ref{def of bounded contained}).
\begin{lemma}\label{Exhaustion function lemma}
Suppose that $\alpha$ is a real-valued function on $V$. Then, $\{ \textbf{v}\in V:\;\alpha (\textbf{v})  \leqslant \tau\} \stackrel{\circ}{\subset} V$ for all $ \tau\in \mathbb{R}$ if and only if $\lim\limits_{d_V(\textbf{z})\rightarrow 0 } \alpha (\textbf{z})=+\infty.$
\end{lemma}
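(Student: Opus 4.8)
The plan is to reduce everything to the criterion established in Lemma~\ref{221018lem1}, namely that a nonempty $S\subset V$ satisfies $S\stackrel{\circ}{\subset}V$ if and only if $d_V(S)>0$, together with the pointwise bound $d_V(\textbf{z})\geqslant d_V(S)$ for every $\textbf{z}\in S$ noted just before that lemma. Throughout, write $S_\tau\triangleq\{\textbf{v}\in V:\alpha(\textbf{v})\leqslant\tau\}$; when $S_\tau=\emptyset$ it is trivially uniformly included in $V$ (take any $r,R$ in Definition~\ref{def of bounded contained}), so only nonempty sublevel sets need attention. I will also unwind the meaning of the hypothesis $\lim_{d_V(\textbf{z})\to0}\alpha(\textbf{z})=+\infty$ as: for every $M\in\mathbb{R}$ there is $\delta>0$ with $\alpha(\textbf{z})>M$ for all $\textbf{z}\in V$ satisfying $d_V(\textbf{z})<\delta$.

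For the direction ($\Leftarrow$), fix $\tau\in\mathbb{R}$ and apply the hypothesis with $M=\tau$ to obtain $\delta>0$ such that $d_V(\textbf{z})<\delta$ forces $\alpha(\textbf{z})>\tau$. Contrapositively, every $\textbf{z}\in S_\tau$ obeys $d_V(\textbf{z})\geqslant\delta$, which by the definition $d_V(\textbf{z})=\min\{d(\{\textbf{z}\},\partial V),1/\|\textbf{z}\|\}$ means simultaneously $d(\{\textbf{z}\},\partial V)\geqslant\delta$ and $\|\textbf{z}\|\leqslant1/\delta$. Taking the infimum over $\textbf{z}\in S_\tau$ in the first inequality gives $d(S_\tau,\partial V)\geqslant\delta$ (using $d(S_\tau,\partial V)=\inf_{\textbf{z}\in S_\tau}d(\{\textbf{z}\},\partial V)$), while the second gives $\sup_{\textbf{z}\in S_\tau}\|\textbf{z}\|\leqslant1/\delta$, hence $1/\sup_{\textbf{z}\in S_\tau}\|\textbf{z}\|\geqslant\delta$. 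Therefore $d_V(S_\tau)\geqslant\delta>0$, and Lemma~\ref{221018lem1} yields $S_\tau\stackrel{\circ}{\subset}V$.

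For the direction ($\Rightarrow$), I argue by contradiction. If the limit fails, then there is some $M\in\mathbb{R}$ and, for each $k\in\mathbb{N}$, a point $\textbf{z}_k\in V$ with $d_V(\textbf{z}_k)<1/k$ yet $\alpha(\textbf{z}_k)\leqslant M$. Then $\{\textbf{z}_k\}\subset S_M$, so $S_M\neq\emptyset$; by hypothesis $S_M\stackrel{\circ}{\subset}V$, whence $d_V(S_M)>0$ by Lemma~\ref{221018lem1}. But the pointwise bound gives $d_V(\textbf{z}_k)\geqslant d_V(S_M)>0$ for all $k$, contradicting $d_V(\textbf{z}_k)<1/k\to0$. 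This proves the limit, completing the equivalence.

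The substance of the argument is the clean splitting of $d_V$ into its two components and the observation that a uniform lower bound on $d_V(\textbf{z})$ over a set passes to a lower bound on $d_V(S)$; the only points demanding care are the conventions $1/0=\infty,\,1/\infty=0$ (so that the formulas stay meaningful when $\partial V=\emptyset$ or when points run off to infinity) and the degenerate case $S_\tau=\emptyset$. I do not anticipate a genuine obstacle here, since the statement is in essence a repackaging of Lemma~\ref{221018lem1}, so the effort is bookkeeping rather than any new idea.
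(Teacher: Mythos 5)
Your proof is correct and follows essentially the same route as the paper's: both directions reduce to the characterization $d_V(S)>0\iff S\stackrel{\circ}{\subset}V$ of Lemma \ref{221018lem1} together with the splitting of $d_V$ into its boundary-distance and norm components, with your sequence-based contradiction in the forward direction being just a repackaging of the paper's direct argument. Your explicit treatment of the empty sublevel set and of nonemptiness before invoking Lemma \ref{221018lem1} is slightly more careful than the paper, but this is bookkeeping, not a different idea.
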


\begin{proof}
If $\lim\limits_{d_V(\textbf{z})\rightarrow 0 } \alpha (\textbf{z})=+\infty$, then for any given $\tau\in \mathbb{R}$,  there exists $\delta>0$ such that $\alpha \left( \textbf{z}\right)  >\tau+1$ holds for any $\textbf{z}\in V$ satisfying $d_V(\textbf{z})<\delta $. Thus
$$
\left\{ \textbf{v}\in V:\;\alpha \left( \textbf{v}\right)  \leqslant \tau\right\}
\subset \left\{ \textbf{v}\in V:\;d_V(\textbf{v})  \geqslant \delta \right\}
\subset  \{ \textbf{v}\in V:\;d(\{\textbf{v}\},\partial V)  \geqslant \delta \}\cap \bigg\{ \textbf{v}\in V:\; ||\textbf{v}|| \leqslant \frac{1}{\delta} \bigg\}
\stackrel{\circ}{\subset} V.
$$

Conversely, suppose that $\left\{ \textbf{v}\in V:\;\alpha \left( \textbf{v}\right)  \leqslant \tau\right\} \stackrel{\circ}{\subset} V$ for all $\tau\in \mathbb{R}$. Given $M>0$, let
$$
\delta \triangleq d_V (  \{ \textbf{v}\in V:\;\alpha \left( \textbf{v}\right)  \leqslant M \}).
$$
By Lemma \ref{221018lem1}, we have $\delta>0$. Then for any $\textbf{z}\in V$ with $d_V(\textbf{z})  <\delta$, we claim that
\begin{equation}\label{230317e1}
\textbf{z}\notin \{\textbf{v}\in V:\;\alpha \left(\textbf{v}\right)  \leqslant M \}.
 \end{equation}
Otherwise, if $\textbf{z}\in \{\textbf{v}\in V:\;\alpha \left(\textbf{v}\right)  \leqslant M \}$, then $d_V(\textbf{z})\geqslant d_V (  \{ \textbf{v}\in V:\;\alpha \left( \textbf{v}\right)  \leqslant M \})=\delta$, which is a contradiction. Therefore, (\ref{230317e1}) holds, and hence $\alpha \left( \textbf{z}\right)  >M$, which implies that $\lim\limits_{d_V(\textbf{z})\rightarrow 0} \alpha \left( \textbf{z}\right)  =+\infty$. The proof of Lemma \ref{Exhaustion function lemma} is completed.
\end{proof}

Based on Definition \ref{def of bounded contained} and motivated by the definition of exhaustion functions in finite dimensions, we introduce the following notion.

\begin{definition}
A real-valued function $\alpha$ on $V$ is called an exhaustion function on $V$, if
\begin{equation}\label{230317e2}
\{\textbf{z}\in V:\;\alpha (\textbf{z})  \leqslant \tau\} \stackrel{\circ}{\subset} V, \quad\forall\;\tau\in \mathbb{R}.
\end{equation}
\end{definition}

\begin{remark}
Note that in view of \cite[p. 16]{Ohsawa}, in the setting of finite dimensions the counterpart of (\ref{230317e2}) takes the form that $\{ \textbf{z}\in V:\;\alpha (\textbf{z})  \leqslant \tau\}$ is compact in $V$ for all $\tau\in \mathbb{R}$. As we shall see later, (\ref{230317e2}) is more convenient in infinite-dimensional spaces.
\end{remark}

In this work, we use the following notion of pseudo-convex domains in $\ell^2$:
\begin{definition}\label{pseudocovexdomain}
$V$  is called a pseudo-convex domain in $\ell^2$, if there is an exhaustion function $\eta \in C^{\infty}_{F}\left( V\right)$ such that
for every $n\in \mathbb{N}$, the following inequality holds on $V$,
\begin{equation}\label{positivity condition}
		\sum^{n}_{i=1} \sum^{n}_{j=1} ( \partial_i \overline{\partial_j} \eta  )\cdot  \zeta_{i}\cdot\overline{\zeta_{j}} \geqslant 0,\quad\forall\;(\zeta_{1},\cdots,\zeta_n)\in \mathbb{C}^n.
\end{equation}
The above $\eta$ is called a plurisubharmonic exhaustion function on $V$.
\end{definition}

For the function $\eta$ in Definition \ref{pseudocovexdomain} and each $\tau\in\mathbb{R}$, write
\begin{equation}\label{230117e1}
V_{\tau}\triangleq \left\{ \textbf{z}\in V:\;\eta \left( \textbf{z}\right)  \leqslant \tau\right\}, \qquad V^{o}_{\tau}\triangleq \big\{\textbf{z}\in V:\; B_{r}(\textbf{z})\subset V_\tau\hbox{ for some }r>0\big\}.
\end{equation}
Clearly, $V_{\tau}\stackrel{\circ}{\subset} V$  and
$V^{o}_{\tau}$ is the interior of $V_\tau$ for all $\tau\in \mathbb{R}$. On the other hand, since $V_{0}\stackrel{\circ}{\subset} V$ and $\eta \in C^{\infty}_{F}\left( V\right)$, it is clear that $\inf\limits_{V_{0}} \eta >-\infty$ and $\inf\limits_{V} \eta=\min\left(\inf\limits_{V_{0}}\eta ,\;\inf\limits_{V\setminus V_{0}}\eta \right) >-\infty$.

\begin{remark}
Definition \ref{pseudocovexdomain} is essentially motivated by \cite[Theorem 2.6.2, p. 44]{Hor90} and \cite[Theorem 2.6.7, p. 46]{Hor90}. There exist several other equivalent definitions of pseudo-convex domains in infinite-dimensional spaces, e.g., \cite[p. 361]{Lem03}, \cite[Definition 37.3, p. 274]{Mujica} and \cite[Definition 2.1.3, p. 41]{Noverraz}. Our Definition \ref{pseudocovexdomain} is equivalent to these definitions but the proof of this fact is quite complicated, and therefore we shall present it in a forthcoming paper \cite{WZ}.

 Clearly, $\eta$ appeared in Definition \ref{pseudocovexdomain} is not unique. Since only second order partial derivatives are used in (\ref{positivity condition}), it seems more natural to require $\eta\in C^2_{F}\left( V\right)$ in Definition \ref{pseudocovexdomain}. Nevertheless, as we shall show in \cite{WZ} that, if one can find an exhaustion function $\eta\in C^2_{F}\left( V\right)$ satisfying (\ref{positivity condition}), then there exists another exhaustion function $\tilde\eta\in C^{\infty}_{F}\left( V\right)$ enjoying the same property. Furthermore, the $C^{\infty}_{F}\left( V\right)$-smoothness for plurisubharmonic exhaustion functions on $V$ will paly a key role in our main approximation results in the rest of this paper.
\end{remark}

\begin{remark}\label{230422r1}
If necessary, we may replace the plurisubharmonic exhaustion function $\eta$ in Definition \ref{pseudocovexdomain} by $\eta + \left| \left|\cdot\right|  \right|^{2}_{\ell^{2} }  -\inf\limits_{V_{0}} \eta  $. Thus, we may assume that $\eta\geqslant 0$ and for any $n\in\mathbb{N}$, instead of (\ref{positivity condition}), the following inequality holds on $V$:
\begin{equation}\label{regular condition for eta}
\sum_{1\leqslant i,j\leqslant n}\left( \partial_{i} \overline{\partial_{j} } \eta\right)\cdot \zeta_{i}\cdot\overline{\zeta_{j}} \geqslant \sum^{n}_{i=1} \left| \zeta_{i}\right|^{2}  ,\quad\forall\;(\zeta_{1},\cdots,\zeta_n)\in \mathbb{C}^n .
\end{equation}
\end{remark}

One can find many pseudo-convex domains in $\ell^2$, as shown by the following result.

\begin{proposition}\label{pseudo-convex domains in l2}
{\rm (1)} Suppose that $V$ is a pseudo-convex domain in $\ell^2$. For any $\textbf{a}\in \ell^{2}$ and $\textbf{c}=(c_{i})_{i=1}^{\infty } \in \mathbb{C}^{\infty}$ with $ 0<\inf\limits_{i\geqslant 1} \left| c_{i}\right|  \leqslant \sup\limits_{i\geqslant 1} \left| c_{i}\right|  <\infty  $, let
    $$V+\textbf{a } \triangleq \left\{ \textbf{z}+\textbf{a}:\;\textbf{z}\in V\right\}, \quad\textbf{c}V \triangleq \left\{ \left( c_{i}z_{i}\right)^{\infty }_{i=1} :\;\textbf{z}=\left(z_{i}\right)^{\infty }_{i=1} \in V\right\}.
    $$
Then both $V+\textbf{a }$ and $\textbf{c}V$ are pseudo-convex domains in $\ell^2$;

\medskip

{\rm (2)} For any $\textbf{a}\in \ell^{2}$ and $r\in(0,+\infty)$, $B_{r}(\textbf{a})$ is a pseudo-convex domain in $\ell^2$;

\medskip

{\rm (3)} If $S\subset \mathbb{C}^{m}$ (for some $m\in\mathbb{N}$) is a pseudo-convex domain in $\mathbb{C}^{m}$ (See  \cite[Definition 2.6.8, p. 47]{Hor90}), then $V\triangleq \{(\textbf{z}_m,\textbf{z}^m):\;\textbf{z}_m\in S,\,\textbf{z}^m\in \ell^2\}$ is a pseudo-convex domain in $\ell^{2}$;

\medskip

{\rm (4)} The Hilbert polydisc $\mathbb{D}_2^{\infty}\triangleq \big\{(z_i)_{i=1}^{\infty}\in\ell^2:\;|z_i|<1\text{ for all }i\in\mathbb{N}  \big\}  $ is a pseudo-convex domain in $\ell^{2}$.
\end{proposition}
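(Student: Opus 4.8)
The plan is to treat all four assertions through the same template dictated by Definition \ref{pseudocovexdomain}: in each case I would exhibit an explicit candidate $\eta\in C^{\infty}_F$ and check three things — membership in $C^{\infty}_F$ (i.e. $C^{\infty}$ plus the $C^1_F$-bound on every $E\stackrel{\circ}{\subset}V$), the exhaustion property (by Lemma \ref{Exhaustion function lemma} this is $\eta\to+\infty$ as $d_V(\textbf z)\to0$, or equivalently that each sublevel set is uniformly included), and the pointwise positivity (\ref{positivity condition}) of the truncated complex Hessians $(\partial_i\overline{\partial_j}\eta)_{1\le i,j\le n}$.

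For (1) I would transport the given $\eta$ through the affine change of variables. For $V+\textbf a$ take $\tilde\eta(\textbf z)\triangleq\eta(\textbf z-\textbf a)$; for $\textbf c V$ take $\tilde\eta(\textbf w)\triangleq\eta\big((w_i/c_i)_{i=1}^{\infty}\big)$. The conditions $0<\inf_i|c_i|\le\sup_i|c_i|<\infty$ make $\textbf z\mapsto(c_iz_i)_i$ a bi-Lipschitz linear isomorphism of $\ell^2$, so it preserves uniform inclusion (Definition \ref{def of bounded contained}) and distorts norms and distance-to-boundary only by fixed constants, which yields both the exhaustion property (via Lemma \ref{221018lem1}) and the $C^1_F$-bounds. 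The essential point is that the derivatives transform diagonally, $\partial_{w_i}\overline{\partial_{w_j}}\tilde\eta=(c_i\overline{c_j})^{-1}\,\partial_i\overline{\partial_j}\eta$, so $\sum_{i,j\le n}(\partial_{w_i}\overline{\partial_{w_j}}\tilde\eta)\,\zeta_i\overline{\zeta_j}=\sum_{i,j\le n}(\partial_i\overline{\partial_j}\eta)\,\xi_i\overline{\xi_j}\ge0$ with $\xi_i\triangleq\zeta_i/c_i$; translations leave the Hessian unchanged. For (2) I would give the exhaustion function directly: on $B_r(\textbf a)$ set $\eta(\textbf z)\triangleq-\log\big(r^2-\|\textbf z-\textbf a\|^2\big)$. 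With $u=\|\textbf z-\textbf a\|^2$ one computes $\partial_i u=\overline{z_i-a_i}$, $\overline{\partial_j}u=z_j-a_j$, $\partial_i\overline{\partial_j}u=\delta_{ij}$, hence $\partial_i\overline{\partial_j}\eta=\delta_{ij}/v+(\partial_i u)(\overline{\partial_j}u)/v^2$ with $v=r^2-u>0$, whose quadratic form is $v^{-1}\sum_i|\zeta_i|^2+v^{-2}\big|\sum_i(z_i-a_i)\overline{\zeta_i}\big|^2\ge0$. Since $\|\textbf z\|$ is bounded on $B_r(\textbf a)$, $d_V(\textbf z)\to0$ forces $\|\textbf z-\textbf a\|\to r$, so $\eta\to+\infty$, and on each $E\stackrel{\circ}{\subset}B_r(\textbf a)$ the quantity $v$ is bounded below, giving $\sum_i(|D_{x_i}\eta|^2+|D_{y_i}\eta|^2)=4\|\textbf z-\textbf a\|^2/v^2<\infty$. (One could instead deduce (2) from (1) since $B_r(\textbf a)=\textbf a+\textbf c B_1$ with $\textbf c=(r,r,\dots)$.)

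For (3), the cylinder $V=S\times\ell^2$, I would pick a $C^{\infty}$ plurisubharmonic exhaustion function $\psi$ of the finite-dimensional pseudo-convex $S$ (which exists by the classical theory, \cite[Section 2.6]{Hor90}) and set $\eta(\textbf z)\triangleq\psi(\textbf z_m)+\|\textbf z\|^2$. For the $C^{\infty}_F$-membership the key observation is that if $E\stackrel{\circ}{\subset}V$ then the projection of $E$ onto $\mathbb C^m$ is relatively compact in $S$, so $\psi$ and its derivatives are bounded there, while $\|\textbf z\|^2\in C^{\infty}_F(V)$. The Hessian splits: the $\psi$-block, applied to the tuple extended by zeros, is positive semidefinite because $\psi$ is plurisubharmonic, and $\|\textbf z\|^2$ contributes $\sum_{i\le n}|\zeta_i|^2$, so (\ref{positivity condition}) holds for every $n$. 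The exhaustion property I would verify directly on $\{\eta\le\tau\}$: the term $\|\textbf z\|^2$ bounds this set, and $\psi(\textbf z_m)\le\tau$ confines $\textbf z_m$ to a compact $K_\tau\Subset S$, so $\textbf z_m$ stays a fixed distance $\rho=d(K_\tau,\mathbb C^m\setminus S)>0$ from the complement, whence $\cup_{\textbf z}B_\rho(\textbf z)\subset V$ and $\{\eta\le\tau\}\stackrel{\circ}{\subset}V$.

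Part (4) is the crux and carries the genuinely infinite-dimensional difficulty. The candidate is $\eta(\textbf z)\triangleq\sum_{k=1}^{\infty}-\log(1-|z_k|^2)$, which converges on $\mathbb D_2^{\infty}\subset\ell^2$ since $-\log(1-|z_k|^2)\le C\,|z_k|^2$ once $\sup_k|z_k|<1$ and $\sum_k|z_k|^2<\infty$. Its complex Hessian is diagonal (each summand depends on a single coordinate, so the off-diagonal entries vanish) with $\partial_i\overline{\partial_i}\eta=(1-|z_i|^2)^{-2}>0$, making (\ref{positivity condition}) immediate; the exhaustion property is clean because $\eta(\textbf z)\le\tau$ forces $-\log(1-|z_k|^2)\le\tau$ for every $k$, hence $\sup_k|z_k|\le M_\tau<1$, and (using $-\log(1-t)\ge t$) $\|\textbf z\|^2\le\tau$, so a uniform-radius ball argument gives $\{\eta\le\tau\}\stackrel{\circ}{\subset}\mathbb D_2^{\infty}$. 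The hard part is the $C^{0}$ (and thence $C^{\infty}$) verification of the infinite sum: I would show the partial sums converge to $\eta$ uniformly on every compact $K\subset\mathbb D_2^{\infty}$ — there $\sup_{\textbf z\in K}\sup_k|z_k|<1$ and, by the compactness criterion in $\ell^2$, the tails $\sup_{\textbf z\in K}\sum_{k>n}|z_k|^2$ tend to $0$ — and then invoke \cite[Proposition 2.1, p. 778]{Lem99}, exactly as in the proof of Proposition \ref{SMOOTH BUT NOT Frechet}. The matching $C^1_F$-bound $\sum_i(|D_{x_i}\eta|^2+|D_{y_i}\eta|^2)=\sum_i 4|z_i|^2(1-|z_i|^2)^{-2}\le 4(1-M_0^2)^{-2}\|\textbf z\|^2$ then holds on each $E\stackrel{\circ}{\subset}\mathbb D_2^{\infty}$, where $\sup_k|z_k|\le M_0<1$ and $\textbf z$ is bounded; all higher partials are explicit one-variable rational expressions in $z_i,\overline{z_i}$ with nonvanishing denominators, hence continuous.
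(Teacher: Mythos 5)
Your proof is correct and follows essentially the same route as the paper: in each part one exhibits an explicit plurisubharmonic exhaustion function and checks membership in $C^{\infty}_{F}$, the exhaustion property, and positivity of the truncated Hessians — with the same candidates for (1), (2) (the paper does $B_1$ and invokes (1), as you also note) and (3). The only deviation is in (4), where the paper takes $\eta(\textbf{z})=\prod_{i=1}^{\infty}(1-|z_i|^2)^{-1}$ while you take its logarithm $\sum_{i=1}^{\infty}-\log(1-|z_i|^2)$; these are equivalent choices (compose with the convex increasing exponential), and your additive version has the minor advantage of a diagonal complex Hessian, your verification of continuity, the $C^1_F$-bounds and uniform inclusion of sublevel sets being considerably more detailed than the paper's one-line assertion.
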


\begin{proof}
(1) Suppose that $\eta$ is a plurisubharmonic exhaustion function on $V$. Let $\eta_{1} \left( \textbf{z}\right)\triangleq\eta \left( \textbf{z}-\textbf{a}\right)$ for $\textbf{z}\in V+\textbf{a}$ and $\eta_{2} \left( \textbf{z}\right)\triangleq\eta \left( \frac{z_{1}}{c_{1}} ,\frac{z_{2}}{c_{2}} ,\cdots \right)$ for $\textbf{z}=\left(z_{i}\right)^{\infty }_{i=1} \in \textbf{c}V$.
	Clearly, $\eta_{1}  \in C^{\infty}_{F}\left( V+\textbf{a}\right)$ and $\eta_{2}  \in C^{\infty}_{F}\left( \textbf{c}V\right)$. Then $\eta_{1}$ and $\eta_{2}$ are plurisubharmonic exhaustion functions on $V+\textbf{a }$ and $\textbf{c}V$, respectively.

\medskip

(2) Let $\eta \left( \textbf{z}\right)\triangleq-\ln \left( 1-\left| \left| \textbf{z}\right|  \right|^{2}  \right)$ for $\textbf{z}\in B_1.$ Then $\eta$ is a plurisubharmonic exhaustion function on $B_1$. By the conclusion (1), we see that $B_{r}(\textbf{a})$ is a pseudo-convex domain in $\ell^2$.

\medskip

(3)	By \cite[Theorem 2.6.11, p. 48]{Hor90}), there exists a strictly plurisubharmonic exhaustion function $\eta \in C^{\infty }\left(S \right)$ of $S$. Let $\tilde{\eta}(\textbf{z})\triangleq\eta(\textbf{z}_m)+\left| \left| \textbf{z}\right|  \right|^{2}$ for $\textbf{z}=(\textbf{z}_m,\textbf{z}^m)\in V$. It is easy to check that $\tilde{\eta}$ is a plurisubharmonic exhaustion function on $V$.

\medskip

(4) Let $\eta  ( \textbf{z})\triangleq\prod^{\infty }_{i=1} \frac{1}{1-|z_{i}|^{2}}$ for $\textbf{z}=(z_i)_{i=1}^{\infty}\in\mathbb{D}_2^{\infty}$. Then $\eta$ is a plurisubharmonic exhaustion function on $\mathbb{D}_2^{\infty}$. This completes the proof of Proposition \ref{pseudo-convex domains in l2}.
\end{proof}

We shall use the following assumption frequently.
\begin{condition}\label{230424c1}
$V$ is a pseudo-convex domain in $\ell^{2}$ and $\eta$ is a plurisubharmonic exhaustion function on $V$.
\end{condition}

The following result is about some elementary properties of pseudo-convex domains in $\ell^{2}$ and their plurisubharmonic exhaustion functions (Recall (\ref{230117e1}) for
$V_{\tau}$ and $V^{o}_{\tau}$).

\begin{proposition}\label{properties on pseudo-convex domain}
Let Condition \ref{230424c1} hold. Then,
\begin{itemize}
\item[$(1)$] For each $\tau\in\mathbb{R}$, there exists $C(\tau)\in(0,+\infty)$ such that
\begin{equation}\label{230319e1}
\left| \eta \left(\textbf{z}\right)  -\eta \left( \tilde{\textbf{z}} \right)  \right|  \leqslant C(\tau)\left| \left|\, \textbf{z}-\tilde{\textbf{z}}\,\right|  \right|,\quad\forall\; \textbf{z},\tilde{\textbf{z}}\in V_{\tau};
\end{equation}
\item[$(2)$] For any $\tau_1,\tau_2\in\mathbb{R}$ with $\tau_1<\tau_2$, it holds that $V_{\tau_1}\stackrel{\circ}{\subset} V^{o}_{\tau_2}$;
\item[$(3)$] For any $S\stackrel{\circ}{\subset} V$, there exists $\tau\in\mathbb{R}$ such that $S\stackrel{\circ}{\subset} V^{o}_{\tau}$.
\end{itemize}
\end{proposition}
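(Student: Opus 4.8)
The plan is to extract from the hypothesis $\eta\in C^{\infty}_{F}(V)\subset C^{1}_{F}(V)$ a single quantitative ingredient---a \emph{local} Lipschitz bound for $\eta$ governed by the $\ell^2$-norm of its gradient---and then derive all three assertions from it. The step I expect to be the main obstacle is that $\eta$ need not be Fr\'echet differentiable (Proposition \ref{SMOOTH BUT NOT Frechet}), so I cannot invoke a mean value theorem along an arbitrary segment in $\ell^2$. My remedy is a finite-dimensional truncation. Fix $\tau$ and recall $V_{\tau}\stackrel{\circ}{\subset}V$; choosing $r,R$ as in Definition \ref{def of bounded contained}, the tube $W\triangleq\bigcup_{\textbf{z}\in V_{\tau}}B_{r/2}(\textbf{z})$ again satisfies $W\stackrel{\circ}{\subset}V$, so by the defining property of $C^{1}_{F}(V)$ both $C\triangleq\sup_{W}\big(\sum_{i=1}^{\infty}(|D_{x_i}\eta|^{2}+|D_{y_i}\eta|^{2})\big)^{1/2}$ and $M\triangleq\sup_{V_{\tau}}|\eta|$ are finite. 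For $\textbf{z},\tilde{\textbf{z}}\in V_{\tau}$ with $\textbf{v}\triangleq\tilde{\textbf{z}}-\textbf{z}$ satisfying $\|\textbf{v}\|<r/2$, I would let $\textbf{v}^{(n)}$ be the truncation of $\textbf{v}$ to its first $n$ complex coordinates. The segment $t\mapsto\textbf{z}+t\textbf{v}^{(n)}$, $t\in[0,1]$, stays inside $B_{r/2}(\textbf{z})\subset W$; along it $\eta$ is a classically $C^{1}$ function of $t$ (existence and continuity of the partials make the restriction to a finite-dimensional affine subspace continuously differentiable), so the fundamental theorem of calculus and the Cauchy--Schwarz inequality give
$$|\eta(\textbf{z}+\textbf{v}^{(n)})-\eta(\textbf{z})|\leq\int_{0}^{1}\Big|\tfrac{d}{dt}\eta(\textbf{z}+t\textbf{v}^{(n)})\Big|\,\mathrm{d}t\leq C\,\|\textbf{v}^{(n)}\|\leq C\,\|\textbf{v}\|.$$
Letting $n\to\infty$, one has $\textbf{v}^{(n)}\to\textbf{v}$ in $\ell^2$, hence $\textbf{z}+\textbf{v}^{(n)}\to\tilde{\textbf{z}}$, and continuity of $\eta$ yields $|\eta(\tilde{\textbf{z}})-\eta(\textbf{z})|\leq C\|\textbf{v}\|$. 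This is the key local estimate.

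To finish (1) I would dispatch distant points by a trivial bound: if $\|\textbf{z}-\tilde{\textbf{z}}\|\geq r/2$ then $|\eta(\textbf{z})-\eta(\tilde{\textbf{z}})|\leq 2M\leq\frac{4M}{r}\|\textbf{z}-\tilde{\textbf{z}}\|$. Taking $C(\tau)\triangleq\max\{C,\,4M/r\}$ then gives $(\ref{230319e1})$ for all $\textbf{z},\tilde{\textbf{z}}\in V_{\tau}$; this sidesteps having to join far-apart points by paths lying inside $W$, which need not be convex.

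For (2) the same local estimate is the engine. Given $\tau_{1}<\tau_{2}$, I would run the construction above with $V_{\tau_{1}}$ in place of $V_{\tau}$ to obtain $r_{0}>0$ and a tube $W_{1}\triangleq\bigcup_{\textbf{z}\in V_{\tau_{1}}}B_{r_{0}/2}(\textbf{z})\stackrel{\circ}{\subset}V$ with $C_{1}\triangleq\sup_{W_{1}}\big(\sum_{i}(|D_{x_i}\eta|^{2}+|D_{y_i}\eta|^{2})\big)^{1/2}<\infty$, and set $r'\triangleq\min\{r_{0}/4,\,(\tau_{2}-\tau_{1})/(2C_{1})\}$. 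For $\textbf{z}\in V_{\tau_{1}}$ and any $\textbf{w}\in B_{r'}(\textbf{z})$, every $\textbf{w}'\in B_{r'}(\textbf{w})$ satisfies $\|\textbf{w}'-\textbf{z}\|<2r'\leq r_{0}/2$, so the segment from $\textbf{z}$ to $\textbf{w}'$ lies in $B_{r_{0}/2}(\textbf{z})\subset W_{1}$ and the local estimate gives $\eta(\textbf{w}')\leq\eta(\textbf{z})+C_{1}\cdot 2r'\leq\tau_{1}+(\tau_{2}-\tau_{1})=\tau_{2}$. Hence $B_{r'}(\textbf{w})\subset V_{\tau_{2}}$, i.e. $\textbf{w}\in V^{o}_{\tau_{2}}$, so $B_{r'}(\textbf{z})\subset V^{o}_{\tau_{2}}$; since $V_{\tau_{1}}\subset B_{R}$ is bounded, this is precisely $V_{\tau_{1}}\stackrel{\circ}{\subset}V^{o}_{\tau_{2}}$.

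Finally, (3) becomes soft once (2) is available. Since $S\stackrel{\circ}{\subset}V$, the defining bound of $C^{1}_{F}(V)$ gives $\tau_{0}\triangleq\sup_{S}\eta<\infty$, whence $S\subset V_{\tau_{0}}$. Picking any $\tau>\tau_{0}$, part (2) yields $V_{\tau_{0}}\stackrel{\circ}{\subset}V^{o}_{\tau}$, and the inclusion $S\subset V_{\tau_{0}}$ combined with the elementary monotonicity of $\stackrel{\circ}{\subset}$ (if $A\subset B\stackrel{\circ}{\subset}C$ then $A\stackrel{\circ}{\subset}C$, immediate from Definition \ref{def of bounded contained}) gives $S\stackrel{\circ}{\subset}V^{o}_{\tau}$, as required.
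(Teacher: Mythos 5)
Your proof is correct, and its core is genuinely different from the paper's. For the key Lipschitz estimate in (1), the paper cuts off $\eta$ to get $\Psi=\eta\,\psi(\eta)\in C^{\infty}_{0,F}(\ell^2)$, pushes $\Psi$ down to finite dimensions with the reduction transformation $\eta_n(\textbf{z}_n)=\int\Psi(\textbf{z}_n,\textbf{z}^n)\,\mathrm{d}P_n(\textbf{z}^n)$, applies the classical mean value theorem to these cylinder functions (with the gradient bound coming from Jensen/H\"older), and then transfers the estimate back to $\Psi$ via $L^2$-convergence, an almost-everywhere convergent subsequence, strict positivity of $P$ (Corollary \ref{property of strictly positive measure}) and continuity. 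You instead truncate the displacement vector $\textbf{v}$ to finitely many coordinates, observe that the restriction of $\eta$ to a finite-dimensional affine slice is classically $C^1$ (continuous partials in finitely many real variables), apply the fundamental theorem of calculus plus Cauchy--Schwarz along the segment, and pass to the limit $\textbf{v}^{(n)}\to\textbf{v}$ using only the continuity of $\eta$; far-apart points are handled by the trivial bound $2M\leqslant\frac{4M}{r}\|\textbf{z}-\tilde{\textbf{z}}\|$, which neatly sidesteps the non-convexity of the tube $W$. Your route is more elementary -- it uses no measure theory at all and works for any $C^{1}_{F}$ function -- whereas the paper's route exercises the reduction-transformation machinery (Proposition \ref{Reduce diemension}) that is reused throughout the paper and incidentally yields the Lipschitz bound for $\Psi$ on all of $\ell^2$. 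Your part (2) is also argued differently and, if anything, more carefully: rather than evaluating $\eta$ on $\partial V^{o}_{\tau_2}$ and claiming $\eta=\tau_2$ there (a point the paper leaves to the reader), you show directly that a ball of explicit radius $r'$ around each point of $V_{\tau_1}$ lies inside $V^{o}_{\tau_2}$, with $r'$ calibrated by the local Lipschitz constant. Part (3) is essentially the paper's argument, with the contradiction step replaced by the direct observation $\sup_S\eta<\infty$.
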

\begin{proof}
(1) Choose a real-valued function $\psi \in C^{\infty }\left( \mathbb{R} \right)$ such that $0\leqslant \psi \leqslant 1,\psi \left( x\right)  =1$ for all $x\leqslant \tau$ and $\psi \left( x\right)  =0$ for all $x>\tau+1$. Let $\Psi \left( \textbf{z}\right)\triangleq \eta \left( \textbf{z}\right) \psi \left( \eta \left( \textbf{z}\right)  \right)$ for $\textbf{z}\in V$ and $\Psi \left( \textbf{z}\right)\triangleq 0$ for $\textbf{z}\in \ell^2\setminus V$. Then  $\Psi\in C^{\infty}_{0,F}( \ell^2).$ For each $n\in\mathbb{N}$, set
$$
\eta_n(\textbf{z}_n)\triangleq \int \Psi(\textbf{z}_n,\textbf{z}^n)\,\mathrm{d}P_n(\textbf{z}^n),\quad \textbf{z}_n=(x_{i} +\sqrt{-1}y_{i})_{i=1}^{n}\in \mathbb{C}^n,
$$
where $\textbf{z}^n=(x_{i} +\sqrt{-1}y_{i})_{i=n+1}^\infty\in \ell^2$.
 By H\"older's inequality, it follows that
\begin{equation}\label{230516e1}
\begin{array}{ll}
\displaystyle
\sum^{n}_{j=1}\big(| D_{x_j} \eta_{n}|^{2}+| D_{y_j} \eta_{n}|^{2}\big)
= \sum^{n}_{j=1}\bigg(\bigg|\int D_{x_j} \Psi\,\mathrm{d}P_{n}\bigg|^{2}+\bigg|\int D_{y_j} \Psi\,\mathrm{d}P_{n}\bigg|^{2}\bigg)\\[3mm]
\displaystyle \leqslant \int \sum^{n}_{j=1}\bigg( |D_{x_j} \Psi|^{2}+|D_{y_j} \Psi|^{2}\bigg)\,\mathrm{d}P_{n}\\[3mm]
\displaystyle \leqslant \sup_{V} \sum^{\infty }_{j=1} \big(| D_{x_j}\Psi|^{2}+| D_{y_j} \Psi|^{2}\big)<\infty,
\end{array}
\end{equation}
where the last inequality holds because $\Psi\in C^{\infty}_{0,F}( \ell^2)$.

Write $C(\tau)\triangleq \max\left\{1,\sqrt{\sup\limits_{V} \sum\limits^{\infty }_{j=1} \big(| D_{x_j}\Psi|^{2}+| D_{y_j} \Psi|^{2}\big)}\right\}$. Then by the classical mean value theorem, using (\ref{230516e1}), we have
$$
\left| \eta_{n}\left( \textbf{z}_n\right)  -\eta_{n}\left(\tilde{\textbf{z}}_n\right)  \right|  \leqslant C(\tau)\left| \left| \textbf{z}_n -\tilde{\textbf{z}}_n \right|  \right|_{\mathbb{C}^{n} }  ,\quad  \forall\;\textbf{z}_n ,\tilde{\textbf{z}}_n \in \mathbb{C}^{n}.
$$
Viewing $\eta_n$ as a cylinder function on $\ell^2$,  we  have
\begin{equation}\label{230516e2}
| \eta_{n}( \textbf{z})  -\eta_{n}(\tilde{\textbf{z}})  |=| \eta_{n}( \textbf{z}_n)  -\eta_{n}(\tilde{\textbf{z}}_n)  | \leqslant C(\tau)\left| \left| \textbf{z}_n -\tilde{\textbf{z}}_n \right|  \right|_{\mathbb{C}^{n} }   \leqslant C(\tau)\left| \left| \textbf{z} -\tilde{\textbf{z}} \right|  \right|,\quad \forall\;  \textbf{z},\tilde{\textbf{z}}\in\ell^2,
\end{equation}
where $\tilde{\textbf{z}}^n,\textbf{z}^n\in \ell^2,\,\, \tilde{\textbf{z}}_n,\textbf{z}_n\in \mathbb{C}^n$ and $\textbf{z}=(\textbf{z}_n,\textbf{z}^n)$, $\tilde{\textbf{z}}=(\tilde{\textbf{z}}_n,\tilde{\textbf{z}}^n)$.
Meanwhile, by the conclusion (2) of Proposition \ref{Reduce diemension},  there exists a subsequence $\{\eta_{n_{k}}\}_{k=1}^{\infty}$ and a Borel subset $E_0$ of $\ell^2$ with $P(E_0)=0$ such that $\lim\limits_{k\rightarrow \infty } \eta_{n_{k}}(\textbf{z})=\Psi(\textbf{z})$ for any $\textbf{z}\in \ell^2\setminus E_0$. Thus, letting $k\to\infty$, we obtain from (\ref{230516e2}) that
$$
| \Psi( \textbf{z})  -\Psi(\tilde{\textbf{z}})  | \leqslant C(\tau)\left| \left| \textbf{z} -\tilde{\textbf{z}} \right|  \right|,\quad\forall\;\textbf{z}, \tilde{\textbf{z}}\in\ell^2\setminus E_0.
$$
Combining the continuity of $\Psi$ with the above inequality and by Corollary \ref{property of strictly positive measure}, we see that
$$
| \Psi( \textbf{z})  -\Psi(\tilde{\textbf{z}})  | \leqslant C(\tau)\left| \left| \textbf{z} -\tilde{\textbf{z}} \right|  \right|,\quad \forall\;\textbf{z},\tilde{\textbf{z}}\in\ell^2,
$$
which leads to (\ref{230319e1}).

\medskip

(2) Obviously, $V_{\tau_1}\subset V^{o}_{\tau_2}$. Note that for $\textbf{z}\in V_{\tau_1}$ and $\tilde{\textbf{z}}\in \partial V^{o}_{\tau_2}$, we have $\eta(\textbf{z}) \leqslant \tau_1$ and $\eta(\tilde{\textbf{z}})=\tau_2$. By the above conclusion (1), we have
$$
0<\tau_2-\tau_1<\left| \eta \left( \textbf{z} \right)  -\eta \left( \tilde{\textbf{z}}\right)  \right|  \leqslant C\left( \tau_2\right)  \left| \left|\textbf{z}-\tilde{\textbf{z}}\right|  \right|,
$$
which implies that $d(V_{\tau_1},\partial V^{o}_{\tau_2})\geqslant \frac{\tau_2-\tau_1}{C(\tau_2)}>0$ and $V_{\tau_1}$ is a bounded subset of $\ell^2$. Thus $V_{\tau_1}\stackrel{\circ}{\subset}V^{o}_{\tau_2}$.

\medskip

(3) Firstly, we use the contradiction argument to prove that there exists $\tau_0\in\mathbb{R}$  such that $S\subset V_{\tau_0}$. Otherwise, for each $n\in\mathbb{N}$, there would exist $\textbf{z}_n\in S\backslash V_n$ and hence, by the definition of $V_\tau$ in (\ref{230117e1}), one has $\eta(\textbf{z}_n)>n$, which contradicts the fact that $\sup\limits_{\textbf{z}\in S}|\eta(\textbf{z})|<\infty$ (which follows from the assumption $\eta \in C^{\infty}_{F}\left( V\right)$ and the definition of $C^{\infty}_{F}\left( V\right)$). Then choosing $\tau>\tau_0$, by the above conclusion (2), we have $S\subset V_{\tau_0}\stackrel{\circ}{\subset}V^{o}_{\tau}$. The proof of Proposition \ref{properties on pseudo-convex domain} is thus completed.
\end{proof}

\section{$\overline{\partial}$ operators on pseudo-convex domains}\label{secx3}

In the sequel, unless otherwise stated, we fix $s,t\in \mathbb{N}_0$ with $s+t\geqslant 1$, a $\sigma$-finite Borel measure $\mu$ on $V$, a Borel subset $E$ of $V$ and a real-valued Borel-measurable function $w$ on $E$.

Write
$$
(\ell^2)^{s+t}=\underbrace{\ell^2\times\ell^2\times\cdots\times\ell^2}_{s+t\hbox{ \tiny times}}.
$$
For any strictly increasing multi-indices $I=(i_1,\cdots,i_s)$ and $J=(j_1,\cdots,j_t)$, i.e., $i_1,\cdots,i_s, j_1,\cdots,$ $j_t\in \mathbb{N}$, $i_1<\cdots<i_s$ and $j_1<\cdots<j_t$, we fix a number
\begin{equation}\label{defnition of general st froms}
c_{I,J}>0,
\end{equation}
and define $I\cup J\triangleq\{i_1,\cdots,i_s\}\cup \{j_1,\cdots,j_t\}$. For $j\in \mathbb{N}$, we define $J\cup\{j\}=\{j_1,\cdots,j_t\}\cup\{j\}$.
As in \cite[p. 530]{YZ}, we define a complex-valued function $\mathrm{d}z^I\wedge \mathrm{d}\overline{z}^J$ on $(\ell^2)^{s+t}$ by
$$
(\mathrm{d}z^I\wedge \mathrm{d}\overline{z}^J)(\textbf{z}^1,\cdots,\textbf{z}^{s+t})\triangleq \frac{1}{\sqrt{(s+t)!}}\sum_{\sigma\in S_{s+t}}(-1)^{s(\sigma)}\cdot\prod_{k=1}^{s}z_{i_k}^{\sigma_k}\cdot\prod_{l=1}^{t}\overline{z_{j_l}^{\sigma_{s+l}}},
$$
where $\textbf{z}^l=(z_j^l)_{j=1}^\infty\in\ell^2,\,\,l=1,\cdots, s+t$, $S_{s+t}$ is the permutation group of $\{1,\cdots,s+t\}$, $s(\sigma)$ is the sign of $\sigma=(\sigma_1,\cdots,\sigma_s,\sigma_{s+1},\cdots,\sigma_{s+t})$, and we have the usual convention that $0!=1$. We call the following (formal) summation an $(s,t)$-form on $E$:
 \begin{equation}\label{230320e1}
 \sum^{\prime }_{\left| I\right|  =s} \sum^{\prime }_{\left| J\right|  =t} f_{I,J}\,\mathrm{d}z_{I}\wedge \,\mathrm{d}\overline{z_{J}},
 \end{equation}
where the sum $\sum\limits^{\prime }_{\left| I\right|  =s} \sum\limits_{\left| J\right|  =t}^{\prime } $ is taken only over all strictly increasing multi-indices $I$ and $J$ with $\left| I\right|  =s$ and $\left| J\right|  =t$ (for which $\left| I\right|$ and $\left| J\right|$ stand for respectively the cardinalities of sets $\{i_1,\cdots,i_s\}$ and $\{j_1,\cdots,j_t\}$), and $f_{I,J}$ is a function on $E$. Clearly, each $f_{I,J}\,\mathrm{d}z_{I}\wedge \,\mathrm{d}\overline{z_{J}}$ can be viewed as a function on $E\times (\ell^2)^{s+t}$. Nevertheless, in the present setting of infinite dimensions, (\ref{230320e1}) is an infinite series for which the convergence is usually not guaranteed, and therefore it is a formal summation (unless further conditions are imposed).

Next, we introduce some working spaces which will play key roles in the sequel. Denote by $L^{2}\left(V,loc\right)$ the set of all Borel-measurable functions $f$ on $V$ for which
$$
\int_{S} \left| f\right|^{2} \,\mathrm{d}P<\infty,\quad\forall\; S\in \mathcal{B}(V)\hbox{ with }S\stackrel{\circ}{\subset} V;
$$
by $L^{2}_{\left( s,t\right)  }\left( V,loc\right)$ the set of all $(s,t)$-forms $\sum^{\prime }_{\left| I\right|  =s} \sum^{\prime }_{\left| J\right|  =t} f_{I,J}\,\mathrm{d}z_{I}\wedge \,\mathrm{d}\overline{z_{J}}$ on $V$ for which
$$
f_{I,J}\in L^{2}\left(V,loc\right)\hbox{ for each } I,J,\hbox{ and }\sum^{\prime }_{\left| I\right|  =s} \sum^{\prime }_{\left| J\right|  =t}c^{ }_{I,J} \int_{S} \left| f_{I,J}\right|^{2}\,\mathrm{d}P<\infty ,\;\forall\; S\in \mathcal{B}(V)\hbox{ with } S\stackrel{\circ}{\subset} V;
$$
by $L^{2}_{\left( s,t\right)  }\left( E,\mu \right)$ the set of all $(s,t)$-forms $\sum^{\prime }_{\left| I\right|  =s} \sum^{\prime }_{\left| J\right|  =t} f_{I,J}\,\mathrm{d}z_{I}\wedge \,\mathrm{d}\overline{z_{J}}$ on $E$ for which
$$
f_{I,J}\in L^2(E,\mu)\hbox{ for each } I,J,\hbox{ and }\sum^{\prime }_{\left| I\right|  =s} \sum^{\prime }_{\left| J\right|  =t} c^{ }_{I,J}\int_{E} \left| f_{I,J}\right|^{2} \,\mathrm{d}\mu <\infty;
$$
by $L^{2}_{\left( s,t\right)}\left(E,w\right)$ the set  of all $(s,t)$-forms $\sum^{\prime }_{\left| I\right|  =s} \sum^{\prime }_{\left| J\right|  =t} f_{I,J}\,\mathrm{d}z_{I}\wedge \,\mathrm{d}\overline{z_{J}}$ on $E$ for which
$$
f_{I,J}\in L^2(E,e^{-w}P)\hbox{ for each } I,J,\hbox{ and }\sum^{\prime }_{\left| I\right|  =s} \sum^{\prime }_{\left| J\right|  =t} c^{ }_{I,J}\int_{E} \left| f_{I,J}\right|^{2}  e^{-w}dP<\infty .
$$

\begin{remark}
Clearly, both $L^{2}_{(s,t)}\left(E,w\right)$ and $L^{2}_{(s,t)}\left(E,\mu\right)$, endowed respectively with the following norms
$$
\left|\left|f\right|\right|_{L^{2}_{\left( s,t\right) \left( E,\mu \right)}}=\sqrt{\sum^{\prime }_{\left| I\right|  =s} \sum^{\prime }_{\left| J\right|  =t} c^{ }_{I,J}\int_{E} \left| f_{I,J}\right|^{2} \,\mathrm{d}\mu},\quad\forall\;f=\sum^{\prime }_{\left| I\right|  =s} \sum^{\prime }_{\left| J\right|  =t} f_{I,J}\,\mathrm{d}z_{I}\wedge \,\mathrm{d}\overline{z_{J}}\in L^{2}_{\left( s,t\right) \left( E,\mu \right)}
$$
and
$$
\left|\left|f\right|\right|_{L^{2}_{\left( s,t\right) \left( E,w \right)}}=\sqrt{\sum^{\prime }_{\left| I\right|  =s} \sum^{\prime }_{\left| J\right|  =t} c^{ }_{I,J}\int_{E} \left| f_{I,J}\right|^{2} \,e^{-w}dP},\quad\forall\;f=\sum^{\prime }_{\left| I\right|  =s} \sum^{\prime }_{\left| J\right|  =t} f_{I,J}\,\mathrm{d}z_{I}\wedge \,\mathrm{d}\overline{z_{J}}\in L^{2}_{\left( s,t\right) \left( E,w \right)},
$$
are Hilbert spaces. Moreover, $L^{2}_{\left( s,t\right)  }\left( V,loc\right)$ can be viewed as the set of all locally square-integrable $(s,t)$-forms on $V$ (In particular, $L^{2}\left( V,loc\right)$ is the set of all locally square-integrable functions on $V$).
\end{remark}

\begin{lemma}\label{density1}
Assume that $\textbf{z}_0\in\ell^2$, $r\in(0,+\infty)$ and $\mu $ is a Borel measure on $B_r(\textbf{z}_0)$ such that $\mu \left( E\right)<\infty$ for all Borel subset $E\stackrel{\circ}{\subset} B_r(\textbf{z}_0)$. Then $C^{\infty }_{0,F^{\infty}}\left(B_r(\textbf{z}_0)\right)$ is dense in $L^{p}\left(B_r(\textbf{z}_0),\mu \right)$ for each $ p\in [1,\infty) $.
\end{lemma}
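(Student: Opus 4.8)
The plan is to combine three reductions---exhaustion of the ball, truncation to bounded functions, and a multiplicative system argument---so as to reduce everything to the Complex Multiplicative System Theorem (Theorem \ref{Complex Multiplicative System Theorem}). First I would record that $C^{\infty}_{0,F^{\infty}}(B_r(\textbf{z}_0))\subset L^p(B_r(\textbf{z}_0),\mu)$: any $g$ in this space has $\supp g\stackrel{\circ}{\subset}B_r(\textbf{z}_0)$, hence $\supp g\subset\overline{B_\rho(\textbf{z}_0)}$ for some $\rho<r$, on which $\mu$ is finite by hypothesis, while $g$ is bounded. Then, fixing $f\in L^p(B_r(\textbf{z}_0),\mu)$ and $\varepsilon>0$, I would reduce in two steps. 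Exhausting $B_r(\textbf{z}_0)$ by the sets $\overline{B_\rho(\textbf{z}_0)}$ with $\rho\uparrow r$, each of which is uniformly included in $B_r(\textbf{z}_0)$ and so of finite $\mu$-measure, dominated convergence (applied to $|f|^p$) gives $\|f-f\chi_{\overline{B_\rho(\textbf{z}_0)}}\|_{L^p}\to0$; fixing such a $\rho$ and writing $K\triangleq\overline{B_\rho(\textbf{z}_0)}$, a further truncation $f_M\triangleq f\,\chi_K\,\chi_{\{|f|\le M\}}$, again justified by dominated convergence on the finite-measure set $K$, reduces matters to approximating a bounded Borel function $f_M$ supported in $K$.

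The core is the approximation of such a bounded function. I would choose radii $\rho<\rho'<\rho''<r$, set $K'\triangleq\overline{B_{\rho'}(\textbf{z}_0)}$, and fix two cutoffs $\theta,\theta'$ of the form $\chi(\|\cdot-\textbf{z}_0\|^2)$ with $\chi\in C^{\infty}_c(\mathbb{R})$, where $\theta\equiv1$ on $K$ with $\supp\theta\subset K'$, and $\theta'\equiv1$ on $K'$ with $\supp\theta'\subset\overline{B_{\rho''}(\textbf{z}_0)}$. A direct computation with the chain rule shows $\theta,\theta'\in C^{\infty}_{0,F^{\infty}}(B_r(\textbf{z}_0))$: the summability required by the $F$-condition holds because $\sum_i(|D_{x_i}\theta|^2+|D_{y_i}\theta|^2)=4|\chi'(\|\cdot-\textbf{z}_0\|^2)|^2\,\|\cdot-\textbf{z}_0\|^2$ is bounded on the bounded support, and the analogous estimates persist for all mixed derivatives $\partial_\alpha\overline{\partial_\beta}$. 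Since this class is stable under products (every factor is bounded with controlled $F$-sums), one has $\theta g,\theta'\psi\in C^{\infty}_{0,F^{\infty}}(B_r(\textbf{z}_0))$ whenever $g\in C^{\infty}_{0,F^{\infty}}(B_r(\textbf{z}_0))$ and $\psi$ is a bounded smooth cylinder function depending on finitely many coordinates.

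I would then apply the Complex Multiplicative System Theorem on $\Omega=K'$, taking $\mathbb{M}$ to be the restrictions to $K'$ of the bounded smooth (complex) cylinder functions, and $\mathbb{H}$ the bounded Borel functions on $K'$ lying in the $L^p(K',\mu)$-closure of $\{g|_{K'}:g\in C^{\infty}_{0,F^{\infty}}(B_r(\textbf{z}_0))\}$. Here $1=\theta'|_{K'}\in\mathbb{H}$, and $\mathbb{M}\subset\mathbb{H}$ because $\psi|_{K'}=(\theta'\psi)|_{K'}$; the set $\mathbb{H}$ is a complex linear space closed under conjugation (since conjugation preserves $C^{\infty}_{0,F^{\infty}}$), and, crucially, it is closed under bounded convergence precisely because $\mu(K')<\infty$ lets dominated convergence turn bounded pointwise limits into $L^p(K',\mu)$ limits. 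As the coordinate cylinder functions generate the Borel $\sigma$-algebra of $\ell^2$ (cf. the product-$\sigma$-algebra description in Subsection \ref{s2-3}), one has $\sigma(\mathbb{M})=\mathscr{B}(K')$, so the theorem yields that every bounded Borel function on $K'$---in particular $f_M|_{K'}$---belongs to $\mathbb{H}$.

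Finally I would transfer this $K'$-approximation back to the whole ball by means of $\theta$. Given $g\in C^{\infty}_{0,F^{\infty}}(B_r(\textbf{z}_0))$ with $\|f_M-g\|_{L^p(K',\mu)}$ small, the function $\theta g\in C^{\infty}_{0,F^{\infty}}(B_r(\textbf{z}_0))$ vanishes off $K'$, where $f_M$ vanishes too, while on $K'$ one has the pointwise bound $|f_M-\theta g|\le|f_M-g|$ (on $K$ use $\theta\equiv1$; on $K'\setminus K$ use $f_M=0$ together with $0\le\theta\le1$), so $\|f_M-\theta g\|_{L^p(B_r(\textbf{z}_0),\mu)}\le\|f_M-g\|_{L^p(K',\mu)}$. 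Chaining this with the two reduction estimates gives the claim. I expect the main obstacle to be exactly this last transfer: the multiplicative system theorem controls $g$ only on $K'$, and without the cutoff $\theta$---chosen to equal $1$ on the support $K$ of the target and to vanish outside $K'$---the approximant could be large on the annulus $K'\setminus K$ or beyond, so it is the inequality $|f_M-\theta g|\le|f_M-g|$ on $K'$ that absorbs this otherwise uncontrolled error. The remaining delicate point is checking the $F^{\infty}$-summability of all mixed derivatives of the cutoffs and of their products with cylinder functions, which is routine only because every support involved is bounded.
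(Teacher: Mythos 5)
Your proof is correct, and it hinges on the same key tool as the paper's argument -- the Complex Multiplicative System Theorem (Theorem \ref{Complex Multiplicative System Theorem}) applied on a set of finite $\mu$-measure, with radial cutoffs of the form $\chi(\|\cdot-\textbf{z}_0\|^2)$ -- but the surrounding decomposition is genuinely different. The paper truncates smoothly from the outset: it multiplies $f$ by cutoffs $\Phi_n=\varphi_n(\eta)$ built from the exhaustion function $\eta=-\ln\bigl(1-\|(\cdot-\textbf{z}_0)/r\|^2\bigr)$, thereby reducing to the sub-level sets $(B_r(\textbf{z}_0))_n^o$ (which are open balls of radius $<r$), and on each of these it takes the multiplicative system to be the full test-function class $\mathbb{M}_n=C^{\infty}_{0,F^{\infty}}\bigl((B_r(\textbf{z}_0))_n^o\bigr)$; since $\mathbb{H}_n$ is by construction contained in the class of $L^p$-approximable functions, density on the sub-level set follows at once, with $\sigma(\mathbb{M}_n)=\mathscr{B}\bigl((B_r(\textbf{z}_0))_n^o\bigr)$ established via radial bump functions and the Lindel\"of property. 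You instead truncate sharply ($f\chi_K\chi_{\{|f|\leqslant M\}}$), take the multiplicative system to be bounded smooth cylinder functions, and identify $\sigma(\mathbb{M})=\mathscr{B}(K')$ directly from the product-$\sigma$-algebra description of $\mathscr{B}(\ell^2)$ -- which is arguably cleaner than the paper's Lindel\"of argument -- but you then pay for this with the transfer step: the theorem only controls approximation of $f_M$ on $K'$ by restrictions, so you must multiply the approximant by $\theta$ and invoke the pointwise bound $|f_M-\theta g|\leqslant|f_M-g|$ on $K'$ to obtain a global approximant in the right class. The paper's route avoids any such final transfer (its approximants already lie in $C^{\infty}_{0,F^{\infty}}$ of an open subset uniformly included in the ball, so they extend by zero), at the cost of the more elaborate $\sigma$-algebra identification and of the auxiliary fact that bounded Borel functions are dense in $L^p$ of a finite measure space; note also that both routes need the product stability of $C^{\infty}_{0,F^{\infty}}$ (the paper so that $\mathbb{M}_n$ is a multiplicative system, you for $\theta g$ and $\theta'\psi$), and both exploit finiteness of $\mu$ on uniformly included sets to make bounded convergence interact with the $L^p$ topology. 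Your argument is a valid alternative proof.
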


\begin{proof}
Let $\eta \left( \textbf{z}\right)\triangleq-\ln \left( 1-\left| \left| \frac{\textbf{z}-\textbf{z}_0}{r}\right|  \right|^{2}  \right),\,\textbf{z}\in B_r(\textbf{z}_0)$ and $(B_r(\textbf{z}_0))_n^o\triangleq \{\textbf{z}'\in B_r(\textbf{z}_0):\;\eta(\textbf{z}')<n\},\,n\in\mathbb{N}$. Choose $\varphi_n\in C^{\infty}(\mathbb{R})$ such that $0\leqslant \varphi_n\leqslant 1$, $\varphi_n(x)=1$ for all $x\leqslant n-1$ and $\varphi_n(x)=0$ for all $x\geqslant n$. Let $\Phi_n(\cdot)\triangleq \varphi_n(\eta(\cdot))$ and $\mu_n(E)\triangleq\mu(E)$,  $\forall\;E\in \mathscr{B}((B_r(\textbf{z}_0))_n^o)$. Then, $\Phi_n(\cdot)\in C^{\infty }_{0,F^{\infty}}\left(B_r(\textbf{z}_0)\right)$. For each $f\in L^{p}\left( B_r(\textbf{z}_0),\mu \right)$, one can view $\Phi_n f$ as an element of $L^p((B_r(\textbf{z}_0))_n^o,\mu_n)$. Thus, according to Lebesgue's Dominated Convergence Theorem, we have
\begin{equation}\label{230123e1}
\lim_{n\to\infty}\int_{ B_r(\textbf{z}_0)}|\Phi_n f-f|^p\,\mathrm{d}\mu=0.
\end{equation}
For each $n\in\mathbb{N}$, we choose $\{\psi_m\}_{m=1}^{\infty}\subset C^{\infty}(\mathbb{R})$ such that $0\leqslant \psi_m\leqslant 1$, $\psi_m(x)=1$ for all $x<n-\frac{2}{m}$ and $\psi_m(x)=0$ for all $x\geqslant n-\frac{1}{m},\,m\in\mathbb{N}$. Then $\lim\limits_{m\to\infty}\psi_m(x)=1$ for all $x<n$ and $\lim\limits_{m\to\infty}\psi_m(x)=0$ for all $x\geqslant n$. Let $\Psi_m(\cdot)\triangleq \psi_m(\eta(\cdot))$. Then $\Psi_m(\cdot)\in  C^{\infty }_{0,F^{\infty}}\left( (B_r(\textbf{z}_0))_n^o\right)$.

Write
$$
\begin{array}{ll}
\displaystyle
\mathbb{M}_n \triangleq  C^{\infty }_{0,F^{\infty}}\left( (B_r(\textbf{z}_0))_n^o\right),\\[3mm]
\displaystyle
\mathbb{N}_n\triangleq
\left\{f\in L^p((B_r(\textbf{z}_0))_n^o,\mu_n):\;\exists\; \{g_k\}_{k=1}^{\infty}\subset \mathbb{M}_n\text{ such that }\lim_{k\to\infty}\int_{(B_r(\textbf{z}_0))_n^o}|g_k-f|^p\,\mathrm{d}\mu_n=0\right\},\\[3mm]
\displaystyle
 \mathbb{H}_n\triangleq \left\{f\in \mathbb{N}_n:\; f\text{ is a bounded Borel-measurable function on }(B_r(\textbf{z}_0))_n^o \right\}.
\end{array}
$$
Then $\mathbb{H}_n$ and $\mathbb{M}_n$ are two families of bounded complex-valued functions on $(B_r(\textbf{z}_0))_n^o$ such that:

\begin{enumerate}

\item[(1)] $\mathbb{H}_n$ is a complex linear subspace which is closed under the complex conjugation and the bounded convergence;

\item[(2)] $\mathbb{M}_n\subset \mathbb{H}_n$ and $\mathbb{M}_n$ is a multiplicative system;

\item[(3)] $1\in \mathbb{H}_n$ (because $\{\Psi_m\}_{m=1}^{\infty}\subset \mathbb{M}_n$ and $\lim\limits_{m\to\infty}\Psi_m(\textbf{z})=1$ for all $\textbf{z}\in (B_r(\textbf{z}_0))_n^o$).

\end{enumerate}

\noindent By Theorem \ref{Complex Multiplicative System Theorem}, $\mathbb{H}_n$ contains all bounded complex valued $\sigma(\mathbb{M}_n)$-measurable functions.

We claim that
\begin{equation}\label{230321e1}
\sigma(\mathbb{M}_n)=\mathscr{B}((B_r(\textbf{z}_0))_n^o).
\end{equation}
Indeed, it is easy to see that $\sigma(\mathbb{M}_n)\subset\mathscr{B}((B_r(\textbf{z}_0))_n^o)$. Conversely, for each $\textbf{z}_0'\in (B_r(\textbf{z}_0))_n^o$ and $r_0>0$ so that the open ball $B_{r_0}(\textbf{z}_0')\subset (B_r(\textbf{z}_0))_n^o$, we choose an integer $k_0>\frac{1}{r_0^2}$. Then for each integer $k\geqslant k_0$, choose $\varphi_k \in C_c^{\infty}(\mathbb{R})$ such that $0\leqslant \varphi_k\leqslant 1$,  $\varphi_k(x)=1$ for all $x\in \mathbb{R}$ with $0\leqslant |x|\leqslant r_0^2-\frac{1}{k}$, and  $\varphi_k(x)=0$ for all $x\in \mathbb{R}$ satisfying $|x|\geqslant r_0^2-\frac{1}{2k}$. Let $\Phi_k(\textbf{z})\triangleq \varphi_k(||\textbf{z}-\textbf{z}_0'||^2),\,\textbf{z}\in B_r(\textbf{z}_0),\,k\geqslant k_0$. It is easy to show that $\{\Phi_k\}_{k=k_0}^{\infty}\subset \mathbb{M}_n $, and hence $B_{r_0}(\textbf{z}_0')=\bigcup\limits_{k=k_0}^{\infty}\left\{\textbf{z}\in B_r(\textbf{z}_0):\Phi_k(\textbf{z})=1\right\}\in \sigma(\mathbb{M}_n)$. Since $(B_r(\textbf{z}_0))_n^o$ is a Lindel\"{o}f space, each non-empty open subset of $(B_r(\textbf{z}_0))_n^o$ is a countable union of open balls contained in $(B_r(\textbf{z}_0))_n^o$. Therefore, $\sigma(\mathbb{M}_n)$ contains all open subset of $(B_r(\textbf{z}_0))_n^o$, and hence (\ref{230321e1}) is true.

By (\ref{230321e1}), we conclude that $\mathbb{H}_n$ contains all bounded complex valued Borel-measurable functions. With the fact that the set of all bounded complex valued Borel-measurable functions is dense in $L^p((B_r(\textbf{z}_0))_n^o,\mu_n)$, this yields that
$L^p((B_r(\textbf{z}_0))_n^o,\mu_n)\subset \mathbb{N}_n$, and hence $L^p((B_r(\textbf{z}_0))_n^o,\mu_n)= \mathbb{N}_n$, which indicates that $ C^{\infty }_{0,F^{\infty}}\left( (B_r(\textbf{z}_0))_n^o\right)$ is dense in $L^p((B_r(\textbf{z}_0))_n^o,\mu_n)$. By (\ref{230123e1}) and noting that $\Phi_n f\in L^p((B_r(\textbf{z}_0))_n^o,\mu_n)$ for each $n$, we conclude that $C^{\infty }_{0,F^{\infty}}\left( B_r(\textbf{z}_0) \right)$ is dense in $L^{p}\left( B_r(\textbf{z}_0),\mu \right)$. The proof of Lemma \ref{density1} is completed.
\end{proof}

As a consequence of Lemma \ref{density1}, we have the following result.
\begin{corollary}\label{Test functions}
Suppose that $ p\in(1,+\infty)$, $\mu$ is a Borel measure on $V$ such that $\mu \left( S\right)<\infty$ for all Borel subset $S \stackrel{\circ}{\subset} V$, and $f$ is a Borel-measurable function on $V$ so that for each $\textbf{z}_0\in V$, there exists $r\in(0,+\infty)$ for which $B_{r}(\textbf{z}_0)\stackrel{\circ}{\subset} V$, $\chi_{B_{r}(\textbf{z}_0)}\cdot f\in L^{p}\left(V,\mu \right)$ and
\begin{equation}\label{jichuchou}
		\int_{V} f\bar\phi d\mu =0,\quad\forall\;\phi\in C^{\infty }_{0,F^{\infty }}\left( B_{r}(\textbf{z}_0)\right).
\end{equation}
Then, $f=0$ almost everywhere with respect to $\mu$.
\end{corollary}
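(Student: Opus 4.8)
The plan is to localize, run a duality argument on each ball, and then patch the conclusions together by a Lindel\"of covering argument. First I would fix $\textbf{z}_0\in V$ and let $r\in(0,+\infty)$ be the radius furnished by the hypothesis, so that $B_r(\textbf{z}_0)\stackrel{\circ}{\subset} V$, $\chi_{B_r(\textbf{z}_0)}\cdot f\in L^{p}(V,\mu)$, and (\ref{jichuchou}) holds. Write $p'\triangleq p/(p-1)\in(1,+\infty)$ for the conjugate exponent. Since $B_r(\textbf{z}_0)\stackrel{\circ}{\subset} V$, any Borel set $E\stackrel{\circ}{\subset} B_r(\textbf{z}_0)$ is also uniformly included in $V$, so $\mu(E)<\infty$; hence the restriction of $\mu$ to $B_r(\textbf{z}_0)$ meets the hypothesis of Lemma \ref{density1}, and that lemma (applied at the exponent $p'$) tells us that $C^{\infty}_{0,F^{\infty}}(B_r(\textbf{z}_0))$ is dense in $L^{p'}(B_r(\textbf{z}_0),\mu)$.

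Next I would reinterpret (\ref{jichuchou}) as the vanishing of a bounded functional. Because $f|_{B_r(\textbf{z}_0)}\in L^{p}(B_r(\textbf{z}_0),\mu)$, H\"older's inequality shows that $T(\phi)\triangleq\int_{B_r(\textbf{z}_0)} f\,\overline{\phi}\,\mathrm{d}\mu$ is a bounded conjugate-linear functional on $L^{p'}(B_r(\textbf{z}_0),\mu)$, with $|T(\phi)|\leqslant ||f||_{L^{p}(B_r(\textbf{z}_0),\mu)}\cdot ||\phi||_{L^{p'}(B_r(\textbf{z}_0),\mu)}$. Each $\phi\in C^{\infty}_{0,F^{\infty}}(B_r(\textbf{z}_0))$ is bounded with $\supp\phi\stackrel{\circ}{\subset} B_r(\textbf{z}_0)$, hence lies in $L^{p'}(B_r(\textbf{z}_0),\mu)$ and satisfies $\int_V f\,\overline{\phi}\,\mathrm{d}\mu=T(\phi)$; thus (\ref{jichuchou}) says precisely that $T$ vanishes on $C^{\infty}_{0,F^{\infty}}(B_r(\textbf{z}_0))$. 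By the density established above and the continuity of $T$, it follows that $T\equiv 0$ on all of $L^{p'}(B_r(\textbf{z}_0),\mu)$.

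The decisive step is then to feed $T$ a test element that reconstructs the $L^{p}$-mass of $f$. Setting $\phi\triangleq |f|^{p-2}f$ on $B_r(\textbf{z}_0)$ (and $\phi\triangleq 0$ where $f=0$), one has $|\phi|^{p'}=|f|^{(p-1)p'}=|f|^{p}$, so $\phi\in L^{p'}(B_r(\textbf{z}_0),\mu)$, while $f\,\overline{\phi}=|f|^{p}$. Consequently $\int_{B_r(\textbf{z}_0)}|f|^{p}\,\mathrm{d}\mu=T(\phi)=0$, which forces $f=0$ $\mu$-almost everywhere on $B_r(\textbf{z}_0)$. I expect the conjugate-exponent bookkeeping to be the only genuinely delicate point: one must invoke Lemma \ref{density1} at $p'$ rather than $p$, and verify that $|f|^{p-2}f$ really lands in $L^{p'}$; once this is set up correctly the rest is soft functional-analytic argument.

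Finally I would globalize. The family $\{B_{r(\textbf{z}_0)}(\textbf{z}_0)\}_{\textbf{z}_0\in V}$ is an open cover of $V$, and since $V$, as an open subset of $\ell^2$, is a Lindel\"of space, it admits a countable subcover $\{B_{r_k}(\textbf{z}_k)\}_{k=1}^{\infty}$. By the previous paragraph $f=0$ $\mu$-almost everywhere on each $B_{r_k}(\textbf{z}_k)$, and a countable union of $\mu$-null sets is again $\mu$-null, so $f=0$ $\mu$-almost everywhere on $V$, as claimed.
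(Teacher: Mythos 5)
Your proposal is correct and follows essentially the same route as the paper's own proof: localize to the balls from the hypothesis, apply Lemma \ref{density1} at the conjugate exponent to get density of $C^{\infty}_{0,F^{\infty}}\left(B_{r}(\textbf{z}_0)\right)$ in the dual space, pair $f$ against (an approximation of) $|f|^{p-2}f$ to force $\int_{B_{r}(\textbf{z}_0)}|f|^{p}\,\mathrm{d}\mu=0$, and patch the balls together by the Lindel\"of property of $V$. Your explicit framing via a bounded conjugate-linear functional is just a repackaging of the paper's direct approximation of $\overline{\mathrm{sgn}(f)}\cdot|f|^{p-1}$ by test functions, so there is no substantive difference.
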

\begin{proof}
By Lemma \ref{density1}, $C^{\infty }_{0,F^{\infty }}\left( B_{r}(\textbf{z}_0)\right)$ is dense in $L^q(B_{r}(\textbf{z}_0),\mu\mid_{B_{r}(\textbf{z}_0)})$, where $q\triangleq\frac{1}{1-\frac{1}{p}}$ and $\mu\mid_{B_{r}(\textbf{z}_0)}(E)\triangleq \mu(E)$ for all $E\in \mathscr{B}\left(B_{r}(\textbf{z}_0)\right)$. Since $\overline{\text{sgn}(f)}\cdot |f|^{p-1}\in L^q(B_{r}(\textbf{z}_0),\mu\mid_{B_{r}(\textbf{z}_0)})$ where sgn$(z)\triangleq \frac{z}{|z|}$ if $z\in\mathbb{C}\setminus\{0\}$ and  sgn$(0)\triangleq 0$, the above density result, along with (\ref{jichuchou}), implies that
$$
\int_{B_{r}(\textbf{z}_0)}|f|^p\,\mathrm{d}\mu=0.
$$
Since $V$ is a Lindel\"{o}f space, we may cover $V$ by countably many balls as above. Thus, we can use the Monotone Convergence Theorem to conclude that
$$
\int_{V}|f|^p\,\mathrm{d}\mu=0,
$$
which implies that $f=0$ almost everywhere with respect to $\mu$. Hence we completed the proof of Corollary \ref{Test functions}.
\end{proof}

As a further consequence of Lemma \ref{density1}, we have the following density result for the space of square-integrable functions on $V$.
\begin{corollary}\label{density2}
Assume that $\mu $ is a Borel measure on $V$ such that $\mu \left( S\right)<\infty$ for all Borel subset $S \stackrel{\circ}{\subset} V$. Then $C^{\infty }_{0,F^{\infty}}\left(V  \right)$ is dense in $L^{2}\left( V,\mu \right)$.
\end{corollary}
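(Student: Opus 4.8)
The plan is to deduce the global density statement from the local density on balls provided by Lemma \ref{density1}, via a cut-off argument combined with a \emph{finite} partition of unity. First I would use that $V$ is open and that every open subset of $\ell^2$ is a Lindel\"of space: for each $\textbf{z}\in V$ one has $d_V(\textbf{z})>0$, so by Lemma \ref{221018lem1} a sufficiently small ball satisfies $B_r(\textbf{z})\stackrel{\circ}{\subset}V$. Passing to a countable subcover (and to slightly smaller concentric balls) yields balls $B_k\triangleq B_{r_k}(\textbf{z}_k)\stackrel{\circ}{\subset}V$ together with radial bumps $\psi_k(\textbf{z})\triangleq\varphi_k(||\textbf{z}-\textbf{z}_k||^2)$, where $0\leqslant\psi_k\leqslant 1$, $\supp\psi_k\subset B_k$, the sets $\{\psi_k=1\}$ still cover $V$, and $\psi_k\in C^{\infty}_{0,F^{\infty}}(V)$ (that such radial bumps lie in $C^{\infty}_{0,F^{\infty}}(V)$ is exactly as in the proof of Lemma \ref{density1}).

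Next I would introduce the cut-offs $\Phi_n\triangleq 1-\prod_{k=1}^n(1-\psi_k)$. Being a finite polynomial in $\psi_1,\dots,\psi_n$, each $\Phi_n$ lies in $C^{\infty}_{0,F^{\infty}}(V)$, satisfies $0\leqslant\Phi_n\leqslant 1$ and $\supp\Phi_n\subset\bigcup_{k=1}^nB_k\stackrel{\circ}{\subset}V$, and is identically $1$ on $\bigcup_{k=1}^n\{\psi_k=1\}$; hence $\Phi_n\to 1$ pointwise on $V$. For $f\in L^2(V,\mu)$ one has $|\Phi_n f-f|\leqslant|f|\in L^2(V,\mu)$, so Lebesgue's Dominated Convergence Theorem gives $\Phi_n f\to f$ in $L^2(V,\mu)$. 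It therefore suffices to approximate each $\Phi_n f$.

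For fixed $n$ I would use the associated finite partition of unity $\rho_k\triangleq\psi_k\prod_{j<k}(1-\psi_j)$ for $1\leqslant k\leqslant n$, which satisfies $\sum_{k=1}^n\rho_k=\Phi_n$, with $\rho_k\in C^{\infty}_{0,F^{\infty}}(V)$ and $\supp\rho_k\subset B_k$. Thus $\Phi_n f=\sum_{k=1}^n\rho_k f$, where each $\rho_k f\in L^2(V,\mu)$ is supported in $B_k$. Since $\stackrel{\circ}{\subset}$ is transitive (any $E\stackrel{\circ}{\subset}B_k$ satisfies $E\stackrel{\circ}{\subset}V$, using $B_k\stackrel{\circ}{\subset}V$), the restriction $\mu\mid_{B_k}$ meets the hypothesis of Lemma \ref{density1}, so $C^{\infty}_{0,F^{\infty}}(B_k)$ is dense in $L^2(B_k,\mu\mid_{B_k})$. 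Approximating each $\rho_k f$ there to within $\varepsilon/n$, extending the approximants by zero to elements of $C^{\infty}_{0,F^{\infty}}(V)$ (their supports are uniformly contained in $B_k$, hence in $V$), and summing, the triangle inequality produces an element of $C^{\infty}_{0,F^{\infty}}(V)$ within $\varepsilon$ of $\Phi_n f$. Combining with the previous paragraph completes the argument.

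The step I expect to be the main obstacle is the patching. In infinite dimensions a partition of unity subordinate to the infinite cover $\{B_k\}$ need not be locally finite, so one cannot simply sum infinitely many local approximants. The device that circumvents this is to truncate first by $\Phi_n$, which involves only the finitely many bumps $\psi_1,\dots,\psi_n$, thereby reducing, for each fixed $n$, to a genuinely finite partition of unity and a finite sum of single-ball approximations supplied by Lemma \ref{density1}. The remaining points — that finite products and sums of radial bumps stay in $C^{\infty}_{0,F^{\infty}}(V)$ with uniformly contained supports, and the transitivity of $\stackrel{\circ}{\subset}$ — are routine once Lemma \ref{density1} and Lemma \ref{221018lem1} are invoked.
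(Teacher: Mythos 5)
Your argument is correct, but it globalizes the local density result (Lemma \ref{density1}) by a genuinely different mechanism than the paper does. The paper's proof is a short duality argument: since $L^{2}(V,\mu)$ is a Hilbert space, density follows once the orthogonal complement of $C^{\infty}_{0,F^{\infty}}(V)$ in $L^{2}(V,\mu)$ is shown to be trivial, and that is precisely Corollary \ref{Test functions} (whose own proof applies Lemma \ref{density1} on balls with the dual exponent, then uses the Lindel\"of property and monotone convergence). You instead work on the primal side: a Lindel\"of cover of $V$ by balls $B_k\stackrel{\circ}{\subset}V$ with radial bumps $\psi_k$, the truncations $\Phi_n=1-\prod_{k\leqslant n}(1-\psi_k)$ with $\Phi_n f\to f$ by dominated convergence, and then, for each fixed $n$, the finite partition of unity $\rho_k=\psi_k\prod_{j<k}(1-\psi_j)$ reducing matters to finitely many single-ball approximations supplied by Lemma \ref{density1}, extended by zero. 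Your truncation device is exactly what is needed to compensate for the absence of a locally finite partition of unity subordinate to the infinite cover, and the routine facts you defer do check out against the definitions in Subsection \ref{definition of basic symbols}: finite products and sums of the bumps stay in $C^{\infty}_{0,F^{\infty}}(V)$ (Leibniz, together with bounds of the type $\sum_i\big(|D_{x_i}\psi_k|^2+|D_{y_i}\psi_k|^2\big)\leqslant C\|\textbf{z}-\textbf{z}_k\|^2$ on bounded sets), zero-extension from $B_k$ to $V$ stays in the class, and $E\stackrel{\circ}{\subset}B_k\subset V$ indeed gives $E\stackrel{\circ}{\subset}V$. As for what each route buys: the paper's is shorter and recycles a corollary it needs anyway, but it leans on the inner-product (or at least a duality) structure and, as stated, on Corollary \ref{Test functions} which requires an exponent in $(1,+\infty)$; yours is constructive, produces explicit approximants, and transfers verbatim to $L^{p}(V,\mu)$ for every $p\in[1,\infty)$, including $p=1$, where the duality route would need additional care.
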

\begin{proof}
Note that $L^{2}\left( V,\mu \right)$ is a Hilbert space and obviously $C^{\infty }_{0,F^{\infty }}\left( V\right) \subset L^{2}\left( V,\mu \right)$. Write
 $$
 \left( C^{\infty }_{0,F^{\infty }}\left( V\right)  \right)^{\bot }=\left\{f\in L^{2}\left( V,\mu \right):\;\int_Vf\bar gd\mu=0,\quad\forall\;g\in  C^{\infty }_{0,F^{\infty }}\left( V\right)\right\}.
 $$
For any $f\in \left( C^{\infty }_{0,F^{\infty }}\left( V\right)  \right)^{\bot }$, we have $\int_{V} f\bar gd\mu=0$ for all $g\in C^{\infty }_{0,F^{\infty }}\left( V\right)$. Then, by Corollary \ref{Test functions},  $f=0\in L^2\left( V,\mu \right)$, which implies that $C^{\infty }_{0,F^{\infty}}\left(V  \right)$ is dense in $L^{2}\left( V,\mu \right)$.
The proof of Corollary \ref{density2} is completed.
	\end{proof}

Also, we have the following density result for the space of square-integrable $(s,t)$-forms on $V$.
\begin{lemma}\label{0orderSobolev}
Suppose that $w$ is a real-valued Borel-measurable function on $V$ such that $\int_Ee^{-w}\,\mathrm{d}P<\infty$ for all Borel subset $E\stackrel{\circ}{\subset} V$. Then, the following set
$$
\mathfrak{F}\triangleq \bigcup^{\infty }_{n=1}  \bigg\{\sum^{\prime }_{\left| I\right|  =s,\left| J\right|  =t,\max \left\{ I\bigcup J\right\}  \leqslant n} f_{I,J}dz_{I}\wedge d\overline{z_{J}} :\;f_{I,J}\in C^{\infty }_{0,F^{\infty}}\left( V\right)\text{ for all involved }I,J \bigg\}
$$
is dense in $L^{2}_{(s,t)  }( V,w)$.
\end{lemma}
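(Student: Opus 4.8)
The plan is to reduce the density assertion to two independent approximations, both of which are routine given the machinery already assembled: a \emph{truncation} of the (in general infinite) defining series of an $(s,t)$-form to finitely many multi-index pairs, followed by a \emph{coefficientwise} smooth approximation supplied by Corollary \ref{density2}. To prepare the latter, set $\mu\triangleq e^{-w}P$. Since $w$ is real-valued and Borel-measurable, $e^{-w}$ is a nonnegative Borel-measurable function on $V$, so $\mu$ is a Borel measure on $V$, and by hypothesis $\mu(E)=\int_E e^{-w}\,\mathrm{d}P<\infty$ for every Borel set $E\stackrel{\circ}{\subset}V$. Thus $\mu$ satisfies precisely the hypothesis of Corollary \ref{density2}, whence $C^{\infty}_{0,F^{\infty}}(V)$ is dense in $L^2(V,\mu)=L^2(V,e^{-w}P)$. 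I will also use that, by the very definition of the norm, it decouples over the coefficients:
\[
\left\| \sum^{\prime}_{|I|=s}\sum^{\prime}_{|J|=t} f_{I,J}\,\mathrm{d}z_I\wedge\mathrm{d}\overline{z_J} \right\|_{L^2_{(s,t)}(V,w)}^2 = \sum^{\prime}_{|I|=s}\sum^{\prime}_{|J|=t} c_{I,J}\int_V |f_{I,J}|^2 e^{-w}\,\mathrm{d}P .
\]

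Given $f=\sum^{\prime}_{|I|=s}\sum^{\prime}_{|J|=t}f_{I,J}\,\mathrm{d}z_I\wedge\mathrm{d}\overline{z_J}\in L^2_{(s,t)}(V,w)$ and $\varepsilon>0$, I would truncate first. Since the series on the right-hand side above converges, its tail is small: there exists $N\in\mathbb{N}$ with $\sum^{\prime}_{\max\{I\cup J\}>N}c_{I,J}\int_V|f_{I,J}|^2e^{-w}\,\mathrm{d}P<\varepsilon^2/4$. Putting $f^{(N)}\triangleq\sum^{\prime}_{\max\{I\cup J\}\leqslant N}f_{I,J}\,\mathrm{d}z_I\wedge\mathrm{d}\overline{z_J}$, the decoupled norm identity gives $\|f-f^{(N)}\|_{L^2_{(s,t)}(V,w)}<\varepsilon/2$. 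The crucial finiteness observation is that the constraint $\max\{I\cup J\}\leqslant N$ forces $I,J\subset\{1,\dots,N\}$; since $|I|=s$ and $|J|=t$, there are exactly $M\triangleq\binom{N}{s}\binom{N}{t}$ admissible pairs, so $f^{(N)}$ is a genuinely finite sum.

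For the coefficientwise step I would invoke the density of $C^{\infty}_{0,F^{\infty}}(V)$ in $L^2(V,e^{-w}P)$ established in the first paragraph, applied to each of the finitely many coefficients $f_{I,J}$ appearing in $f^{(N)}$: choose $g_{I,J}\in C^{\infty}_{0,F^{\infty}}(V)$ with $c_{I,J}\int_V|f_{I,J}-g_{I,J}|^2e^{-w}\,\mathrm{d}P<\varepsilon^2/(4M)$. Then $g\triangleq\sum^{\prime}_{\max\{I\cup J\}\leqslant N}g_{I,J}\,\mathrm{d}z_I\wedge\mathrm{d}\overline{z_J}$ belongs to the set in the statement (with $n=N$), and summing the $M$ estimates yields $\|f^{(N)}-g\|_{L^2_{(s,t)}(V,w)}^2<\varepsilon^2/4$. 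The triangle inequality then gives $\|f-g\|_{L^2_{(s,t)}(V,w)}<\varepsilon$, proving the density.

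The substantive analytic content—the construction of smooth, uniformly-supported approximants through the Complex Multiplicative System Theorem—is already packaged in Lemma \ref{density1} and Corollary \ref{density2}, so I do not anticipate a serious obstacle here; the argument is essentially bookkeeping. The single point that requires care is the \emph{order} of the two approximations: one must truncate first in order to isolate a finite number $M$ of coefficients, because the per-coefficient tolerance $\varepsilon^2/(4M)$ can only be apportioned over finitely many terms. Attempting to approximate all coefficients simultaneously would not yield a summable total error without such a preliminary truncation.
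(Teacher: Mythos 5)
Your proof is correct and follows essentially the same route as the paper's own argument: truncate the form to the finitely many multi-index pairs with $\max\{I\cup J\}\leqslant n$ using the convergence of the defining series, then apply Corollary \ref{density2} (with the measure $e^{-w}P$) coefficientwise, apportioning the tolerance over the finitely many terms. The only differences are cosmetic bookkeeping (your count $\binom{N}{s}\binom{N}{t}$ versus the paper's bound $n^{s+t}$, and where the constants $c_{I,J}$ are absorbed), so there is nothing to correct.
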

\begin{proof}
Obviously,
$\mathfrak{F}\subset L^{2}_{(s,t)}\left( V,w\right) $.
Conversely, for any $ f=\sum\limits^{\prime }_{\left| I\right|  =s,\left| J\right|  =t} f_{I,J}dz_{I}\wedge d\overline{z_{J}} \in L^{2}_{\left( s,t\right)  }\left( V,w\right)$ and $\varepsilon >0$, there exists $n\in \mathbb{N}$ such that
$$
\sum^{\prime }_{\left| I\right|  =s,\left| J\right|  =t,\max \left\{ I\bigcup J\right\}  >n} c_{I,J}\int_{V} \left| f_{I,J}\right|^{2}  e^{-w}dP<\frac{\varepsilon }{2}.
$$
By Corollary \ref{density2},  $C_{0,F^{\infty}}^{\infty}(V)$ is dense in $L^{2}(V,e^{-w}P)$. Then, for each  strictly
increasing multi-indices $I$ and $J$ with $\left| I\right|  =s,\left| J\right|  =t$ and $\max \left\{ I\bigcup J\right\}  \leqslant n$, choose $\varphi_{I,J} \in C^{\infty }_{0,F^{\infty}}\left( V\right)$ such that
$$
\int_{V} \left| \varphi_{I,J} -f_{I,J}\right|^{2}  e^{-w}dP<\frac{\varepsilon }{2n^{t+s}\max\limits_{\left| I\right|  =s,\left| J\right|  =t,\max \left\{ I\bigcup J\right\}  \leqslant n} c_{I,J}}
$$
and let $\varphi \triangleq\sum\limits^{\prime }_{\left| I\right|  =s,\left| J\right|  =t,\max \left\{ I\bigcup J\right\}  \leqslant n} \varphi_{I,J} dz_{I}\wedge d\overline{z_{J}}.$ It is clear that $\phi\in \mathfrak{F} $ and $\left| \left| \varphi -f\right|  \right|^{2}_{L^{2}_{(s,t)}\left( V,w\right)  } <\varepsilon $. This completes the proof of Lemma \ref{0orderSobolev}.
\end{proof}

Motivated by Corollary \ref{Test functions},  we introduce the following notions of supports of locally square-integrable functions/forms.
\begin{definition}\label{230124def1}
For any $f\in L^2(V,loc)$, the support of $f$, denoted by $\supp f$, is
the set of all points $\textbf{z}_0$ in $V$ for which there exists no $r>0$ so that $B_{r}(\textbf{z}_0)\subset V$ and $\int_{B_{r}(\textbf{z}_0)} f\bar\phi dP=0$ for all $\phi \in C^{\infty }_{0,F^{\infty}}\left( B_{r}(\textbf{z}_0)\right).$
	
Generally, for any
$$
f=\sum^{\prime }_{\left| I\right|  =s} \sum^{\prime }_{\left| J\right|  =t} f_{I,J}\,\mathrm{d}z_{I}\wedge \,\mathrm{d}\overline{z_{J}}\in L^{2}_{\left( s,t\right)  }\left( V,loc\right),
$$
the support of $f$, denoted by $\supp f$, is the set $\overline{\bigcup\limits^{\prime }_{\left| I\right|  =s,\left| J\right|  =t} \supp  f_{I,J}}$, where the union $\bigcup\limits^{\prime }$ is taken only over all strictly increasing multi-indices $I$ and $J$ with $\left| I\right|  =s$ and $\left| J\right|  =t$.
\end{definition}

For each $n\in\mathbb{N}$, choose a real-valued function
\begin{equation}\label{eq230131e1}
\gamma_{n} \in C^{\infty }_{c}\left( \mathbb{C}^{n} \right)
\end{equation}
such that
\begin{itemize}
\item[(1)]
$\gamma_{n} \geqslant 0$;

\item[(2)]
$\int_{\mathbb{C}^{n} } \gamma_{n} \left( \textbf{z}_n\right)  d\textbf{z}_n=1$, where $d\textbf{z}_n$ is the Lebesuge measure on $\mathbb{C}^n$;

\item[(3)]
 $\supp\gamma_{n} \subset \{ \textbf{z}_n \in \mathbb{C}^{n} :\;\left| \left| \textbf{z}_n\right|  \right|_{\mathbb{C}^{n} }  \leqslant 1\} ;$

\item[(4)] $\gamma_{n} ( \textbf{z}_n)  =\gamma_{n} ( \textbf{z}_n')$ for all $\textbf{z}_n,\textbf{z}_n'\in\mathbb{C}^{n}$ with $| | \textbf{z}_n  |  |_{\mathbb{C}^{n}}  =\left| \left| \textbf{z}_n'\right|  \right|_{\mathbb{C}^{n}}  $.
\end{itemize}
For $\delta\in(0,+\infty)$, set
$$\gamma_{n,\delta } \left( \textbf{z}_n\right)  \triangleq \frac{1}{\delta^{2n} } \gamma_n \left( \frac{\textbf{z}_n  }{\delta } \right),\quad\forall\;\textbf{z}_n  \in \mathbb{C}^{n}.
$$
Suppose that $f\in L^2(\ell^2,P)$.  By the conclusion (1) of Proposition \ref{Reduce diemension}, we see that $f_n\in L^2(\mathbb{C}^n,\mathcal{N}^n)$, where $f_n$ is given by (\ref{Integration reduce dimension}). Suppose further that $\supp f\stackrel{\circ}{\subset} \ell^2$. Then, for any $\delta\in(0,+\infty)$, we may define the convolution $f_{n,\delta}(\cdot)$  of $f_n$ and $\gamma_{n,\delta}$ by
\begin{equation}\label{Convolution after reduce dimension}
f_{n,\delta}(\textbf{z}_n)\triangleq  \int_{\mathbb{C}^n}f_n(\textbf{z}_n')\gamma_{n,\delta }  ( \textbf{z}_n -\textbf{z}_n')\mathrm{d}\textbf{z}_n',\quad\;\textbf{z}_n  \in \mathbb{C}^{n}.
\end{equation}
It is clear that $f_{n,\delta}\in C_c^{\infty}(\mathbb{C}^n)$. Set
\begin{equation}\label{def for Gauss weight}
\varphi_n(\textbf{z}_n)\triangleq\prod_{i=1}^{n}\left(\frac{1}{2\pi a_i^2}\cdot e^{-\frac{|z_i|^2}{2a_i^2}}\right),\quad \; \textbf{z}_n=(z_1,\cdots,z_n)\in \mathbb{C}^n.
\end{equation}
We have the following result.
\begin{proposition}\label{convolution properties}
Suppose that $f\in L^2(\ell^2,P)$ with $\supp f\stackrel{\circ}{\subset} \ell^2$. Then, for each $n\in\mathbb{N}$, it holds that (Recall (\ref{Integration reduce dimension}) for $ f_n$)
\begin{itemize}
\item[{\rm (1)}] $f_n\in  L^2(\mathbb{C}^n, d\textbf{z}_n)\cap L^1(\mathbb{C}^n, d\textbf{z}_n)$, $f_{n,\delta}\in L^2(\ell^2,P)$ for any $\delta>0$ and $\lim\limits_{\delta\to 0+}||f_{n,\delta}-f_n||_{L^2(\ell^2,P)}=0$;
\item[{\rm (2)}] $\lim\limits_{\delta\rightarrow 0+} \int_{\ell^2}\big|| f_{n,\delta}|^{2}  -| f_n|^{2}\big|\,\mathrm{d}P =0;$
\item[{\rm (3)}] For any $g\in L^2(\ell^2,P)$ with $\supp g\stackrel{\circ}{\subset} \ell^2$,
$$
\int_{\ell^2}f_{n,\delta}g\,\mathrm{d}P=\int_{\ell^2}f\varphi_n^{-1}\cdot(g_{n}\varphi_n)_{n,\delta}\,\mathrm{d}P.
$$
\end{itemize}
\end{proposition}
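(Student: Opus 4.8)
The plan is to push every computation down to the finite-dimensional space $\mathbb{C}^n$ equipped with Lebesgue measure, exploiting the factorization $P=\mathcal{N}^n\times P_n$ together with the observation that $\mathcal{N}^n$ has Lebesgue density $\varphi_n$, i.e. $\mathrm{d}\mathcal{N}^n=\varphi_n\,\mathrm{d}\textbf{z}_n$ (compare (\ref{def for Gauss weight})). The first thing I would record is that $\supp f\stackrel{\circ}{\subset}\ell^2$ forces a compact support downstairs: if $\supp f\subset B_R$, then $f_n(\textbf{z}_n)=0$ whenever $||\textbf{z}_n||_{\mathbb{C}^n}>R$, so $\supp f_n\subset\overline{B_R}$ in $\mathbb{C}^n$. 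On this ball $\varphi_n$ is continuous and strictly positive, hence bounded between two positive constants, which makes the three norms $||\cdot||_{L^2(\mathbb{C}^n,\mathcal{N}^n)}$, $||\cdot||_{L^2(\mathbb{C}^n,\mathrm{d}\textbf{z}_n)}$ and (via Cauchy--Schwarz on the finite-measure ball) $||\cdot||_{L^1(\mathbb{C}^n,\mathrm{d}\textbf{z}_n)}$ mutually comparable for functions supported in $\overline{B_R}$.

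For part (1), the membership $f_n\in L^2(\mathbb{C}^n,\mathcal{N}^n)$ is exactly conclusion (1) of Proposition \ref{Reduce diemension}, and the comparability just noted upgrades it to $f_n\in L^2(\mathbb{C}^n,\mathrm{d}\textbf{z}_n)\cap L^1(\mathbb{C}^n,\mathrm{d}\textbf{z}_n)$. Since $f_{n,\delta}\in C^{\infty}_c(\mathbb{C}^n)$ is bounded and $P$ is a probability measure, $f_{n,\delta}\in L^2(\ell^2,P)$ at once. For the convergence, both $f_{n,\delta}$ and $f_n$ are cylinder functions of the first $n$ coordinates, supported for $\delta\le 1$ in $\overline{B_{R+1}}$, so $||f_{n,\delta}-f_n||_{L^2(\ell^2,P)}^2=\int_{\mathbb{C}^n}|f_{n,\delta}-f_n|^2\varphi_n\,\mathrm{d}\textbf{z}_n\le M\,||f_{n,\delta}-f_n||_{L^2(\mathbb{C}^n,\mathrm{d}\textbf{z}_n)}^2$, with $M=\sup_{\overline{B_{R+1}}}\varphi_n$, and the right-hand side tends to $0$ by the classical Friedrichs mollifier convergence in $L^2(\mathbb{C}^n,\mathrm{d}\textbf{z}_n)$. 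Part (2) I would then obtain by copying the proof of conclusion (3) of Proposition \ref{Reduce diemension} almost verbatim: factor $|f_{n,\delta}|^2-|f_n|^2=(|f_{n,\delta}|-|f_n|)(|f_{n,\delta}|+|f_n|)$, apply Cauchy--Schwarz, bound the first factor by $||f_{n,\delta}-f_n||_{L^2(\ell^2,P)}\to 0$ from part (1), and observe that the second factor stays bounded.

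Part (3) is the crux and is really a statement about the self-duality of mollification. I would first collapse the left-hand side: since $f_{n,\delta}$ depends only on $\textbf{z}_n$, the factorization $P=\mathcal{N}^n\times P_n$ and the definition (\ref{Integration reduce dimension}) of $g_n$ give $\int_{\ell^2}f_{n,\delta}g\,\mathrm{d}P=\int_{\mathbb{C}^n}f_{n,\delta}\,g_n\,\mathrm{d}\mathcal{N}^n=\int_{\mathbb{C}^n}f_{n,\delta}\,g_n\varphi_n\,\mathrm{d}\textbf{z}_n$. Inserting the convolution definition (\ref{Convolution after reduce dimension}), $f_{n,\delta}=f_n*\gamma_{n,\delta}$, and applying Fubini (legitimate because $f_n$ and $g_n$ are integrable with compact support while $\gamma_{n,\delta}$ and $\varphi_n$ are bounded on the relevant compacta), the radial symmetry of $\gamma_{n,\delta}$ recorded in property (4) of (\ref{eq230131e1}) lets me transfer the mollification onto the other factor, turning the integral into $\int_{\mathbb{C}^n}f_n\,(g_n\varphi_n)_{n,\delta}\,\mathrm{d}\textbf{z}_n$. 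Independently I would collapse the right-hand side: the factor $\varphi_n^{-1}(g_n\varphi_n)_{n,\delta}$ is again a cylinder function of $\textbf{z}_n$, so integrating out the tail gives $\int_{\ell^2}f\varphi_n^{-1}(g_n\varphi_n)_{n,\delta}\,\mathrm{d}P=\int_{\mathbb{C}^n}f_n\varphi_n^{-1}(g_n\varphi_n)_{n,\delta}\,\mathrm{d}\mathcal{N}^n=\int_{\mathbb{C}^n}f_n\,(g_n\varphi_n)_{n,\delta}\,\mathrm{d}\textbf{z}_n$, where the last step merely cancels $\varphi_n^{-1}$ against the density $\mathrm{d}\mathcal{N}^n=\varphi_n\,\mathrm{d}\textbf{z}_n$. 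The two sides now coincide.

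The hard part is not the algebra but the integrability bookkeeping that licenses it. I would check that $g_n$ is compactly supported (by the same argument as for $f_n$, using $\supp g\stackrel{\circ}{\subset}\ell^2$), so that $g_n\varphi_n$ and its mollification $(g_n\varphi_n)_{n,\delta}$ have compact support in $\mathbb{C}^n$; this is what keeps the Fubini step finite and, crucially, confines the unbounded factor $\varphi_n^{-1}$ to the compact set $\supp f_n$ where it is bounded, so that the integrand $f\varphi_n^{-1}(g_n\varphi_n)_{n,\delta}$ on the right is genuinely $P$-integrable despite $\varphi_n^{-1}\to\infty$ at infinity. Once these support and boundedness facts are in place, every interchange of integration and change of density is routine.
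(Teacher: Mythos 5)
Your proposal is correct and follows essentially the same route as the paper: reduction to $\mathbb{C}^n$ via the factorization $P=\mathcal{N}^n\times P_n$, compact support of $f_n$ and $g_n$ inherited from $\supp f,\,\supp g\stackrel{\circ}{\subset}\ell^2$, classical Friedrichs mollifier convergence in $L^2(\mathbb{C}^n,\mathrm{d}\textbf{z}_n)$ transferred back through the bounded Gaussian density, and, for (3), Fubini combined with the radial symmetry of $\gamma_{n,\delta}$ and the relation $\mathrm{d}\mathcal{N}^n=\varphi_n\,\mathrm{d}\textbf{z}_n$. The only cosmetic difference is that you collapse both sides of (3) down to $\mathbb{C}^n$ before performing the convolution duality, whereas the paper keeps the integrals over $\ell^2$ and lifts $f_n$ back to $f$ via the weight $\varphi_n^{-1}$; the underlying computation and integrability bookkeeping are the same.
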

\begin{proof}
(1) Since $f_{n,\delta}\in C_c^{\infty}(\mathbb{C}^n)$, we have $f_{n,\delta}\in L^2(\ell^2,P)$.
By our assumption there exists $r>0$ such that $\supp f\subset B_{r}$. By (\ref{Integration reduce dimension}) and (\ref{Convolution after reduce dimension}), we see that $\supp f_{n}\subset \left\{ \textbf{z}_n\in \mathbb{C}^{n} :\;\left| \left| \textbf{z}_n\right|  \right|_{\mathbb{C}^{n} }  \leqslant r\right\}$, and hence $\supp f_{n,\delta }\subset \left\{ \textbf{z}_n\in \mathbb{C}^{n} :\;\left| \left| \textbf{z}_n\right|  \right|_{\mathbb{C}^{n} }  \leqslant r+\delta \right\}$. By the conclusion (1) of Proposition \ref{Reduce diemension}, we have $f_{n}\in L^2(\mathbb{C}^n,d\textbf{z}_n)\cap L^1(\mathbb{C}^n,d\textbf{z}_n)$ and
$$
\int_{\ell^2}|f_{n,\delta}  -f_n|^{2}\,\mathrm{d}P
=\int_{\mathbb{C}^n} |  f_{n,\delta} - f_n |^2\,\mathrm{d}\mathcal{N}^n
\leqslant \frac{1}{\left( 2\pi \right)^{n}  a^{2}_{1}a^{2}_{2}\cdots a^{2}_{n}} \int_{\mathbb{C}^{n} } \left| f_{n,\delta }\left(  \textbf{z}_n\right)  -f_{n}\left( \textbf{z}_n\right)  \right|^{2}  \mathrm{d}\textbf{z}_n.
$$
Since $\lim\limits_{\delta\to 0+}\int_{\mathbb{C}^{n} } \left| f_{n,\delta }\left(  \textbf{z}_n\right)  -f_{n}\left( \textbf{z}_n\right)  \right|^{2}  \mathrm{d}\textbf{z}_n=0$, we conclude that $\lim\limits_{\delta\to 0+}||f_{n,\delta}-f_n||_{L^2(\ell^2,P)}=0$.

\medskip

(2) The proof is similar to that of the conclusion (3) of Proposition \ref{Reduce diemension}, and therefore we omit the details.

\medskip

(3) Note that
$$
 	\begin{array}{ll}
\displaystyle
 		\int_{\ell^{2} } f_{n,\delta }g\mathrm{d}P=\int_{\ell^{2} } \left(\int_{\mathbb{C}^{n} } f_{n}\left( \textbf{z}_n'\right)  \gamma_{n,\delta } \left( \textbf{z}_n-\textbf{z}_n'\right)  \mathrm{d}\textbf{z}_n'\right)g\left( \textbf{z}_n,\textbf{z}^n\right) \mathrm{d}P\left( \textbf{z}_n,\textbf{z}^n\right)  \\[3mm]
\displaystyle =\int_{\ell^{2} } \left(\int_{\ell^{2} } f\left( \textbf{z}_n',\tilde{\textbf{z}}^n\right)  \gamma_{n,\delta } \left( \textbf{z}_n-\textbf{z}_n'\right)  \varphi^{-1}_{n} \left( \textbf{z}_n'\right)  \mathrm{d}P\left( \textbf{z}_n',\tilde{\textbf{z}}^n\right)  \right) g\left( \textbf{z}_n,\textbf{z}^n\right) \mathrm{d}P\left(\textbf{z}_n,\textbf{z}^n\right) \\[3mm]
\displaystyle =\int_{\ell^{2} }f\left( \textbf{z}_n',\tilde{\textbf{z}}^n\right)\varphi^{-1}_{n} \left( \textbf{z}_n'\right)\left( \int_{\ell^{2} }g\left( \textbf{z}_n,\textbf{z}^n\right) \gamma_{n,\delta } \left( \textbf{z}_n-\textbf{z}_n'\right)\mathrm{d}P\left(\textbf{z}_n,\textbf{z}^n\right)\right)\mathrm{d}P\left( \textbf{z}_n',\tilde{\textbf{z}}^n\right) \\[3mm]
\displaystyle =\int_{\ell^{2} }f\left( \textbf{z}_n',\tilde{\textbf{z}}^n\right)\varphi^{-1}_{n} \left( \textbf{z}_n'\right)\left( \int_{\mathbb{C}^{n}  }g_n\left( \textbf{z}_n\right)\varphi_n(\textbf{z}_n) \gamma_{n,\delta } \left( \textbf{z}_n-\textbf{z}_n'\right)\mathrm{d}\textbf{z}_n\right)\mathrm{d}P\left( \textbf{z}_n',\tilde{\textbf{z}}^n\right)  \\[3mm]
\displaystyle  =\int_{\ell^{2} } f\varphi^{-1}_{n} \cdot\left( g_{n}\varphi_{n} \right)_{n,\delta } \mathrm{d}P,
 	\end{array}
$$
where the third equality follows from Fubini's theorem. This completes the proof of Proposition \ref{convolution properties}.
\end{proof}

The following result provides a useful link between the forms on $V$ and that on finite dimensions.
\begin{lemma}\label{approximation for st form}
Suppose that $\sup\limits_{\textbf{z}\in E}|w(\textbf{z})|<\infty$ for each $E\stackrel{\circ}{\subset} V$ and $f=\sum\limits^{\prime }_{\left| I\right|  =s,\left| J\right|  =t} f_{I,J}dz_{I}\wedge d\overline{z_{J}}\in L^{2}_{(s,t)  }( V,w)$. Then, the following conclusions hold:

\medskip

 {\rm (1)} $L^{2}_{(s,t)  }( V,w)\subset L^{2}_{(s,t)  }( V,loc)$;

 \medskip

 {\rm (2)}  If $\supp f\stackrel{\circ}{\subset} V$ (in this case $f$  can be viewed as an element of $L^{2}_{(s,t)  }( \ell^2,P)\left(\subset L^{2}_{\left( s,t\right)  }\left( \ell^2,loc\right)\right)$ by extending the value of $f_{I,J}$ by 0 on $\ell^2\setminus V$ for all strictly
increasing multi-indices $I$ and $J$, and $\supp f\stackrel{\circ}{\subset} \ell^2$),
for each $n\in\mathbb{N}$ and $\delta\in(0,+\infty)$, letting
$$
f_{n,\delta}\triangleq\sum\limits^{\prime }_{\left| I\right|  =s,\left| J\right|  =t,\max\{I\cup J\}\leqslant n} (f_{I,J})_{n,\delta}dz_{I}\wedge d\overline{z_{J}},
$$
where $(f_{I,J})_{n,\delta}$ is defined as that in (\ref{Convolution after reduce dimension}), then
$$
\lim_{n\to\infty}\lim_{\delta\to 0+}||f_{n,\delta}-f||_{L^{2}_{(s,t)  }( \ell^2,P)}=0.
$$
\end{lemma}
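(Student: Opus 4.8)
The plan is to treat the two parts separately: part (1) is a one-line comparison of norms exploiting the local boundedness of $w$, while part (2) is the genuine content and rests on interleaving the two approximation devices already analysed in Propositions \ref{Reduce diemension} and \ref{convolution properties}, glued together by an $\varepsilon/2$ truncation of the multi-index series.

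For part (1), I would fix an arbitrary Borel set $S\stackrel{\circ}{\subset}V$. By hypothesis $M:=\sup_{S}|w|<\infty$, so $e^{-w}\geqslant e^{-M}$ on $S$ and hence $|f_{I,J}|^{2}\leqslant e^{M}|f_{I,J}|^{2}e^{-w}$ pointwise on $S$ for every pair $I,J$. Integrating over $S$, multiplying by $c_{I,J}$ and summing yields
$$
\sum^{\prime}_{|I|=s,|J|=t}c_{I,J}\int_{S}|f_{I,J}|^{2}\,\mathrm{d}P\leqslant e^{M}\sum^{\prime}_{|I|=s,|J|=t}c_{I,J}\int_{V}|f_{I,J}|^{2}e^{-w}\,\mathrm{d}P=e^{M}\,\|f\|^{2}_{L^{2}_{(s,t)}(V,w)}<\infty,
$$
which is precisely the defining requirement for membership in $L^{2}_{(s,t)}(V,loc)$.

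For part (2), I would first record that the extension by zero is legitimate: combining part (1) with $\supp f\stackrel{\circ}{\subset}V$ (and the remark following Definition \ref{def of bounded contained}) shows $\sum^{\prime}c_{I,J}\int_{\ell^{2}}|f_{I,J}|^{2}\,\mathrm{d}P<\infty$, so $f\in L^{2}_{(s,t)}(\ell^{2},P)$ and each component $f_{I,J}$ meets the hypotheses of Proposition \ref{convolution properties}. Since for each fixed $n$ the form $f_{n,\delta}$ is a \emph{finite} sum over $\{\max\{I\cup J\}\leqslant n\}$, I can pass to the inner limit termwise: Proposition \ref{convolution properties}(1) gives $(f_{I,J})_{n,\delta}\to(f_{I,J})_{n}$ in $L^{2}(\ell^{2},P)$ as $\delta\to0+$, whence
$$
\lim_{\delta\to0+}\|f_{n,\delta}-f\|^{2}_{L^{2}_{(s,t)}(\ell^{2},P)}=\sum^{\prime}_{\max\{I\cup J\}\leqslant n}c_{I,J}\int_{\ell^{2}}|(f_{I,J})_{n}-f_{I,J}|^{2}\,\mathrm{d}P+\sum^{\prime}_{\max\{I\cup J\}>n}c_{I,J}\int_{\ell^{2}}|f_{I,J}|^{2}\,\mathrm{d}P.
$$
The last sum is the tail of the convergent series $\sum^{\prime}c_{I,J}\int_{\ell^{2}}|f_{I,J}|^{2}\,\mathrm{d}P$, hence it vanishes as $n\to\infty$.

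The hard part is the first sum, whose index set \emph{and} summands both depend on $n$. I would handle it by splitting at a fixed level $N$. For indices with $\max\{I\cup J\}>N$, Proposition \ref{Reduce diemension}(1) together with the triangle inequality gives $\int_{\ell^{2}}|(f_{I,J})_{n}-f_{I,J}|^{2}\,\mathrm{d}P\leqslant 4\int_{\ell^{2}}|f_{I,J}|^{2}\,\mathrm{d}P$, so their total contribution is at most $4\sum^{\prime}_{\max\{I\cup J\}>N}c_{I,J}\int_{\ell^{2}}|f_{I,J}|^{2}\,\mathrm{d}P$, which falls below $\varepsilon/2$ once $N$ is large (again a convergent tail, and crucially uniform in $n$). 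For the remaining finitely many indices with $\max\{I\cup J\}\leqslant N$, Proposition \ref{Reduce diemension}(2) forces each $\int_{\ell^{2}}|(f_{I,J})_{n}-f_{I,J}|^{2}\,\mathrm{d}P\to0$ as $n\to\infty$, so this finite head drops below $\varepsilon/2$ for all large $n$. This uniform-tail-plus-vanishing-head decomposition is exactly what legitimizes the outer limit and is the only delicate point of the argument.
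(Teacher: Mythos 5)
Your proof is correct and follows essentially the same route as the paper's: both rest on Proposition \ref{Reduce diemension} (conclusion (1) for the uniform tail bound, conclusion (2) for the finite head) and Proposition \ref{convolution properties}(1) for the $\delta$-limit, combined with a truncation-level splitting of the multi-index series; your only deviations are cosmetic (you compute the inner $\delta$-limit exactly and then split, whereas the paper first proves $\lim_{n}\|f_{n}-f\|=0$ and appends the $\delta$-limit by the triangle inequality, and you spell out part (1), which the paper dismisses as obvious). No gaps.
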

\begin{proof}
The proof of the conclusion (1) is obvious, and hence we only prove the conclusion (2).
For each $n\in\mathbb{N}$, write
$$
f_{n}\triangleq\sum\limits^{\prime }_{\left| I\right|  =s,\left| J\right|  =t,\max\{I\cup J\}\leqslant n} (f_{I,J})_{n}dz_{I}\wedge d\overline{z_{J}},
$$
where $(f_{I,J})_{n}$ is defined similarly to that in (\ref{Integration reduce dimension}). By $f\in L^{2}_{(s,t)  }( \ell^2,P)$, for any $\varepsilon>0$, there exists $n_0\in\mathbb{N}$ such that
$$
\sum\limits^{\prime }_{\left| I\right|  =s,\left| J\right|  =t,\max\{I\cup J\}> n_0} c_{I,J}\int_{\ell^2}|f_{I,J}|^2\,\mathrm{d}P<\frac{\varepsilon}{8}.
$$
By the  conclusion (2) of Proposition \ref{Reduce diemension}, there exists $n_1\in\mathbb{N}$ such that $n_1\geqslant n_0$ and for all $n\geqslant n_1$, it holds that
$$
\sum\limits^{\prime }_{\left| I\right|  =s,\left| J\right|  =t,\max\{I\cup J\}\leqslant n_0} c_{I,J}\int_{\ell^2}|(f_{I,J})_{n}-f_{I,J}|^2\,\mathrm{d}P<\frac{\varepsilon}{4},
$$
and by the  conclusion (1) of Proposition \ref{Reduce diemension},
$$
\begin{array}{ll}
\displaystyle
||f_{n}-f||_{L^{2}_{(s,t)  }( \ell^2,P)}^2\\[3mm]
\displaystyle =
\sum\limits^{\prime }_{\left| I\right|  =s,\left| J\right|  =t,\max\{I\cup J\}\leqslant n_0} c_{I,J}\int_{\ell^2}|(f_{I,J})_{n}-f_{I,J}|^2\,\mathrm{d}P+\sum\limits^{\prime }_{\left| I\right|  =s,\left| J\right|  =t,\max\{I\cup J\}> n} c_{I,J}\int_{\ell^2}|f_{I,J}|^2\,\mathrm{d}P\\[3mm]
\displaystyle \quad+\sum\limits^{\prime }_{\left| I\right|  =s,\left| J\right|  =t,n_0<\max\{I\cup J\}\leqslant n} c_{I,J}\int_{\ell^2}|(f_{I,J})_{n}-f_{I,J}|^2\,\mathrm{d}P\\[3mm]
\displaystyle \leqslant\sum\limits^{\prime }_{\left| I\right|  =s,\left| J\right|  =t,\max\{I\cup J\}\leqslant n_0} c_{I,J}\int_{\ell^2}|(f_{I,J})_{n}-f_{I,J}|^2\,\mathrm{d}P+
5\sum\limits^{\prime }_{\left| I\right|  =s,\left| J\right|  =t,\max\{I\cup J\}> n_0} c_{I,J}\int_{\ell^2}|f_{I,J}|^2\,\mathrm{d}P\\[3mm]
\displaystyle <\varepsilon.
\end{array}
$$
Therefore, $\lim\limits_{n\to\infty}||f_{n}-f||_{L^{2}_{(s,t)  }( \ell^2,P)}=0$. For each $n\in\mathbb{N}$ and $\delta\in(0,+\infty)$,
$$
||f_{n,\delta}-f_n||_{L^{2}_{(s,t)  }( \ell^2,P)}^2=
\sum\limits^{\prime }_{\left| I\right|  =s,\left| J\right|  =t,\max\{I\cup J\}\leqslant n} c_{I,J}\int_{\ell^2}|(f_{I,J})_{n,\delta}-(f_{I,J})_n|^2\,\mathrm{d}P.
$$
Combining this with the conclusion (1) of Proposition \ref{convolution properties}, we see that $\lim\limits_{\delta\to 0+}||f_{n,\delta}-f_n||_{L^{2}_{(s,t)  }( \ell^2,P)}=0$, which completes the proof of Lemma \ref{approximation for st form}.
\end{proof}

As a consequence of Lemmas \ref{0orderSobolev} and \ref{approximation for st form}, we have the following density result.
\begin{corollary}\label{cylinder function is dense}
Suppose that $\sup\limits_{\textbf{z}\in V} |w(\textbf{z})|<\infty$. Then
$$
\bigcup^{\infty }_{n=1}  \bigg\{\sum^{\prime }_{\left| I\right|  =s,\left| J\right|  =t,\max \left\{ I\bigcup J\right\}  \leqslant n} f_{I,J}dz_{I}\wedge d\overline{z_{J}} :\; f_{I,J}\in C^{\infty }_{c}\left(\mathbb{C}^n\right)\text{ for all involved }I,J \bigg\}
$$
is dense in $L^{2}_{(s,t)  }( V,w)$.
\end{corollary}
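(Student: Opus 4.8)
The plan is to combine Lemmas \ref{0orderSobolev} and \ref{approximation for st form} by a two-step approximation, the only nontrivial point being the passage from the unweighted norm $\|\cdot\|_{L^{2}_{(s,t)}(\ell^2,P)}$ to the weighted norm $\|\cdot\|_{L^{2}_{(s,t)}(V,w)}$. First I would record that the hypothesis $\sup_V|w|<\infty$ makes both lemmas applicable. Writing $M\triangleq\sup_{V}|w|$, for every Borel set $E\stackrel{\circ}{\subset}V$ one has $\int_E e^{-w}\,\mathrm{d}P\leqslant e^{M}P(E)\leqslant e^{M}<\infty$ (recall $P$ is a probability measure) and $\sup_E|w|\leqslant M<\infty$, so the hypotheses of Lemma \ref{0orderSobolev} and of Lemma \ref{approximation for st form} are both met.

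Fix $f\in L^{2}_{(s,t)}(V,w)$ and $\varepsilon>0$. By Lemma \ref{0orderSobolev} I would first choose a finite-index form
$$
g=\sum^{\prime }_{\left| I\right|  =s} \sum^{\prime }_{\left| J\right|  =t,\,\max\{I\cup J\}\leqslant N} g_{I,J}\,\mathrm{d}z_{I}\wedge \mathrm{d}\overline{z_{J}},\qquad g_{I,J}\in C^{\infty}_{0,F^{\infty}}(V),
$$
with $\|f-g\|_{L^{2}_{(s,t)}(V,w)}<\varepsilon/2$. Since each $g_{I,J}\in C^{\infty}_{0}(V)$ has $\supp g_{I,J}\stackrel{\circ}{\subset}V$, the form $g$ satisfies $\supp g\stackrel{\circ}{\subset}V$, so (extending the coefficients by $0$ off $V$) I may view $g$ as an element of $L^{2}_{(s,t)}(\ell^2,P)$ and invoke conclusion (2) of Lemma \ref{approximation for st form}.

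Forming the cylinder approximants $g_{n,\delta}\triangleq\sum^{\prime }_{\max\{I\cup J\}\leqslant n}(g_{I,J})_{n,\delta}\,\mathrm{d}z_{I}\wedge \mathrm{d}\overline{z_{J}}$, each coefficient $(g_{I,J})_{n,\delta}$ belongs to $C^{\infty}_{c}(\mathbb{C}^n)$ by Proposition \ref{convolution properties}, so $g_{n,\delta}$ is exactly of the type listed in the statement. Lemma \ref{approximation for st form} gives $\lim_{n\to\infty}\lim_{\delta\to 0+}\|g_{n,\delta}-g\|_{L^{2}_{(s,t)}(\ell^2,P)}=0$, and to transfer this to the weighted norm I use $e^{-w}\leqslant e^{M}$ on $V$ together with enlarging the integration domain from $V$ to $\ell^2$ (the integrands being nonnegative):
$$
\|g_{n,\delta}-g\|^{2}_{L^{2}_{(s,t)}(V,w)}
=\sum^{\prime }_{\left| I\right|  =s} \sum^{\prime }_{\left| J\right|  =t} c_{I,J}\int_{V}\big|(g_{I,J})_{n,\delta}-g_{I,J}\big|^{2}e^{-w}\,\mathrm{d}P
\leqslant e^{M}\,\|g_{n,\delta}-g\|^{2}_{L^{2}_{(s,t)}(\ell^2,P)}.
$$
Choosing first $n$ large and then $\delta$ small according to the iterated limit yields a single pair $(n,\delta)$ with $\|g_{n,\delta}-g\|_{L^{2}_{(s,t)}(V,w)}<\varepsilon/2$, whence the triangle inequality gives $\|g_{n,\delta}-f\|_{L^{2}_{(s,t)}(V,w)}<\varepsilon$. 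As $g_{n,\delta}$ has all coefficients in $C^{\infty}_{c}(\mathbb{C}^n)$ with $\max\{I\cup J\}\leqslant n$, this proves the asserted density. I expect no real obstacle here: the sole substantive step is the weighted-norm estimate displayed above, which is immediate once one observes that $\sup_V|w|<\infty$ forces $e^{-w}$ to be bounded on $V$; everything else is bookkeeping (checking hypotheses and extracting one pair $(n,\delta)$ from the iterated limit).
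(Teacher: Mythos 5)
Your proposal is correct and follows exactly the route the paper intends: the paper states this corollary without proof, as an immediate consequence of Lemmas \ref{0orderSobolev} and \ref{approximation for st form}, and your two-step approximation (first a finite-index form with coefficients in $C^{\infty}_{0,F^{\infty}}(V)$, then its cylinder convolution approximants) is precisely that combination. The only detail the paper leaves implicit—transferring the unweighted $L^{2}_{(s,t)}(\ell^2,P)$ convergence to the weighted $L^{2}_{(s,t)}(V,w)$ norm via $e^{-w}\leqslant e^{\sup_V|w|}$ and enlarging the domain of integration—is exactly what you supply, so nothing is missing.
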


We also need the following simple approximation result (which should be known but we do not find an exact reference).
\begin{lemma}\label{F}
Assume that $f\in C^{1 }_{b}( \ell^{2})$. Then there exists $\{h_{k }\}_{k=1}^{\infty}\subset C^{\infty }_{b,F^{\infty}}( \ell^{2})$ such that $\lim\limits_{k \rightarrow \infty } h_{k}=f$, $\lim\limits_{k \rightarrow \infty } D_{x_{j}}h_{k}=D_{x_{j}}f$  and $\lim\limits_{k \rightarrow \infty } D_{y_{j}}h_{k}=D_{y_{j}}f$ almost everywhere respect to $P$ for each $j\in\mathbb{N}$. Meanwhile, for each $j,k\in\mathbb{N}$, the following inequalities hold on $\ell^2$,
\begin{equation}\label{6zqqqx}
\left\{
\begin{array}{ll}
\displaystyle\left|h_{k }\right|  \leqslant \sup_{\ell^{2} } \left| f\right|  ,\  \left| \partial_{j} h_{k}\right|  \leqslant \sup_{ \ell^{2} } \left| \partial_{j} f\right|  ,\left| \overline{\partial_{j} } h_{k }\right|  \leqslant \sup_{ \ell^{2} } \left| \overline{\partial_{j} } f\right|,  \\[5mm]
\displaystyle\left| D_{x_{j}}h_{k }\right|  \leqslant \sup_{\ell^{2} } \left| D_{x_{j}}f\right|\text{ and } \left| D_{y_{j}}h_{k}\right|  \leqslant \sup_{\ell^{2} } \left| D_{y_{j}}f\right|.
\end{array}\right.
\end{equation}
Furthermore, if $f\in C^{1}_{0,F}( \ell^{2})$, then
\begin{equation}\label{6}		
\sum^{\infty }_{j=1} \left| \partial_{j} h_{k}\right|^{2}  \leqslant \sup_{\ell^{2} } \sum^{\infty }_{j=1} \left| \partial_{j} f\right|^{2} ,\quad\sum^{\infty }_{j=1} \left| \overline{\partial_{j} } h_{k}\right|^{2}  \leqslant \sup_{\ell^{2} } \sum^{\infty }_{j=1} \left| \overline{\partial_{j} } f\right|^{2},\quad\forall\;k\in\mathbb{N}.
\end{equation}
\end{lemma}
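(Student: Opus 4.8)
The plan is to approximate $f$ in two stages: first collapse it to a function of finitely many variables by the dimension-reduction transformation (\ref{Integration reduce dimension}), and then smooth the resulting cylinder function by a finite-dimensional mollification as in (\ref{Convolution after reduce dimension}). Concretely, for each $n\in\mathbb{N}$ I would set $f_n(\textbf{z}_n)\triangleq\int f(\textbf{z}_n,\textbf{z}^n)\,\mathrm{d}P_n(\textbf{z}^n)$ and then form the convolution $f_{n,\delta}\triangleq f_n*\gamma_{n,\delta}$ in $\mathbb{C}^n$, with $\gamma_{n,\delta}$ the mollifier built from (\ref{eq230131e1}). Since $f\in C^{1}_b(\ell^2)$, differentiation under the integral sign (justified by dominated convergence, as $P_n$ is a probability measure and the first partials of $f$ are bounded) shows that $f_n\in C^1$ and that, for $j\leqslant n$, $D_{x_j}f_n=(D_{x_j}f)_n$ and $D_{y_j}f_n=(D_{y_j}f)_n$, while $D_{x_j}f_n=D_{y_j}f_n=0$ for $j>n$; convolution with the $C^{\infty}_c$ kernel then renders $f_{n,\delta}$ a $C^{\infty}$ cylinder function with all partial derivatives bounded. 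The key structural remark is that a cylinder function depending only on $\textbf{z}_n$ has $\sum_i(|D_{x_i}\cdot|^2+|D_{y_i}\cdot|^2)$ equal to a \emph{finite} sum, so any such function with bounded derivatives automatically lies in $C^{\infty}_{b,F^{\infty}}(\ell^2)$; hence every $f_{n,\delta}$ is an admissible candidate.

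For the pointwise bounds (\ref{6zqqqx}) I would use that both operations are averages against probability measures and commute with the constant-coefficient operators $D_{x_j},D_{y_j},\partial_j,\overline{\partial_j}$. Thus, for instance, $\partial_j f_{n,\delta}=(\partial_j f_n)*\gamma_{n,\delta}=((\partial_j f)_n)*\gamma_{n,\delta}$ when $j\leqslant n$, and since $\int\mathrm{d}P_n=1=\int\gamma_{n,\delta}$ each averaging step is a sup-norm contraction, giving $|\partial_j f_{n,\delta}|\leqslant\sup|\partial_j f|$ (and trivially $0\leqslant\sup|\partial_j f|$ when $j>n$); the same reasoning covers $|f|$, $|\overline{\partial_j}\cdot|$, $|D_{x_j}\cdot|$ and $|D_{y_j}\cdot|$. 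For the stronger $\ell^2$-summed bounds (\ref{6}) under the hypothesis $f\in C^{1}_{0,F}(\ell^2)$ — where $\supp f\stackrel{\circ}{\subset}\ell^2$ is exactly what makes the right-hand sides finite — I would apply Jensen's inequality to the convex map $\textbf{v}\mapsto\|\textbf{v}\|^2$ on $\mathbb{C}^n$, pushing the norm-square first inside the $P_n$-integral and then inside the convolution, to obtain $\sum_j|\partial_j f_{n,\delta}|^2\leqslant\sup_{\ell^2}\sum_j|\partial_j f|^2$, and likewise for $\overline{\partial_j}$.

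It then remains to arrange simultaneous almost-everywhere convergence of $h_k$ and of all its first partials. By Proposition \ref{Reduce diemension}(2), $\|f_n-f\|_{L^2(\ell^2,P)}\to0$ and, for each fixed $j$, $\|(D_{x_j}f)_n-D_{x_j}f\|_{L^2(\ell^2,P)}\to0$ and the analogue for $D_{y_j}$ (using $D_{x_j}f,D_{y_j}f\in C^{0}_b(\ell^2)\subset L^2(\ell^2,P)$). For each fixed $n$, as $\delta\to0+$ the functions $f_{n,\delta}$, $D_{x_j}f_{n,\delta}$, $D_{y_j}f_{n,\delta}$ converge pointwise to $f_n$, $D_{x_j}f_n$, $D_{y_j}f_n$ and, being uniformly bounded on the finite measure space $(\mathbb{C}^n,\mathcal{N}^n)$, also converge in $L^2(\ell^2,P)$ by dominated convergence. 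Choosing $\delta_n$ so small that $f_{n,\delta_n}$ lies within $1/n$ of $f_n$, and each of its first partials within $1/n$ of the corresponding partial of $f_n$ for all indices $\leqslant n$, a triangle inequality yields $g_n\triangleq f_{n,\delta_n}\to f$, $D_{x_j}g_n\to D_{x_j}f$ and $D_{y_j}g_n\to D_{y_j}f$ in $L^2(\ell^2,P)$ for every $j$. A diagonal extraction over these countably many $L^2$-convergent sequences finally produces a subsequence $h_k\triangleq g_{n_k}$ that converges $P$-almost everywhere together with all its first partials.

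The main obstacle I anticipate is the bookkeeping in this last step. I cannot invoke Proposition \ref{convolution properties} directly, because a general $f\in C^{1}_b(\ell^2)$ need not satisfy $\supp f\stackrel{\circ}{\subset}\ell^2$; instead the $L^2$-convergence of the mollifications must be obtained by hand from the finiteness of the Gaussian measure together with boundedness and pointwise convergence. Coupling the two limiting parameters $n$ and $\delta$ so as to secure simultaneous almost-everywhere convergence of $f$ and of \emph{every} partial derivative at once — rather than one derivative at a time — is the delicate point, and it is precisely what forces the diagonal argument over the countable family of $L^2$-limits.
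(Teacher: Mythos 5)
Your proposal is correct and follows essentially the same route as the paper's own proof: dimension reduction via (\ref{Integration reduce dimension}), finite-dimensional mollification via (\ref{Convolution after reduce dimension}), sup-norm contraction of the two averaging operations for (\ref{6zqqqx}), Jensen's inequality for (\ref{6}), and passage from $L^2$-convergence to almost-everywhere convergence of a subsequence by a diagonal argument. The only (immaterial) difference is that you couple $\delta_n$ to $n$ before extracting one a.e.-convergent subsequence, whereas the paper runs the diagonal method twice over the two-parameter family $\{f_{n,\delta}\}$.
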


\begin{proof}
For each $n\in\mathbb{N}$, let $f_n$ be given by (\ref{Integration reduce dimension}). Then, $D_{x_j}f_n(\textbf{z}_n)=\int D_{x_j}f(\textbf{z}_n,\textbf{z}^n)\,\mathrm{d}P_n(\textbf{z}^n)$ for $j=1,\cdots, n$. By the conclusion (2) of Proposition \ref{Reduce diemension}, we have $\lim\limits_{n\rightarrow \infty } \int_{\ell^{2} } \left| f_{n}-f\right|^{2}  dP=0$ and $\lim\limits_{n\rightarrow \infty } \int_{\ell^{2} } \left| D_{x_{j}} f_{n}  -D_{x_{j}} f\right|^{2}dP=0.$  For the sequence of functions $\{\gamma_n\}_{n=1}^{\infty}$ given in (\ref{eq230131e1}) and each $\delta\in(0,+\infty)$, we write
$$
f_{n,\delta}(\textbf{z}_n)\triangleq  \int_{\mathbb{C}^n}f_n(\textbf{z}_n')\gamma_{n,\delta }  ( \textbf{z}_n -\textbf{z}_n')d\textbf{z}_n',\quad\textbf{z}_n  \in \mathbb{C}^{n}.
$$
Then, $f_{n,\delta}\in C^{\infty }_{b,F^{\infty}}( \ell^{2})$, $|f_{n,\delta}|\leqslant \sup\limits_{\mathbb{C}^n}|f_n|\leqslant \sup\limits_{\ell^2}|f|$, $|D_{x_j}f_{n,\delta}|\leqslant \sup\limits_{\mathbb{C}^n}|D_{x_j}f_n|\leqslant \sup\limits_{\ell^2}|D_{x_j}f|$, $|\partial_{j}f_{n,\delta}|\leqslant \sup\limits_{\mathbb{C}^n}|\partial_{j}f_n|\leqslant \sup\limits_{\ell^2}|\partial_{j}f|$,
$$
\lim\limits_{\delta\rightarrow 0} \int_{\ell^{2} } \left| f_{n,\delta}-f_n\right|^{2}  dP=0,\qquad
\lim\limits_{\delta\rightarrow 0} \int_{\ell^{2} } \left| D_{x_{j}} f_{n,\delta}  -D_{x_{j}} f_n\right|^{2}dP=0
$$
for each fixed $j\in\mathbb{N}$. By the diagonal method, we can pick a sequence of functions $\{g_k\}_{k=1}^{\infty}$ from $\{f_{n,\delta}:\;n\in\mathbb{N},\,\delta\in(0,+\infty)\}$ such that $\lim\limits_{k\rightarrow \infty } \int_{\ell^{2} } \left| g_{k}-f\right|^{2}  dP=0$ and $\lim\limits_{k\rightarrow \infty } \int_{\ell^{2} } \left| D_{x_{j}} g_{k}  -D_{x_{j}} f\right|^{2}dP=0$ for each $j\in\mathbb{N}$. By \cite[Theorem 6.3.1 (b) at pp. 171--172 and Section 6.5 at pp. 180--181]{Res} and using  the diagonal method again, we can find a subsequence $\{h_k\}_{k=1}^{\infty}$ of $\{g_k\}_{k=1}^{\infty}$ such that $\lim\limits_{k \rightarrow \infty } h_{k}=f$ and $\lim\limits_{k \rightarrow \infty } D_{x_{j}}h_{k}=D_{x_{j}}f$  almost everywhere respect to $P$ for each $j\in\mathbb{N}$. Obviously, the sequence $\{h_k\}_{k=1}^{\infty}$ satisfies the conditions in (\ref{6zqqqx}).

Next, if $f\in C^{1}_{0,F}( \ell^{2})$, then by the Jensen inequality (See \cite[Theorem 3.3, p. 62]{Rud87}),
$$
\begin{array}{ll}
\displaystyle 		
\sum^{\infty }_{j=1} \left| \partial_{j} f_{n,\delta}( \textbf{z}_n)\right|^{2}
= \sum^{n}_{j=1} \left| \partial_{j} f_{n,\delta}( \textbf{z}_n )\right|^{2}
\leqslant\int_{\mathbb{C}^n}\sum^{n}_{j=1} \left| \partial_{j} f_{n }(\textbf{z}_n')\right|^{2}\gamma_{n,\delta }  ( \textbf{z}_n -\textbf{z}_n')d\textbf{z}_n'\\[3mm]
\displaystyle \leqslant \sup_{\textbf{z}_n'\in\mathbb{C}^n}\sum^{n}_{j=1} \left| \partial_{j} f_{n }(\textbf{z}_n')\right|^{2}
= \sup_{\textbf{z}_n'\in\mathbb{C}^n}\sum^{n}_{j=1} \left| \int\partial_{j} f (\textbf{z}_n',\textbf{z}^n)\,\mathrm{d}P_n(\textbf{z}^n)\right|^{2}\\[3mm]
\displaystyle \leqslant \sup_{\textbf{z}_n'\in\mathbb{C}^n} \int\sum^{n}_{j=1} \left|\partial_{j} f (\textbf{z}_n',\textbf{z}^n)\right|^{2}\,\mathrm{d}P_n(\textbf{z}^n)
\leqslant \sup_{\textbf{z}\in\ell^2}  \sum^{n}_{j=1} \left|\partial_{j} f (\textbf{z})\right|^{2}\leqslant \sup_{\textbf{z}\in\ell^2}  \sum^{\infty}_{j=1} \left|\partial_{j} f (\textbf{z})\right|^{2},
\end{array}
$$
and hence we obtain \eqref{6}. This completes the proof of Lemma \ref{F}.
\end{proof}

 For $i\in\mathbb{N} $ and strictly increasing multi-indices $L=(l_1,\cdots,l_t)$ and $K$ with $\left| L\right|  =t$ and $\left| K\right|  =t+1$,
	we set
\begin{equation}\label{230204e3}
\varepsilon_{iL}^{K}\triangleq\left\{
\begin{array}{ll}0, &\hbox{ if }K\neq  \{i\}\cup L,\\
\hbox{the sign of the permutation }\left(\begin{array}{ll}i,l_1,\cdots,l_t\\ \qquad K\end{array}\right), &\hbox{ if }K= \{i\}\cup L.
\end{array}\right.
\end{equation}

\begin{definition}\label{def for db}
We say that $f=\sum\limits^{\prime }_{\left| I\right|  =s} \sum\limits^{\prime }_{\left| J\right|  =t} f_{I,J}\,\mathrm{d}z_{I}\wedge \,\mathrm{d}\overline{z_{J}}\in L^{2}_{(s,t)}\left( V,loc\right)$ is in the domain $ D_{\overline{\partial } }$ of $\overline{\partial }$ if there exists $g=\sum\limits^{\prime }_{\left| I\right|  =s} \sum\limits^{\prime }_{\left| K\right|  =t+1} g_{I,K}\,\mathrm{d}z_{I}\wedge \,\mathrm{d}\overline{z_{K}}\in L^{2}_{(s,t+1)}\left( V,loc\right)$ such that for all strictly increasing multi-indices $I$ and $K$ with $\left| I\right|  =s$ and $\left| K\right|  =t+1$, it holds that
\begin{equation}\label{230204e5}
\left( -1\right)^{s+1}  \int_{V} \sum^{\infty }_{i=1} \sum^{\prime }_{\left| J\right|  =t} \varepsilon^{K}_{iJ} f_{I,J}\overline{\delta_{i} \phi }\,\mathrm{d}P=\int_{V} g_{I,K}\bar{\phi }\,\mathrm{d}P,\quad \forall \;\phi \in C^{\infty }_{0,F}\left( V\right).
\end{equation}
In this case, we define $\overline{\partial } f\triangleq g$.
\end{definition}

\begin{remark}\label{230327r1}
Note that, for any fixed strictly increasing multi-index $K$ with $\left| K\right|=t+1$, by the definition of $\varepsilon^{K}_{iJ}$ in (\ref{230204e3}), there exist at most finitely many non-zero terms in the series $\sum\limits^{\infty }_{i=1} \sum\limits^{\prime }_{\left| J\right|  =t} \varepsilon^{K}_{iJ} f_{I,J}\overline{\delta_{i} \phi }$ in (\ref{230204e5}).
\end{remark}

\begin{remark}\label{230407r1}
If $f=\sum\limits^{\prime }_{\left| I\right|  =s} \sum\limits^{\prime }_{\left| J\right|  =t} f_{I,J}\,\mathrm{d}z_{I}\wedge \,\mathrm{d}\overline{z_{J}}\in L^{2}_{(s,t)}\left( V,loc\right)$ satisfies $ f_{I,J}\in C_0^1(V)$ for each strictly increasing multi-indices $I$ and $J$ with $\left| I\right|  =s$ and $\left| J\right|  =t$, and  $\sum\limits^{\prime }_{\left| I\right|  =s} \sum\limits^{\prime }_{\left| K\right|  =t+1} \sum\limits^{\infty }_{i=1} \sum\limits^{\prime }_{\left| J\right|  =t} \varepsilon^{K}_{iJ} \overline{\partial}_{i}  f_{I,J}\,\mathrm{d}z_{I}\wedge \,\mathrm{d}\overline{z_{K}}\in L^{2}_{(s,t+1)}\left( V,loc\right)$, then, by Corollaries \ref{integration by Parts or deltai} and \ref{density2}, $ f\in D_{\overline{\partial } }$, and
$$
\overline{\partial } f=\left( -1\right)^{s} \sum\limits^{\prime }_{\left| I\right|  =s} \sum\limits^{\prime }_{\left| K\right|  =t+1} \sum^{\infty }_{i=1} \sum^{\prime }_{\left| J\right|  =t} \varepsilon^{K}_{iJ} \overline{\partial}_{i}  f_{I,J}\,\mathrm{d}z_{I}\wedge \,\mathrm{d}\overline{z_{K}}.
$$
\end{remark}

We have the following result.
\begin{proposition}\label{weak equality for more test functions}
Under Condition \ref{230424c1}, for any $f=\sum\limits^{\prime }_{\left| I\right|  =s} \sum\limits^{\prime }_{\left| J\right|  =t} f_{I,J}\,\mathrm{d}z_{I}\wedge \,\mathrm{d}\overline{z_{J}}\in L^{2}_{(s,t)}\left( V,loc\right)$, the following assertions hold:

{\rm (1)} If $f\in  D_{\overline{\partial } }$, then for all strictly increasing multi-indices $I$ and $K$ with $\left| I\right|=s$ and $\left| K\right|=t+1$,
\begin{equation}\label{230203e1}
\left( -1\right)^{s+1}  \int_{V} \sum^{\infty }_{i=1} \sum^{\prime }_{\left| J\right|  =t} \varepsilon^{K}_{iJ} f_{I,J}\overline{\delta_{i} \phi }\,\mathrm{d}P=\int_{V} \left( \overline{\partial } f\right)_{I,K}  \bar{\phi }\,\mathrm{d}P,\quad  \forall\; \phi \in C^{1 }_{0}\left( V\right);
\end{equation}

{\rm (2)}
If $f\in  D_{\overline{\partial } }$ and $\supp f \stackrel{\circ}{\subset}V$, then for all strictly increasing multi-indices $I$ and $K$  with $\left| I\right|=s$ and $\left| K\right|=t+1$,
\begin{equation}\label{230204e2}	
\left( -1\right)^{s+1}  \int_{V} \sum^{\infty }_{i=1} \sum^{\prime }_{\left| J\right|  =t} \varepsilon^{K}_{iJ} f_{I,J}\overline{\delta_{i} \phi }\,\mathrm{d}P=\int_{V} \left( \overline{\partial } f\right)_{I,K}  \bar{\phi }\,\mathrm{d}P,\quad  \forall\; \phi \in C^{1 }_{b}\left( V \right);
\end{equation}

{\rm (3)}
If $\eta\in C^{\infty}_{F^{\infty}}\left( V\right)$, $g=\sum\limits^{\prime }_{\left| I\right|  =s} \sum\limits^{\prime }_{\left| K\right|  =t+1} g_{I,K}\,\mathrm{d}z_{I}\wedge \,\mathrm{d}\overline{z_{K}}\in L^{2}_{(s,t+1)}\left( V,loc\right)$, and the following equality
$$
\left( -1\right)^{s+1}  \int_{V} \sum^{\infty }_{i=1} \sum^{\prime }_{\left| J\right|  =t} \varepsilon^{K}_{iJ} f_{I,J}\overline{\delta_{i} \phi }\,\mathrm{d}P=\int_{V} g_{I,K}\bar{\phi } \,\mathrm{d}P,\quad  \forall\; \phi \in C^{\infty }_{0,F^{\infty }}\left( V\right)
$$
holds for any strictly increasing multi-indices $I$ and $K$  with $\left| I\right|=s$ and $\left| K\right|=t+1$, then $f\in  D_{\overline{\partial } }$ and $\overline{\partial } f\triangleq g$.
\end{proposition}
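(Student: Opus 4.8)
The plan is to prove all three assertions as successive extensions of the defining identity (\ref{230204e5}) to larger classes of test functions, each extension obtained by one and the same \emph{localize--mollify--dominated-convergence} scheme. The common engine is the following approximation. Given a test function $\phi$ with $\supp\phi\stackrel{\circ}{\subset}V$, I would first extend $\phi$ by zero to $\ell^2$ and apply Lemma \ref{F} to obtain cylinder functions $h_k$ (each depending on finitely many coordinates, hence in $C^\infty_{b,F^\infty}(\ell^2)$ automatically) with $h_k\to\phi$ and $D_{x_j}h_k\to D_{x_j}\phi$, $D_{y_j}h_k\to D_{y_j}\phi$ almost everywhere with respect to $P$, subject to the uniform bounds (\ref{6zqqqx}). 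Since these $h_k$ need not have support uniformly contained in $V$, I would reintroduce support control through the plurisubharmonic exhaustion function $\eta$ supplied by Condition \ref{230424c1}: by Proposition \ref{properties on pseudo-convex domain}(3) there is $\tau$ with $\supp\phi\stackrel{\circ}{\subset}V^o_\tau$, and I set $\chi\triangleq\psi(\eta)$ for a $\psi\in C^\infty(\mathbb{R})$ equal to $1$ on $(-\infty,\tau]$ and $0$ on $[\tau+1,\infty)$, so that $\chi\equiv1$ on $\supp\phi$ and $\supp\chi\subset V_{\tau+1}\stackrel{\circ}{\subset}V$. The approximants would then be $\phi_k\triangleq\chi h_k$, which converge to $\chi\phi=\phi$ together with their first partials almost everywhere, are uniformly bounded, and share the fixed uniformly-contained support $\supp\chi$.

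With these $\phi_k$ in hand, I would pass to the limit in each side of (\ref{230204e5}) by dominated convergence. For fixed $I,K$ only finitely many terms survive in the inner sum (Remark \ref{230327r1}), each integrand being $f_{I,J}\overline{\delta_i\phi_k}$ or $g_{I,K}\overline{\phi_k}$; because $\supp\chi$ is bounded, the otherwise unbounded factor $\overline{z_i}/(2a_i^2)$ in $\delta_i$ is bounded there, so these integrands are dominated by a fixed constant times $|f_{I,J}|$ (resp. $|g_{I,K}|$) on $\supp\chi$ and vanish elsewhere, which is integrable since $f_{I,J},g_{I,K}\in L^2(V,\mathrm{loc})$ and $P$ is finite. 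This settles parts (1) and (3) simultaneously: for part (1) the identity is known on $C^\infty_{0,F}(V)$, $\chi$ may be taken in $C^\infty_{0,F}(V)$ (only $\eta\in C^\infty_F(V)$ is needed), each $\phi_k\in C^\infty_{0,F}(V)$, and the limit yields the identity for arbitrary $\phi\in C^1_0(V)$; for part (3) the extra hypothesis $\eta\in C^\infty_{F^\infty}(V)$ lets me take $\chi\in C^\infty_{0,F^\infty}(V)$, whence $\phi_k\in C^\infty_{0,F^\infty}(V)$, and the limit upgrades the assumed identity from $C^\infty_{0,F^\infty}(V)$ to all $\phi\in C^\infty_{0,F}(V)$, which is exactly what Definition \ref{def for db} requires for $f\in D_{\overline\partial}$. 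The equality $\overline\partial f=g$ is then forced: the difference of $g$ and the form delivered by Definition \ref{def for db} tests to zero against every $\phi\in C^\infty_{0,F^\infty}(V)$, so Corollary \ref{Test functions} gives coincidence almost everywhere.

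For part (2) I would reduce to part (1) using $\supp f\stackrel{\circ}{\subset}V$. A preliminary step is $\supp(\overline\partial f)\subseteq\supp f$: testing the defining identity (\ref{230204e5}) against $\phi\in C^\infty_{0,F^\infty}(B_r(\textbf{z}_0))$ supported in a ball disjoint from $\supp f$ makes the left side vanish (the coefficients $f_{I,J}$ vanish a.e. off $\supp f$), so $\int_V(\overline\partial f)_{I,K}\overline\phi\,\mathrm{d}P=0$ there, and Corollary \ref{Test functions} together with the Lindel\"of property forces $(\overline\partial f)_{I,K}=0$ a.e. on $V\setminus\supp f$. Now, for $\phi\in C^1_b(V)$, I would pick $\tau$ with $\supp f\stackrel{\circ}{\subset}V^o_\tau$ and a cutoff $\widetilde\chi=\psi(\eta)\in C^\infty_{0,F}(V)$ equal to $1$ on $V_\tau\supseteq V^o_\tau\supseteq\supp f$. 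Then $\widetilde\chi\phi\in C^1_0(V)$, so part (1) applies to it. On $V^o_\tau$ one has $\widetilde\chi\equiv1$ and $\partial_i\widetilde\chi=0$, hence $\delta_i(\widetilde\chi\phi)=\delta_i\phi$ on $\supp f$ and thus $f_{I,J}\overline{\delta_i(\widetilde\chi\phi)}=f_{I,J}\overline{\delta_i\phi}$ a.e.; likewise $\supp(\overline\partial f)\subseteq\supp f\subseteq\{\widetilde\chi=1\}$ gives $(\overline\partial f)_{I,K}\overline{\widetilde\chi\phi}=(\overline\partial f)_{I,K}\overline\phi$ a.e. Substituting these into the identity for $\widetilde\chi\phi$ yields (\ref{230204e2}) for the original $\phi\in C^1_b(V)$.

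The main obstacle is precisely the construction underlying paragraph one: producing approximants that lie in the exact class dictated by Definition \ref{def for db} while retaining both almost-everywhere convergence of the first partials and a fixed uniformly-contained support. The mollified reductions from Lemma \ref{F} give the convergence and the pointwise gradient bounds but destroy support localization, whereas a cutoff restores the support yet must be checked to keep the product inside $C^\infty_{0,F}(V)$ (resp. $C^\infty_{0,F^\infty}(V)$); it is here that the $C^\infty_F$ (resp. $C^\infty_{F^\infty}$) regularity of the exhaustion function in Condition \ref{230424c1}, together with the boundedness of $\supp\chi$ that tames the weight $\overline{z_i}/(2a_i^2)$, is indispensable.
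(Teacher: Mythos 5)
Your proposal is correct and follows essentially the same route as the paper's proof: cut-offs of the form $\psi(\eta)$ built from the plurisubharmonic exhaustion function, the mollified cylinder approximations of Lemma \ref{F}, dominated convergence (justified exactly as you say, by Remark \ref{230327r1} and the uniformly contained support, which tames the factor $\overline{z_i}/(2a_i^2)$), and the reduction of assertion (2) to assertion (1) by multiplying with a cut-off equal to $1$ on $\supp f$. The only cosmetic differences are that you use a single fixed cut-off times the approximants $h_k$ with one passage to the limit, where the paper uses the two-parameter family $\phi_l\Phi_k$ with an iterated limit, and that you make explicit the inclusion $\supp(\overline{\partial}f)\subseteq \supp f$ (via Corollary \ref{Test functions}), a step the paper's proof of (2) uses implicitly.
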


\begin{proof}
(1) Choose $\{\varphi_{k}\}_{k=1}^{\infty} \subset C^{\infty }\left( \mathbb{R}\right)$ such that for each $k\in\mathbb{N}$, $0\leqslant\varphi_{k} \leqslant1$, $\varphi_{k} \left( x\right)  =1$ for all $x\leqslant k$ and $\varphi_{k} \left( x\right)  =0$ for all $x>k+1$. Recall Condition \ref{230424c1} for $\eta$. Let
\begin{equation}\label{230204e1}
\Phi_{k} \triangleq\varphi_{k}( \eta ).
\end{equation}
Then  $\Phi_{k}  \in C^{\infty }_{0,F}( V).$
For any fixed $\phi \in C^{1 }_{0}\left( V\right)$, by Proposition \ref{properties on pseudo-convex domain}, there exists $r\in(0,+\infty)$ such that $\supp \phi \subset V_{r}$ (Recall (\ref{230117e1}) for the notation $V_r$). We may view $\phi$ as an element of $C^{1 }_{0}( \ell^2)\subset C^{1 }_{b}( \ell^2)$ by the zero extension. By Lemma \ref{F}, there exists $\{\phi_{k}\}_{k=1}^{\infty}\subset C^{\infty }_{b,F^\infty}( \ell^{2})$ such that the conditions corresponding to (\ref{6zqqqx}) are satisfied (for which $f$ and $h_k$ therein are replaced by $\phi$ and $\phi_k$, respectively). For any $k,l\in\mathbb{N}$ with $k>r$, we have $\phi_{l}\Phi_{k} \in C^{\infty }_{0,F}\left( V\right)$, and hence, by  $f\in  D_{\overline{\partial } }$ and Definition \ref{def for db},
\begin{equation}\label{230204e4}
\left( -1\right)^{s+1}  \int_{V} \sum^{\infty }_{i=1} \sum^{\prime }_{\left| J\right|  =t} \varepsilon^{K}_{iJ} f_{I,J}\overline{\delta_{i} \left( \phi_{l } \Phi_{k} \right)  }\,\mathrm{d}P=\int_{V} \left( \overline{\partial } f\right)_{I,K}  \overline{\phi_{l } \Phi_{k} }\,\mathrm{d}P.
\end{equation}
Letting $l\rightarrow \infty $ in (\ref{230204e4}) and noting Remark \ref{230327r1}, we have
\begin{equation}\label{230204e25}
\left( -1\right)^{s+1}  \int_{V} \sum^{\infty }_{i=1} \sum^{\prime }_{\left| J\right|  =t} \varepsilon^{K}_{iJ} f_{I,J}\overline{\delta_{i} \left( \phi \Phi_{k} \right)  }\,\mathrm{d}P=\int_{V} \left( \overline{\partial } f\right)_{I,K}  \overline{\phi \Phi_{k} }\,\mathrm{d}P.
\end{equation}
Since $\supp \phi\subset V_k$ and $\Phi_k=1$ on $V_k$, by (\ref{230204e25}) we obtain the desired (\ref{230203e1}).

\medskip

(2) Since $\supp f \stackrel{\circ}{\subset}V$, by Proposition \ref{properties on pseudo-convex domain} again, there exists $r\in(0,+\infty)$ such that $\supp f \subset V_{r}.$  Then for any $k\in\mathbb{N}$ with $k>r$ and $\phi \in C^{1 }_{b}\left( V \right)$, $\phi \Phi_{k}\in C^{1 }_{0}( V)$ (where $\Phi_{k}$ is given by (\ref{230204e1})). By the above conclusion (1), we see that (\ref{230204e25}) holds for the present $\phi \Phi_{k}$. Hence, by $\supp f\subset V_k$ and $\Phi_k=1$ on $V_k$, we obtain the desired (\ref{230204e2}).

\medskip

(3) By our assumption, $\{\Phi_{k}\}_{k=1}^{\infty} \subset C^{\infty }_{0,F^{\infty}}( V)  $ (See (\ref{230204e1}) for $\Phi_{k}$). For any $\phi \in C^{\infty}_{0, F}( V)$, we choose $\{\phi_{k}\}_{k=1}^{\infty}\subset C^{\infty }_{b,F^{\infty}}( \ell^{2})$ as in the proof of the conclusion (1). For any $k,l\in\mathbb{N}$ such that $k>r$, we have $\phi_{l}\Phi_{k} \in C^{\infty }_{0,F^{\infty}}\left( V\right)$ and hence
$$
\left( -1\right)^{s+1}  \int_{V} \sum^{\infty }_{i=1} \sum^{\prime }_{\left| J\right|  =t} \varepsilon^{K}_{iJ} f_{I,J}\overline{\delta_{i} \left( \phi_{l} \Phi_{k} \right)  }\,\mathrm{d}P=\int_{V}g_{I,K}\overline{\phi_{l } \Phi_{k} }\,\mathrm{d}P.
$$
By the same argument as that in the proof of the conclusion (1), from the above equality we obtain (\ref{230204e5}).
The proof of Proposition \ref{weak equality for more test functions} is completed.
\end{proof}

\begin{definition}\label{definition of T S}
For any real-valued functions $w_{1},w_{2},w_{3}\in C^{2}_{F}\left( V\right)$, we define two linear unbounded operators $T:\;L^{2}_{(s,t)}\left( V,w_{1}\right) \to L^{2}_{(s,t+1)}\left( V,w_{2}\right)$ and $S:\;L^{2}_{(s,t+1)}\left( V,w_{2}\right)\to L^{2}_{(s,t+2)}\left( V,w_{3}\right)$ as follows:
$$
\left\{
\begin{array}{ll}
D_{T}\triangleq \left\{ u\in L^{2}_{(s,t)}\left( V,w_{1}\right)  :\;u\in D_{\overline{\partial }}\;\text{ and }\;\overline{\partial } u\in L^{2}_{(s,t+1)}\left( V,w_{2}\right)  \right\},\\[3mm]
Tu\triangleq  \overline{\partial } u,\quad\forall\;u\in D_T,
\end{array}\right.
$$
$$
\quad\;\left\{
\begin{array}{ll}
D_{S}\triangleq \left\{ f\in L^{2}_{\left( s,t+1\right)  }\left( V,w_{2}\right)  :\;f\in D_{\overline{\partial }}\;\text{ and }\;\overline{\partial } f\in L^{2}_{(s,t+2)}\left( V,w_{3}\right)  \right\},\\[3mm]
Sf\triangleq \overline{\partial } f,\quad\forall\;f\in D_S.
\end{array}\right.
$$
We call $
R_{T}\triangleq \left\{ Tu :\;u\in D_{T}  \right\}$ and $
N_{S}\triangleq \left\{ f :\;f\in D_{S}\text{ and }Sf=0  \right\}$ the range of $T$ and the kernel of $S$, respectively.
\end{definition}

In the rest of this section, to simplify the presentation, unless otherwise stated, we fix $w_{1},w_{2},w_{3}$, $T$ and $S$ as that in Definition \ref{definition of T S}.
\begin{lemma}\label{densly defined closed}
The operator $T$ (resp. $S$) is  densely defined and closed from $L^{2}_{(s,t)}\left( V,w_{1}\right)$ (resp. $L^{2}_{(s,t+1)}\left( V,w_{2}\right)$) into $L^{2}_{(s,t+1)}\left( V,w_{2}\right)$ (resp. $L^{2}_{(s,t+2)}\left( V,w_{3}\right)$).
\end{lemma}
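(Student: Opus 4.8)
The plan is to treat $T$ and $S$ on an equal footing, since each is the single operator $\overline{\partial}$ of Definition \ref{def for db} cut down to a pair of weighted spaces; I will give the argument for $T$ and note that the one for $S$ is identical after shifting the bidegree by one. Two things must be established: that $D_T$ is dense in $L^{2}_{(s,t)}(V,w_1)$ and that the graph of $T$ is closed. The single structural fact I will use throughout is that, as $w_1,w_2,w_3\in C^{2}_{F}(V)\subset C^{1}_{F}(V)$, each weight is bounded on every set $E\stackrel{\circ}{\subset}V$; hence on such a set the density $e^{-w_j}$ is bounded both above and below by positive constants, so the weighted $L^2$-norm and the plain $L^2(\,\cdot\,,P)$-norm are comparable there. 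This is exactly what lets me move between the weighted spaces, in which convergence is assumed, and the unweighted measure $P$, which appears in the weak formulation of $\overline{\partial}$.

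For density, I would exhibit an explicit dense subspace contained in $D_T$. Take any form $f=\sum^{\prime}_{|I|=s,|J|=t,\max\{I\cup J\}\le n} f_{I,J}\,\mathrm{d}z_I\wedge \mathrm{d}\overline{z_J}$ with all $f_{I,J}\in C^{\infty}_{0,F^{\infty}}(V)$. By Remark \ref{230407r1}, $f\in D_{\overline{\partial}}$ and $\overline{\partial}f$ is the finite sum of $\overline{\partial}_i f_{I,J}$ terms given there. Every coefficient of $f$ and of $\overline{\partial}f$ is then bounded with support uniformly included in $V$; since $P$ is a probability measure and $e^{-w_1},e^{-w_2}$ are bounded on those supports, $f\in L^{2}_{(s,t)}(V,w_1)$ and $\overline{\partial}f\in L^{2}_{(s,t+1)}(V,w_2)$, so $f\in D_T$. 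Lemma \ref{0orderSobolev}, whose hypothesis $\int_E e^{-w_1}\,\mathrm{d}P<\infty$ for $E\stackrel{\circ}{\subset}V$ holds by the boundedness just noted, asserts precisely that such forms are dense in $L^{2}_{(s,t)}(V,w_1)$; hence $D_T$ is dense.

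For closedness, I would start from sequences $u_k\in D_T$ with $u_k\to u$ in $L^{2}_{(s,t)}(V,w_1)$ and $Tu_k=\overline{\partial}u_k\to g$ in $L^{2}_{(s,t+1)}(V,w_2)$, and verify the defining identity of Definition \ref{def for db} for the pair $(u,g)$. Fix $\phi\in C^{\infty}_{0,F}(V)$ and strictly increasing $I,K$ with $|I|=s$, $|K|=t+1$. Each $u_k$ satisfies
\[
(-1)^{s+1}\int_V \sum^{\infty}_{i=1}\sum^{\prime}_{|J|=t}\varepsilon^{K}_{iJ}(u_k)_{I,J}\,\overline{\delta_i\phi}\,\mathrm{d}P=\int_V(\overline{\partial}u_k)_{I,K}\,\overline{\phi}\,\mathrm{d}P,
\]
the inner sum having only finitely many nonzero terms by Remark \ref{230327r1}. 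Because $\supp\phi\stackrel{\circ}{\subset}V$ is bounded and $w_1$ is bounded there, each $\delta_i\phi$ lies in $L^2(\supp\phi,P)$ and, with $c_{I,J}>0$ fixed, $\|(u_k)_{I,J}-u_{I,J}\|_{L^2(\supp\phi,P)}\le \big(\sup_{\supp\phi}e^{w_1}\big)^{1/2}\,\|(u_k)_{I,J}-u_{I,J}\|_{L^2(V,e^{-w_1}P)}\to 0$; Cauchy--Schwarz then passes the left-hand side to the limit, and the same estimate with $w_2$ passes the right-hand side to $\int_V g_{I,K}\overline{\phi}\,\mathrm{d}P$. Thus $(u,g)$ satisfies the identity for every $\phi\in C^{\infty}_{0,F}(V)$; since $u\in L^{2}_{(s,t)}(V,loc)$ and $g\in L^{2}_{(s,t+1)}(V,loc)$ by Lemma \ref{approximation for st form}(1), this is exactly $u\in D_{\overline{\partial}}$ with $\overline{\partial}u=g$, and as $g\in L^{2}_{(s,t+1)}(V,w_2)$ we conclude $u\in D_T$ and $Tu=g$.

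The hard part is not any single estimate but the bookkeeping of measures: convergence is given in the weighted spaces, whereas $\overline{\partial}$ is defined weakly against $\mathrm{d}P$. The observation that a $C^{2}_{F}(V)$-weight is bounded above and below on the (uniformly included) support of any admissible test form makes the two topologies locally equivalent, and this is the only point at which the specific class $C^{2}_{F}(V)$ of weights and the design of the test-function space $C^{\infty}_{0,F}(V)$ genuinely enter. Once this comparability is secured, both assertions reduce to the general principle that a weakly defined differential operator has dense domain, as it contains the smooth forms, and a closed graph, as weak identities pass to limits.
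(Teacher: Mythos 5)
Your proposal is correct and takes essentially the same route as the paper: density is obtained from the same class of finite sums of $(s,t)$-forms with coefficients in $C^{\infty }_{0,F^{\infty}}(V)$ (contained in $D_T$ via Remark \ref{230407r1}, and dense by Lemma \ref{0orderSobolev}, whose hypothesis holds because the weights in $C^{2}_{F}(V)$ are bounded on uniformly included sets), and closedness follows by passing to the limit in the weak identity of Definition \ref{def for db}, using that $e^{-w_1}$, $e^{-w_2}$ are bounded above and below on $\supp \phi\stackrel{\circ}{\subset}V$. The only difference is presentational: the paper declares the inclusion into $D_T$ ``easy to see'' and outsources the closedness argument to \cite[Lemma 2.5]{YZ}, whereas you spell out both steps (and, like the paper, you treat the finiteness of the infinite sum over $K$ in the definition of $\overline{\partial }f\in L^{2}_{(s,t+1)}(V,w_{2})$ at the same brief level of detail).
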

\begin{proof}
It is easy to see that
$$
\bigcup^{\infty }_{n=1}  \bigg\{ \sum^{\prime }_{\left| I\right|  =s,\left| J\right|  =t,\max \left\{ I\bigcup J\right\}  \leqslant n} f_{I,J}dz_{I}\wedge d\overline{z_{J}} :\;f_{I,J}\in C^{\infty }_{0,F}\left( V\right)\text{ for all }I,J \bigg\}\subset D_T
$$
and
$$
\bigcup^{\infty }_{n=1}  \bigg\{ \sum^{\prime }_{\left| I\right|  =s,\left| J\right|  =t+1,\max \left\{ I\bigcup J\right\}  \leqslant n} f_{I,J}dz_{I}\wedge d\overline{z_{J}} :\;f_{I,J}\in C^{\infty }_{0,F}\left( V\right)\text{ for all }I,J \bigg\}\subset D_S.
$$
Combining Lemma \ref{0orderSobolev} and the fact that $ C^{\infty }_{0,F^{\infty}}\left( V\right)\subset  C^{\infty }_{0,F}\left( V\right)$, we see that both $T$ and $S$ are densely defined operators. Similarly to the proof of \cite[Lemma 2.5, p. 529]{YZ}, we can prove that both $T$ and $S$ are closed operators. The proof of Lemma \ref{densly defined closed} is completed.
\end{proof}

In the sequel, we shall denote by $T^*$ and $S^*$ respectively the adjoint operators of $T$ and $S$ (e.g., \cite[Definition 4.2.2, p. 177]{Kra}). Meanwhile, we denote by $D_{T^*}$ and $D_{S^*}$ respectively the domains of $T^*$ and $S^*$.

We have the following simple result.
\begin{proposition}\label{Pre-exactness}
For each $u\in D_T$, it holds that $Tu\in D_S$ and $S(Tu)=0$, i.e., $R_T\subset N_S$.
\end{proposition}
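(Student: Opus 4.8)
The plan is to verify directly that $\overline{\partial}u\in D_{\overline{\partial}}$ with $\overline{\partial}(\overline{\partial}u)=0$; once this is established, $Tu=\overline{\partial}u\in D_{\overline{\partial}}$ and $\overline{\partial}(Tu)=0\in L^{2}_{(s,t+2)}(V,w_{3})$, so that $Tu\in D_{S}$ and $S(Tu)=0$, which is precisely $R_{T}\subset N_{S}$. Writing $u=\sum^{\prime}_{|I|=s}\sum^{\prime}_{|J|=t}u_{I,J}\,\mathrm{d}z_{I}\wedge \mathrm{d}\overline{z_{J}}$ and $\overline{\partial}u=\sum^{\prime}_{|I|=s}\sum^{\prime}_{|K|=t+1}(\overline{\partial}u)_{I,K}\,\mathrm{d}z_{I}\wedge \mathrm{d}\overline{z_{K}}$, Definition \ref{def for db} reduces the task to showing that, for every pair of strictly increasing multi-indices $I,L$ with $|I|=s$ and $|L|=t+2$,
$$(-1)^{s+1}\int_{V}\sum_{i=1}^{\infty}\sum^{\prime}_{|K|=t+1}\varepsilon^{L}_{iK}(\overline{\partial}u)_{I,K}\,\overline{\delta_{i}\phi}\,\mathrm{d}P=0,\qquad\forall\;\phi\in C^{\infty}_{0,F}(V),$$
which is exactly \eqref{230204e5} with the resulting $(s,t+2)$-form equal to $0$.

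The mechanism is to feed $\delta_{i}\phi$ back into the weak equation defining $\overline{\partial}u$. For $\phi\in C^{\infty}_{0,F}(V)$ and any $i\in\mathbb{N}$, one checks that $\delta_{i}\phi=\partial_{i}\phi-\frac{\overline{z_{i}}}{2a_{i}^{2}}\phi\in C^{\infty}_{0}(V)\subset C^{1}_{0}(V)$: indeed $\phi$ and all its partials are bounded with $\supp\phi\stackrel{\circ}{\subset}V\subset B_{R}$, so the multiplier $\overline{z_{i}}$ is bounded on $\supp\phi$. Hence, under Condition \ref{230424c1}, the extended identity \eqref{230203e1} of Proposition \ref{weak equality for more test functions}(1) applies with test function $\delta_{i}\phi$, yielding for every $K$ with $|K|=t+1$
$$\int_{V}(\overline{\partial}u)_{I,K}\,\overline{\delta_{i}\phi}\,\mathrm{d}P=(-1)^{s+1}\int_{V}\sum_{j=1}^{\infty}\sum^{\prime}_{|J|=t}\varepsilon^{K}_{jJ}u_{I,J}\,\overline{\delta_{j}\delta_{i}\phi}\,\mathrm{d}P.$$
Multiplying by $\varepsilon^{L}_{iK}$, summing over $i$ and $K$, and using $(-1)^{s+1}(-1)^{s+1}=1$, the left-hand side of the target identity becomes
$$\int_{V}\sum^{\prime}_{|J|=t}\Bigg(\sum_{i,j}\Big(\sum^{\prime}_{|K|=t+1}\varepsilon^{L}_{iK}\varepsilon^{K}_{jJ}\Big)\,\overline{\delta_{j}\delta_{i}\phi}\Bigg)u_{I,J}\,\mathrm{d}P.$$
All sums here are finite: as in Remark \ref{230327r1}, $\varepsilon^{K}_{jJ}\neq0$ forces $j\in K$ and $J=K\setminus\{j\}$, while $\varepsilon^{L}_{iK}\neq0$ forces $i\in L$, so only finitely many indices contribute and every interchange of sum and integral is legitimate.

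It then remains to see that the inner sum over $(i,j)$ vanishes pointwise, which rests on two elementary facts. First, $\delta_{i}$ and $\delta_{j}$ commute on $C^{2}(V)$: since $\partial_{j}\overline{z_{i}}=0$ for all $i,j$ and ordinary second-order partials commute for $C^{2}$ functions, a direct expansion gives $\delta_{j}\delta_{i}\phi=\partial_{i}\partial_{j}\phi-\frac{\overline{z_{i}}}{2a_{i}^{2}}\partial_{j}\phi-\frac{\overline{z_{j}}}{2a_{j}^{2}}\partial_{i}\phi+\frac{\overline{z_{i}}\,\overline{z_{j}}}{4a_{i}^{2}a_{j}^{2}}\phi$, which is symmetric in $(i,j)$; hence so is $\overline{\delta_{j}\delta_{i}\phi}$. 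Second, the coefficient $\sum^{\prime}_{|K|=t+1}\varepsilon^{L}_{iK}\varepsilon^{K}_{jJ}$ is antisymmetric in $(i,j)$: for fixed $J$ the only possibly nonzero term corresponds to $K=\{j\}\cup J$, and then $\varepsilon^{L}_{iK}\varepsilon^{K}_{jJ}$ equals the sign of the permutation sorting the tuple $(i,j,j_{1},\ldots,j_{t})$ into the increasing multi-index $L$; swapping $i$ and $j$ is a single transposition of this tuple and so reverses the sign. Summing an antisymmetric coefficient against a symmetric quantity over all ordered pairs $(i,j)$ gives zero, whence the integral vanishes and the target identity holds.

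I expect the main obstacle to be the admissibility of $\delta_{i}\phi$ as a test function: for $\phi\in C^{\infty}_{0,F}(V)$ the form $\delta_{i}\phi$ need not satisfy the $F$-type bound defining $C^{\infty}_{0,F}(V)$, because it involves second-order partials of $\phi$, so one genuinely needs the enlarged test class $C^{1}_{0}(V)$ furnished by Proposition \ref{weak equality for more test functions}(1)—this is where Condition \ref{230424c1} is used. The remaining delicate point is the sign bookkeeping establishing the antisymmetry of $\sum^{\prime}_{|K|=t+1}\varepsilon^{L}_{iK}\varepsilon^{K}_{jJ}$, which should be recorded carefully but is otherwise routine.
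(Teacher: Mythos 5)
Your proof is correct, and it is essentially the argument the paper has in mind: the paper omits its own proof of Proposition \ref{Pre-exactness}, deferring to \cite[Lemma 2.6, p. 531]{YZ}, and that argument is exactly the one you carry out — feed $\delta_{i}\phi$ back into the weak equation defining $\overline{\partial}u$, use that $\delta_{j}\delta_{i}\phi$ is symmetric in $(i,j)$ (because $\partial_{j}\overline{z_{i}}=0$ and mixed partials of a $C^{2}$ function commute) while $\sum^{\prime}_{\left| K\right|=t+1}\varepsilon^{L}_{iK}\varepsilon^{K}_{jJ}$ is antisymmetric, and conclude that the pairing vanishes. Your sign bookkeeping is right, and your observation that all the index sums are finite (so no interchange of limit and integral needs justification) is exactly the point of Remark \ref{230327r1}.

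Two caveats are worth recording. First, your admissibility step for $\delta_{i}\phi$ runs through Proposition \ref{weak equality for more test functions}(1), whose hypotheses include Condition \ref{230424c1}, whereas the statement of Proposition \ref{Pre-exactness} carries no pseudo-convexity assumption; so what you have actually proved is the proposition on pseudo-convex domains. Within this paper that is harmless — every later use of the proposition (e.g.\ Proposition \ref{general Solving d-bar on V}) is made under Condition \ref{230424c1}, and the paper's only mechanism for enlarging the test class beyond $C^{\infty}_{0,F}(V)$ (needed precisely because $\partial_{i}\phi$ need not satisfy the $F$-bound, as you correctly diagnose) is itself conditional on pseudo-convexity — but you should either add the hypothesis explicitly or note that in the stated generality the admissibility of $\delta_{i}\phi$ must be justified by other means, as in the whole-space setting of \cite{YZ}. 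Second, a very minor omission: before applying Definition \ref{def for db} to $\overline{\partial}u$ you should record that $Tu\in L^{2}_{(s,t+1)}\left( V,w_{2}\right)\subset L^{2}_{(s,t+1)}\left( V,loc\right)$, which follows from the local boundedness of $w_{2}\in C^{2}_{F}\left( V\right)$ as in the conclusion (1) of Lemma \ref{approximation for st form}.
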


\begin{proof}
The proof is very similar to that of \cite[Lemma 2.6, p. 531]{YZ}, and therefore we omit the details.
\end{proof}

For any fixed $f=\sum\limits^{\prime }_{\left| I\right|  =s,\left| J\right|  =t+1} f_{I,J}dz_{I}\wedge d\overline{z_{J}}\in L^{2}_{(s,t+1)}\left(V,loc\right)$, $i\in\mathbb{N} $ and strictly increasing multi-indices $K$ and $L$ with $\left| K\right|  =s$ and $\left| L\right|  =t$, we write:
 \begin{equation}\label{gener111}
 f_{K,iL}\triangleq \sum\limits^{\prime }_{\left| J\right|  =t+1}\varepsilon^{J}_{iL}\cdot f_{K,J},\quad c_{K,iL}\triangleq \sum\limits^{\prime }_{\left| J\right|  =t+1}|\varepsilon^{J}_{iL}|\cdot c_{K,J},
 \end{equation}
where   $\varepsilon^{J}_{iL}$ and $c_{K,J}$ are given in (\ref{230204e3}) and (\ref{defnition of general st froms}), respectively.

 \begin{remark}\label{230329r1}
 By (\ref{230204e3}), it is easy to see that, for any $i\in\mathbb{N} $ and strictly increasing multi-indices $K$ and $L$ with $\left| K\right|  =s$ and $\left| L\right|  =t$, there is at most one non-zero term in each of the series $\sum\limits^{\prime }_{\left| J\right|  =t+1}\varepsilon^{J}_{iL}\cdot f_{K,J}$ and $\sum\limits^{\prime }_{\left| J\right|  =t+1}|\varepsilon^{J}_{iL}|\cdot c_{K,J}$ in (\ref{gener111}).
 \end{remark}
 We need the following technical result.
\begin{lemma}\label{T*formula}
Assume that $ f=\sum\limits^{\prime }_{\left| I\right|  =s,\left| J\right|  =t+1} f_{I,J}dz_{I}\wedge d\overline{z_{J}} \in D_{T^{\ast }}$, $\phi \in C^{1}_0\left( V\right)$, and
\begin{equation}\label{phi begin to D condition}
\int_{V} \left| \phi \right|^{2}  e^{-w_{1}}\,\mathrm{d}P<\infty \ \ \text{ and } \ \ \sum^{\infty }_{j=1} c_{K,jL}\int_{V} \left| \partial_{j} \phi \right|^{2}  e^{-w_{2}}\,\mathrm{d}P<\infty
\end{equation}
for some given strictly increasing multi-indices $K$ and $L$  with $\left| K\right|=s$ and $\left| L\right|=t$.
Then,
$$
c_{K,L}\int_{V} \left( T^{\ast }f\right)_{K,L}  \phi e^{-w_{1}}\,\mathrm{d}P=\left( -1\right)^{s}  \sum_{j=1}^{\infty} c_{K,jL} \int_{V}\partial_{j} \phi \cdot f_{K,jL}e^{-w_{2}}\,\mathrm{d}P.
$$
\end{lemma}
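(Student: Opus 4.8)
The plan is to feed the defining relation of the adjoint, $\left\langle Tu,f\right\rangle_{w_2}=\left\langle u,T^{\ast}f\right\rangle_{w_1}$ (valid for every $u\in D_T$ because $f\in D_{T^{\ast}}$; here $\left\langle\cdot,\cdot\right\rangle_{w_1}$ and $\left\langle\cdot,\cdot\right\rangle_{w_2}$ denote the inner products of $L^{2}_{(s,t)}(V,w_1)$ and $L^{2}_{(s,t+1)}(V,w_2)$), with the single judiciously chosen form
$$
u\triangleq\overline{\phi}\,\mathrm{d}z_K\wedge\mathrm{d}\overline{z_L},
$$
whose only nonzero component is $u_{K,L}=\overline{\phi}$. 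Since $\phi\in C^{1}_{0}(V)$, also $\overline{\phi}\in C^{1}_{0}(V)$ (same support, and its partial derivatives are the conjugates of those of $\phi$). Hence Remark \ref{230407r1} applies and gives $u\in D_{\overline{\partial}}$ with
$$
\overline{\partial}u=(-1)^{s}\sum^{\prime}_{|K'|=t+1}\sum^{\infty}_{i=1}\varepsilon^{K'}_{iL}\,\overline{\partial}_i\overline{\phi}\;\mathrm{d}z_K\wedge\mathrm{d}\overline{z_{K'}},
$$
so that $(Tu)_{K,K'}=(-1)^{s}\sum_i\varepsilon^{K'}_{iL}\,\overline{\partial_i\phi}$, where I use the elementary identity $\overline{\partial}_i\overline{\phi}=\overline{\partial_i\phi}$.

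First I would check $u\in D_T$, which is precisely what hypothesis (\ref{phi begin to D condition}) provides. The first bound gives $\|u\|^{2}_{L^{2}_{(s,t)}(V,w_1)}=c_{K,L}\int_V|\phi|^{2}e^{-w_1}\,\mathrm{d}P<\infty$. For the second, I reorganize the squared $L^{2}_{(s,t+1)}(V,w_2)$-norm of $\overline{\partial}u$: by Remark \ref{230329r1}, for each fixed $i$ there is at most one strictly increasing $K'$ (namely $\{i\}\cup L$) with $\varepsilon^{K'}_{iL}\neq 0$, so the sum over $K'$ reindexes to a sum over $i$, and with $c_{K,\{i\}\cup L}=c_{K,iL}$ from (\ref{gener111}) and $|\overline{\partial}_i\overline{\phi}|=|\partial_i\phi|$ this norm equals $\sum_j c_{K,jL}\int_V|\partial_j\phi|^{2}e^{-w_2}\,\mathrm{d}P<\infty$. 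Thus $\overline{\partial}u\in L^{2}_{(s,t+1)}(V,w_2)$ and $u\in D_T$.

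Then I would expand both sides of the adjoint identity. Its right-hand side is just $c_{K,L}\int_V\overline{\phi}\,\overline{(T^{\ast}f)_{K,L}}\,e^{-w_1}\,\mathrm{d}P$. For the left-hand side, writing out the $w_2$-inner product, grouping the finitely-many-nonzero terms by $i$, and interchanging summation with integration (justified by the Cauchy-Schwarz inequality together with $Tu,f\in L^{2}_{(s,t+1)}(V,w_2)$), I obtain
$$
(-1)^{s}\sum^{\infty}_{i=1}\int_V\overline{\partial_i\phi}\left(\sum^{\prime}_{|K'|=t+1}c_{K,K'}\,\varepsilon^{K'}_{iL}\,\overline{f_{K,K'}}\right)e^{-w_2}\,\mathrm{d}P.
$$
The inner parenthesis collapses to $c_{K,iL}\,\overline{f_{K,iL}}$: the only surviving $K'$ is $\{i\}\cup L$, and because $\varepsilon^{K'}_{iL}$ is real, (\ref{gener111}) gives $c_{K,\{i\}\cup L}\,\varepsilon^{\{i\}\cup L}_{iL}\,\overline{f_{K,\{i\}\cup L}}=c_{K,iL}\,\overline{f_{K,iL}}$. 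Equating the two expressions and taking complex conjugates (all the $c_{I,J}$ and $(-1)^{s}$ being real, and $w_1,w_2$ real-valued) yields exactly
$$
c_{K,L}\int_V(T^{\ast}f)_{K,L}\,\phi\,e^{-w_1}\,\mathrm{d}P=(-1)^{s}\sum^{\infty}_{j=1}c_{K,jL}\int_V\partial_j\phi\cdot f_{K,jL}\,e^{-w_2}\,\mathrm{d}P.
$$

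I expect the main difficulty to be bookkeeping rather than conceptual: one must track the index conventions carefully enough that the one-component test form $u$ reproduces precisely the combination $\sum_i c_{K,iL}\,\overline{\partial_i\phi}\,\overline{f_{K,iL}}$, and confirm that the reindexing $K'\leftrightarrow\{i\}\cup L$ is a bijection onto the relevant multi-indices. The analytic content---membership $u\in D_T$ and the interchange of limit operations---is entirely controlled by hypothesis (\ref{phi begin to D condition}) and Cauchy-Schwarz, so no new estimate is needed beyond what is assumed.
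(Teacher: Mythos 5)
Your proposal is correct and follows essentially the same route as the paper's own proof: you test the adjoint identity against the single-component form $\overline{\phi}\,\mathrm{d}z_K\wedge\mathrm{d}\overline{z_L}$ (the paper's $g$), verify membership in $D_T$ from hypothesis (\ref{phi begin to D condition}), and carry out the same $\varepsilon^{J}_{iL}$/$c_{K,iL}$ bookkeeping via Remark \ref{230329r1}. The only cosmetic differences are that you invoke Remark \ref{230407r1} (which the paper itself justifies by Corollaries \ref{integration by Parts or deltai} and \ref{density2}, the very argument its proof of this lemma spells out directly from Definition \ref{def for db}) and that your ordering of the inner-product slots forces a harmless final complex conjugation.
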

\begin{proof}
Let $g\triangleq \sum\limits^{\prime }_{\left| I_2\right|  =s,\left| L_2\right|  =t} g_{I_2,L_2}dz_{I_2}\wedge d\overline{z_{L_2}}$, where $g_{K,L}\triangleq \overline{\phi}$ and $g_{I_2,L_2}\triangleq 0$ for any other strictly increasing multi-indices $I_2$ and $L_2$ with $\left| I_2\right|=s$ and $\left| L_2\right|=t$.  Fix any $ \varphi \in C^{\infty }_{0,F}\left( V\right)$ and strictly increasing multi-indices $I_3$ and $J_3$ with $\left| I_3\right|=s$ and $\left| J_3\right|=t+1$. If $I_3\neq K$ or there does not exist $j_0\in\mathbb{N}$ such that $\{j_0\}\cup L=J_3$, then we have
$$
\left( -1\right)^{s+1}  \int_{V} \sum^{\infty }_{i=1} \sum^{\prime }_{\left| L_3\right|  =t} \varepsilon^{J_3}_{iL_3} g_{I_3,L_3}\overline{\delta_{i} \varphi }\,\mathrm{d}P=0.
$$
If $I_3=K$ and there exists $j_0\in\mathbb{N}$ such that $\{j_0\}\cup L=J_3$, then we have
$$
\begin{array}{ll}
\displaystyle \left( -1\right)^{s+1}  \int_{V} \sum^{\infty }_{i=1} \sum^{\prime }_{\left| L_3\right|  =t} \varepsilon^{J_3}_{iL_3} g_{K,L_3}\overline{\delta_{i} \varphi }\,\mathrm{d}P\\[3mm]
\displaystyle =\left( -1\right)^{s+1}  \int_{V} \varepsilon^{J_3}_{j_0L}\cdot \overline{\phi}\cdot\overline{\delta_{j_0} \varphi }\,\mathrm{d}P=\left( -1\right)^{s}  \int_{V} \varepsilon^{J_3}_{j_0L}\cdot \left(\overline{\partial_{j_0}}\overline{\phi}\right)\cdot\overline{ \varphi }\,\mathrm{d}P=\left( -1\right)^{s}  \int_{V}\sum_{j=1}^{\infty} \varepsilon^{J_3}_{jL}\cdot \left(\overline{\partial_{j}}\overline{\phi}\right)\cdot\overline{ \varphi }\,\mathrm{d}P,
\end{array}
$$
where the second equality follows from Corollary \ref{integration by Parts or deltai}.
Hence, by the condition \eqref{phi begin to D condition}, we see that $g\in L^{2}_{(s,t)}\left( V,w_{1}\right)$ and
$$
(-1)^s\sum^{\prime }_{\left| J\right|  =t+1} \sum_{j=1}^{\infty}\varepsilon^{J}_{jL}\overline{\partial_{j} }\overline{\phi} dz_{K}\wedge d\,\overline{z_{J}}\in L^{2}_{(s,t+1)}\left( V,w_{2}\right).
$$
Then according to  Definition \ref{def for db}, as well as Definition \ref{definition of T S}, we conclude that $g\in D_T$ and
$
Tg = (-1)^s\sum^{\prime }_{\left| J\right|  =t+1} \sum_{j=1}^{\infty}\varepsilon^{J}_{jL}\overline{\partial_{j} }\overline{\phi} dz_{K}\wedge d\,\overline{z_{J}}$.
By $(T^{\ast }f,g)_{L^2_{(s,t)}(V,w_1)}=
(f,Tg)_{L^2_{(s,t+1)}(V,w_2)}$, using the fact that $\varepsilon^{J}_{jL}=\varepsilon^{J}_{jL}|\varepsilon^{J}_{jL}|$ and noting Remark \ref{230329r1}, we have
$$
\begin{array}{ll}
\displaystyle
c_{K,L}\int_{V} \left( T^{\ast }f\right)_{K,L} \phi e^{-w_{1}}\,\mathrm{d}P
=\left( -1\right)^{s}  \sum^{\prime }_{\left| J\right|  =t+1} \sum_{j=1}^{\infty} c_{K,J}\varepsilon^{J}_{jL}\int_{V}f_{K,J}\cdot\overline{\overline{ \partial_{j}} \overline{\phi}} e^{-w_{2}}\,\mathrm{d}P\\[3mm]
\displaystyle =\left( -1\right)^{s}  \sum^{\prime }_{\left| J\right|  =t+1} \sum_{j=1}^{\infty} c_{K,J}\varepsilon^{J}_{jL}\int_{V}f_{K,J}\cdot\partial_{j} \phi e^{-w_{2}}\,\mathrm{d}P =\left( -1\right)^{s} \sum_{j=1}^{\infty} c_{K,jL}\int_{V}f_{K,jL}\cdot\partial_{j}\phi e^{-w_{2}}\,\mathrm{d}P,
\end{array}
$$
which completes the proof of Lemma \ref{T*formula}.
\end{proof}


Now we derive an explicit expression of $T^*$ (Recall (\ref{gener111}) for $f_{I,iL}$):
\begin{proposition}\label{general formula of T*}
Suppose that $f=\sum\limits^{\prime }_{\left| I\right|  =s,\left| J\right|  =t+1} f_{I,J}dz_{I}\wedge d\overline{z_{J}}\in L^{2}_{(s,t+1)}\left( V,w_{2}\right)$, where $f_{I,J}=0$ for all  strictly
increasing multi-indices $I$ and $J$ with $\max \left\{ I\bigcup J\right\}  >n_{0}$ for some $n_{0}\in \mathbb{N}$ and $f_{I,J}\in C^{1}_{0,F}\left( V\right)$ for all  strictly
increasing multi-indices $I$ and $J$ satisfying $\max \left\{ I\bigcup J\right\} \leqslant n_{0}$.
Then, $f\in D_{T^{\ast }}$ and
$$
T^{\ast }f= \left( -1\right)^{s+1}  e^{w_{1}-w_{2}} \sum^{\prime }_{\left| I\right|  =s,\left| L\right|  =t,\max \left\{ I\bigcup L\right\}  \leqslant n_{0}}\sum^{n_{0}}_{i=1} \frac{c_{I,iL}}{c_{I,L}}\cdot (\delta_{i} f_{I,iL}-f_{I,iL}\cdot \partial_{i} w_{2})dz_{I}\wedge d\overline{z_{L}}.
$$
\end{proposition}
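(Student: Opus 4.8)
The plan is to prove membership $f\in D_{T^{\ast}}$ and compute $T^{\ast}f$ in one stroke: I will write down the candidate $h$ given by the asserted right-hand side, namely the $(s,t)$-form with coefficients $h_{I,L}\triangleq(-1)^{s+1}e^{w_1-w_2}\sum_{i=1}^{n_0}\frac{c_{I,iL}}{c_{I,L}}\bigl(\delta_i f_{I,iL}-f_{I,iL}\,\partial_i w_2\bigr)$ (for $\max\{I\cup L\}\le n_0$, and $0$ otherwise), and then verify the defining adjoint identity $(Tu,f)_{L^2_{(s,t+1)}(V,w_2)}=(u,h)_{L^2_{(s,t)}(V,w_1)}$ for \emph{every} $u\in D_T$. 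Because $D_T$ is dense (Lemma \ref{densly defined closed}), once such an $h\in L^2_{(s,t)}(V,w_1)$ is exhibited, the definition of the adjoint yields directly $f\in D_{T^{\ast}}$ and $T^{\ast}f=h$, so no separate boundedness argument is needed.

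First I would fix $u=\sum^{\prime}_{|I|=s,|L|=t}u_{I,L}\,\mathrm{d}z_I\wedge\mathrm{d}\overline{z_L}\in D_T$ and expand
\[
(Tu,f)_{L^2_{(s,t+1)}(V,w_2)}=\sum^{\prime}_{|I|=s}\sum^{\prime}_{|K|=t+1}c_{I,K}\int_V(\overline{\partial}u)_{I,K}\,\overline{f_{I,K}}\,e^{-w_2}\,\mathrm{d}P.
\]
The key move is to rewrite each term through the weak characterization of $\overline{\partial}u$ in Proposition \ref{weak equality for more test functions}(1), applied with the test function $\phi=f_{I,K}e^{-w_2}$. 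This $\phi$ indeed lies in $C^1_0(V)$: since $f_{I,K}\in C^1_{0,F}(V)$ its support is $\stackrel{\circ}{\subset}V$, and on that support $w_2\in C^2_F(V)$ (hence $e^{-w_2}$) is bounded together with its first partial derivatives, so the product is bounded, $C^1$, and supported $\stackrel{\circ}{\subset}V$. Using the elementary identity $\delta_i(\psi e^{-w_2})=e^{-w_2}\bigl(\delta_i\psi-\psi\,\partial_i w_2\bigr)=e^{-w_2}\sigma_i\psi$ (recall (\ref{230419e11}) with $\varphi=w_2$), relation (\ref{230203e1}) converts $\int_V(\overline{\partial}u)_{I,K}\overline{f_{I,K}}e^{-w_2}\,\mathrm{d}P$ into $(-1)^{s+1}\int_V\sum_i\sum^{\prime}_{|J|=t}\varepsilon^K_{iJ}\,u_{I,J}\,e^{-w_2}\,\overline{\sigma_i f_{I,K}}\,\mathrm{d}P$.

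Next I would sum over $K$ and reorganize the multi-index sums. For fixed $I$, $i$, and $J$ with $|J|=t$, Remark \ref{230329r1} guarantees that at most one $K$ contributes, and the definitions (\ref{gener111}) collapse $\sum^{\prime}_{|K|=t+1}\varepsilon^K_{iJ}\,c_{I,K}\,\overline{\sigma_i f_{I,K}}$ to $c_{I,iJ}\,\overline{\sigma_i f_{I,iJ}}$ (using $\varepsilon^K_{iJ}=\varepsilon^K_{iJ}|\varepsilon^K_{iJ}|$ and linearity of $\sigma_i$). This gives
\[
(Tu,f)_{L^2_{(s,t+1)}(V,w_2)}=(-1)^{s+1}\sum^{\prime}_{|I|=s}\sum^{\prime}_{|J|=t}\sum_{i}c_{I,iJ}\int_V u_{I,J}\,\overline{\sigma_i f_{I,iJ}}\,e^{-w_2}\,\mathrm{d}P,
\]
and writing $e^{-w_2}=e^{w_1-w_2}e^{-w_1}$ and matching against $(u,h)_{L^2_{(s,t)}(V,w_1)}=\sum^{\prime}_{|I|=s}\sum^{\prime}_{|J|=t}c_{I,J}\int_V u_{I,J}\overline{h_{I,J}}e^{-w_1}\,\mathrm{d}P$ forces exactly the claimed coefficients $h_{I,J}$ upon conjugation (all of $(-1)^{s+1}$, $e^{w_1-w_2}$, $c_{I,iJ}/c_{I,J}$ being real). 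It remains to confirm $h\in L^2_{(s,t)}(V,w_1)$: each $h_{I,L}$ is supported in the finite union $\bigcup_{i\le n_0}\supp f_{I,iL}$, which is $\stackrel{\circ}{\subset}V$, and on that set $e^{w_1-w_2}$, $\delta_i f_{I,iL}$ and $f_{I,iL}\,\partial_i w_2$ are bounded (again by $w_1,w_2\in C^2_F(V)$ and $f_{I,iL}\in C^1_{0,F}(V)$), so $h_{I,L}\in L^2(V,e^{-w_1}P)$. As an alternative once $f\in D_{T^{\ast}}$ is secured, the formula can instead be extracted from Lemma \ref{T*formula} by integrating its right-hand side by parts via Theorem \ref{Gauss-Green Theorem} (which yields $\int_V(\partial_j\phi)g\,\mathrm{d}P=-\int_V\phi\,\delta_j g\,\mathrm{d}P$) and then stripping the arbitrary $\phi$ with Corollary \ref{Test functions}.

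The step I expect to require the most care is not analytic but organizational: (a) verifying that $f_{I,K}e^{-w_2}$ is a legitimate element of $C^1_0(V)$ so that the weak identity (\ref{230203e1}) may be invoked — this is precisely where the hypotheses $f_{I,J}\in C^1_{0,F}(V)$ and $w_2\in C^2_F(V)$ are essential — and (b) the bookkeeping that collapses the infinite multi-index summations into single terms with the correct signs $\varepsilon^K_{iJ}$ and normalizations $c_{I,iJ}/c_{I,J}$. Once the integration-by-parts identity and the index arithmetic of Remark \ref{230329r1} are in place the computation is routine, but the sign and normalization tracking is the delicate point to get right.
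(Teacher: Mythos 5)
Your proposal is correct and follows essentially the same route as the paper's own proof: you exhibit the asserted form as a candidate $h\in L^{2}_{(s,t)}(V,w_{1})$, verify $(Tu,f)_{L^{2}_{(s,t+1)}(V,w_{2})}=(u,h)_{L^{2}_{(s,t)}(V,w_{1})}$ for all $u\in D_{T}$ by invoking Proposition \ref{weak equality for more test functions}(1) with the test function $f_{I,K}e^{-w_{2}}\in C^{1}_{0}(V)$, use the identity $\delta_{i}(\psi e^{-w_{2}})=e^{-w_{2}}(\delta_{i}\psi-\psi\,\partial_{i}w_{2})$, and collapse the multi-index sums via Remark \ref{230329r1} and $\varepsilon^{K}_{iJ}=\varepsilon^{K}_{iJ}|\varepsilon^{K}_{iJ}|$ — exactly the paper's argument, with your support/boundedness checks supplying details the paper leaves implicit.
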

\begin{proof}
Let
$$
g\triangleq  \left( -1\right)^{s+1}  e^{w_{1}-w_{2}} \sum^{\prime }_{\left| I\right|  =s,\left| L\right|  =t,\max \left\{ I\bigcup L\right\}  \leqslant n_{0}}\sum^{n_{0}}_{i=1} \frac{c_{I,iL}}{c_{I,L}}\cdot(\delta_{i} f_{I,iL}-f_{I,iL}\cdot\partial_{i} w_{2})dz_{I}\wedge d\overline{z_{L}}.
$$
Then, $g\in L^{2}_{(s,t)}\left( V,w_{1}\right)$. We will prove that $\left( Tu,f\right)_{L^{2}_{(s,t+1)}\left( V,w_{2}\right)  }  =\left( u,g\right)_{L^{2}_{(s,t)}\left( V,w_{1}\right)  } $ for all $u\in D_{T}$, which implies that $f\in D_{T^*}$ and $T^*f=g$. Since $f_{I,J}e^{-w_{2}}\in C_0^1(V)$ for all strictly
increasing multi-indices $I$ and $J$, by the conclusion (1) of Proposition \ref{weak equality for more test functions}, using the equality $\varepsilon^{J}_{jL}=\varepsilon^{J}_{jL}|\varepsilon^{J}_{jL}|$ and recalling Remark \ref{230329r1}, we obtain that
$$
\begin{array}{ll}
\displaystyle
			\left( Tu,f\right)_{L^{2}_{(s,t+1)}\left( V,w_{2}\right)  }   =\sum^{\prime }_{\left| I\right|  =s} \sum^{\prime }_{\left| J\right|  =t+1} c_{I,J}\int_{V} \left( Tu\right)_{I,J}  \overline{f_{I,J}} e^{-w_{2}}\,\mathrm{d}P\\[3mm]
\displaystyle =\left( -1\right)^{s+1}  \sum^{\prime }_{\left| I\right|  =s,\left| J\right|  =t+1} \sum^{\prime }_{\left| L\right|  =t} \sum^{\infty }_{i=1} c_{I,J}\varepsilon^{J}_{iL} \int_{V} u_{I,L}\overline{\delta_{i} (f_{I,J}e^{-w_{2}})} \,\mathrm{d}P\\[3mm]
\displaystyle =\left( -1\right)^{s+1}  \sum^{\prime }_{\left| I\right|  =s,\left| J\right|  =t+1} \sum^{\prime }_{\left| L\right|  =t} \sum^{\infty }_{i=1} c_{I,J}\varepsilon^{J}_{iL} \int_{V} u_{I,L}e^{w_{1}-w_{2}}(\overline{\delta_{i} f_{I,J}-f_{I,J}\partial_{i} w_{2}}) e^{-w_{1}}\,\mathrm{d}P\\[3mm]
\displaystyle =\left( -1\right)^{s+1}  \sum^{\prime }_{\left| I\right|  =s} \sum^{\prime }_{\left| L\right|  =t} c_{I,L}\int_{V} u_{I,L}\sum^{\prime }_{\left| J\right|  =t+1} \sum^{\infty }_{i=1}\varepsilon^{J}_{iL}\cdot \frac{c_{I,J}}{c_{I,L}}\cdot e^{w_{1}-w_{2}}(\overline{\delta_{i} f_{I,J}-f_{I,J}\partial_{i} w_{2}} )e^{-w_{1}}\,\mathrm{d}P\\[3mm]
\displaystyle =\left( -1\right)^{s+1}  \sum^{\prime }_{\left| I\right|  =s} \sum^{\prime }_{\left| L\right|  =t} c_{I,L}\int_{V} u_{I,L} \sum^{\infty }_{i=1}  \frac{c_{I,iL}}{c_{I,L}}\cdot e^{w_{1}-w_{2}}(\overline{\delta_{i} f_{I,iL}-f_{I,iL}\partial_{i} w_{2}} )e^{-w_{1}}\,\mathrm{d}P.
\end{array}
$$
Meanwhile, since $f_{I,J}=0$ for all  strictly
increasing multi-indices $I$ and $J$ with $\max \left\{ I\cup J\right\}  >n_{0}$, and by noting the definition of $f_{I,iL}$ in (\ref{gener111}), we obtain that
$$
\begin{array}{ll}
\displaystyle \left( -1\right)^{s+1}  \sum^{\prime }_{\left| I\right|  =s} \sum^{\prime }_{\left| L\right|  =t} c_{I,L}\int_{V} u_{I,L} \sum^{\infty }_{i=1}  \frac{c_{I,iL}}{c_{I,L}}\cdot e^{w_{1}-w_{2}}(\overline{\delta_{i} f_{I,iL}-f_{I,iL}\partial_{i} w_{2}} )e^{-w_{1}}\,\mathrm{d}P\\[3mm]
\displaystyle =\left( -1\right)^{s+1} \sum^{\prime }_{\left| I\right|  =s,\left| L\right|  =t,\max \left\{ I\bigcup L\right\}  \leqslant n_{0}} c_{I,L}\int_{V} u_{I,L} \sum^{n_0 }_{i=1}  \frac{c_{I,iL}}{c_{I,L}}\cdot e^{w_{1}-w_{2}}(\overline{\delta_{i} f_{I,iL}-f_{I,iL}\partial_{i} w_{2}} )e^{-w_{1}}\,\mathrm{d}P\\[3mm]
\displaystyle =\left( u,g\right)_{L^{2}_{(s,t)}\left( V,w_{1}\right)  }.
\end{array}
$$
Thus, the proof of Proposition \ref{general formula of T*} is completed.
\end{proof}

The following proposition will be useful in the sequel.
\begin{proposition}\label{support argument}
Under Condition \ref{230424c1}, the following results hold.
	\begin{itemize}
		\item[$(1)$] Suppose that $f\in D_{T^{\ast}}$, $\supp f \stackrel{\circ}{\subset} V_{r}^{o}$ for some $r\in(0,+\infty)$, and $\sup\limits_{i\in\mathbb{N}}c_{I,iL}<\infty$ for any strictly increasing multi-indices $I$ and $L$ with $\left| I\right|=s$ and $\left| L\right|=t$ (See (\ref{gener111}) for $ c_{I,iL}$).
Then $\supp (T^{\ast}f) \stackrel{\circ}{\subset} V_{r}^{o}$;
		\item[$(2)$] If $f\in D_{T}$ and $\supp f \stackrel{\circ}{\subset} V_{r}^{o}$ for some $r\in(0,+\infty)$, then $\supp (Tf)  \stackrel{\circ}{\subset} V_{r}^{o}$.
	\end{itemize}
\end{proposition}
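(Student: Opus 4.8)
The plan is to deduce both assertions from the stronger, weight-free inclusions $\supp(T^{\ast}f)\subset\supp f$ and $\supp(Tf)\subset\supp f$, and then to use the elementary fact — immediate from Definition \ref{def of bounded contained} — that any subset of a set uniformly included in $V_{r}^{o}$ is itself uniformly included in $V_{r}^{o}$. Hence it suffices to prove that $T^{\ast}$ and $T=\overline{\partial}$ do not enlarge supports, i.e. that each is a \emph{local} operator. The guiding observation is that if $\textbf{z}_{0}\notin\supp f=\overline{\bigcup^{\prime}_{|I|=s,|J|=t}\supp f_{I,J}}$, then there is a ball $B_{\rho}(\textbf{z}_{0})\subset V$ disjoint from every $\supp f_{I,J}$; on such a ball each $f_{I,J}$ vanishes almost everywhere with respect to $P$, by the characterization of support in Definition \ref{230124def1} together with Corollary \ref{Test functions}. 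I will then show that the same ball witnesses $\textbf{z}_{0}\notin\supp(Tf)$, resp. $\textbf{z}_{0}\notin\supp(T^{\ast}f)$.

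I would treat assertion (2) first, as it is the more direct. Fix $\textbf{z}_{0}\in V\setminus\supp f$ and a ball $B_{\rho}(\textbf{z}_{0})\subset V$ as above. Any $\phi\in C^{\infty}_{0,F^{\infty}}(B_{\rho}(\textbf{z}_{0}))$, extended by zero, lies in $C^{\infty}_{0,F}(V)$, so for each admissible pair $I,K$ the defining identity \eqref{230204e5} gives $\int_{V}(Tf)_{I,K}\bar{\phi}\,\mathrm{d}P=(-1)^{s+1}\int_{V}\sum_{i=1}^{\infty}\sum^{\prime}_{|J|=t}\varepsilon^{K}_{iJ}f_{I,J}\,\overline{\delta_{i}\phi}\,\mathrm{d}P$. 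By Remark \ref{230327r1} this sum has only finitely many nonzero terms, and since $\delta_{i}\phi=\partial_{i}\phi-\frac{\overline{z_{i}}}{2a_{i}^{2}}\phi$ is supported in $\supp\phi\subset B_{\rho}(\textbf{z}_{0})$, where every $f_{I,J}$ vanishes, the integrand is zero almost everywhere. Thus $\int_{B_{\rho}(\textbf{z}_{0})}(Tf)_{I,K}\bar{\phi}\,\mathrm{d}P=0$ for all such $\phi$ and all $I,K$; as the ball is independent of $I,K$, it is disjoint from every $\supp(Tf)_{I,K}$, whence $\textbf{z}_{0}\notin\supp(Tf)$. This proves $\supp(Tf)\subset\supp f$.

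For assertion (1) I would run the same scheme through the explicit weak formula of Lemma \ref{T*formula}. With $\textbf{z}_{0}$ and $B_{\rho}(\textbf{z}_{0})$ as above, fix $K,L$ and an arbitrary $\psi\in C^{\infty}_{0,F^{\infty}}(B_{\rho}(\textbf{z}_{0}))$, and apply Lemma \ref{T*formula} to the test function $\phi\triangleq\overline{\psi}\,e^{w_{1}}$. Since $e^{w_{1}}$ is positive, $\supp\phi=\supp\psi\stackrel{\circ}{\subset}V$, and as $w_{1}\in C^{2}_{F}(V)$ the function $e^{w_{1}}$ is bounded and $C^{1}$ on $\supp\psi$, so $\phi\in C^{1}_{0}(V)$; moreover $\phi e^{-w_{1}}=\overline{\psi}$, so the left-hand side of the formula equals $c_{K,L}\int_{V}(T^{\ast}f)_{K,L}\overline{\psi}\,\mathrm{d}P$. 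On the right-hand side the factor $\partial_{j}\phi$ is supported in $\supp\psi\subset B_{\rho}(\textbf{z}_{0})$, where each $f_{K,jL}=\sum^{\prime}_{|J|=t+1}\varepsilon^{J}_{jL}f_{K,J}$ vanishes; hence every term, and so the whole right-hand side, is zero. Therefore $\int_{B_{\rho}(\textbf{z}_{0})}(T^{\ast}f)_{K,L}\overline{\psi}\,\mathrm{d}P=0$ for all such $\psi$ and all $K,L$, giving $\textbf{z}_{0}\notin\supp(T^{\ast}f)$ and hence $\supp(T^{\ast}f)\subset\supp f$.

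The only delicate point — and the main obstacle — is verifying that $\phi=\overline{\psi}\,e^{w_{1}}$ satisfies the hypotheses \eqref{phi begin to D condition} of Lemma \ref{T*formula}, in particular the series bound $\sum_{j=1}^{\infty}c_{K,jL}\int_{V}|\partial_{j}\phi|^{2}e^{-w_{2}}\,\mathrm{d}P<\infty$. Expanding $\partial_{j}\phi=(\partial_{j}\overline{\psi})e^{w_{1}}+\overline{\psi}\,e^{w_{1}}\partial_{j}w_{1}$ and estimating $|\partial_{j}\phi|^{2}$, the integrand is supported in $\supp\psi$, on which $e^{2w_{1}-w_{2}}$ is bounded and both $\sum_{j}|\partial_{j}\psi|^{2}$ and $\sum_{j}|\partial_{j}w_{1}|^{2}$ are finite because $\psi\in C^{\infty}_{0,F^{\infty}}(V)\subset C^{1}_{F}(V)$ and $w_{1}\in C^{2}_{F}(V)$. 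The series then converges \emph{precisely} because $\sup_{i}c_{K,iL}<\infty$, which is exactly the standing hypothesis of part (1); this is the unique place where that assumption enters. Combining the two inclusions just obtained with $\supp f\stackrel{\circ}{\subset}V_{r}^{o}$ then yields both conclusions.
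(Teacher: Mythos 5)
Your proposal is correct, and it actually establishes something sharper than the paper does, by a recognizably different route. The paper never proves the weight-free locality $\supp(T^{\ast}f)\subset\supp f$: instead it fattens $\supp f$ into a collar $U=\{\textbf{z}\in V_{r}^{o}:\ d(\textbf{z},\supp f)<\tfrac12 d(\supp f,\partial V_{r}^{o})\}$, shows by a triangle-inequality argument and Lemma \ref{221018lem1} that $\overline{U}\stackrel{\circ}{\subset}V_{r}^{o}$, tests in Lemma \ref{T*formula} against \emph{unweighted} $\phi\in C^{\infty}_{0,F}(V\setminus\overline{U})$ (the right-hand side vanishes because $f$ vanishes a.e.\ off $\supp f\subset U$), and then applies the density result Corollary \ref{density2} on the open set $V\setminus\overline{U}$ with the measure $e^{-w_{1}}P$ to conclude $(T^{\ast}f)_{I,L}=0$ a.e.\ there, hence $\supp(T^{\ast}f)\subset\overline{U}\stackrel{\circ}{\subset}V_{r}^{o}$. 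You instead argue locally around each point off $\supp f$, and your choice of test function $\phi=\overline{\psi}\,e^{w_{1}}$ is exactly what is needed to cancel the weight $e^{-w_{1}}$ on the left of Lemma \ref{T*formula}, so that the conclusion is precisely the unweighted orthogonality demanded by Definition \ref{230124def1}; this gives $\supp(T^{\ast}f)\subset\supp f$ (and $\supp(Tf)\subset\supp f$ via \eqref{230204e5}), after which the monotonicity of $\stackrel{\circ}{\subset}$ under inclusion — which is indeed immediate from Definition \ref{def of bounded contained} — finishes. Both arguments run on the same engine (Lemma \ref{T*formula} for part (1), the weak identity \eqref{230204e5} for part (2)) and both consume the hypothesis $\sup_{i}c_{I,iL}<\infty$ in exactly the same place, namely to make the series in \eqref{phi begin to D condition} converge. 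What your version buys is the stronger locality statement and the elimination of the metric collar construction; what the paper's version buys is a single global application of Corollary \ref{density2} in place of your pointwise bookkeeping, so it needs neither the weighted test functions nor the one step you leave slightly implicit: passing from $\int_{B_{\rho}(\textbf{z}_{0})}(T^{\ast}f)_{I,L}\overline{\psi}\,\mathrm{d}P=0$ for all $\psi$ to the assertion that the \emph{whole ball}, not just $\textbf{z}_{0}$, is disjoint from $\supp(T^{\ast}f)_{I,L}$ (a sub-ball argument, or Corollary \ref{Test functions} exactly as you already invoked it for $f$ itself, closes this routinely). With that detail spelled out, your proof is complete and stands as a legitimate, somewhat cleaner alternative to the published one.
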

\begin{proof}
(1) Without loss of generality, we assume that $f\not\equiv0$. Hence $\supp f\not=\emptyset$. By Lemma \ref{221018lem1}, $d\left( \supp f,\partial V^{o}_{r}\right) >0$. Let $U\triangleq\left\{ \textbf{z}\in V^{o}_{r}:\;d\left( \textbf{z},\supp f\right)  <\frac{1}{2} d\left( \supp f,\partial V^{o}_{r}\right)  \right\}$.  We claim that $d\left( U,\partial V^{o}_{r}\right)  \not=0$. Otherwise, we could find $\{\textbf{z}^{1 }_{n}\}_{n=1}^{\infty}\subset U$ and $\{\textbf{z}^{2 }_{n}\}_{n=1}^{\infty}\subset \partial V^{o}_{r} $ so that $\lim\limits_{n\rightarrow \infty } d\left( \textbf{z}^{1  }_{n},\textbf{z}^{2  }_{n}\right)  =0.$ Then, there exists $n_0\in\mathbb{N}$ such that for all $n\geqslant n_0$, it holds that $d\left( \textbf{z}^{1 }_{n},\textbf{z}^{2 }_{n}\right)<\frac{1}{2} d\left( \supp f,\partial V^{o}_{r}\right)$, and hence
$$
 d\left( \textbf{z}^{1  }_{n},\supp f\right)  \geqslant d\left( \textbf{z}^{2  }_{n},\supp f\right)  -d\left( \textbf{z}^{1  }_{n},\textbf{z}^{2  }_{n}\right)
 \geqslant d\left( \partial V^{o}_{r},\supp f\right)  -d\left( \textbf{z}^{1}_{n},\textbf{z}^{2  }_{n}\right)
>\frac{1}{2} d\left(\supp f,\partial V^{o}_{r}\right),
$$
which leads to a contradiction. Thus $ d\left( U,\partial V^{o}_{r}\right)  >0$. Hence $ d\left( \overline{U},\partial V^{o}_{r}\right)  >0$, and by  Lemma \ref{221018lem1} again, $\overline{U}\stackrel{\circ}\subset V^{o}_{r}$. Further, for all strictly increasing multi-indices $I$ and $L$ with $\left| I\right|  =s$ and $\left| L\right|  =t$ and $ \phi \in C^{\infty }_{0,F}\left( V\backslash \overline{U}\right)$ (which can be viewed as a subset of $C^{1 }_{0,F}\left( V\right)$ by the zero extension to $\overline{U}$), noting that
$$
\int_{V} \left| \phi \right|^{2}  e^{-w_{1}}\,\mathrm{d}P<\infty \text{ and } \sum^{\infty }_{j=1} c_{I,jL}\int_{V} \left| \partial_{j} \phi \right|^{2}  e^{-w_{2}}\,\mathrm{d}P\leqslant \sup_{i\in\mathbb{N}}c_{I,iL} \sum^{\infty }_{j=1}\int_{V} \left| \partial_{j} \phi \right|^{2}  e^{-w_{2}}\,\mathrm{d}P<\infty ,
$$
by Lemma \ref{T*formula}, we have  (See (\ref{gener111}) for $ f_{I,jL}$)
 $$
 c_{I,L}\int_{V\backslash\overline{U}  } \left( T^{\ast }f\right)_{I,L}  \phi e^{-w_{1}}\,\mathrm{d}P=\left( -1\right)^{s}  \sum^{\infty }_{j=1} \int_{V} c_{I,jL}f_{I,jL}\partial_{j} \phi e^{-w_{2}}\,\mathrm{d}P=0.
 $$
By Corollary \ref{density2}, we have $\left( T^{\ast }f\right)_{I,L}  =0$ almost everywhere on $V/\overline{U}$ with respect to $P$. Therefore, $\supp T^{\ast}f  \subset \overline{U}\stackrel{\circ}{\subset} V_{r}^{o}$.

\medskip

(2) Similarly to the above and the proof of the conclusion (1) of Proposition \ref{weak equality for more test functions}, we can show the conclusion (2) of Proposition \ref{support argument}.
\end{proof}

Combining Lemma \ref{T*formula} and Proposition \ref{support argument}, we have the following variant of Lemma \ref{T*formula}.
\begin{corollary}\label{weak T*formula}
Assume that Condition \ref{230424c1} holds, $ f=\sum\limits^{\prime }_{\left| I\right|  =s,\left| J\right|  =t+1} f_{I,J}dz_{I}\wedge d\overline{z_{J}} \in D_{T^{\ast }}$ with $\supp f \stackrel{\circ}{\subset} V$,  $\sup\limits_{i\in\mathbb{N}}c_{I,iL}<\infty$, $\phi \in C^{1}_0\left( V\right)$, and for all $E\in \mathscr{B}(V)$ with $E \stackrel{\circ}{\subset}  V$,
\begin{equation}\label{phi begin to D condition'}
\int_{E} \left| \phi \right|^{2}  e^{-w_{1}}\,\mathrm{d}P<\infty \text{ and } \sum^{\infty }_{j=1} c_{I,jL}\int_{E} \left| \partial_{j} \phi \right|^{2}  e^{-w_{2}}\,\mathrm{d}P<\infty
\end{equation}
hold for some strictly increasing multi-indices $I$ and $L$ with $\left| I\right|=s$ and $\left| L\right|=t$.
Then,
\begin{equation}\label{230221e1}
c_{I,L}\int_{V} \left( T^{\ast }f\right)_{I,L}  \phi e^{-w_{1}}\,\mathrm{d}P=\left( -1\right)^{s}  \sum_{j=1}^{\infty} c_{I,jL} \int_{V}\partial_{j} \phi \cdot f_{I,jL}e^{-w_{2}}\,\mathrm{d}P.
\end{equation}
\end{corollary}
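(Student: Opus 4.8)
The plan is to reduce the claim to the already-established Lemma \ref{T*formula} by multiplying the test function $\phi$ by a cut-off built from the plurisubharmonic exhaustion function $\eta$, so that the product satisfies the \emph{global} integrability hypotheses \eqref{phi begin to D condition}, and then to strip the cut-off off again using support information. First I would dispose of the trivial case $f\equiv 0$ and then locate the supports. Since $\supp f\stackrel{\circ}{\subset}V$, Proposition \ref{properties on pseudo-convex domain}(3) furnishes $\tau_1\in\mathbb{R}$ with $\supp f\stackrel{\circ}{\subset}V^o_{\tau_1}$, and the hypothesis $\sup_{i}c_{I,iL}<\infty$ lets me invoke (the componentwise argument behind) Proposition \ref{support argument}(1) to obtain $\supp (T^{\ast}f)_{I,L}\stackrel{\circ}{\subset}V^o_{\tau_1}$ as well. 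Fixing $\tau_1<\tau_1'<\tau_2$, I choose $\psi\in C^{\infty}(\mathbb{R})$ with $0\le\psi\le1$, $\psi=1$ on $(-\infty,\tau_1']$ and $\psi=0$ on $[\tau_2,+\infty)$, and set $\Psi\triangleq\psi(\eta)$. Then $\Psi\in C^{\infty}(V)$, $\supp\Psi\subset V_{\tau_2}\stackrel{\circ}{\subset}V$, $\Psi\equiv1$ on $V_{\tau_1'}\supset\supp f\cup\supp (T^{\ast}f)_{I,L}$, and $\partial_j\Psi=\psi'(\eta)\,\partial_j\eta$ is supported in $\{\tau_1'\le\eta\le\tau_2\}$.

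The technical crux is to check that $\phi\Psi\in C^{1}_{0}(V)$ satisfies the global conditions \eqref{phi begin to D condition} required by Lemma \ref{T*formula}. Since $\phi\in C^{1}_{0}(V)$ and $\Psi$ is smooth, bounded with support $\stackrel{\circ}{\subset}V$, membership $\phi\Psi\in C^{1}_{0}(V)$ is immediate. Because $\supp(\phi\Psi)\subset E\triangleq V_{\tau_2}\stackrel{\circ}{\subset}V$, the first condition follows from $|\Psi|\le1$ and the local bound \eqref{phi begin to D condition'} applied on $E$. For the second, I expand $\partial_j(\phi\Psi)=\Psi\,\partial_j\phi+\phi\,\partial_j\Psi$ and estimate the two resulting sums separately: the $\Psi\,\partial_j\phi$ part is controlled by $\sum_j c_{I,jL}\int_E|\partial_j\phi|^2e^{-w_2}\,\mathrm{d}P<\infty$ via \eqref{phi begin to D condition'}, while the $\phi\,\partial_j\Psi$ part is bounded, using $\partial_j\Psi=\psi'(\eta)\partial_j\eta$, by $(\sup_j c_{I,jL})\,(\sup_{\mathbb{R}}|\psi'|^2)\,(\sup_V|\phi|^2)\int_E\big(\sum_j|\partial_j\eta|^2\big)e^{-w_2}\,\mathrm{d}P$, which is finite because $\sup_j c_{I,jL}<\infty$, because $\eta\in C^{\infty}_{F}(V)\subset C^{1}_{F}(V)$ makes $\sum_j|\partial_j\eta|^2$ bounded on $E$, and because $w_2\in C^{2}_{F}(V)\subset C^{1}_{F}(V)$ is bounded on $E$ so that $e^{-w_2}$ is $P$-integrable over $E$.

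With $\phi\Psi$ admissible, Lemma \ref{T*formula} (with $K=I$) gives
\[
c_{I,L}\int_{V}(T^{\ast}f)_{I,L}\,(\phi\Psi)\,e^{-w_1}\,\mathrm{d}P=(-1)^s\sum_{j=1}^{\infty} c_{I,jL}\int_V\partial_j(\phi\Psi)\,f_{I,jL}\,e^{-w_2}\,\mathrm{d}P.
\]
It remains to remove $\Psi$ from both sides. On the left, $(\Psi-1)(T^{\ast}f)_{I,L}=0$ almost everywhere because $\Psi=1$ on $V_{\tau_1'}\supset\supp (T^{\ast}f)_{I,L}$, so the left-hand side equals $c_{I,L}\int_V(T^{\ast}f)_{I,L}\,\phi\,e^{-w_1}\,\mathrm{d}P$. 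On the right, substituting $\partial_j(\phi\Psi)=\Psi\partial_j\phi+\phi\partial_j\Psi$, the cross term vanishes termwise: $\partial_j\Psi\neq0$ forces $\eta\ge\tau_1'>\tau_1$, i.e.\ one is outside $V_{\tau_1}\supset\supp f_{I,jL}$, whence $\partial_j\Psi\,f_{I,jL}=0$ almost everywhere; and in the remaining term $\Psi\,\partial_j\phi\,f_{I,jL}=\partial_j\phi\,f_{I,jL}$ since $\Psi=1$ on $\supp f_{I,jL}\subset V_{\tau_1}\subset V_{\tau_1'}$. This yields exactly \eqref{230221e1}.

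The main obstacle I anticipate is the second half of the integrability verification in the middle paragraph: controlling the cross term $\phi\,\partial_j\Psi$ is precisely where the two extra hypotheses of the corollary, namely $\sup_i c_{I,iL}<\infty$ and $\supp f\stackrel{\circ}{\subset}V$ (the latter through the confining level set $V^o_{\tau_1}$), are genuinely used, and it is the step that fails without them. The rest is bookkeeping with supports, which must be carried out carefully but presents no conceptual difficulty; the single point I would state cleanly is that the transition region $\{\tau_1'\le\eta\le\tau_2\}$ of the cut-off is disjoint (up to $P$-null sets) from the supports of both $f_{I,jL}$ and $(T^{\ast}f)_{I,L}$, which is what makes the removal of $\Psi$ legitimate.
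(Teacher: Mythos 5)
Your proposal is correct and follows essentially the same route as the paper's proof: both construct a cut-off $\Phi=h(\eta)$ (your $\Psi=\psi(\eta)$) from the plurisubharmonic exhaustion function, verify that $\phi\cdot\Phi$ satisfies the global hypotheses \eqref{phi begin to D condition} of Lemma \ref{T*formula} using $\sup_{i}c_{I,iL}<\infty$ together with the local bounds coming from $\eta\in C^{\infty}_{F}(V)$ and $w_2\in C^{2}_{F}(V)$, apply that lemma, and then remove the cut-off on both sides via $\supp f\stackrel{\circ}{\subset}V_r^o$ and Proposition \ref{support argument}(1) for $\supp(T^{\ast}f)$. The only cosmetic difference is that you strip $\Psi$ by pointwise almost-everywhere vanishing of the cross terms, while the paper equates the integrals over $V$ and $V_r^o$ directly; these are the same argument.
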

\begin{proof}
Recall that $V$ is a pseudo-convex domain in $\ell^{2}$ and $\eta(\in C_F^\infty(U))$ is a plurisubharmonic exhaustion function on $V$ (See Condition \ref{230424c1}). By the conclusion (3) of Proposition \ref{properties on pseudo-convex domain}, we may find $r\in(0,+\infty)$ such that $\supp f \stackrel{\circ}{\subset} V_r^o$. Choose $h\in C^{\infty}(\mathbb{R})$ such that $0\leqslant h(\cdot)\leqslant 1$, $h(x)=0$ for all $x\geqslant r+1$ and $h(x)=1$ for all $x\leqslant r$. Let $\Phi\triangleq h(\eta)$. Then $\Phi\in C_{0,F}^{\infty}(V)$. By (\ref{phi begin to D condition'}), we see that
$$
\int_{V} \left| \phi \cdot\Phi \right|^{2}  e^{-w_{1}}\,\mathrm{d}P=\int_{V_{r+1}^o} \left| \phi \cdot\Phi \right|^{2}  e^{-w_{1}}\,\mathrm{d}P
\leqslant \int_{V_{r+1}^o} \left| \phi \right|^{2}  e^{-w_{1}}\,\mathrm{d}P<\infty
$$
and
$$
\begin{array}{ll}
\displaystyle \sum^{\infty }_{j=1} c_{I,jL}\int_{V} \left| \partial_{j} (\phi \cdot \Phi )\right|^{2}  e^{-w_{2}}\,\mathrm{d}P \\[3mm]
\displaystyle =\sum^{\infty }_{j=1} c_{I,jL}\int_{V} \left| \Phi\cdot\partial_{j}  \phi +\phi \cdot \partial_{j} \Phi \right|^{2}  e^{-w_{2}}\,\mathrm{d}P \\[3mm]
\displaystyle \leqslant 2\sum^{\infty }_{j=1} c_{I,jL}\int_{V} \left| \Phi\cdot\partial_{j}  \phi  \right|^{2}  e^{-w_{2}}\,\mathrm{d}P+2\sum^{\infty }_{j=1} c_{I,jL}\int_{V} \left|  \phi \cdot \partial_{j} \Phi \right|^{2}  e^{-w_{2}}\,\mathrm{d}P\\[3mm]
\displaystyle =2\sum^{\infty }_{j=1} c_{I,jL}\int_{V_{r+1}^o} \left| \Phi\cdot\partial_{j}  \phi  \right|^{2}  e^{-w_{2}}\,\mathrm{d}P+2\sum^{\infty }_{j=1} c_{I,jL}\int_{V_{r+1}^o} \left|  \phi \cdot \partial_{j} \Phi \right|^{2}  e^{-w_{2}}\,\mathrm{d}P\\[3mm]
\displaystyle \leqslant 2\sum^{\infty }_{j=1} c_{I,jL}\int_{V_{r+1}^o} \left|\partial_{j}  \phi  \right|^{2}  e^{-w_{2}}\,\mathrm{d}P+2 \cdot \left(\sup_{j\in\mathbb{N}} c_{I,jL} \right)\cdot \left(\sup_{V_{r+1}^o}\sum^{\infty }_{j=1}\left|\partial_{j} \Phi \right|^{2}\right)\cdot\int_{V_{r+1}^o} \left|  \phi  \right|^{2}  e^{-w_{2}}\,\mathrm{d}P<\infty.
\end{array}
$$
Thus $\phi\cdot \Phi$ satisfies the condition \eqref{phi begin to D condition} in Lemma \ref{T*formula}, and hence
$$
c_{I,L}\int_{V} \left( T^{\ast }f\right)_{I,L}  \phi\cdot \Phi \cdot e^{-w_{1}}\,\mathrm{d}P=\left( -1\right)^{s}  \sum_{j=1}^{\infty} c_{I,jL} \int_{V}\partial_{j}( \phi\cdot \Phi)\cdot f_{I,jL}e^{-w_{2}}\,\mathrm{d}P.
$$
Since $\supp f \stackrel{\circ}{\subset} V_r^o$ and $\sup\limits_{i\in\mathbb{N}}c_{I,iL}<\infty$, by the conclusion (1) of Proposition \ref{support argument}, we see that $\supp T^*f \stackrel{\circ}{\subset} V_r^o$. Then,
$$
\begin{array}{ll}
\displaystyle
c_{I,L}\int_{V} \left( T^{\ast }f\right)_{I,L}  \phi e^{-w_{1}}\,\mathrm{d}P
= c_{I,L}\int_{V_r^o} \left( T^{\ast }f\right)_{I,L}  \phi e^{-w_{1}}\,\mathrm{d}P\\[3mm]
\displaystyle = c_{I,L}\int_{V_r^o} \left( T^{\ast }f\right)_{I,L}  \phi\cdot \Phi \cdot e^{-w_{1}}\,\mathrm{d}P= c_{I,L}\int_{V} \left( T^{\ast }f\right)_{I,L}  \phi\cdot \Phi \cdot e^{-w_{1}}\,\mathrm{d}P,
\end{array}
$$
and similarly $\left( -1\right)^{s}  \sum\limits_{j=1}^{\infty} c_{I,jL} \int_{V}\partial_{j}( \phi\cdot \Phi)\cdot f_{I,jL}e^{-w_{2}}\,\mathrm{d}P
=
\left( -1\right)^{s}  \sum\limits_{j=1}^{\infty} c_{I,jL} \int_{V}(\partial_{j} \phi )\cdot f_{I,jL}e^{-w_{2}}\,\mathrm{d}P$.
Thus we obtain the desired equality (\ref{230221e1}).
This completes the proof of Corollary \ref{weak T*formula}.
\end{proof}

To end this section, we put
\begin{equation}\label{230419e21}
\mathscr{M}\triangleq\left\{ \mathfrak{m}\in C^{1}_b\left( V\right)  :\;\sup_{V}  \bigg(\sum^{\infty }_{i=1}  (|\partial_{i} \mathfrak{m} |^{2}+ | \overline{\partial_{i} } \mathfrak{m}|^{2} )\cdot e^{w_{1}-w_{2}} \bigg) <\infty \right\}.
\end{equation}
Also, we introduce the following assumption (Recall (\ref{gener111}) for $c_{I,iJ}$).
\begin{condition}\label{230423ass1}
$\displaystyle c_1^{s,t}\triangleq \sup_{|I|=s,|J|=t,i\in\mathbb{N}}\frac{ c_{I,iJ} }{c_{I,J}}<\infty$ for all $s,t\in\mathbb{N}_0$.
\end{condition}

The following result will be useful later.
\begin{proposition}\label{general multiplitier}
Under Conditions \ref{230424c1} and \ref{230423ass1}, for any $\mathfrak{m}\in \mathscr{M}$, it holds that
\begin{itemize}
		\item[$(1)$]  If $f\in D_{T}$, then $\mathfrak{m}\cdot f\in D_{T}$ and $T\left( \mathfrak{m}\cdot f\right)  =\mathfrak{m}\cdot (Tf)+(T\mathfrak{m}) \wedge f,$ where
$$
(T\mathfrak{m})\wedge f\triangleq \left( -1\right)^{s}  \sum^{\prime }_{\left| I\right|  =s} \sum^{\prime }_{\left| K\right|  =t+1} \left(\sum^{\infty }_{i=1} \sum^{\prime }_{\left| J\right|  =t} \varepsilon^{K}_{iJ} f_{I,J}\overline{\partial_{i} } \mathfrak{m} \right)dz_{I}\wedge d\overline{z_{K}};
$$

\item[$(2)$] If $g\in D_{T^{\ast }}$, then $\mathfrak{m} \cdot g\in D_{{T}^{\ast }}$.
\end{itemize}
\end{proposition}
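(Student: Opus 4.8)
The plan is to establish (1) by directly verifying the weak definition of $\overline{\partial}$ from Definition \ref{def for db}, and then to deduce (2) from (1) by a duality argument. For (1), fix $f=\sum'_{|I|=s}\sum'_{|J|=t}f_{I,J}\,dz_I\wedge d\overline{z_J}\in D_T$ and take as candidate
$$
g\triangleq \mathfrak{m}\cdot(Tf)+(T\mathfrak{m})\wedge f .
$$
Since $\mathfrak{m}\in C^1_b(V)$ is bounded, $\mathfrak{m}\cdot f\in L^2_{(s,t)}(V,w_1)$ and $\mathfrak{m}\cdot(Tf)\in L^2_{(s,t+1)}(V,w_2)$. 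The algebraic engine is an elementary product rule: for $\phi\in C^\infty_{0,F}(V)$ one has $\overline{\mathfrak{m}}\,\phi\in C^1_0(V)$, and a direct computation from (\ref{eq2301161}) gives $\delta_i(\overline{\mathfrak{m}}\,\phi)=\overline{\mathfrak{m}}\,\delta_i\phi+(\partial_i\overline{\mathfrak{m}})\phi$; taking complex conjugates and using $\overline{\partial_i\overline{\mathfrak{m}}}=\overline{\partial_i}\mathfrak{m}$ yields
$$
\mathfrak{m}\,\overline{\delta_i\phi}=\overline{\delta_i(\overline{\mathfrak{m}}\,\phi)}-(\overline{\partial_i}\mathfrak{m})\,\overline{\phi}.
$$
Substituting this into $(-1)^{s+1}\int_V\sum_i\sum'_{|J|=t}\varepsilon^K_{iJ}(\mathfrak{m}f_{I,J})\overline{\delta_i\phi}\,dP$ splits the integral into two pieces: the piece containing $\overline{\delta_i(\overline{\mathfrak{m}}\phi)}$ is evaluated by applying the conclusion (1) of Proposition \ref{weak equality for more test functions} to $f$ with the admissible test form $\overline{\mathfrak{m}}\phi\in C^1_0(V)$, producing $\int_V\mathfrak{m}(Tf)_{I,K}\overline{\phi}\,dP$, while the remaining piece reproduces exactly $\int_V((T\mathfrak{m})\wedge f)_{I,K}\overline{\phi}\,dP$ by the definition of the wedge in the statement. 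This verifies (\ref{230204e5}) for $\mathfrak{m}f$ with $\overline{\partial}(\mathfrak{m}f)=g$, provided $g\in L^2_{(s,t+1)}(V,loc)$.

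The crux, and the step I expect to be the main obstacle, is showing $(T\mathfrak{m})\wedge f\in L^2_{(s,t+1)}(V,w_2)$; here both Condition \ref{230423ass1} and the defining bound of $\mathscr{M}$ in (\ref{230419e21}) enter. For fixed strictly increasing $I,K$ with $|I|=s$, $|K|=t+1$, only indices $i\in K$ contribute, so $|\sum_{i\in K}a_i|^2\le (t+1)\sum_{i\in K}|a_i|^2$ gives
$$
|((T\mathfrak{m})\wedge f)_{I,K}|^2\le (t+1)\sum_{i\in K}|f_{I,K\setminus\{i\}}|^2\,|\overline{\partial_i}\mathfrak{m}|^2 .
$$
Condition \ref{230423ass1} supplies $c_{I,K}\le c_1^{s,t}\,c_{I,K\setminus\{i\}}$ (recalling $c_{I,K}=c_{I,i(K\setminus\{i\})}$ from (\ref{gener111})), and writing $e^{-w_2}=e^{w_1-w_2}e^{-w_1}$ I would reorganize the double sum over $(K,i\in K)$ into a sum over $(J,i\notin J)$ via $K=\{i\}\cup J$, so that the scalar factor $\sum_{i\notin J}|\overline{\partial_i}\mathfrak{m}|^2e^{w_1-w_2}$ is controlled by the finite supremum in (\ref{230419e21}). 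This bounds $\|(T\mathfrak{m})\wedge f\|^2_{L^2_{(s,t+1)}(V,w_2)}$ by a constant multiple of $\|f\|^2_{L^2_{(s,t)}(V,w_1)}$. The delicate point is that one must not discard the $i$-dependence of $\overline{\partial_i}\mathfrak{m}$: a naive Cauchy–Schwarz estimate separating $\sum_{i}|f_{I,K\setminus\{i\}}|^2$ from $\sum_i|\overline{\partial_i}\mathfrak{m}|^2$ leads, after the reindexing, to a divergent counting sum over $i\notin J$, so the factors $|\overline{\partial_i}\mathfrak{m}|^2$ must stay paired inside the sum and be absorbed by the weighted summability encoded in $\mathscr{M}$. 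Together with $\mathfrak{m}\cdot(Tf)\in L^2_{(s,t+1)}(V,w_2)$ this gives $g\in L^2_{(s,t+1)}(V,w_2)\subset L^2_{(s,t+1)}(V,loc)$, hence $\mathfrak{m}f\in D_T$ and the Leibniz formula of (1); Condition \ref{230424c1} is used only to license Proposition \ref{weak equality for more test functions}.

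For (2), I argue by duality, noting first that $\overline{\mathfrak{m}}\in\mathscr{M}$ because the defining bound in (\ref{230419e21}) is invariant under conjugation. Let $g\in D_{T^*}$. For any $u\in D_T$, part (1) applied to $\overline{\mathfrak{m}}$ gives $\overline{\mathfrak{m}}\,u\in D_T$ with $T(\overline{\mathfrak{m}}\,u)=\overline{\mathfrak{m}}\,Tu+(T\overline{\mathfrak{m}})\wedge u$, whence
$$
(Tu,\mathfrak{m}g)_{L^2_{(s,t+1)}(V,w_2)}=(\overline{\mathfrak{m}}\,Tu,g)_{L^2_{(s,t+1)}(V,w_2)}=(T(\overline{\mathfrak{m}}\,u),g)-((T\overline{\mathfrak{m}})\wedge u,g).
$$
The first term equals $(\overline{\mathfrak{m}}\,u,T^*g)_{L^2_{(s,t)}(V,w_1)}$ since $g\in D_{T^*}$, and is bounded by $\|\mathfrak{m}\|_\infty\,\|T^*g\|\,\|u\|$; the second is bounded by $\|(T\overline{\mathfrak{m}})\wedge u\|\,\|g\|$, which by the estimate established in part (1) is at most a constant times $\|u\|_{L^2_{(s,t)}(V,w_1)}$. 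Thus $u\mapsto (Tu,\mathfrak{m}g)$ is continuous on $D_T$ in the $L^2_{(s,t)}(V,w_1)$-norm, and since $T$ is densely defined (Lemma \ref{densly defined closed}) this is precisely the criterion for $\mathfrak{m}g\in D_{T^*}$, completing the proof.
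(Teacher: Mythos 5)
Your proof is correct and follows essentially the same route as the paper's: the identical product-rule substitution $\mathfrak{m}\,\overline{\delta_{i}\phi }=\overline{\delta_{i}(\overline{\mathfrak{m}}\phi )}-(\overline{\partial_{i}}\mathfrak{m})\,\overline{\phi }$ tested through the conclusion (1) of Proposition \ref{weak equality for more test functions} with $\overline{\mathfrak{m}}\phi \in C^{1}_{0}(V)$, the same paired estimate for $\left|\left|(T\mathfrak{m})\wedge f\right|\right|_{L^{2}_{(s,t+1)}(V,w_{2})}$ obtained by reindexing $(K,i\in K)$ as $(J,i\notin J)$ and absorbing $\frac{c_{I,iJ}}{c_{I,J}}\leqslant c_1^{s,t}$ together with the supremum in (\ref{230419e21}) (this is exactly the paper's inequality (\ref{general inequality for Teta wedege f})), and the same duality argument via $\overline{\mathfrak{m}}\in \mathscr{M}$ for part (2). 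Your cautionary remark about not separating $|\overline{\partial_{i}}\mathfrak{m}|^{2}$ from $|f_{I,J}|^{2}$ by a naive Cauchy--Schwarz is a correct reading of why the paper keeps these factors paired; there are no gaps.
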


\begin{proof}
(1) Note that for any $\phi \in C^{\infty }_{0,F}\left( V\right)$, $\mathfrak{m}\phi\in C_0^1(V)$. Hence, for any strictly increasing multi-indices $I$ and $K$ with $\left| I\right|  =s$ and $\left| K\right|  =t+1$,  by the conclusion (1) of Proposition \ref{weak equality for more test functions}, we have
$$
		\begin{array}{ll}
\displaystyle
			\left( -1\right)^{s+1}  \int_{V} \sum^{\infty }_{i=1} \sum^{\prime }_{\left| J\right|  =t} \varepsilon^{K}_{iJ} \mathfrak{m} f_{I,J}\overline{\delta_{i} \phi } \,\mathrm{d}P
=\left( -1\right)^{s+1}  \int_{V} \sum^{\infty }_{i=1} \sum^{\prime }_{\left| J\right|  =t} \varepsilon^{K}_{iJ} f_{I,J}\overline{(\delta_{i} \left( \overline{\mathfrak{m} } \phi \right)  -\partial_{i} \overline{\mathfrak{m} } \cdot \phi )}\,\mathrm{d}P\\[3mm]
\displaystyle =\int_{V} \mathfrak{m} (T f)_{I,K}\overline{\phi } \,\mathrm{d}P+\left( -1\right)^{s}  \int_{V} \sum^{\infty }_{i=1} \sum^{\prime }_{\left| J\right|  =t} \varepsilon^{K}_{iJ} f_{I,J}\overline{\partial_{i} } \mathfrak{m} \cdot \overline{\phi } \,\mathrm{d}P\\[3mm]
\displaystyle =\int_{V} \mathfrak{m} (T f)_{I,K}\overline{\phi }\,\mathrm{d}P+  \int_{V}((T\mathfrak{m} )\wedge f)_{I,K} \cdot \overline{\phi } \,\mathrm{d}P.
		\end{array}
$$
By (\ref{gener111}), it follows that (Recall Condition \ref{230423ass1} for $c_1^{s,t}$)
\begin{equation}\label{general inequality for Teta wedege f}
\begin{array}{ll}
\displaystyle\left| \left|(T\mathfrak{m}) \wedge f\right|  \right|_{L^{2}_{\left( s,t+1\right)  }\left( V,w_{2}\right)  }^2  \\[3mm]\displaystyle
\leqslant  \left( t+1\right)  \sum^{\prime }_{\left| I\right|  =s,\left| K\right|  =t+1} c_{I,K}\int_{V} \sum^{\infty }_{i=1} \sum^{\prime }_{\left| J\right|  =t} |\varepsilon^{K}_{iJ} f_{I,J}\overline{\partial_{i} } \mathfrak{m} |^{2}e^{-w_{2}}\,\mathrm{d}P \\[3mm]\displaystyle=\left( t+1\right)  \sum^{\prime }_{\left| I\right|  =s,\left| K\right|  =t+1}\int_{V} \sum^{\infty }_{i=1} \sum^{\prime }_{\left| J\right|  =t}\frac{ c_{I,K}\cdot|\varepsilon^{K}_{iJ}|}{c_{I,J}}|\overline{\partial_{i} } \mathfrak{m}|^2\cdot c_{I,J}\cdot|f_{I,J}|^{2}e^{-w_{2}}\,\mathrm{d}P \\[3mm]\displaystyle= \left( t+1\right)  \sum^{\prime }_{\left| I\right|  =s }\int_{V} \sum^{\infty }_{i=1} \sum^{\prime }_{\left| J\right|  =t}\frac{ c_{I,iJ} }{c_{I,J}}|\overline{\partial_{i} } \mathfrak{m}|^2\cdot c_{I,J}\cdot|f_{I,J}|^{2}e^{-w_{2}}\,\mathrm{d}P \\[3mm]\displaystyle\leqslant \left( t+1\right) \cdot c_1^{s,t}\cdot\int_{V} \left(\sum^{\infty }_{i=1} |\overline{\partial_{i} } \mathfrak{m} |^{2}\cdot e^{w_{1}-w_{2}}\right)\cdot \left(\sum^{\prime }_{\left| I\right|  =s}\sum^{\prime }_{\left| J\right|  =t} c_{I,J}\cdot\left| f_{I,J}\right|^{2}\right)  e^{-w_{1}}\,\mathrm{d}P\\[3mm]\displaystyle\leqslant \left( t+1\right) \cdot c_1^{s,t}\cdot\sup_{V} \sum^{\infty }_{i=1} (|\overline{\partial_{i} } \mathfrak{m} |^{2}e^{w_{1}-w_{2}})\cdot \left| \left| f\right|  \right|^{2}_{L^{2}_{\left( s,t\right)  }\left( V,w_{1}\right)  } <\infty.
\end{array}
\end{equation}
Also, $||\mathfrak{m}\cdot(Tf)||_{L^{2}_{\left( s,t+1\right)  }\left( V,w_{2}\right)  }  \leqslant \sup\limits_{V}|\mathfrak{m}|\cdot ||Tf||_{L^{2}_{\left( s,t+1\right)  }\left( V,w_{2}\right)  }$. Therefore, $\mathfrak{m} \cdot f \in D_{T}$ and $T ( \mathfrak{m}\cdot f )  =\mathfrak{m}\cdot (Tf)+(T\mathfrak{m}) \wedge f.$

\medskip

(2) Clearly, $\overline{\mathfrak{m}}\in \mathscr{M}$. For any $u\in D_{T}$, by the conclusion (1), it holds that $\overline{\mathfrak{m}}\cdot u\in D_T$ and $T\left( \overline{\mathfrak{m}}\cdot u\right)  =\overline{\mathfrak{m}}\cdot (Tu)+(T\overline{\mathfrak{m}}) \wedge u$ and hence
$$
\begin{array}{ll}
\displaystyle
			\left( Tu,\mathfrak{m} \cdot g\right)_{L^{2}_{(s,t+1)}\left( V,w_{2}\right)  }  =\left( \overline{\mathfrak{m} }\cdot (Tu),g\right)_{L^{2}_{(s,t+1)}\left( V,w_{2}\right)  } \\[3mm]
\displaystyle =\left(T\left( \overline{\mathfrak{m}}\cdot u\right),g\right)_{L^{2}_{\left( s,t+1\right)  }\left( V,w_{2}\right)  }  -\left( (T\overline{\mathfrak{m}}) \wedge u,g\right)_{L^{2}_{\left( s,t+1\right)  }\left( V,w_{2}\right)  } \\[3mm]
\displaystyle =\left( u,\mathfrak{m}\cdot T^{\ast }g\right)_{L^{2}_{\left( s,t\right)  }\left( V,w_{1}\right)  }  -\left( (T\overline{\mathfrak{m}}) \wedge u,g\right)_{L^{2}_{\left( s,t+1\right)  }\left( V,w_{2}\right)  }.
		\end{array}
$$
With \eqref{general inequality for Teta wedege f}, the above implies that
$$
\begin{array}{ll}
\displaystyle
| ( Tu,\mathfrak{m}\cdot g )_{L^{2}_{(s,t+1)}\left( V,w_{2}\right)  }  |
\leqslant |( u,\mathfrak{m}\cdot T^{\ast }g)_{L^{2}_{\left( s,t\right)  }\left( V,w_{1}\right)  }| +|( (T\overline{\mathfrak{m}}) \wedge u,g)_{L^{2}_{\left( s,t+1\right)  }\left( V,w_{2}\right)  }|\\[3mm]
\displaystyle \leqslant \left(  || \mathfrak{m}\cdot T^{\ast }g||_{L^{2}_{\left( s,t\right)  }\left( V,w_{1}\right)  }  +C\cdot\left| \left| g\right|  \right|_{L^{2}_{\left( s,t+1\right)  }\left( V,w_{2}\right)  }  \right) \cdot \left| \left| u\right|  \right|_{L^{2}_{\left( s,t\right)  }\left( V,w_{1}\right)  }\\[3mm]
\displaystyle \leqslant \left(  \sup_V|\mathfrak{m}|\cdot || T^{\ast }g ||_{L^{2}_{\left( s,t\right)  }\left( V,w_{1}\right)  }  +C  \cdot\left| \left| g\right|  \right|_{L^{2}_{\left( s,t+1\right)  }\left( V,w_{2}\right)  }  \right) \cdot \left| \left| u\right|  \right|_{L^{2}_{\left( s,t\right)  }\left( V,w_{1}\right)  },
\end{array}
$$
where $C\triangleq \sqrt{\left( t+1\right) \cdot c_1^{s,t}\cdot \sup\limits_{V} \sum\limits^{\infty }_{i=1} (\left| \partial_{i} \mathfrak{m}\right|^{2}  e^{w_{1}-w_{2}})}$. Then, by the definition of $T^*$ (e.g., \cite[Definition 4.2.2, p. 177]{Kra}), we have $\mathfrak{m}\cdot g\in D_{T^{\ast }}$. This completes the proof of Proposition \ref{general multiplitier}.
\end{proof}

\section{$L^2$ estimates for the $\overline{\partial}$ equations on pseudo-convex domains in $\ell^{2}$}\label{secx4}

The purpose of this section is to establish the key $L^2$ estimates for the $\overline{\partial}$ equations (on pseudo-convex domains in $\ell^{2}$) by means of some suitable approximations.



From now on, we fix a pseudo-convex domain  $V$ in $\ell^{2}$ and a plurisubharmonic exhaustion function $\eta(\in C_F^\infty(V))$ on $V$. We need to choose suitably the weight functions $w_{1},w_{2}$ and $w_{3}$ in Definition \ref{definition of T S}. For this purpose, motivated by  \cite[p. 543]{AGGM} (for the definition of $H_b(U)$ therein), \cite[p. 164]{Din99} (for the holomorphic functions of bounded type) and \cite[p. 798]{Rya87} (for the holomorphic mappings of bounded type), we introduce the following notion.
\begin{definition}\label{230726def1}
A complex valued function $f$ on a nonempty open subset $U$ of $\ell^2$ is called a locally bounded function on $U$ if $\sup\limits_{E} \left| f\right|  <\infty$ for all $E\stackrel{\circ}{\subset}U$.
\end{definition}

We also need the following simple result.
\begin{lemma}\label{majority funtion}
Suppose that Condition \ref{230424c1} holds and $f$ is a locally bounded function on $V$. Then, there exists $g\in C^{\infty}_{F}(V)$ such that $|f(\textbf{z})|\leqslant g(\textbf{z})$ for all $\textbf{z}\in V$.
\end{lemma}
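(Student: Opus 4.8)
The plan is to produce $g$ of the form $g=\chi\circ\eta$, where $\eta\in C^{\infty}_{F}(V)$ is the exhaustion function supplied by Condition \ref{230424c1} and $\chi\in C^{\infty}(\mathbb{R})$ is a nondecreasing function growing fast enough to dominate $f$ along the sublevel sets of $\eta$. The guiding observation, which I would establish first, is that \emph{every} such composition automatically lies in $C^{\infty}_{F}(V)$: by the one–variable chain rule $D_{x_i}g=\chi'(\eta)\,D_{x_i}\eta$ and $D_{y_i}g=\chi'(\eta)\,D_{y_i}\eta$, so on any $E\stackrel{\circ}{\subset}V$,
$$
|g|+\sum_{i=1}^{\infty}\big(|D_{x_i}g|^2+|D_{y_i}g|^2\big)=|\chi(\eta)|+|\chi'(\eta)|^2\sum_{i=1}^{\infty}\big(|D_{x_i}\eta|^2+|D_{y_i}\eta|^2\big).
$$
Since $\eta\in C^{1}_{F}(V)$, both $\sup_E|\eta|$ and $\sup_E\sum_i(|D_{x_i}\eta|^2+|D_{y_i}\eta|^2)$ are finite; hence $\eta(E)$ is contained in a bounded interval, $\chi(\eta)$ and $\chi'(\eta)$ are bounded on $E$ by continuity of $\chi,\chi'$, and the whole expression is bounded on $E$. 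Thus $g\in C^{1}_{F}(V)$, while $g\in C^{\infty}(V)$ follows from $\eta\in C^{\infty}(V)$ and $\chi\in C^{\infty}(\mathbb{R})$ by the iterated chain rule. Note that only the exhaustion and $C^{\infty}_{F}$-properties of $\eta$ are used here, not its plurisubharmonicity.

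It then remains to choose $\chi$ with $\chi\circ\eta\ge|f|$ on $V$. For $\tau\in\mathbb{R}$ I would set $M(\tau)\triangleq\sup_{V_\tau}|f|$ (with the convention $\sup_\emptyset=0$), recalling $V_\tau=\{\textbf{z}\in V:\eta(\textbf{z})\le\tau\}$ from (\ref{230117e1}). Since $V_\tau\stackrel{\circ}{\subset}V$ and $f$ is locally bounded on $V$, each $M(\tau)$ is finite, and $M$ is nondecreasing because $\tau_1\le\tau_2$ forces $V_{\tau_1}\subset V_{\tau_2}$. If $\chi\in C^{\infty}(\mathbb{R})$ satisfies $\chi\ge M$ pointwise, then for every $\textbf{z}\in V$, writing $\tau=\eta(\textbf{z})$ one has $\textbf{z}\in V_\tau$, whence $|f(\textbf{z})|\le M(\tau)\le\chi(\tau)=g(\textbf{z})$, as required. (Only values $\tau\ge\inf_V\eta>-\infty$ actually occur, but $\chi$ is constructed on all of $\mathbb{R}$.)

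The last point—finding a smooth majorant of the merely monotone $M$—is the technical heart, though it is elementary. First I would build a continuous, nondecreasing majorant $L\ge M$: with $b_n\triangleq M(n+1)+1$ for $n\in\mathbb{Z}$, let $L$ be the piecewise–linear interpolation of the points $(n,b_n)_{n\in\mathbb{Z}}$; monotonicity of $M$ gives $L(\tau)\ge b_n> M(n+1)\ge M(\tau)$ for $\tau\in[n,n+1]$. Then I would mollify with a shift: fixing a standard mollifier $\rho\ge0$ with $\supp\rho\subset[-1,1]$ and $\int_{\mathbb{R}}\rho=1$, set $\chi(\tau)\triangleq\int_{\mathbb{R}}L(\tau+1-\sigma)\rho(\sigma)\,\mathrm{d}\sigma$. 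This yields $\chi\in C^{\infty}(\mathbb{R})$, and since $L$ is nondecreasing and $\tau+1-\sigma\ge\tau$ on $\supp\rho$, we get $\chi(\tau)\ge L(\tau)\ge M(\tau)$. Then $g\triangleq\chi\circ\eta$ is the desired function. The main obstacle is not this construction but the verification in the first paragraph that $g\in C^{\infty}_{F}(V)$, where the $F$-condition on $\eta$ is used in an essential way; once that is in place, the rest is bookkeeping.
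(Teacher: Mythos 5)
Your proof is correct and follows essentially the same route as the paper's: both take $g=h(\eta)$ for a smooth real function $h$ on $\mathbb{R}$ chosen to dominate $\sup_{V_\tau}|f|$ on each sublevel set $V_\tau$ of the exhaustion function $\eta$. The only difference is that you spell out the two points the paper treats as obvious, namely the explicit construction of the smooth majorant (piecewise-linear interpolation plus a shifted mollification) and the chain-rule verification that $h(\eta)\in C^{\infty}_{F}(V)$.
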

\begin{proof}
 For each $j\in \mathbb{N}$, define $V_j$ as that in (\ref{230117e1}).  Then $V=\bigcup\limits^{\infty }_{j=1} V_{j}=V_1\bigcup \bigcup\limits^{\infty }_{j=1} \left(V_{j+1}\setminus V_j\right)$. Since $f$ is a locally bounded function on $V$, we have $c_j\triangleq \sup\limits_{\textbf{z}\in V_{j}}|f(\textbf{z})|<\infty$ for each $j\in\mathbb{N}$. Choose $h \in C^{\infty }\left( \mathbb{R} \right)$ such that $h(x)\geqslant c_{j+1}$ for any $x\in( j,j+1],\,j\in\mathbb{N}$, while $h(x)=c_1$ for any $x\leqslant 1$. Let $g\triangleq h (\eta)$. It is obvious that $g\in C^{\infty}_{F}(V)$ such that $|f(\textbf{z})|\leqslant g(\textbf{z})$ for all $\textbf{z}\in V$. This completes the proof of Lemma \ref{majority funtion}.
\end{proof}

For every $k\in\mathbb{N}$, let
$$
h_{k} \left( t\right) \triangleq\begin{cases}0,&t>k +1,\\ \left( t-k -1\right)^{2}  \left( 2t+1-2k \right),&k \leqslant t\leqslant k +1,\\ 1,&t<k.
\end{cases}
$$
Then, $h_{k }\in C^{1 }\left( \mathbb{R}\right)$, $0\leqslant h_{k}\leqslant 1$, $h_{k}(x)=1$ for all $x<k$, $h_{k } \left( x\right)  =0$ for all $x>k+1$ and $\left| h^{\prime }_{k} \right|  \leqslant  \frac{3}{2}$. We construct a sequence $\{X_{k}\}_{k=1}^\infty$ of cut-off functions on $V$ as follows:
\begin{equation}\label{section2}
	X_{k}\triangleq h_{k }( \eta)  \in C^{1}_{0,F}\left( V\right).
\end{equation}
Note that for each $k\in\mathbb{N}$,
$$
\ln \left( 1+\sum\limits^{\infty }_{i=1} |\overline{\partial_{i} } X_{k}|^{2}\right)
=\ln \left( 1+\sum\limits^{\infty }_{i=1} |h_k^{'}(\eta)|^2\cdot|\overline{\partial_{i} }\eta|^{2}\right)
\leqslant \ln \left( 1+\frac{9}{4}\cdot\sum\limits^{\infty }_{i=1}|\overline{\partial_{i} }\eta|^{2}\right).
$$
Since $\ln \left( 1+\frac{9}{4}\cdot\sum\limits^{\infty }_{i=1}|\overline{\partial_{i} }\eta|^{2}\right)$ is a locally bounded function on $V$, by  Lemma \ref{majority funtion}, there exists $\psi \in C^{\infty }_{F}\left( V\right)$ such that $\ln \left( 1+\frac{9}{4}\cdot\sum\limits^{\infty }_{i=1}|\overline{\partial_{i} }\eta|^{2}\right)  \leqslant \psi $. Therefore,
\begin{equation}\label{majority function}
\sum^{\infty }_{i=1} |\overline{\partial_{i} } X_{k}|^{2}\leqslant e^{\psi },\quad \forall\; k\in\mathbb{N}.
\end{equation}
In the rest of this paper, unless otherwise stated, we shall choose the weight functions in Definition \ref{definition of T S} as follows:
\begin{equation}\label{weight function}
	w_{1} =\varphi -2\psi ,\quad w_{2} =\varphi -\psi ,\quad w_{3} =\varphi ,
\end{equation}
where $\varphi \in C^{2}_{F}\left( V\right)$ is a given real-valued function (satisfying appropriate conditions to be given later).

Recall that $T$ and $S$ are given as that in Definition \ref{definition of T S}. We need to approximate the elements in $ D_{S}\bigcap D_{T^{\ast }}$ in a suitable way and the whole process is divided into two parts. The following proposition is the first part of our approximation process (Recall \eqref{weight function} for $w_1,w_2$ and $w_3$ and \eqref{section2} for $\{X_{k}\}_{k=1}^{\infty}$).
\begin{proposition}\label{general cut-off density3}
 Under Conditions \ref{230424c1} and \ref{230423ass1}, for each $f\in D_{S}\bigcap D_{T^{\ast }}$, it holds that $X_k\cdot f\in D_{S}\bigcap D_{T^{\ast }}$ for any $k\in\mathbb{N}$ and
$$
\lim_{k \rightarrow \infty }\Big( \left| \left| S\left( X_{k}\cdot f\right)  -X_{k}\cdot (Sf)\right|  \right|_{w_{3}}  +\left| \left| T^{\ast }\left( X_{k}\cdot f\right)  -X_{k }\cdot (T^{\ast }f)\right|  \right|_{w_{1}}  +\left| \left| X_{k}\cdot f-f\right|  \right|_{w_{2}} \Big) =0,
$$
where we simply denote $ \left| \left| \cdot \right|  \right|_{L^{2}_{\left( s,t+i-1\right)  }\left( V,w_{i}\right)  }$ by $\left| \left| \cdot \right|  \right|_{w_{i}}$ for each $i=1,2,3$.
\end{proposition}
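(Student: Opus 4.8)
The plan is to realise each $X_k$ as a multiplier lying in $\mathscr{M}$, to read off assertion~(1) directly from Proposition~\ref{general multiplitier}, and then to control the three error terms by dominated convergence, exploiting that all of them are concentrated on the region $\{k\le\eta\le k+1\}$ on which $h_k'(\eta)\neq0$. First I would check $X_k\in\mathscr{M}$ for every $k\in\mathbb{N}$. Since $X_k=h_k(\eta)$ is real-valued one has $|\partial_i X_k|=|\overline{\partial_i}X_k|$ for each $i$, and because $w_1-w_2=-\psi$ together with $\sum_{i=1}^{\infty}|\overline{\partial_i}X_k|^2\le e^{\psi}$ from \eqref{majority function}, it follows that $\sup_V\sum_{i=1}^{\infty}\bigl(|\partial_i X_k|^2+|\overline{\partial_i}X_k|^2\bigr)e^{w_1-w_2}\le 2$; as $X_k\in C^1_{0,F}(V)\subset C^1_b(V)$, this gives $X_k\in\mathscr{M}$. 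Moreover $w_2-w_3=-\psi$ as well, so $\mathscr{M}$ is simultaneously the multiplier class attached to $S$, and Proposition~\ref{general multiplitier} applies verbatim to $S$ (with $(s,t,w_1,w_2)$ replaced by $(s,t+1,w_2,w_3)$, Condition~\ref{230423ass1} furnishing $c_1^{s,t+1}<\infty$). Hence, for $f\in D_S\cap D_{T^*}$, part~(2) yields $X_k f\in D_{T^*}$, while part~(1) applied to $S$ yields $X_k f\in D_S$ and $S(X_k f)=X_k(Sf)+(SX_k)\wedge f$. This is precisely assertion~(1).

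The limit $\|X_k f-f\|_{w_2}\to0$ is immediate from dominated convergence: $X_k\to1$ pointwise with $|X_k|\le1$, so the integrand $\sum^{\prime}_{|I|=s}\sum^{\prime}_{|J|=t+1}c_{I,J}|X_k-1|^2|f_{I,J}|^2e^{-w_2}$ tends to $0$ pointwise and is dominated by $\sum^{\prime}_{|I|=s}\sum^{\prime}_{|J|=t+1}c_{I,J}|f_{I,J}|^2e^{-w_2}$, which is $P$-integrable since $f\in L^2_{(s,t+1)}(V,w_2)$. For the $w_3$-term, the identity $S(X_k f)-X_k(Sf)=(SX_k)\wedge f$ reduces matters to $\|(SX_k)\wedge f\|_{w_3}$, and the computation behind \eqref{general inequality for Teta wedege f} (carried out for $S$, and stopped before the supremum is extracted) gives
$$\|(SX_k)\wedge f\|_{w_3}^2\le (t+2)\,c_1^{s,t+1}\int_V\Bigl(\sum_{i=1}^{\infty}|\overline{\partial_i}X_k|^2e^{w_2-w_3}\Bigr)\Bigl(\sum^{\prime}_{|I|=s}\sum^{\prime}_{|J|=t+1}c_{I,J}|f_{I,J}|^2\Bigr)e^{-w_2}\,\mathrm{d}P.$$
By \eqref{majority function} and $w_2-w_3=-\psi$ the first parenthesis is $\le1$, and it tends to $0$ pointwise because $\overline{\partial_i}X_k=h_k'(\eta)\,\overline{\partial_i}\eta$ vanishes at any fixed $\textbf{z}$ once $k>\eta(\textbf{z})$; dominated convergence, with the same integrable majorant as above, then forces $\|(SX_k)\wedge f\|_{w_3}\to0$.

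The $w_1$-term is the main difficulty, since Proposition~\ref{general multiplitier}(2) supplies the membership $X_k f\in D_{T^*}$ but no explicit expression for $T^*(X_k f)$. I would produce one by running the computation in the proof of Proposition~\ref{general multiplitier}(2) with $\mathfrak{m}=X_k$ (real, so $\overline{\mathfrak{m}}=X_k$), which gives, for all $u\in D_T$,
$$\bigl(u,\,T^*(X_k f)-X_k T^*f\bigr)_{w_1}=-\bigl((TX_k)\wedge u,\,f\bigr)_{w_2}.$$
Expanding the right-hand side, interchanging the (finite) sums, and using $\varepsilon^K_{iJ}=\varepsilon^K_{iJ}|\varepsilon^K_{iJ}|$ together with the contraction identities $c_{I,iJ}=\sum^{\prime}_{|K|=t+1}|\varepsilon^K_{iJ}|c_{I,K}$ and $f_{I,iJ}=\sum^{\prime}_{|K|=t+1}\varepsilon^K_{iJ}f_{I,K}$ of \eqref{gener111}, the pairing becomes $(u,-R_k)_{w_1}$ with $(R_k)_{I,J}=(-1)^{s+1}e^{w_1-w_2}\sum_i\frac{c_{I,iJ}}{c_{I,J}}(\partial_i X_k)f_{I,iJ}$; since $D_T$ is dense in $L^2_{(s,t)}(V,w_1)$ by Lemma~\ref{densly defined closed}, this identifies $T^*(X_k f)-X_k T^*f=-R_k$ pointwise (both sides lying in $L^2_{(s,t)}(V,w_1)$, as the estimate below confirms for $R_k$). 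Finally, Cauchy--Schwarz in $i$ with the weights $c_{I,iJ}/c_{I,J}$, the bounds $c_{I,iJ}/c_{I,J}\le c_1^{s,t}$ from Condition~\ref{230423ass1}, $|\partial_i X_k|=|\overline{\partial_i}X_k|$ and \eqref{majority function}, and the elementary exponential identities $2(w_1-w_2)-w_1=-w_3$ and $\psi-w_3=-w_2$, bound the integrand of $\|T^*(X_k f)-X_k T^*f\|_{w_1}^2$ pointwise by a fixed multiple of $\sum^{\prime}_{|I|=s}\sum^{\prime}_{|J|=t+1}c_{I,J}|f_{I,J}|^2e^{-w_2}$ (here one also uses the counting identity $\sum^{\prime}_{|J|=t}\sum_i c_{I,iJ}|f_{I,iJ}|^2=(t+1)\sum^{\prime}_{|J|=t+1}c_{I,J}|f_{I,J}|^2$), while it again tends to $0$ pointwise; dominated convergence yields $\|T^*(X_k f)-X_k T^*f\|_{w_1}\to0$, completing the proof. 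The delicate points are thus the passage to the explicit commutator formula for $T^*$ through the density of $D_T$, and the verification that the three weight combinations collapse so that a single $P$-integrable majorant controls all the error terms.
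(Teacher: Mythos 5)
Your proposal is correct and follows essentially the same route as the paper's proof: you verify $X_k\in\mathscr{M}$ (via $w_1-w_2=w_2-w_3=-\psi$ and \eqref{majority function}), invoke Proposition \ref{general multiplitier} at the level of both $T$ and $S$ to get $X_kf\in D_S\cap D_{T^*}$ together with $S(X_kf)=X_k(Sf)+(SX_k)\wedge f$, handle the $w_2$- and $w_3$-terms by dominated convergence with the integrable majorant $\sum^{\prime}_{|I|=s,|J|=t+1}c_{I,J}|f_{I,J}|^2e^{-w_2}$, and treat the $w_1$-term through the pairing identity $(u,T^*(X_kf)-X_kT^*f)_{w_1}=-((TX_k)\wedge u,f)_{w_2}$ combined with the density of $D_T$ in $L^2_{(s,t)}(V,w_1)$, exactly as the paper does. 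The only deviations are cosmetic: the paper bounds that pairing directly by $\sqrt{(t+1)c_1^{s,t}}\bigl(\int_{V\setminus V_{k-1}}\sum^{\prime}_{|I|=s,|J|=t+1}c_{I,J}|f_{I,J}|^2e^{-w_2}\,\mathrm{d}P\bigr)^{1/2}\|u\|_{w_1}$ (using that $(TX_k)\wedge u$ vanishes on $V_{k-1}$) rather than extracting your explicit commutator $R_k$, and your sign for $R_k$ is reversed — with your normalization the identity is $T^*(X_kf)-X_kT^*f=R_k$, consistent with differencing the formula of Proposition \ref{general formula of T*} via $\delta_i(X_kg)=X_k\delta_ig+g\,\partial_iX_k$ — which is immaterial since only $\|R_k\|_{w_1}$ enters the conclusion.
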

\begin{proof}
By \eqref{section2}, we see that $\{X_{k}\}_{k=1}^{\infty}\subset \mathscr{M}$ (Recall (\ref{230419e21}) for
$\mathscr{M}$). Hence, similarly to the proof of the conclusion (1) of Proposition \ref{general multiplitier}, one can show that $X_{k}\cdot f\in D_{S}$  for each $k\in\mathbb{N}$ and $S\left( X_{k}\cdot f\right)  =X_{k}\cdot (Sf)+(SX_{k}) \wedge f,$ where
$$
(SX_{k})\wedge f\triangleq \left( -1\right)^{s}  \sum^{\prime }_{\left| I\right|  =s} \sum^{\prime }_{\left| K\right|  =t+2} \left(\sum^{\infty }_{i=1} \sum^{\prime }_{\left| J\right|  =t+1} \varepsilon^{K}_{iJ} f_{I,J}\overline{\partial_{i} } X_{k} \right)dz_{I}\wedge d\overline{z_{K}}.
$$
By the conclusion (2) of Proposition \ref{general multiplitier}, we have $X_k\cdot f\in D_{T^{\ast }}$, and hence $X_k\cdot f\in D_{S}\bigcap D_{T^{\ast }}$, and (Recall Condition \ref{230423ass1} for $c_1^{s,t+1}$)
\begin{eqnarray*}
&&\left| \left| S\left( X_{k}\cdot f\right)  -X_{k}\cdot(Sf)\right|  \right|^{2}_{w_{3} }
=\sum^{\prime }_{\left| I\right|  =s}\sum^{\prime }_{\left| K\right|  =t+2} c_{I,K}\int_{V} \left|\sum^{\infty }_{i=1} \sum^{\prime }_{\left| J\right|  =t+1} \varepsilon^{K}_{i,J} f_{I,J}\overline{\partial_{i} } X_{k }\right|^{2}e^{-w_{3}}\,\mathrm{d}P\\
&&\leqslant (t+2)\sum^{\prime }_{\left| I\right|  =s}\sum^{\prime }_{\left| K\right|  =t+2} c_{I,K}\int_{V}\sum^{\infty }_{i=1} \sum^{\prime }_{\left| J\right|  =t+1} \left|\varepsilon^{K}_{i,J} f_{I,J}\overline{\partial_{i} } X_{k }\right|^{2}e^{-w_{3}}\,\mathrm{d}P\\
&&=(t+2)\sum^{\prime }_{\left| I\right|  =s}\sum^{\prime }_{\left| K\right|  =t+2}\int_{V}\sum^{\infty }_{i=1} \sum^{\prime }_{\left| J\right|  =t+1} \frac{c_{I,K}\cdot|\varepsilon^{K}_{i,J}|}{c_{I,J}} \cdot c_{I,J}\cdot |f_{I,J}|^2\cdot|\overline{\partial_{i} }X_{k } |^{2}e^{-w_{3}}\,\mathrm{d}P\\
&&=(t+2)\sum^{\prime }_{\left| I\right|  =s}\int_{V}\sum^{\infty }_{i=1} \sum^{\prime }_{\left| J\right|  =t+1} \frac{c_{I,iJ}}{c_{I,J}} \cdot c_{I,J}\cdot |f_{I,J}|^2\cdot|\overline{\partial_{i} }X_{k } |^{2}e^{-w_{3}}\,\mathrm{d}P\\
&&\leqslant (t+2)\cdot c_1^{s,t+1}\cdot\sum^{\prime }_{\left| I\right|  =s}\sum^{\prime }_{\left| J\right|  =t+1}c_{I,J}\cdot\int_{V} |f_{I,J}|^2\cdot\sum^{\infty }_{i=1} |\overline{\partial_{i} }X_{k } |^{2}\cdot e^{-w_{3}}\,\mathrm{d}P.
\end{eqnarray*}
Recalling \eqref{majority function} and \eqref{weight function}, we have $\sum\limits^{\infty }_{i=1} |\overline{\partial_{i} } X_{k}|^{2}\leqslant e^{\psi }=e^{w_{3}-w_{2}}$ for all $k\in\mathbb{N}.$ Thus
$$
		 \sum^{\prime }_{\left| I\right|  =s,\left| J\right|  =t+1} c_{I,J} \left| f_{I,J}\right|^{2} \cdot\sum^{\infty }_{i=1}  |\overline{\partial_{i} } X_{k }|^{2} e^{-w_{3}} \leqslant\sum^{\prime }_{\left| I\right|  =s,\left| J\right|  =t+1} c_{I,J}  \left| f_{I,J}\right|^{2}  e^{-w_{2}}  .
$$
Note that for each $\textbf{z}\in V$, if $k>\eta(\textbf{z})$, then $|\overline{\partial_{i} } X_{k}\left( \textbf{z}\right)  |^{2}=0$ for all $i\in\mathbb{N}$.
Hence,
$$
\lim_{k \rightarrow \infty } \sum^{\infty }_{i=1} |\overline{\partial_{i} } X_{k }(\textbf{z})|^{2}=0,\quad  \forall\; \textbf{z}\in V.\
$$
By Lebesgue's Dominated Convergence Theorem, it follows that
$$
		\lim_{k\rightarrow \infty }\sum^{\prime }_{\left| I\right|  =s,\left| J\right|  =t+1} c_{I,J}\int_{V} \sum^{\infty }_{i=1} |\overline{\partial_{i} } X_{k}|^{2}\left| f_{I,J}\right|^{2}  e^{-w_{3}}\,\mathrm{d}P=0,
		$$
and hence $\lim\limits_{k\rightarrow \infty } \left| \left| S\left( X_{k}\cdot f\right)  -X_{k}\cdot(Sf)\right|  \right|^{2}_{w_{3}}  =0.$

On the other hand, for every $u\in D_{T}$, by Proposition \ref{general multiplitier}, and noting  \eqref{majority function} and \eqref{weight function} again, we obtain that
\begin{eqnarray*}
&&\left|\left( T^{\ast }\left( X_{k}\cdot f\right)  -X_{k}\cdot(T^{\ast }f),u\right)_{L^{2}_{\left( s,t\right)  }\left( V,w_{1}\right)  }  \right|
=\left|\left(f, X_{k}\cdot(Tu)\right)_{L^{2}_{\left( s,t+1\right)  }\left( V,w_{2}\right)  }  -\left( f,T(X_{k}\cdot u)\right)_{L^{2}_{\left( s,t+1\right)  }\left( V,w_{2}\right)  }  \right|\\
&&=\left|\left( f,(T X_{k})\wedge u\right)_{L^{2}_{\left( s,t+1\right)  }\left( V,w_{2}\right)  }  \right|=\left| \sum^{\prime }_{\left| I\right|  =s}\sum^{\prime }_{\left| J\right|  =t+1} c_{I,J}\int_{V} f_{I,J}\overline{((TX_{k})\wedge u)_{I,J}} e^{-w_{2}}\,\mathrm{d}P\right|  \\
&&\leqslant \sum^{\prime }_{\left| I\right|  =s}\sum^{\prime }_{\left| J\right|  =t+1} c_{I,J}\int_{V} \left|f_{I,J}e^{-\frac{w_{2}}{2} }\right|\cdot \left|((TX_{k})\wedge u)_{I,J}e^{-\frac{w_{2}}{2} }\right|\,\mathrm{d}P\\
&&\leqslant \int_{V} \left(\sum^{\prime }_{\left| I\right|  =s}\sum^{\prime }_{\left| J\right|  =t+1}c_{I,J} |f_{I,J}|^{2}e^{-w_{2}}\right)^{\frac{1}{2} }\cdot \left(\sum^{\prime }_{\left| I\right|  =s}\sum_{\left| J\right|  =t+1}^{\prime }  c_{I,J}|((TX_{k})\wedge u)_{I,J}|^{2}e^{-w_{2}}\right)^{\frac{1}{2} }\,\mathrm{d}P\\
&&\leqslant \sqrt{(t+1)\cdot c_1^{s,t}}  \cdot\int_{V\setminus V_{k-1}} \left(\sum^{\prime }_{\left| I\right|  =s}\sum^{\prime }_{\left| J\right|  =t+1} c_{I,J} |f_{I,J}|^{2}e^{-w_{2}}\right)^{\frac{1}{2} }\cdot \left(\sum^{\prime }_{\left| I\right|  =s}\sum_{\left| L\right|  =t}^{\prime } c_{I,J}|u_{I,L}|^{2}e^{-w_{1}}\right)^{\frac{1}{2} }\,\mathrm{d}P\\
&&\leqslant \sqrt{(t+1)\cdot c_1^{s,t}} \cdot\left(\int_{V\setminus V_{k-1}} \sum^{\prime }_{\left| I\right|  =s}\sum^{\prime }_{\left| J\right|  =t+1} c_{I,J}|f_{I,J}|^{2}e^{-w_{2}}\,\mathrm{d}P\right)^{\frac{1}{2} }\cdot \left| \left| u\right|  \right|_{L^{2}_{\left( s,t\right)  }\left( V,w_{1}\right)  } ,
\end{eqnarray*}
where the third inequality follows from the facts that $((TX_{k})\wedge u)_{I,J}=0$ on $V_{k-1}$ for all $I,J$,
\begin{eqnarray*}
\sum^{\infty }_{i=1} |\overline{\partial_{i} } X_{k}|^{2}\leqslant e^{\psi }=e^{w_{2}-w_{1}}
\end{eqnarray*}
and
$$
		\begin{array}{ll}
\displaystyle
			\sum_{\left| I\right|  =s}\sum^{\prime }_{\left| J\right|  =t+1}c_{I,J} |((T X_{k })\wedge u)_{I,J}|^{2}e^{-w_{2}}=\sum^{\prime }_{\left| I\right|  =s}\sum^{\prime }_{\left| J\right|  =t+1} c_{I,J}\left|\sum^{\infty }_{i=1} \sum^{\prime }_{\left| L\right|  =t} \varepsilon^{J}_{iL} u_{I,L}\overline{\partial_{i} } X_{k}\right|^{2}e^{-w_{2}}\\[3mm]
\displaystyle \leqslant \left( t+1\right)\cdot  \sum^{\prime }_{\left| I\right|  =s}\sum^{\prime }_{\left| J\right|  =t+1}c_{I,J} \sum^{\infty }_{i=1} \sum^{\prime }_{\left| L\right|  =t} |\varepsilon^{J}_{iL} u_{I,L}\overline{\partial_{i} } X_{k}|^{2}e^{-w_{2}}\\[3mm]
\displaystyle \leqslant \left( t+1\right)\cdot  \sum^{\prime }_{\left| I\right|  =s}\sum^{\prime }_{\left| J\right|  =t+1}\sum^{\infty }_{i=1} \sum^{\prime }_{\left| L\right|  =t} \frac{c_{I,J} |\varepsilon^{J}_{iL}|}{c_{I,L}}\cdot c_{I,L} \cdot|u_{I,L}|^2\cdot|\overline{\partial_{i} } X_{k}|^{2}e^{-w_{2}}\\[3mm]
\displaystyle \leqslant \left( t+1\right) \cdot \sum^{\prime }_{\left| I\right|  =s} \sum^{\infty }_{i=1} \sum^{\prime }_{\left| L\right|  =t} \frac{c_{I,iL} }{c_{I,L}}\cdot c_{I,L} \cdot|u_{I,L}|^2\cdot|\overline{\partial_{i} } X_{k}|^{2}e^{-w_{2}}\\[3mm]
\displaystyle \leqslant \left( t+1\right)\cdot c_1^{s,t}\cdot \sum^{\prime }_{\left| I\right|  =s} \sum^{\prime }_{\left| L\right|  =t} c_{I,L}|u_{I,L}|^2\sum^{\infty }_{i=1}|\overline{\partial_{i} } X_{k}|^{2}e^{-w_{2}}\\[3mm]
\displaystyle \leqslant \left( t+1\right)\cdot c_1^{s,t}\cdot   \sum^{\prime }_{\left| I\right|  =s}\sum^{\prime }_{\left| L\right|  =t} c_{I,L}|u_{I,L}|^{2}e^{-w_{1}}.
		\end{array}
$$
Since $D_T$ is dense in $L^{2}_{\left( s,t\right)  }\left( V,w_{1}\right)$, we deduce that
$$
\left| \left| T^{\ast }\left( X_{k}\cdot f\right)  -X_{k}\cdot( T^{\ast }f)\right|  \right|_{L^{2}_{(s,t)}\left( V,w_{1}\right)  }\leqslant \sqrt{(t+1)\cdot c_1^{s,t}}  \left(\sum^{\prime }_{\left| I\right|  =s} \sum^{\prime }_{\left| J\right|  =t+1} c_{I,J}\int_{V\setminus V_{k-1}} |f_{I,J}|^{2}e^{-w_{2}}\,\mathrm{d}P\right)^{\frac{1}{2} }.
$$
Note that the right side of the above inequality tends to zero as $k\to\infty$, which implies that $\lim\limits_{k\rightarrow \infty } \left| \left| T^{\ast }\left( X_{k}\cdot f\right)  -X_{k} \cdot(T^{\ast }f)\right|  \right|_{L^{2}_{(s,t)}\left( V,w_{1}\right)  }  =0.$

Since $|X_{k}|\leqslant 1$, we have $\sum\limits^{\prime }_{\left| I\right|  =s}\sum\limits^{\prime }_{\left| J\right|  =t+1} c_{I,J}|\left( X_{k}-1\right)  f_{I,J}|^{2}e^{-w_{2}}\leqslant 4 \sum\limits^{\prime }_{\left| I\right|  =s}\sum\limits^{\prime }_{\left| J\right|  =t+1} c_{I,J}\left| f_{I,J}\right|^{2}  e^{-w_{2}}$. We also note that for each $\textbf{z}\in V$, if $k>\eta(\textbf{z})$, then $X_{k}\left( \textbf{z}\right)=1$, and hence
$$
\lim_{k \rightarrow \infty }  X_{k }(\textbf{z}) =1,\  \forall\; \textbf{z}\in V.\
$$
By Lebesgue's Dominated Convergence Theorem, we arrive at
$$
		\begin{array}{ll}
\displaystyle
			\lim_{k\rightarrow \infty } \left| \left| X_{k} \cdot f-f\right|  \right|^{2}_{L^{2}_{(s,t+1)}\left( V,w_{2}\right)  }
=\lim_{k \rightarrow \infty } \int_{V} \sum^{\prime }_{\left| I\right|  =s}\sum^{\prime }_{\left| J\right|  =t+1} c_{I,J}|\left( X_{k}-1\right)  f_{I,J}|^{2}e^{-w_{2}}\,\mathrm{d}P \\[3mm]
\displaystyle = \int_{V} \lim_{k\rightarrow \infty }\sum^{\prime }_{\left| I\right|  =s}\sum^{\prime }_{\left| J\right|  =t+1} c_{I,J}|\left( X_{k}-1\right)  f_{I,J}|^{2}e^{-w_{2}}\,\mathrm{d}P=0 .
		\end{array}
$$
The proof of Proposition \ref{general cut-off density3} is completed.
\end{proof}

The following technical result will be useful later.
\begin{lemma}\label{general I,P}
Let Conditions \ref{230424c1} and \ref{230423ass1} hold. Then, for any $g=\sum\limits^{\prime }_{\left| I\right|  =s,\left| J\right|  =t+1 } g_{I,J}dz_{I}\wedge d\overline{z_{J}}\in D_{T^{\ast}}$ with $\supp g\stackrel{\circ}{\subset}  V$, the following conclusions hold:

\begin{itemize}
		\item[$(1)$]  For each strictly increasing multi-indices $I$ and $L$ with $\left| I\right|  =s$ and $\left| L\right|  =t$, the series $$\sum\limits^{\infty }_{i=1}\frac{c_{I,iL}}{c_{I,L}}\cdot g_{I,iL}\cdot\partial_{i} w_2$$ converges almost everywhere (with respect to $P$) on $V$, where $g_{I,iL}\triangleq \sum\limits^{\prime }_{\left| J\right|  =t+1}\varepsilon^{J}_{iL}g_{I,J}$ for $i\in\mathbb{N}$;

\item[$(2)$] The following two forms
\begin{equation}\label{general def of I and P}
\mathbb{I}g\triangleq \sum^{\prime }_{\left| I\right|  =s} \sum^{\prime }_{\left| L\right|  =t} \left(\sum^{\infty }_{i=1} \frac{c_{I,iL}}{c_{I,L}}\cdot g_{I,iL}\cdot\partial_{i} w_{2}\right)dz_{I}\wedge d\overline{z_{L}} ,\quad \mathbb{P}g\triangleq \mathbb{I}g-\left( -1\right)^{s}\cdot  e^{w_{2}-w_{1}}\cdot (T^{\ast }g)
\end{equation}
belong to $ L^{2}_{(s,t)}\left( V,w_{1}\right)$. Furthermore, if there exists $n_{0}\in \mathbb{N}$ such that $g_{I,J}=0$ for all strictly increasing multi-indices $I$ and $J$ with $\left| I\right|  =s$, $\left| J\right|  =t+1$ and $\max \left\{ I\cup J\right\}  >n_{0}$. Then,
$\left( \mathbb{I}g\right)_{I,L}  =0$ and $\left( \mathbb{P}g\right)_{I,L}  =0$ for all strictly increasing multi-indices $I$ and $L$ with $\left| I\right|  =s$, $\left| L\right|  =t$ and $\max \left\{ I\cup L\right\}  >n_{0}.$
\end{itemize}
\end{lemma}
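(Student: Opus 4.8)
The plan is to treat both forms $\mathbb{I}g$ and $\mathbb{P}g$ coefficientwise, exploiting two facts: that $g\in L^{2}_{(s,t+1)}(V,w_{2})$ is square-integrable, and that $\supp g\stackrel{\circ}{\subset}V$ confines everything to a set on which the weight-derived quantities are bounded. The starting point is the reindexing coming from Remark \ref{230329r1}: for fixed strictly increasing $I,L$ with $|I|=s$, $|L|=t$, the series $\sum_{i}c_{I,iL}\int_{V}|g_{I,iL}|^{2}e^{-w_{2}}\,\mathrm{d}P$ equals $\sum_{J\supset L}c_{I,J}\int_{V}|g_{I,J}|^{2}e^{-w_{2}}\,\mathrm{d}P$, which is dominated by $\sum^{\prime}_{|J|=t+1}c_{I,J}\int_{V}|g_{I,J}|^{2}e^{-w_{2}}\,\mathrm{d}P<\infty$; hence $\sum_{i}\frac{c_{I,iL}}{c_{I,L}}|g_{I,iL}|^{2}<\infty$ almost everywhere. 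Since $w_{2}=\varphi-\psi\in C^{2}_{F}(V)$, the quantity $\sum_{i}|\partial_{i}w_{2}|^{2}$ is finite at every point of $V$ and bounded on every $E\stackrel{\circ}{\subset}V$, and Condition \ref{230423ass1} bounds $c_{I,iL}/c_{I,L}$ by $c_{1}^{s,t}$, so $\sum_{i}\frac{c_{I,iL}}{c_{I,L}}|\partial_{i}w_{2}|^{2}\le c_{1}^{s,t}\sum_{i}|\partial_{i}w_{2}|^{2}<\infty$ everywhere. Conclusion (1) then follows from a pointwise Cauchy--Schwarz estimate, which shows that $\sum_{i}\frac{c_{I,iL}}{c_{I,L}}g_{I,iL}\partial_{i}w_{2}$ converges absolutely almost everywhere.

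For the first claim of (2), I would bound $(\mathbb{I}g)_{I,L}$ by the same Cauchy--Schwarz inequality and use $\supp g\stackrel{\circ}{\subset}V$ to make the $\partial_{i}w_{2}$-factor and the weight ratio uniform. Writing $M_{1}\triangleq c_{1}^{s,t}\sup_{\supp g}\sum_{i}|\partial_{i}w_{2}|^{2}$ and noting that $e^{-w_{1}}=e^{\psi}e^{-w_{2}}$ with $e^{\psi}\le M_{2}\triangleq\sup_{\supp g}e^{\psi}<\infty$ on $\supp g$ (both finite because $\psi\in C^{\infty}_{F}(V)$ and $w_{2}\in C^{2}_{F}(V)$ are locally bounded), and recalling that each $g_{I,iL}$ vanishes almost everywhere off $\supp g$, one obtains
\[
\sum^{\prime}_{|I|=s,|L|=t}c_{I,L}\int_{V}|(\mathbb{I}g)_{I,L}|^{2}e^{-w_{1}}\,\mathrm{d}P\le M_{1}M_{2}\sum^{\prime}_{I,L}\sum_{i}c_{I,iL}\int_{V}|g_{I,iL}|^{2}e^{-w_{2}}\,\mathrm{d}P=(t+1)M_{1}M_{2}\left|\left|g\right|\right|_{L^{2}_{(s,t+1)}(V,w_{2})}^{2},
\]
the last equality using the reindexing identity summed over $L$ (each $J$ with $|J|=t+1$ has exactly $t+1$ subindices $L$). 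Thus $\mathbb{I}g\in L^{2}_{(s,t)}(V,w_{1})$. For $\mathbb{P}g=\mathbb{I}g-(-1)^{s}e^{w_{2}-w_{1}}(T^{\ast}g)$ it then suffices to check $e^{w_{2}-w_{1}}(T^{\ast}g)=e^{\psi}(T^{\ast}g)\in L^{2}_{(s,t)}(V,w_{1})$: by the conclusion (3) of Proposition \ref{properties on pseudo-convex domain} there is $r$ with $\supp g\stackrel{\circ}{\subset}V^{o}_{r}$, and since $\sup_{i}c_{I,iL}\le c_{1}^{s,t}c_{I,L}<\infty$ the conclusion (1) of Proposition \ref{support argument} gives $\supp(T^{\ast}g)\stackrel{\circ}{\subset}V^{o}_{r}$, on which $e^{\psi}$ is bounded; hence multiplication by $e^{\psi}$ preserves membership in $L^{2}_{(s,t)}(V,w_{1})$, and $\mathbb{P}g\in L^{2}_{(s,t)}(V,w_{1})$.

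For the vanishing statement, suppose $g_{I,J}=0$ whenever $\max\{I\cup J\}>n_{0}$. If $\max\{I\cup L\}>n_{0}$, then every $g_{I,iL}=\sum^{\prime}_{|J|=t+1}\varepsilon^{J}_{iL}g_{I,J}$ vanishes, since the only candidate $J=\{i\}\cup L\supset L$ has $\max\{I\cup J\}>n_{0}$; thus $(\mathbb{I}g)_{I,L}=0$ at once. To get $(\mathbb{P}g)_{I,L}=0$ I would show $(T^{\ast}g)_{I,L}=0$ almost everywhere. Applying Corollary \ref{weak T*formula} (whose hypotheses hold because $\supp g\stackrel{\circ}{\subset}V$ and $\sup_{i}c_{I,iL}<\infty$) with an arbitrary test function $\phi\in C^{\infty}_{0,F^{\infty}}(V)$, the right-hand side $(-1)^{s}\sum_{j}c_{I,jL}\int_{V}\partial_{j}\phi\cdot g_{I,jL}e^{-w_{2}}\,\mathrm{d}P$ vanishes since all $g_{I,jL}=0$; hence $\int_{V}(T^{\ast}g)_{I,L}\phi e^{-w_{1}}\,\mathrm{d}P=0$ for every such $\phi$. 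As $(T^{\ast}g)_{I,L}\in L^{2}(V,e^{-w_{1}}P)$ and $C^{\infty}_{0,F^{\infty}}(V)$ is closed under conjugation and dense in $L^{2}(V,e^{-w_{1}}P)$ by Corollary \ref{density2}, this forces $(T^{\ast}g)_{I,L}=0$, so $(\mathbb{P}g)_{I,L}=(\mathbb{I}g)_{I,L}-(-1)^{s}e^{w_{2}-w_{1}}(T^{\ast}g)_{I,L}=0$.

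The step requiring the most care is the global $L^{2}$ bound for $\mathbb{I}g$: the three weights must be tracked precisely through the relation $w_{2}-w_{1}=\psi$, and because the control on $\sum_{i}|\partial_{i}w_{2}|^{2}$ is only \emph{local}, the hypothesis $\supp g\stackrel{\circ}{\subset}V$ is essential to convert it into a uniform constant $M_{1}$. A secondary subtlety, which I expect to be the genuine obstacle, is that $g$ lies only in $D_{T^{\ast}}$ and is not assumed smooth; this is exactly why the vanishing of $(T^{\ast}g)_{I,L}$ must be extracted from the weak identity of Corollary \ref{weak T*formula} combined with the density of test functions, rather than read off directly from the explicit formula of Proposition \ref{general formula of T*}.
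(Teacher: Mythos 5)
Your proposal is correct and follows essentially the same route as the paper's proof: Cauchy--Schwarz together with Condition \ref{230423ass1}, the reindexing identity $\sum_{i}c_{I,iL}\int_{V}|g_{I,iL}|^{2}e^{-w_{2}}\,\mathrm{d}P=\sum^{\prime}_{|J|=t+1,\,J\supset L}c_{I,J}\int_{V}|g_{I,J}|^{2}e^{-w_{2}}\,\mathrm{d}P$ (giving the factor $t+1$ after summation), the uniform inclusion of $\supp g$ to turn the locally bounded quantities $\sum_{i}|\partial_{i}w_{2}|^{2}$ and $e^{\psi}=e^{w_{2}-w_{1}}$ into constants, and the weak $T^{\ast}$-formula plus the density result of Corollary \ref{density2} to force $(T^{\ast}g)_{I,L}=0$ when $\max\{I\cup L\}>n_{0}$. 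The only deviations are cosmetic: you run Cauchy--Schwarz pointwise rather than in integrated ($L^{1}$) form for conclusion (1), you invoke Corollary \ref{weak T*formula} where the paper applies Lemma \ref{T*formula} directly to $\phi\in C^{\infty}_{0,F}(V)$, and you spell out the $L^{2}_{(s,t)}(V,w_{1})$ membership of $e^{w_{2}-w_{1}}T^{\ast}g$ (via Proposition \ref{support argument}) that the paper leaves largely implicit.
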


\begin{proof}
(1) Since $T^*g\in L^{2}_{(s,t)}\left( V,w_{1}\right)$, by Proposition \ref{support argument}, we have $\supp (T^*g)\stackrel{\circ}{\subset}V$.
Since $\supp g\stackrel{\circ}{\subset}V$, by the conclusion (3) of Proposition \ref{properties on pseudo-convex domain}, there exists $r\in(0,+\infty)$ such that $\supp g\subset V_r^o\subset V_r$. Then for any strictly increasing multi-indices $I$ and $L$  with $\left| I\right|  =s$ and $\left| L\right|  =t$, by the Cauchy-Schwarz inequality, we have
\begin{eqnarray*}
&&\int_{V} \sum^{\infty }_{i=1}\frac{c_{I,iL}}{c_{I,L}}\cdot |g_{I,iL}\cdot \partial_{i}w_{2}|\,\mathrm{d}P\\
&&=\sum^{\infty }_{i=1}\int_{V}\frac{c_{I,iL}}{c_{I,L}}\cdot |g_{I,iL}|\cdot e^{-\frac{w_{2}}{2}}\cdot e^{\frac{w_{2}}{2}}\cdot|\partial_{i}w_{2}|\,\mathrm{d}P\\
&&\leqslant \left(\sum^{\infty }_{i=1} \left(\frac{c_{I,iL}}{c_{I,L}}\right)^2\cdot\int_{V} \left| g_{I,iL}\right|^{2}  e^{-w_{2}}\,\mathrm{d}P\right)^{\frac{1}{2} }\cdot \left(\sum^{\infty }_{i=1} \int_{V_{r}} \left| \partial_{i}w_{2}\right|^{2}  e^{w_{2}}\,\mathrm{d}P\right)^{\frac{1}{2} }\\
&&\leqslant \sqrt{\frac{1}{c_{I,L}}\cdot\left(\sup_{i\in\mathbb{N}}\frac{c_{I,iL}}{c_{I,L}}\right)}\cdot\left(\sum^{\infty }_{i=1} c_{I,iL} \cdot\int_{V} \left| g_{I,iL}\right|^{2}  e^{-w_{2}}\,\mathrm{d}P\right)^{\frac{1}{2} }\cdot \left(\sum^{\infty }_{i=1} \int_{V_{r}} \left| \partial_{i}w_{2}\right|^{2}  e^{w_{2}}\,\mathrm{d}P\right)^{\frac{1}{2} }.
\end{eqnarray*}
Also,
$$
		\begin{array}{ll}
\displaystyle \sum^{\infty }_{i=1} c_{I,iL} \cdot\int_{V} \left| g_{I,iL}\right|^{2}\cdot e^{-w_{2}}\,\mathrm{d}P=\sum^{\infty }_{i=1}\int_{V} c_{I,iL} \cdot g_{I,iL}\cdot\overline{g_{I,iL}} \cdot e^{-w_{2}}\,\mathrm{d}P\\[3mm]
\displaystyle =\sum^{\infty }_{i=1} \int_{V}\sum^{\prime }_{\left| J_1\right|  =t+1,\left| J_2\right|  =t+1,\left| J_3\right|  =t+1}\left|\varepsilon^{J_1}_{iL}\right|\cdot\varepsilon^{J_2}_{iL}\cdot \varepsilon^{J_3}_{iL}\cdot c_{I,J_1}\cdot g_{I,J_2}\cdot \overline{g_{I,J_3}}\cdot e^{-w_{2}}\,\mathrm{d}P\\[3mm]
\displaystyle =\sum^{\infty }_{i=1} \int_{V}\sum^{\prime }_{\left| J\right|  =t+1}\left|\varepsilon^{J}_{iL}\right|^3\cdot c_{I,J}\cdot \left| g_{I,J}\right|^2\cdot e^{-w_{2}}\,\mathrm{d}P,
\end{array}
$$
which gives
\begin{equation}\label{230414e3}
		\begin{array}{ll}
\displaystyle \sum^{\prime }_{\left| I\right|  =s,\left| L\right|  =t}\sum^{\infty }_{i=1} c_{I,iL} \cdot\int_{V} \left| g_{I,iL}\right|^{2}\cdot e^{-w_{2}}\,\mathrm{d}P\\[3mm]
\displaystyle =\sum^{\prime }_{\left| I\right|  =s,\left| J\right|  =t+1}\int_{V}\sum^{\infty }_{i=1} \sum^{\prime }_{\left| L\right|  =t}\left|\varepsilon^{J}_{iL}\right|^3\cdot c_{I,J}\cdot \left| g_{I,J}\right|^2\cdot e^{-w_{2}}\,\mathrm{d}P\\[3mm]
\displaystyle =(t+1)\sum^{\prime }_{\left| I\right|  =s,\left| J\right|  =t+1}c_{I,J}\cdot \int_{V}\left| g_{I,J}\right|^2\cdot e^{-w_{2}}\,\mathrm{d}P.
\end{array}
\end{equation}
Hence,
$$\int_{V} \sum^{\infty }_{i=1}\frac{c_{I,iL}}{c_{I,L}}\cdot |g_{I,iL}\cdot \partial_{i}w_{2}|\,\mathrm{d}P<\infty,
$$
which implies that the series $\sum\limits^{\infty }_{i=1}\frac{c_{I,iL}}{c_{I,L}}\cdot g_{I,iL}\partial_{i} w_{2}$ converges almost everywhere (with respect to $P$) on $V$.

\medskip

(2) \ \ By (\ref{230414e3}), it follows that (Recall Condition \ref{230423ass1} for $c_1^{s,t}$)
\begin{eqnarray*}
&&\sum^{\prime }_{\left| I\right|  =s,\left| L\right|  =t}c_{I,L}\cdot\int_{V} \left|\sum^{\infty }_{i=1} \frac{c_{I,iL}}{c_{I,L}}\cdot g_{I,iL}\cdot\left(\partial_{i} w_{2}\right)\right|^{2}\cdot e^{-w_1}\,\mathrm{d}P\\
&&=\sum^{\prime }_{\left| I\right|  =s,\left| L\right|  =t}c_{I,L}\cdot\int_{V_r} \left|\sum^{\infty }_{i=1} \frac{c_{I,iL}}{c_{I,L}}\cdot g_{I,iL}\cdot\left(\partial_{i} w_{2}\right)\right|^{2} e^{-w_1}\,\mathrm{d}P\\
& &\leqslant \sum^{\prime }_{\left| I\right|  =s,\left| L\right|  =t}c_{I,L}\cdot\int_{V_{r}} \left(\sum^{\infty }_{i=1} \left(\frac{c_{I,iL}}{c_{I,L}}\right)^2\cdot\left| g_{I,iL}\right|^{2}\right)\cdot e^{-w_2}\cdot \left(  \sum^{\infty }_{i=1} \left| \partial_{i}w_{2}\right|^{2} \right)\cdot e^{w_2-w_1} \,\mathrm{d}P\\
& &\leqslant \sup_{V_{r}}\left(  \sum^{\infty }_{i=1} \left|\partial_{i}w_{2}\right|^{2}\cdot e^{w_2-w_1} \right)\cdot c_1^{s,t}\cdot\sum^{\prime }_{\left| I\right|  =s,\left| L\right|  =t}\int_{V} \left(\sum^{\infty }_{i=1}c_{I,iL} \cdot\left| g_{I,iL}\right|^{2}\right)\cdot e^{-w_2} \,\mathrm{d}P\\
&&= \sup_{V_{r}}\left(  \sum^{\infty }_{i=1} \left|\partial_{i}w_{2}\right|^{2}\cdot e^{w_2-w_1} \right)\cdot c_1^{s,t}\cdot(t+1)\cdot \sum^{\prime }_{\left| I\right|  =s,\left| J\right|  =t+1}c_{I,J}\cdot\int_{V}\left| g_{I,J}\right|^{2}\cdot e^{-w_2} \,\mathrm{d}P\\
&&<\infty,
\end{eqnarray*}
and hence $\mathbb{I}g\in L^{2}_{(s,t)}\left( V,w_{1}\right)$.

By the definition of $\mathbb{I}g$, it is easy to see that $\left( \mathbb{I}g\right)_{I,L}  =0$ for all strictly increasing multi-indices $I$ and $L$ with $\left| I\right|  =s,\left| L\right|  =t$ and $\max \left\{ I\cup L\right\}  >n_{0}$. For any $\phi\in C^{\infty }_{0,F}\left( V\right)$ and strictly increasing multi-indices $I$ and $L$ with $\left| I\right|  =s,\left| L\right|  =t$ and $\max \left\{ I\cup L\right\}  >n_{0}$, noting that
\begin{eqnarray*}
\sum^{\infty }_{j=1} |c_{I,jL}|\int_{V} \left| \partial_{j} \phi \right|^{2}\cdot  e^{-w_{2}}\,\mathrm{d}P
\leqslant c_{I,L}\cdot \left(\sup_{i\in\mathbb{N}} \frac{c_{I,iL}}{c_{I,L}}\right)\cdot \sum^{\infty }_{j=1}\int_{V} \left| \partial_{j} \phi \right|^{2} \cdot e^{-w_{2}}\,\mathrm{d}P<\infty ,
\end{eqnarray*}
using Lemma \ref{T*formula}, we obtain that
\begin{eqnarray*}
 \int_{V} \left( T^{\ast }g\right)_{I,L} \cdot \phi \cdot e^{-w_{1}}dP=\frac{1}{c_{I,L}}\left( -1\right)^{s}  \sum_{j=1}^{\infty} c_{I,jL} \int_{V}\partial_{j} \phi \cdot g_{I,jL}\cdot e^{-w_{2}}\,\mathrm{d}P=0.
\end{eqnarray*}
By Corollary \ref{density2}, $C^{\infty }_{0,F}\left( V\right)$ is dense in $L^2(V,e^{-w_1}P)$, and hence $\left( T^{\ast }g\right)_{I,L}=0$ and $\left( \mathbb{P}g\right)_{I,L}=0$ for above indices $I$ and $L$. This completes the proof of Lemma \ref{general I,P}.
\end{proof}


The following theorem is the second part of our approximation process (Recall Definition \ref{definition of T S} for the operators $T$ and $S$).
\begin{theorem}\label{general density4}
Under Conditions \ref{230424c1} and  \ref{230423ass1}, for any $f\in D_{S}\bigcap D_{T^{\ast }}$ with $\supp f\stackrel{\circ}{\subset} V_{r}^{o}$ for some $r\in(0,+\infty)$, there exists $\{h_{k}\}_{k=1}^{\infty}\subset D_{S}\cap D_{T^{\ast }} $ and $\{n_k\}_{k=1}^{\infty}\subset\mathbb{N}$ such that for each $k\in\mathbb{N}$, $\left( h_{k }\right)_{I,J}  =0$ for all strictly increasing multi-indices $I$ and $J$ with $\left| I\right|  =s,\left| J\right|  =t+1$ and $\max \left\{ I\bigcup J\right\}  >n_{k}$ while $\left( h_{k }\right)_{I,J}  \in C^{\infty}_{0,F}(V)$ for all strictly increasing multi-indices $I$ and $J$ with $\left| I\right|  =s,\left| J\right|  =t+1$ and $\max \left\{ I\bigcup J\right\}\leqslant n_{k}$. Moreover,
$$
\lim_{k \rightarrow \infty } \left(\left| \left| T^{\ast }  h_k   -T^{\ast }f\right|  \right|_{L^{2}_{(s,t)}\left( V,w_{1}\right)  }  +\left| \left| h_k-f\right|  \right|_{L^{2}_{(s,t+1)}\left( V,w_{2}\right)  }  +\left| \left| S h_k  -Sf\right|  \right|_{L^{2}_{(s,t+2)}\left( V,w_{3}\right)  }  \right)  =0.
$$
\end{theorem}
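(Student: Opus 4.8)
By Proposition \ref{general cut-off density3} it suffices to treat forms whose support is uniformly included in some $V_r^o$, which is exactly the standing hypothesis; the whole difficulty is now to approximate such a (possibly very rough) $f$ by \emph{finite, smooth} forms while controlling $T^{\ast}$ and $S$ at the same time. Fix once and for all $\varphi\in C^{\infty}(\mathbb{R})$ with $0\leqslant\varphi\leqslant1$, $\varphi\equiv1$ on $(-\infty,r+1]$ and $\varphi\equiv0$ on $[r+2,+\infty)$, and set $\Phi\triangleq\varphi(\eta)\in C^{\infty}_{0,F}(V)$. Since $\supp f\subset V_r$ and $\Phi\equiv1$ on $V_{r+1}\supset V_r$, we have $\Phi\cdot f=f$. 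For $n\in\mathbb{N}$ and $\delta>0$ let $(f_{I,J})_{n,\delta}$ be the reduction-and-mollification of the components as in (\ref{Integration reduce dimension})--(\ref{Convolution after reduce dimension}), put $f_{n,\delta}\triangleq\sum^{\prime}_{\max\{I\cup J\}\leqslant n}(f_{I,J})_{n,\delta}\,dz_I\wedge d\overline{z_J}$, and define
\[
h_{n,\delta}\triangleq\Phi\cdot f_{n,\delta}.
\]
Each $(f_{I,J})_{n,\delta}$ is a bounded smooth cylinder function and $\Phi\in C^{\infty}_{0,F}(V)$, so every component $\Phi\cdot(f_{I,J})_{n,\delta}$ lies in $C^{\infty}_{0,F}(V)$ and vanishes unless $\max\{I\cup J\}\leqslant n$; hence $h_{n,\delta}\in D_S\cap D_{T^{\ast}}$ by Remark \ref{230407r1} and Proposition \ref{general formula of T*}, and the structural requirements (with $n_k=n$) hold. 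The plan is to prove the three convergences for the iterated limit $n\to\infty$, $\delta\to0+$, and then extract a diagonal sequence $h_k\triangleq h_{n_k,\delta_k}$.

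\textbf{The norm and the $S$-convergence.} On $\supp\Phi\stackrel{\circ}{\subset}V_{r+2}^{o}$ the weights $w_1,w_2,w_3\in C^{2}_{F}(V)$ are bounded above and below, so there each weighted norm is equivalent to the unweighted $L^{2}(\ell^2,P)$ norm. Writing $h_{n,\delta}-f=\Phi\cdot(f_{n,\delta}-f)$ and using $|\Phi|\leqslant1$, Lemma \ref{approximation for st form}(2) gives $\lim_{n}\lim_{\delta}\|h_{n,\delta}-f\|_{w_2}=0$. For $S$, note $\Phi\in\mathscr{M}$, since $\sum_i|\overline{\partial_i}\Phi|^2 e^{w_1-w_2}=|\varphi'(\eta)|^2\big(\sum_i|\overline{\partial_i}\eta|^2\big)e^{-\psi}$ is bounded by (\ref{majority function}); thus Proposition \ref{general multiplitier}(1) yields $S h_{n,\delta}=\Phi\cdot S f_{n,\delta}+(S\Phi)\wedge f_{n,\delta}$. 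Because $S=\overline{\partial}$ is a constant-coefficient operator, it commutes with the reduction-and-mollification at the level of the form, so that $S f_{n,\delta}=(Sf)_{n,\delta}$ (truncated at level $n$); this is checked directly from the weak definition of $\overline{\partial}$ together with Proposition \ref{convolution properties}(3). Since $\supp Sf\stackrel{\circ}{\subset}V_r^{o}$ by Proposition \ref{support argument}(2), Lemma \ref{approximation for st form}(2) applied to $Sf$ gives $S f_{n,\delta}\to Sf$, while the zeroth-order term obeys $(S\Phi)\wedge f_{n,\delta}\to(S\Phi)\wedge f$. Combining with $\Phi\cdot Sf+(S\Phi)\wedge f=S(\Phi f)=Sf$ proves $\|S h_{n,\delta}-Sf\|_{w_3}\to0$.

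\textbf{The $T^{\ast}$-convergence (the main obstacle).} Applying Proposition \ref{general formula of T*} to $h_{n,\delta}$ and using the Leibniz rule $\delta_i(\Phi\,u)=(\partial_i\Phi)u+\Phi\,\delta_i u$, one splits
\[
T^{\ast}h_{n,\delta}=\Phi\cdot\mathcal{T}_{n,\delta}+E_\Phi(f_{n,\delta}),
\]
where $\mathcal{T}_{n,\delta}$ is the formal adjoint expression of $f_{n,\delta}$ and $E_\Phi(g)\triangleq(-1)^{s+1}e^{w_1-w_2}\sum^{\prime}_{|I|=s,|L|=t}\big(\sum_i\tfrac{c_{I,iL}}{c_{I,L}}(\partial_i\Phi)\,g_{I,iL}\big)dz_I\wedge d\overline{z_L}$ is zeroth order in $g$. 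By Lemma \ref{general I,P} write $\mathcal{T}_{n,\delta}=(-1)^s e^{w_1-w_2}\big(\mathbb{I}(f_{n,\delta})-\mathbb{P}(f_{n,\delta})\big)$. The part $\mathbb{I}$ is zeroth order: as $\partial_i w_2$ is locally bounded and, by Lemma \ref{general I,P}(1) and Condition \ref{230423ass1}, the $i$-series converges with a uniform tail bound, the $L^2$-convergence $(f_{I,iL})_{n,\delta}\to f_{I,iL}$ forces $\mathbb{I}(f_{n,\delta})\to\mathbb{I}f$ on $\supp\Phi$; likewise $E_\Phi(f_{n,\delta})\to E_\Phi(f)$. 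The genuine difficulty is $\mathbb{P}(f_{n,\delta})$, whose components are $\sum_i\tfrac{c_{I,iL}}{c_{I,L}}\delta_i(f_{I,iL})_{n,\delta}$ with $\delta_i=\varphi_n^{-1}\partial_i\varphi_n$ the Gaussian logarithmic-derivative operator, precisely because $f$ has no $L^2$ derivatives and $\mathbb{P}f$ is only defined through $T^{\ast}f$. To identify the limit I would test against $\phi\in C^{\infty}_{0,F}(V)$, move the derivative off $(f_{I,iL})_{n,\delta}$ by Corollary \ref{integration by Parts or deltai}, and transfer the mollification from the $f$-factor onto the test factor via the Gaussian self-adjointness identity Proposition \ref{convolution properties}(3); letting $\delta\to0+$ and $n\to\infty$, the transferred, mollified test function tends to $\partial_j\phi$, reproducing exactly the weak characterization of $(T^{\ast}f)_{I,L}$ from Lemma \ref{T*formula} and Corollary \ref{weak T*formula}. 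The heart of the argument — a Friedrichs-type commutator lemma in this infinite-dimensional Gaussian setting — is to show that the discrepancy arising when $\varphi_n$, the mollification, and the index truncation fail to commute tends to zero in $L^{2}_{(s,t)}(V,w_1)$, and this is exactly where the reduction transformation (\ref{Integration reduce dimension}) and Proposition \ref{convolution properties}(3) are indispensable. Granting this, $\Phi\cdot\mathcal{T}_{n,\delta}\to\Phi\cdot T^{\ast}f$, and together with $E_\Phi(f_{n,\delta})\to E_\Phi(f)$ and $\Phi\cdot T^{\ast}f+E_\Phi(f)=T^{\ast}(\Phi f)=T^{\ast}f$ we obtain $\|T^{\ast}h_{n,\delta}-T^{\ast}f\|_{w_1}\to0$.

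\textbf{Conclusion.} Having the three iterated limits in $n$ and $\delta$, a standard diagonal extraction produces $n_k\uparrow\infty$ and $\delta_k\downarrow0$ so that $h_k\triangleq h_{n_k,\delta_k}$ satisfies all the asserted convergences together with the finiteness and $C^{\infty}_{0,F}(V)$-regularity constraints, which completes the proof. I expect the delicate commutator estimate in the $T^{\ast}$-step to be by far the most technical point; the $S$-step is comparatively soft thanks to the constant-coefficient nature of $\overline{\partial}$.
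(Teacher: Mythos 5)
Your overall architecture coincides with the paper's own proof: the same approximants (a cutoff built from $\eta$ times the reduction--mollification $f_{n,\delta}$), the same splitting of $T^{\ast}$ through the operators $\mathbb{I}$ and $\mathbb{P}$ of Lemma \ref{general I,P}, and the same tools (Proposition \ref{convolution properties}, Corollary \ref{integration by Parts or deltai}, Lemma \ref{T*formula}). But there is a genuine gap exactly at the point you yourself flag: the convergence of the $\mathbb{P}$-part is the heart of the theorem, and you write ``Granting this''. What the paper actually does in its Step 3 is derive a closed-form expression for $(\mathbb{P}f)_{n,\delta}$ (equation (\ref{Pfndelta})) by testing against $\phi\in C^{\infty}_{0,F}(V^{o}_{\rho'})$, moving $\delta_i$ across the integral via Corollary \ref{integration by Parts or deltai}, and transferring the mollification with Proposition \ref{convolution properties}; it then computes $\mathbb{P}(\eta_{\rho}\cdot f_{n,\delta})$ explicitly from Proposition \ref{general formula of T*} (equation (\ref{Petarhofndelta})), and estimates the difference, the key terms being the commutators $(f_{I,iL})_{n,\delta}\cdot\frac{\overline{z_i}}{2a_i^2}-\left(f_{I,iL}\cdot\frac{\overline{z_i}}{2a_i^2}\right)_{n,\delta}$ and the cutoff-derivative terms $(\partial_i\eta_{\rho})\cdot(f_{I,iL})_{n,\delta}$. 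Naming the strategy (test, integrate by parts, transfer the mollification) is the statement of the problem, not its solution; without carrying out this estimate the theorem is not proved.

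Two further points are not merely cosmetic. First, your $\mathbb{I}$-step (``zeroth order, hence converges'') is too quick: the components of $\mathbb{I}(\Phi\cdot f_{n,\delta})$ are sums over $i\leqslant n$ of $\frac{c_{I,iL}}{c_{I,L}}(f_{I,iL})_{n,\delta}\,\partial_i w_2$, and termwise $L^2$-convergence does not control the sum over $i$ and over $(I,L)$; Lemma \ref{general I,P} gives almost-everywhere convergence of the series for the fixed $f$, not a bound uniform in $(n,\delta)$. The paper's Step 4 resolves this with a density argument: it approximates $f$ in $L^{2}_{(s,t+1)}(\ell^2,P)$ by a finite cylinder form in $\mathscr{C}_{(s,t+1)}$, for which the truncation identity is exact once $n$ exceeds the cylinder level, and passes the error through the uniform estimate (\ref{general commutator estimation}). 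Second, you invoke Proposition \ref{general multiplitier} with $f_{n,\delta}$ as the domain element, but $f_{n,\delta}$ need not lie in $D_S\cap D_{T^{\ast}}$ at all (its cylinder support is not uniformly included in $V$, and the weights are only locally bounded); the paper warns about precisely this in the remark following the theorem. The Leibniz identity $S(\Phi\cdot f_{n,\delta})=\Phi\cdot(Sf)_{n,\delta}+(S\Phi)\wedge f_{n,\delta}$ must instead be derived componentwise from Remark \ref{230407r1} together with the commutation identity (\ref{230407e5}), which is how the paper proceeds.
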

\begin{proof}
Note that for each strictly increasing multi-indices $I,J$ and $K$ with $\left| I\right|=s,\left| J\right|  =t+1$ and $\left| K\right|=t+2$, $\supp f_{I,J}\stackrel{\circ}{\subset} V_{r}^{o}$ and by Proposition \ref{support argument}, $\supp (Sf)_{I,K}\stackrel{\circ}{\subset} V_{r}^{o}$. Since $\sup\limits_{\textbf{z}\in V_{r}^{o}}|w_2(\textbf{z})|<\infty$ and $\sup\limits_{\textbf{z}\in V_{r}^{o}}|w_3(\textbf{z})|<\infty$,  we have $f_{I,J},(Sf)_{I,K}\in L^2(V,P)$. For $n\in\mathbb{N}$ and $\delta\in(0,+\infty)$, let
\begin{eqnarray*}
f_{n}&\triangleq &\sum^{\prime }_{\left| I\right|  =s,\left| J\right|  =t+1,\max \left\{ I\cup J\right\}  \leqslant n} (f_{I,J})_{n}dz_{I}\wedge d\overline{z_{J}}, \\
f_{n,\delta }&\triangleq &\sum^{\prime }_{\left| I\right|  =s,\left| J\right|  =t+1,\max \left\{ I\cup J\right\}  \leqslant n} (f_{I,J})_{n,\delta }dz_{I}\wedge d\overline{z_{J}}, \\
\left( Sf\right)_{n,\delta } & \triangleq &\sum^{\prime }_{\left| I\right|  =s,\left| K\right|  =t+2,\max \left\{ I\cup K\right\}  \leqslant n} (\left( Sf\right)_{I,K})_{n,\delta }  dz_{I}\wedge d\overline{z_{K}},
\end{eqnarray*}
where $(f_{I,J})_{n}$ is defined as that in (\ref{Integration reduce dimension}), and $(f_{I,J})_{n,\delta }$ and $(\left( Sf\right)_{I,K})_{n,\delta }$ are defined as that in (\ref{Convolution after reduce dimension}). Clearly, $f_{n}$ and $f_{n,\delta }$ (resp. $\left( Sf\right)_{n,\delta }$) may be viewed $(s,t+1)$-forms (resp. an $(s,t+2)$-form) on $V$ (and also on $\ell^2$), and
\begin{equation}\label{230409e1}
\begin{aligned}
		&\left(f_{n}\right)_{I,J}=\begin{cases}
\left(f_{I,J}\right)_{n},&\max \left\{ I\cup J\right\}  \leqslant n,\\ 0,&\max \left\{ I\cup J\right\}  > n,
\end{cases}\qquad
 \left(f_{n,\delta}\right)_{I,J}=\begin{cases}
\left(f_{I,J}\right)_{n,\delta},&\max \left\{ I\cup J\right\}  \leqslant n,\\ 0,&\max \left\{ I\cup J\right\}  > n,
\end{cases}\\[5mm]
		&
		\left(\left(Sf\right)_{n,\delta}\right)_{I,K}=\begin{cases}
\left(\left(Sf\right)_{I,K}\right)_{n,\delta},&\max \left\{ I\cup K\right\}  \leqslant n,\\ 0,&\max \left\{ I\cup K\right\}  > n.
\end{cases}
\end{aligned}
\end{equation}

Let
$$
h \left( x\right)\triangleq
\begin{cases}
1,&x\in (-\infty ,0],\\ \left( e^{\frac{1}{x-1} }-1\right)  e^{-\frac{e^{\frac{1}{x-1} }}{x} }+1,&x\in \left( 0,1\right),  \\ 0,&x\in [1,+\infty ).
\end{cases}
$$
For each $\rho>r$ and $x\in\mathbb{R}$, set $h_{\rho}(x)\triangleq h(x-\rho)$ and $\eta_{\rho }\triangleq h_{\rho } \left( \eta \right)$. Then $\eta_{\rho }\in C^{\infty }_{0,F}\left( V\right)$, $h,h_{\rho}\in C^{\infty }\left( \mathbb{R}\right)$, $0\leqslant h\leqslant 1$, $0\leqslant h_{\rho}\leqslant 1$, $\  h_{\rho } \left( x\right)  =1$ for all $x<\rho ,\  h_{\rho } \left( x\right)  =0$ for all $x>\rho +1$ and $\left| h_{\rho}^{\prime } \right|  \leqslant C$ where $C\triangleq \sup\limits_{\mathbb{R}}| h^{\prime } |<\infty.$
Note that $(\eta_{\rho }\cdot f_{n,\delta })_{I,J}\in C^{\infty }_{0,F}\left( V\right)$ for each strictly increasing multi-indices $I$ and $J$ with $\left| I\right|  =s,\left| J\right|  =t+1$ and $\max\{I\cup J\}\leqslant n$, while $(\eta_{\rho }\cdot f_{n,\delta })_{I,J}=0$ for each strictly increasing multi-indices $I$ and $J$ with $\left| I\right|  =s,\left| J\right|  =t+1$ and $\max\{I\cup J\}>n$.  Since $\supp \eta_{\rho}\subset V_{\rho+1}$, by the conclusion (2) of Proposition \ref{properties on pseudo-convex domain},  for each $\rho'>\rho+1$, it holds that $\supp \eta_{\rho } \stackrel{\circ}{\subset} V_{\rho^{\prime } }^{o}$.  By Proposition \ref{general formula of T*} and the proof of Lemma \ref{densly defined closed}, we see that $\eta_{\rho }\cdot f_{n,\delta }\in D_{T^{\ast }}\cap D_S$. We will prove that
\begin{equation}\label{230418e03}
\begin{array}{ll}
\displaystyle \lim\limits_{n\rightarrow \infty } \lim\limits_{\delta \rightarrow 0^{+}}\left(\left| \left| T^{\ast }\left( \eta_{\rho }\cdot f_{n,\delta }\right)  -T^{\ast }f\right|  \right|_{L^{2}_{(s,t)}\left( V,w_{1}\right)  }  +\left| \left| \eta_{\rho }\cdot f_{n,\delta }-f\right|  \right|_{L^{2}_{(s,t+1)}\left( V,w_{2}\right)  }\right.\\[3mm]
\displaystyle \left.\qquad\qquad  +\left| \left| S\left( \eta_{\rho }\cdot f_{n,\delta }\right)  -Sf\right|  \right|_{L^{2}_{(s,t+2)}\left( V,w_{3}\right)  }\right)\\[3mm]
\displaystyle   = 0.
\end{array}
\end{equation}
As a consequence of (\ref{230418e03}), the desired sequence $\{h_k\}_{k=1}^{\infty}$ in Theorem \ref{general density4} can be chosen from $\{\eta_{\rho }\cdot f_{n,\delta }:\;\rho>r,\,n\in\mathbb{N},\,\delta\in(0,+\infty)\}$. The proof of the equality (\ref{230418e03}) is quite long, and therefore we divide it into several steps.

\medskip

\textbf{Step 1:}  We shall prove that
\begin{equation}\label{230406e1}
\lim\limits_{n\rightarrow \infty } \lim\limits_{\delta \rightarrow 0^{+}} \left| \left| \eta_{\rho }\cdot f_{n,\delta }-f\right|  \right|_{L^{2}_{(s,t+1)}\left( V_{\rho^{\prime } },w_{2}\right)  }  =0.
\end{equation}
Since $\supp f \stackrel{\circ}{\subset} V^{o}_{r}$, $\rho>r$ and $\eta_{\rho }=1$ on $V^{o}_{r}$, we have $\eta_{\rho }\cdot f=f$. Note also that
\begin{eqnarray*}
&& \left| \left| \eta_{\rho }\cdot f_{n,\delta }-f\right|  \right|_{L^{2}_{(s,t+1)}\left( V_{\rho^{\prime } },w_{2}\right)  } \\
&&\leqslant \left| \left| \eta_{\rho }\cdot f_{n,\delta }-\eta_{\rho }\cdot f_{n}\right|  \right|_{L^{2}_{(s,t+1)}\left( V_{\rho^{\prime } },w_{2}\right)  }  +\left| \left| \eta_{\rho }\cdot f_{n}-\eta_{\rho } \cdot f\right|  \right|_{L^{2}_{(s,t+1)}\left( V_{\rho^{\prime } },w_{2}\right)  }  +\left| \left| \eta_{\rho }\cdot f-f\right|  \right|_{L^{2}_{(s,t+1)}\left( V_{r},w_{2}\right)  }\\
&&\leqslant \left| \left|  f_{n,\delta }- f_{n}\right|  \right|_{L^{2}_{(s,t+1)}\left( V_{\rho^{\prime } },w_{2}\right)  }  +\left| \left|   f_{n}-  f\right|  \right|_{L^{2}_{(s,t+1)}\left( V_{\rho^{\prime } },w_{2}\right)  }   \\
&&\leqslant \sqrt{\sup_{V_{\rho^{\prime } }} e^{-w_2}}\cdot\left| \left|  f_{n,\delta }- f_{n}\right|  \right|_{L^{2}_{(s,t+1)}\left( V_{\rho^{\prime } },P\right)  }  +\sqrt{\sup_{V_{\rho^{\prime } }} e^{-w_2}}\cdot\left| \left|   f_{n}-  f\right|  \right|_{L^{2}_{(s,t+1)}\left( V_{\rho^{\prime } },P\right)  }.
\end{eqnarray*}
Since $\sup\limits_{V_{\rho^{\prime } }} e^{-w_2}<\infty$ and by the proof of Lemma \ref{approximation for st form}, we obtain the desired equality (\ref{230406e1}).

\medskip

\textbf{Step 2:} In this step, we shall show that
\begin{equation}\label{230406e2}
\lim\limits_{n\rightarrow \infty } \lim\limits_{\delta \rightarrow 0^{+}} \left| \left| S(\eta_{\rho }\cdot f_{n,\delta })-Sf\right|  \right|_{L^{2}_{(s,t+2)}\left( V_{\rho^{\prime } },w_{3}\right)  }  =0.
\end{equation}
Since $\eta_{\rho }\cdot (Sf)=Sf$ and
\begin{eqnarray*}
 &&\left| \left| S\left( \eta_{\rho }\cdot f_{n,\delta }\right)  -Sf\right|  \right|_{L^{2}_{(s,t+2)}\left( V_{\rho^{\prime } },w_{3}\right)  }\\
&&\leqslant \left| \left| S\left( \eta_{\rho }\cdot f_{n,\delta }\right)  -\eta_{\rho }\cdot \left( Sf\right)_{n,\delta }  \right|  \right|_{L^{2}_{(s,t+2)}\left( V_{\rho^{\prime } },w_{3}\right)  } +\left| \left| \eta_{\rho }\cdot \left( Sf\right)_{n,\delta }  -\eta_{\rho }\cdot (Sf)\right|  \right|_{L^{2}_{(s,t+2)}\left( V_{\rho^{\prime } },w_{3}\right)  }\\
&&\quad+\left| \left| \eta_{\rho }\cdot (Sf)-Sf\right|  \right|_{L^{2}_{(s,t+2)}\left( V_{r},w_{3}\right)  }\\
&&=\left| \left| S\left( \eta_{\rho }\cdot f_{n,\delta }\right)  -\eta_{\rho }\cdot \left( Sf\right)_{n,\delta }  \right|  \right|_{L^{2}_{(s,t+2)}\left( V_{\rho^{\prime } },w_{3}\right)  } +\left| \left| \eta_{\rho }\cdot \left( Sf\right)_{n,\delta }  -\eta_{\rho }\cdot (Sf)\right|  \right|_{L^{2}_{(s,t+2)}\left( V_{\rho^{\prime } },w_{3}\right)  },
\end{eqnarray*}
it suffices to prove that
 \begin{equation}\label{230407e1}
 \lim\limits_{n\rightarrow \infty } \lim\limits_{\delta \rightarrow 0^{{}_{+}}} \left| \left| \eta_{\rho }\cdot \left( Sf\right)_{n,\delta }  -\eta_{\rho }\cdot (Sf)\right|  \right|_{L^{2}_{(s,t+2)}\left( V_{\rho^{\prime } },w_{3}\right)  }  =0
 \end{equation}
 and
 \begin{equation}\label{230407e2}
 \lim\limits_{n\rightarrow \infty } \lim\limits_{\delta \rightarrow 0^{+}}\left| \left| S\left( \eta_{\rho }\cdot f_{n,\delta }\right)  -\eta_{\rho } \cdot\left( Sf\right)_{n,\delta }  \right|  \right|_{L^{2}_{(s,t+2)}\left( V_{\rho^{\prime } },w_{3}\right)  }=0.
\end{equation}

By the conclusion (2) of Proposition \ref{support argument}, $\supp Sf \stackrel{\circ}{\subset} V^{o}_{r}$. Since $0\leqslant \eta_{\rho } \leqslant 1$ and $\sup\limits_{V_{\rho^{\prime } }}e^{-w_3}<\infty$, we have
$$
\left| \left| \eta_{\rho } \cdot\left( Sf\right)_{n,\delta }  -\eta_{\rho }\cdot (Sf)\right|  \right|_{L^{2}_{(s,t+2)}\left( V_{\rho^{\prime } },w_{3}\right)  } ^2
\leqslant \left(\sup_{V_{\rho^{\prime } }}e^{-w_3}\right)\cdot \left| \left|\left( Sf\right)_{n,\delta }  - Sf\right|  \right|_{L^{2}_{(s,t+2)}\left( V,P\right)  } ^2.
$$
By the conclusion (2) of Lemma \ref{approximation for st form}, we obtain (\ref{230407e1}).

To show (\ref{230407e2}), we claim that
\begin{equation}\label{230407e9}
S( \eta_{\rho }\cdot f_{n,\delta })  =\eta_{\rho }\cdot \left( Sf\right)_{n,\delta }  +(S\eta_{\rho }) \wedge f_{n,\delta },
 \end{equation}
where
$$
(S\eta_{\rho }) \wedge f_{n,\delta }\triangleq \left( -1\right)^{s}  \sum^{\prime }_{\left| I\right|  =s,\max\{I\}\leqslant n} \sum^{\prime }_{\left| K\right|  =t+2} \left(\sum^{\infty }_{i=1} \sum^{\prime }_{\left| J\right|  =t+1,\max\{J\}\leqslant n} \varepsilon^{K}_{iJ} (f_{n,\delta })_{I,J}\overline{\partial_{i} } (\eta_{\rho }) \right)dz_{I}\wedge d\overline{z_{K}}.
$$
Indeed, for each strictly increasing multi-indices $I$ and $K$ with $\left| I\right|  =s$ and $\left| K\right|  =t+2$, we show first that
\begin{equation}\label{230407e5}
(-1)^s\sum^{\prime }_{\left| J\right|  =t+1,\max\{J\}\leqslant n} \sum^{n}_{i=1} \varepsilon^{K}_{iJ} \overline{\partial_{i} } (f_{n,\delta })_{I,J}=(( Sf )_{n,\delta })_{I,K}.
\end{equation}
For this purpose, we consider several cases. The first case is $\max\{I\cup K\}\leqslant n$. In this case, by (\ref{230409e1}), we have
$$
\begin{array}{ll}
\displaystyle (-1)^s\sum^{\prime }_{\left| J\right|  =t+1,\max\{J\}\leqslant n} \sum^{n}_{i=1} \varepsilon^{K}_{iJ} \overline{\partial_{i} } (f_{n,\delta })_{I,J}(\textbf{z}_n)=(-1)^s\sum^{\prime }_{\left| J\right|  =t+1} \sum^{\infty}_{i=1} \varepsilon^{K}_{iJ} \overline{\partial_{i} } (f_{n,\delta })_{I,J}(\textbf{z}_n)\\[3mm]
\displaystyle =(-1)^s\sum^{\prime }_{\left| J\right|  =t+1} \sum^{\infty}_{i=1} \varepsilon^{K}_{iJ} \int_{\mathbb{C}^{n} } (f_{I,J})_{n} ( \textbf{z}_n^{'})  \overline{\partial_{i} } \gamma_{n,\delta } (\textbf{z}_n-\textbf{z}_n^{'})  \,\mathrm{d}\textbf{z}_n^{'}\\[3mm]
\displaystyle =(-1)^s\sum^{\prime }_{\left| J\right|  =t+1} \sum^{\infty}_{i=1} \varepsilon^{K}_{iJ} \int_{\ell^{2}} f_{I,J} ( \textbf{z}_n',{\textbf{z}^n}' )  \overline{\partial_{i} } \gamma_{n,\delta }  (\textbf{z}_n- \textbf{z}_n' )  \varphi^{-1}_{n}  (\textbf{z}_n')  \,\mathrm{d}P( \textbf{z}_n',{\textbf{z}^n}' )\\[3mm]
\displaystyle =(-1)^{s+1}\sum^{\prime }_{\left| J\right|  =t+1} \sum^{\infty}_{i=1} \varepsilon^{K}_{iJ} \int_{\ell^{2}} f_{I,J} \overline{\delta_{i} \left( \gamma_{n,\delta }(\textbf{z}_n-\cdot) \varphi^{-1}_{n} \right)  } \,\mathrm{d}P\\[3mm]
\displaystyle =\int_{\ell^{2}} \left( Sf\right)_{I,K}    \gamma_{n,\delta }(\textbf{z}_n-\cdot) \varphi^{-1}_{n} \,\mathrm{d}P=(( Sf )_{n,\delta })_{I,K}(\textbf{z}_n),
\end{array}
$$
where $\textbf{z}_n\in\mathbb{C}^n$, $\varphi_{n} $ is given by (\ref{def for Gauss weight}) and the fifth equality follows from the fact that $ \gamma_{n,\delta }(\textbf{z}_n-\cdot) \varphi^{-1}_{n}\in C_b^1(V)$ and the conclusion (2) of Proposition \ref{weak equality for more test functions}. The second case is $\max\{I\}> n$, for which, by (\ref{230409e1}) again, each side of (\ref{230407e5}) is equal to $0$.  The third case is $\max\{K\}> n$. In this case, by the definition of $\varepsilon^{K}_{iJ} $, the left side of (\ref{230407e5}) is equal to $0$; while, by (\ref{230409e1}) once more, so is its right side.

In view of Remark \ref{230407r1}, it follows that
\begin{equation}\label{230410e1}
S( \eta_{\rho }\cdot f_{n,\delta })  =\left( -1\right)^{s} \sum\limits^{\prime }_{\left| I\right|  =s} \sum\limits^{\prime }_{\left| K\right|  =t+2} \sum^{\prime }_{\left| J\right|  =t+1} \sum^{\infty}_{i=1} \varepsilon^{K}_{iJ}\cdot \overline{\partial_{i} } \left(\eta_{\rho }\cdot(f_{n,\delta })_{I,J}\right)\,\mathrm{d}z_{I}\wedge \,\mathrm{d}\overline{z_{K}}.
\end{equation}
Also, by the conclusion (1) of Proposition \ref{general multiplitier}, for each strictly increasing multi-indices $I$ and $K$ with $\left| I\right|  =s$ and $\left| K\right|  =t+2$, noting (\ref{230407e5}), we have
\begin{eqnarray*}
&&(-1)^s\sum^{\prime }_{\left| J\right|  =t+1} \sum^{\infty}_{i=1} \varepsilon^{K}_{iJ}\cdot \overline{\partial_{i} } \left(\eta_{\rho }\cdot(f_{n,\delta })_{I,J}\right)=(-1)^s\sum^{\prime }_{\left| J\right|  =t+1,\max\{J\}\leqslant n} \sum^{\infty}_{i=1} \varepsilon^{K}_{iJ}\cdot \overline{\partial_{i} } \left(\eta_{\rho }\cdot(f_{n,\delta })_{I,J}\right)\\
&&=\eta_{\rho }\cdot(-1)^s\sum^{\prime }_{\left| J\right|  =t+1,\max\{J\}\leqslant n} \sum^{\infty}_{i=1} \varepsilon^{K}_{iJ} \cdot \overline{\partial_{i} } (f_{n,\delta })_{I,J}
+(-1)^s\sum^{\prime }_{\left| J\right|  =t+1,\max\{J\}\leqslant n} \sum^{\infty}_{i=1} \varepsilon^{K}_{iJ}\cdot  \overline{\partial_{i} } (\eta_{\rho })\cdot(f_{n,\delta })_{I,J}\\
&&=\eta_{\rho }\cdot(-1)^s\sum^{\prime }_{\left| J\right|  =t+1,\max\{J\}\leqslant n} \sum^{n}_{i=1} \varepsilon^{K}_{iJ}\cdot  \overline{\partial_{i} }  (f_{n,\delta })_{I,J}
+(-1)^s\sum^{\prime }_{\left| J\right|  =t+1,\max\{J\}\leqslant n} \sum^{\infty}_{i=1} \varepsilon^{K}_{iJ}\cdot  \overline{\partial_{i} } (\eta_{\rho })\cdot(f_{n,\delta })_{I,J}\\
&&=\eta_{\rho }\cdot(( Sf )_{n,\delta })_{I,K}+\left((S\eta_{\rho }) \wedge f_{n,\delta }\right)_{I,K},
\end{eqnarray*}
which, combined with (\ref{230410e1}), yields (\ref{230407e9}).

By (\ref{230407e9}), noting that $\varepsilon_{iJ}^K=\left|\varepsilon_{iJ}^K\right|\varepsilon_{iJ}^K$ for each $i\in\mathbb{N}$ and strictly increasing multi-indices $J$ and $K$ with $\left| J\right|  =t+1$ and $\left| K\right|  =t+2$, and recalling (\ref{gener111}) for  $c_{I,iJ}$,  we obtain that
\begin{equation}\label{230407e6}
\begin{aligned}
&\left| \left| S\left( \eta_{\rho } \cdot f_{n,\delta }\right)  -\eta_{\rho }\cdot \left( Sf\right)_{n,\delta }  \right|  \right|^{2}_{L^{2}_{(s,t+2)}\left( V_{\rho^{\prime } },w_{3}\right)  }\\
&=\left| \left|(S\eta_{\rho }) \wedge f_{n,\delta }\right|  \right|^{2}_{L^{2}_{t+2}\left( V_{\rho^{\prime } },w_{3}\right)  } \\
&= \sum^{\prime }_{\left| I\right|  =s,\left| K\right|  =t+2,\max \left\{ I \right\}  \leqslant n} c_{I,K}\int_{V_{\rho^{\prime } }}\left|\sum^{\prime }_{\left| J\right|  =t+1,\max\{J\}\leqslant n} \sum^{\infty}_{i=1} \varepsilon^{K}_{iJ}\cdot \overline{\partial_{i} } (\eta_{\rho })\cdot(f_{I,J})_{n,\delta }\right|^2e^{-w_{3}}\,\mathrm{d}P\\
&\leqslant \sum^{\prime }_{\left| I\right|  =s,\left| K\right|  =t+2,\max \left\{ I \right\}  \leqslant n} c_{I,K}\int_{V_{\rho^{\prime } }}\left(\sum^{\prime }_{\left| J\right|  =t+1,\max\{J\}\leqslant n} \sum^{\infty}_{i=1} \left|\varepsilon^{K}_{iJ}\right|^2\right)\\
&\qquad\qquad\qquad\qquad\qquad\quad\cdot\left(\sum^{\prime }_{\left| J\right|  =t+1,\max\{J\}\leqslant n} \sum^{\infty}_{i=1} \left|\varepsilon^{K}_{iJ}\cdot \overline{\partial_{i} } (\eta_{\rho })\cdot(f_{I,J})_{n,\delta }\right|^2\right)e^{-w_{3}}\,\mathrm{d}P\\
&=(t+2)\sum^{\prime }_{\left| I\right|  =s,\left| K\right|  =t+2,\max \left\{ I \right\}  \leqslant n} c_{I,K}\int_{V_{\rho^{\prime } }}\sum^{\prime }_{\left| J\right|  =t+1,\max\{J\}\leqslant n} \sum^{\infty}_{i=1} \left|\varepsilon^{K}_{iJ}\cdot \overline{\partial_{i} } (\eta_{\rho })\cdot(f_{I,J})_{n,\delta }\right|^2e^{-w_{3}}\,\mathrm{d}P\\
&= (t+2)\int_{V_{\rho^{\prime } }} \sum^{\infty }_{i=1} \sum^{\prime }_{\left| I\right|  =s,\left| J\right|  =t+1,\max \left\{ I\bigcup J\right\}  \leqslant n} c_{I,iJ}\cdot\left| \overline{\partial_{i} } \eta_{\rho } \right|^{2} \cdot\left| (f_{I,J})_{n,\delta }\right|^{2} \cdot e^{-w_{3}}\,\mathrm{d}P\\
&\leqslant C_1\int_{V_{\rho^{\prime } }} \left| h^{\prime }_{\rho } \left( \eta \right)  \right|^{2}  \sum^{\prime }_{\left| I\right|  =s,\left| J\right|  =t+1,\max \left\{ I\bigcup J\right\}  \leqslant n} c_{I,J}\left| (f_{I,J})_{n,\delta }\right|^{2}\,\mathrm{d}P,
\end{aligned}
\end{equation}
where $C_1\triangleq (t+2)\cdot \sup\limits_{V_{\rho^{\prime } }}\left(\sum\limits^{\infty }_{i=1} \left| \overline{\partial_{i} } \eta \right|^{2}\cdot e^{-w_3}\right)\cdot c_1^{s,t+1}<\infty$ (Recall Condition \ref{230423ass1} for $c_1^{s,t+1}$).

Since $f_{I,J}\in L^2(V,P)$ for each strictly increasing multi-indices $I$ and $J$ with $\left| I\right|  =s$ and $\left| J\right|  =t+1$, by the conclusion (2) of Proposition \ref{convolution properties}, for each $n\in\mathbb{N}$, it holds that
$$
\lim_{\delta \rightarrow 0^{+}} \int_{V_{\rho^{\prime } }} \left| | (f_{I,J})_{n,\delta } |^{2}  - | (f_{I,J})_{n}|^{2}  \right|dP=0.
$$
Therefore,
$$
\lim_{\delta \rightarrow 0^{+}} \int_{V_{\rho^{\prime } }} \left|\sum^{\prime }_{\left| I\right|  =s,\left| J\right|  =t+1,\max \left\{ I\bigcup J\right\}  \leqslant n} c_{I,J} | (f_{I,J})_{n,\delta } |^{2}  -\sum^{\prime }_{\left| I\right|  =s,\left| J\right|  =t+1,\max \left\{ I\bigcup J\right\}  \leqslant n} c_{I,J} | (f_{I,J})_{n}|^{2}  \right|dP=0.
$$
Note that
\begin{eqnarray*}
\sum^{\prime }_{\left| I\right|  =s,\left| J\right|  =t+1} c_{I,J} \int_{V_{\rho^{\prime } }} | f_{I,J}|^{2}\,\mathrm{d}P
\leqslant \left(\sup_{V_{\rho^{\prime } }}e^{w_2}\right)\cdot\sum^{\prime }_{\left| I\right|  =s,\left| J\right|  =t+1} c_{I,J} \int_{V_{\rho^{\prime } }} | f_{I,J}|^{2}e^{-w_2}\,\mathrm{d}P<\infty.
\end{eqnarray*}
For any given $\varepsilon>0$, there exists $n_0\in\mathbb{N}$ such that
\begin{eqnarray*}
\sum^{\prime }_{\left| I\right|  =s,\left| J\right|  =t+1,\,\max\{I\cup J\}>n_0} c_{I,J} \int_{V_{\rho^{\prime } }} | f_{I,J}|^{2}\,\mathrm{d}P
<\frac{\varepsilon}{4}.
\end{eqnarray*}
By the conclusion (3) of Proposition \ref{Reduce diemension}, there exits an integer $n_1>n_0$ such that for all $n\geqslant n_1$, it holds that
\begin{eqnarray*}
 \int_{V_{\rho^{\prime } }} \sum^{\prime }_{\left| I\right|  =s,\left| J\right|  =t+1,\,\max\{I\cup J\}\leqslant n_0} c_{I,J}\left| | (f_{I,J})_n|^{2}- | f_{I,J}|^{2}\right|\,\mathrm{d}P
<\frac{\varepsilon}{2}.
\end{eqnarray*}
Then for all $n\geqslant n_1$, noting that $\supp f\subset V_{\rho'}$, we have
\begin{eqnarray*}
&&\int_{V_{\rho^{\prime } }} \left|\sum^{\prime }_{\left| I\right|  =s,\left| J\right|  =t+1,\max \left\{ I\bigcup J\right\}  \leqslant n} c_{I,J}\left| (f_{I,J})_{n}\right|^{2}  -\sum^{\prime }_{\left| I\right|  =s,\left| J\right|  =t+1} c_{I,J}\left| f_{I,J}\right|^{2}  \right|\,\mathrm{d}P\\
&&\leqslant  \sum^{\prime }_{\left| I\right|  =s,\left| J\right|  =t+1,\,\max\{I\cup J\}\leqslant n_0} c_{I,J}\int_{V_{\rho^{\prime } }} \left|\left| (f_{I,J})_{n}\right|^{2}  -\left| f_{I,J}\right|^{2}  \right|\,\mathrm{d}P\\
&&\quad+\sum^{\prime }_{\left| I\right|  =s,\left| J\right|  =t+1,\,n_0<\max\{I\cup J\}\leqslant n } c_{I,J}\int_{V_{\rho^{\prime } }} \left|\left| (f_{I,J})_{n}\right|^{2}  -\left| f_{I,J}\right|^{2}  \right|\,\mathrm{d}P\\
&&\quad+\sum^{\prime }_{\left| I\right|  =s,\left| J\right|  =t+1,\max \left\{ I\bigcup J\right\}  >n} c_{I,J}\int_{V_{\rho^{\prime } }} \left| f_{I,J}\right|^{2}  \,\mathrm{d}P\\
&&<\frac{\varepsilon}{2}+2\sum^{\prime }_{\left| I\right|  =s,\left| J\right|  =t+1,\max \left\{ I\bigcup J\right\}  >n_0} c_{I,J} \int_{V_{\rho^{\prime } }} | f_{I,J}|^{2}\,\mathrm{d}P<\varepsilon,
\end{eqnarray*}
where the second inequality follows from the conclusion (1) of Proposition \ref{Reduce diemension}. Thus
$$
\lim_{n\rightarrow \infty } \int_{V_{\rho^{\prime } }} \left|\sum^{\prime }_{\left| I\right|  =s,\left| J\right|  =t+1,\max \left\{ I\bigcup J\right\}  \leqslant n} c_{I,J}\left| (f_{I,J})_{n}\right|^{2}  -\sum^{\prime }_{\left| I\right|  =s,\left| J\right|  =t+1} c_{I,J}\left| f_{I,J}\right|^{2}  \right|\,\mathrm{d}P=0,
$$
and hence
$$
\lim_{n\rightarrow \infty } \lim_{\delta \rightarrow 0^{+}} \int_{V_{\rho^{\prime } }} \left|\sum^{\prime }_{\left| I\right|  =s,\left| J\right|  =t+1,\max \left\{ I\bigcup J\right\}  \leqslant n} c_{I,J}\left| (f_{I,J})_{n,\delta }\right|^{2}  -\sum^{\prime }_{\left| I\right|  =s,\left| J\right|  =t+1} c_{I,J}\left| f_{I,J}\right|^{2}  \right|\,\mathrm{d}P=0.
$$
Since $\sup_{x\in\mathbb{R}}\left| h^{\prime }_{\rho }(x) \right|  =C<\infty$, we have
\begin{equation}\label{230407e7}
\lim_{n\rightarrow \infty } \lim_{\delta \rightarrow 0^{+}} \int_{V_{\rho^{\prime } }} \left| h^{\prime }_{\rho } \left( \eta \right)  \right|^{2}  \cdot\left|\sum^{\prime }_{\left| I\right|  =s,\left| J\right|  =t+1,\max \left\{ I\bigcup J\right\}  \leqslant n} c_{I,J}\left| (f_{I,J})_{n,\delta }\right|^{2}  -\sum^{\prime }_{\left| I\right|  =s,\left| J\right|  =t+1} c_{I,J}\left| f_{I,J}\right|^{2}  \right|\,\mathrm{d}P=0.
\end{equation}

Combining (\ref{230407e6}) and (\ref{230407e7}), we arrive at
\begin{equation}\label{end of STEP 2}
\begin{array}{ll}
\displaystyle\lim_{n\rightarrow \infty } \lim_{\delta \rightarrow 0^{+}}\left| \left| S\left( \eta_{\rho }\cdot f_{n,\delta }\right)  -\eta_{\rho } \cdot\left( Sf\right)_{n,\delta }  \right|  \right|^{2}_{L^{2}_{(s,t+2)}\left( V_{\rho^{\prime } },w_{3}\right)  }\\[5mm]\displaystyle
\leqslant  C_1\lim_{n\rightarrow \infty } \lim_{\delta \rightarrow 0^{+}}\int_{V_{\rho^{\prime } }} \left| h^{\prime }_{\rho } \left( \eta \right)  \right|^{2}  \sum^{\prime }_{\left| I\right|  =s,\left| J\right|  =t+1,\max \left\{ I\bigcup J\right\}  \leqslant n} c_{I,J}\left| (f_{I,J})_{n,\delta }\right|^{2}\,\mathrm{d}P\\[5mm]\displaystyle
=  C_1\int_{V_{\rho^{\prime } }} \left| h^{\prime }_{\rho } \left( \eta \right)  \right|^{2}  \sum^{\prime }_{\left| I\right|  =s,\left| J\right|  =t+1} c_{I,J}\left| f_{I,J} \right|^{2}\,\mathrm{d}P
=  C_1\int_{V_{r}} \left| h^{\prime }_{\rho } \left( \eta \right)  \right|^{2}  \sum^{\prime }_{\left| I\right|  =s,\left| J\right|  =t+1} c_{I,J}\left| f_{I,J} \right|^{2}\,\mathrm{d}P=0,
\end{array}
\end{equation}
which gives (\ref{230407e2}).

\medskip

\textbf{Step 3:} In this step, we shall prove that $\lim\limits_{n\rightarrow \infty } \lim\limits_{\delta \rightarrow 0^{+}} \left| \left| \mathbb{P}\left( \eta_{\rho } \cdot f_{n,\delta }\right)  -\mathbb{P}f\right|  \right|_{L^{2}_{(s,t)}\left( V_{\rho^{\prime } },w_{1}\right)  }  =0$.\\
Since $\eta_{\rho } \cdot(\mathbb{P}f)=\mathbb{P}f$ and
\begin{eqnarray*}
&&\left| \left| \mathbb{P}\left( \eta_{\rho }\cdot f_{n,\delta }\right)  -\mathbb{P}f\right|  \right|_{L^{2}_{(s,t)}\left( V_{\rho^{\prime } },w_{1}\right)  }\\
&&\leqslant \left| \left|\mathbb{P}\left( \eta_{\rho }\cdot f_{n,\delta }\right)  -\eta_{\rho } \cdot\left(\mathbb{P}f\right)_{n,\delta }  \right|  \right|_{L^{2}_{(s,t)}\left( V_{\rho^{\prime } },w_{1}\right)  }+\left| \left| \eta_{\rho }\cdot \left(\mathbb{P}f\right)_{n,\delta }  -\eta_{\rho }\cdot (\mathbb{P}f)\right|  \right|_{L^{2}_{(s,t)}\left( V_{\rho^{\prime } },w_{1}\right)  }\\
&& \quad +\left| \left| \eta_{\rho }\cdot (\mathbb{P}f)-\mathbb{P}f\right|  \right|_{L^{2}_{(s,t)}\left( V_{r},w_{1}\right)  }\\
&&=\left| \left|\mathbb{P}\left( \eta_{\rho }\cdot f_{n,\delta }\right)  -\eta_{\rho } \cdot\left(\mathbb{P}f\right)_{n,\delta }  \right|  \right|_{L^{2}_{(s,t)}\left( V_{\rho^{\prime } },w_{1}\right)  }+\left| \left| \eta_{\rho }\cdot \left(\mathbb{P}f\right)_{n,\delta }  -\eta_{\rho }\cdot (\mathbb{P}f)\right|  \right|_{L^{2}_{(s,t)}\left( V_{\rho^{\prime } },w_{1}\right)  },
\end{eqnarray*}
it suffices to prove that
\begin{equation}\label{230407e4}\lim\limits_{n\to\infty}\lim\limits_{\delta\to 0+}\left| \left|\mathbb{P}\left( \eta_{\rho }\cdot f_{n,\delta }\right)  -\eta_{\rho } \cdot\left( \mathbb{P}f\right)_{n,\delta }  \right|  \right|_{L^{2}_{(s,t)}\left( V_{\rho^{\prime } },w_{1}\right)  }=0.
\end{equation}
and
\begin{equation}\label{230407e3}\lim\limits_{n\to\infty}\lim\limits_{\delta\to 0+}\left| \left| \eta_{\rho }\cdot \left(\mathbb{P}f\right)_{n,\delta }  -\eta_{\rho }\cdot (\mathbb{P}f)\right|  \right|_{L^{2}_{(s,t)}\left( V_{\rho^{\prime } },w_{1}\right)  }=0
\end{equation}

The proof of (\ref{230407e3}) is similarly to that of (\ref{230407e1}), and therefore we omit the details. In the rest of this step, we shall prove (\ref{230407e4}).

By Proposition \ref{general formula of T*} and Lemma \ref{general I,P}, we have $\eta_{\rho }\cdot f_{n,\delta }\in D_{T^*},\mathbb{P}f,\mathbb{P}\left( \eta_{\rho } \cdot f_{n,\delta }\right)  ,\mathbb{I}f,\mathbb{I}\left( \eta_{\rho }\cdot f_{n,\delta }\right)  \in L^{2}_{\left( s,t\right)  }\left( V,w_{1}\right) $ (Recall (\ref{general def of I and P}) for the definition of $\mathbb{I}$ and $\mathbb{P}$). For every $\phi \in C^{\infty }_{0,F}\left(V_{\rho^{\prime} }^{o}\right) $ and sufficiently small $\delta>0$, each strictly increasing multi-indices $I$ and $L$ with $\left| I\right|  =s,\left| L\right|  =t ,\max\{I\cup L\}\leqslant n$, noting that
$
 \left(f_{n,\delta }\right)_{I,iL}=\sum_{|J|=t+1}'\varepsilon_{iL}^J \left(f_{n,\delta }\right)_{I,J}=\sum_{|J|=t+1,\max\{J\}\leqslant n}'\varepsilon_{iL}^J \left(f_{n,\delta }\right)_{I,J}$ for each $n,i\in\mathbb{N}$,
we have
$$
	\begin{array}{ll}
\displaystyle \int_{V} \left(\mathbb{P} ( \eta_{\rho }\cdot f_{n,\delta } ) \right)_{I,L}\cdot\phi \,\mathrm{d}P
=\int_{V} \left(\mathbb{ I}(\eta_{\rho}\cdot f_{n,\delta })  \right)_{I,L}\cdot  \phi \,\mathrm{d}P-\left( -1\right)^{s}  \int_{V} \left( T^{\ast } ( \eta_{\rho}\cdot f_{n,\delta } )  \right)_{I,L}\cdot  e^{w_{2}}\cdot\phi\cdot e^{-w_{1}}\,\mathrm{d}P\\[3mm]
\displaystyle =\int_{V} \sum^{n}_{i=1} \frac{c_{I,iL}}{c_{I,L}}\cdot\eta_{\rho }\cdot (f_{n,\delta })_{I,iL}\cdot(\partial_{i}w_{2})\cdot\phi \,\mathrm{d}P-\sum^{n}_{i=1} \int_{V}\frac{c_{I,iL}}{c_{I,L}}\cdot \eta_{\rho }\cdot (f_{n,\delta })_{I,iL}\cdot\partial_{i}(e^{w_{2}}\cdot\phi )\cdot e^{-w_{2}}\,\mathrm{d}P\\[3mm]
\displaystyle =-\sum^{n}_{i=1} \int_{V} \frac{c_{I,iL}}{c_{I,L}}\cdot\eta_{\rho }\cdot (f_{n,\delta })_{I,iL}\cdot(\partial_{i}\phi )\,\mathrm{d}P\\[3mm]
\displaystyle =\sum^{n}_{i=1} \int_{V}\frac{c_{I,iL}}{c_{I,L}}\cdot \eta_{\rho } \cdot \delta_{i} (f_{n,\delta })_{I,iL}\cdot\phi \,\mathrm{d}P+\sum^{n}_{i=1} \int_{V}\frac{c_{I,iL}}{c_{I,L}}\cdot \partial_{i}(\eta_{\rho }) \cdot (f_{n,\delta })_{I,iL}\cdot\phi\,\mathrm{d}P,
	\end{array}
$$
where the second equality follows from Lemma \ref{T*formula} and the final one follows from Corollary \ref{integration by Parts or deltai}. By (\ref{general def of I and P}) and the conclusion (3) of Proposition \ref{convolution properties}, we have
$$
\begin{aligned}
&\int_{V} \left( (\mathbb{P}f)_{I,L}\right)_{n,\delta }\cdot  \phi \,\mathrm{d}P
=\int_{V}  \left( (\mathbb{I}f)_{I,L}\right)_{n,\delta }\cdot  \phi \,\mathrm{d}P-\left( -1\right)^{s}  \int_{V}   \left(  e^{w_{2}-w_{1}}\cdot (T^{\ast }f)_{I,L}\right)_{n,\delta }\cdot\phi \,\mathrm{d}P\\
&=\int_{V} \left( \mathbb{I}f\right)_{I,L}\cdot  \varphi^{-1}_{n} \cdot\left( \phi_{n} \cdot\varphi_{n} \right)_{n,\delta }\,\mathrm{d}P-\left( -1\right)^{s}  \int_{V} \left( T^{\ast }f\right)_{I,L}\cdot  \varphi^{-1}_{n} \cdot e^{w_{2}-w_{1}}\cdot \left( \phi_{n} \cdot\varphi_{n} \right)_{n,\delta }  \,\mathrm{d}P\\
&=\int_{V} \left( \sum^{\infty }_{i=1}  \frac{c_{I,iL}}{c_{I,L}}\cdot f_{I,iL}\cdot\partial_{i}w_{2}\right) \cdot \varphi^{-1}_{n} \cdot\left( \phi_{n}\cdot \varphi_{n} \right)_{n,\delta } \,\mathrm{d}P\\
&\quad-\left( -1\right)^{s}  \int_{V} \left( T^{\ast }f\right)_{I,L}\cdot  \varphi^{-1}_{n}\cdot e^{w_{2}}\cdot\left( \phi_{n}\cdot \varphi_{n} \right)_{n,\delta } \cdot e^{-w_{1}}\,\mathrm{d}P.
\end{aligned}
$$
Hence, noting that $\supp f\stackrel{\circ}{\subset} V_{r}^{o}$ and the function $\varphi^{-1}_{n}\cdot e^{w_{2}}\cdot \eta_\rho\cdot\left( \phi_{n} \cdot\varphi_{n} \right)_{n,\delta }$ satisfies the assumption \eqref{phi begin to D condition'}, by the
conclusion (1) of Proposition \ref{support argument}, $ \left( T^{\ast }f\right)_{I,L}= \left( T^{\ast }f\right)_{I,L}\cdot \eta_\rho$, $\partial_{i} \phi_{n} =\left(\partial_{i} \phi\right)_{n} $, $ f_{I,iL}\cdot \eta_\rho= f_{I,iL}$ and $ f_{I,iL}\cdot (\partial_{i} \eta_\rho)= 0$ for $i=1,2,\cdots,n$, and by Corollary \ref{weak T*formula}, we obtain that
$$
\begin{aligned}
&\int_{V} \left( (\mathbb{P}f)_{I,L}\right)_{n,\delta } \cdot \phi \,\mathrm{d}P\\
&=\int_{V} \left( \sum^{\infty }_{i=1} \frac{c_{I,iL}}{c_{I,L}}\cdot f_{I,iL}\cdot\partial_{i}w_{2}\right)\cdot  \varphi^{-1}_{n}\cdot  \left( \phi_{n} \cdot \varphi_{n} \right)_{n,\delta }\,\mathrm{d}P\\
&\quad-\sum^{\infty }_{i=1} \int_{V} \frac{c_{I,iL}}{c_{I,L}}\cdot f_{I,iL}\cdot \partial_{i}\left(\varphi^{-1}_{n}\cdot  e^{w_{2}}\cdot \eta_\rho\cdot \left( \phi_{n}\cdot  \varphi_{n} \right)_{n,\delta } \right)\cdot e^{-w_{2}}\,\mathrm{d}P\\
&=\int_{V} \left( \sum^{\infty }_{i=1}\frac{c_{I,iL}}{c_{I,L}}\cdot f_{I,iL}\cdot \partial_{i}w_{2}\right)\cdot   \varphi^{-1}_{n}\cdot  \left( \phi_{n} \cdot \varphi_{n} \right)_{n,\delta }\,\mathrm{d}P\\
&\quad-\sum^{\infty }_{i=1} \int_{V}\frac{c_{I,iL}}{c_{I,L}}\cdot f_{I,iL}\cdot \partial_{i}\left(\varphi^{-1}_{n}\cdot  e^{w_{2}}\cdot \eta_\rho\right)\cdot \left( \phi_{n}\cdot  \varphi_{n} \right)_{n,\delta } \cdot e^{-w_{2}}\,\mathrm{d}P\\
&\quad-\sum^{\infty }_{i=1} \int_{V}\frac{c_{I,iL}}{c_{I,L}}\cdot f_{I,iL}\cdot \varphi^{-1}_{n}\cdot \partial_{i}\left(\left( \phi_{n} \cdot \varphi_{n} \right)_{n,\delta } \right)\,\mathrm{d}P\\
&=\int_{V} \left( \sum^{\infty }_{i=1}\frac{c_{I,iL}}{c_{I,L}}\cdot  f_{I,iL}\cdot \partial_{i}w_{2}\right) \cdot  \varphi^{-1}_{n} \cdot \left( \phi_{n} \cdot \varphi_{n} \right)_{n,\delta } \,\mathrm{d}P\\
&\quad-\sum^{\infty }_{i=1} \int_{V} \frac{c_{I,iL}}{c_{I,L}}\cdot f_{I,iL}\cdot \left(\partial_{i}\varphi^{-1}_{n} +\left(\partial_{i}w_{2}+\partial_{i}\eta_\rho\right)\cdot \varphi^{-1}_{n} \right)\cdot \left( \phi_{n} \cdot \varphi_{n} \right)_{n,\delta }  \,\mathrm{d}P\\
&\quad-\sum^{n}_{i=1} \int_{V}\frac{c_{I,iL}}{c_{I,L}}\cdot  f_{I,iL}\cdot \varphi^{-1}_{n}\cdot  \left((\partial_{i} \phi_{n} )\cdot \varphi_{n} \right)_{n,\delta } \,\mathrm{d}P-\sum^{n}_{i=1} \int_{V} \frac{c_{I,iL}}{c_{I,L}}\cdot f_{I,iL}\cdot \varphi^{-1}_{n}\cdot  \left( \phi_{n} \cdot \partial_{i}\varphi_{n} \right)_{n,\delta }\,\mathrm{d}P\\
&=\int_{V} \left( \sum^{n}_{i=1} \frac{c_{I,iL}}{c_{I,L}}\cdot  f_{I,iL}\cdot \partial_{i} w_{2}\right) \cdot  \varphi^{-1}_{n} \left( \phi_{n} \cdot \varphi_{n} \right)_{n,\delta } \,\mathrm{d}P\\
&\quad	 -\sum^{n}_{i=1} \int_{V} \frac{c_{I,iL}}{c_{I,L}}\cdot  f_{I,iL}\cdot \left(\frac{\overline{z_{i}} }{2a^{2}_{i}} \cdot \varphi^{-1}_{n} +\left(\partial_{i} w_{2} \right)\cdot \varphi^{-1}_{n}\right)\cdot \left( \phi_{n} \cdot \varphi_{n} \right)_{n,\delta } \,\mathrm{d}P\\
&\quad-\sum^{n}_{i=1} \int_{V} \frac{c_{I,iL}}{c_{I,L}}\cdot  f_{I,iL}\cdot \varphi^{-1}_{n}\cdot  \left( (\partial_{i} \phi_{n} ) \cdot \varphi_{n} \right)_{n,\delta } \,\mathrm{d}P+\sum^{n}_{i=1} \int_{V}\frac{c_{I,iL}}{c_{I,L}}\cdot   f_{I,iL}\cdot \varphi^{-1}_{n} \cdot \left( \phi_{n} \cdot \frac{\overline{z_{i}} }{2a^{2}_{i}}\cdot  \varphi_{n} \right)_{n,\delta } \,\mathrm{d}P\\
&=-\sum^{n}_{i=1} \int_{V} \frac{c_{I,iL}}{c_{I,L}}\cdot f_{I,iL}\cdot \frac{\overline{z_{i}} }{2a^{2}_{i}} \cdot  \varphi^{-1}_{n}\cdot \left( \phi_{n} \cdot \varphi_{n} \right)_{n,\delta } \,\mathrm{d}P\\
&\quad-\sum^{n}_{i=1} \int_{V}\frac{c_{I,iL}}{c_{I,L}}\cdot f_{I,iL}\cdot \varphi^{-1}_{n} \cdot \left( (\partial_{i} \phi)_{n} \cdot \varphi_{n} \right)_{n,\delta }\,\mathrm{d}P+\sum^{n}_{i=1} \int_{V}\frac{c_{I,iL}}{c_{I,L}}\cdot f_{I,iL}\cdot \varphi^{-1}_{n} \cdot \left( \phi_{n} \cdot \frac{\overline{z_{i}} }{2a^{2}_{i}} \cdot \varphi_{n} \right)_{n,\delta } \,\mathrm{d}P.
\end{aligned}
$$
Hence, by the conclusion (3) of Proposition \ref{convolution properties} again and noting that $\partial_{i}\phi=\eta_{\rho'+1}\cdot \left(\partial_{i}\phi\right)$ for $i=1,2,\cdots,n$ , we have
\begin{eqnarray*}
&&\int_{V} \left( (\mathbb{P}f)_{I,L}\right)_{n,\delta }\cdot   \phi \,\mathrm{d}P\\
&&=-\sum^{n}_{i=1} \int_{V}\frac{c_{I,iL}}{c_{I,L}}\cdot \left(f_{I,iL}\cdot\frac{\overline{z_{i}} }{2a^{2}_{i}} \right)_{n,\delta }\cdot\phi \,\mathrm{d}P-\sum^{n}_{i=1} \int_{V}\frac{c_{I,iL}}{c_{I,L}}\cdot (f_{I,iL})_{n,\delta }\cdot \eta_{\rho'+1}\cdot\partial_{i} \phi \,\mathrm{d}P\\
&&\quad+\sum^{n}_{i=1} \int_{V}\frac{c_{I,iL}}{c_{I,L}}\cdot (f_{I,iL})_{n,\delta }\cdot\frac{\overline{z_{i}} }{2a^{2}_{i}}\cdot \phi \,\mathrm{d}P\\
&&=-\sum^{n}_{i=1} \int_{V}\frac{c_{I,iL}}{c_{I,L}}\cdot \left(f_{I,iL}\cdot\frac{\overline{z_{i}} }{2a^{2}_{i}}\right)_{n,\delta }\cdot\phi \,\mathrm{d}P+\sum^{n}_{i=1} \int_{V}\frac{c_{I,iL}}{c_{I,L}}\cdot \delta_{i} \left((f_{I,iL})_{n,\delta }\cdot\eta_{\rho'+1}\right) \cdot\phi \,\mathrm{d}P\\
&&\quad+\sum^{n}_{i=1} \int_{V}\frac{c_{I,iL}}{c_{I,L}}\cdot (f_{I,iL})_{n,\delta }\cdot\frac{\overline{z_{i}} }{2a^{2}_{i}} \cdot\phi \,\mathrm{d}P,\\
&&=-\sum^{n}_{i=1} \int_{V}\frac{c_{I,iL}}{c_{I,L}}\cdot \left(f_{I,iL}\cdot\frac{\overline{z_{i}} }{2a^{2}_{i}} \right)_{n,\delta }\cdot\phi \,\mathrm{d}P+\sum^{n}_{i=1} \int_{V}\frac{c_{I,iL}}{c_{I,L}}\cdot \delta_{i} (f_{I,iL})_{n,\delta }\cdot\phi \,\mathrm{d}P\\
&&\quad+\sum^{n}_{i=1} \int_{V}\frac{c_{I,iL}}{c_{I,L}}\cdot (f_{I,iL})_{n,\delta }\cdot\frac{\overline{z_{i}} }{2a^{2}_{i}} \cdot\phi \,\mathrm{d}P,
\end{eqnarray*}
where the second equality follows from Corollary \ref{integration by Parts or deltai}.  By Corollary \ref{density2}, we have
\begin{equation}\label{Pfndelta}
\begin{aligned}
&\left( \mathbb{P}f\right)_{n,\delta } =\sum^{\prime }_{\left| I\right|  =s,\left| L\right|  =t,\max \left\{ I\bigcup L\right\}  \leqslant n} \sum^{n}_{i=1} \frac{c_{I,iL}}{c_{I,L}}\cdot\left( -\left(f_{I,iL}\cdot\frac{ \overline{z_{i}}}{2a^{2}_{i}} \right)_{n,\delta }+ \delta_{i} (f_{I,iL})_{n,\delta }+ (f_{I,iL})_{n,\delta }\cdot\frac{ \overline{z_{i}} }{2a^{2}_{i}} \right) dz_{I}\wedge d\overline{z_{L}}.
\end{aligned}
\end{equation}

On the other hand, by Proposition \ref{general formula of T*} and the definition of $\mathbb{P}$ in (\ref{general def of I and P}), it follows that
\begin{eqnarray}\label{Petarhofndelta}
\mathbb{P}\left( \eta_{\rho }\cdot f_{n,\delta }\right)  =\sum^{\prime }_{\left| I\right|  =s,\left| L\right|  =t,\max \{ I\bigcup L\} \leqslant n} \left(\sum^{n}_{i=1}\frac{c_{I,iL}}{c_{I,L}}\cdot\left( \eta_{\rho } \cdot\delta_{i} (f_{I,iL})_{n,\delta }+(\partial_{i} \eta_{\rho }) \cdot (f_{I,iL})_{n,\delta }\right)\right)dz_{I}\wedge d\overline{z_{L}} .
\end{eqnarray}

By \eqref{Pfndelta} and \eqref{Petarhofndelta}, we have
$$
\begin{aligned}
&\left| \left| \mathbb{P}\left( \eta_{\rho }\cdot f_{n,\delta }\right) -\eta_{\rho }\cdot (\mathbb{P}f)_{n,\delta } \right|  \right|_{L^{2}_{(s,t)}\left( V_{\rho^{\prime } },w_{1}\right)  } \\
&\leqslant\left|\left|\eta_{\rho }\cdot \sum^{\prime }_{\left| I\right|  =s,\left| L\right|  =t,\max \left\{ I\bigcup L\right\}  \leqslant n} \sum^{n}_{i=1}\frac{c_{I,iL}}{c_{I,L}}\cdot \left( -  \left(f_{I,iL}\cdot \frac{\overline{z_{i}} }{2a^{2}_{i}}\right)_{n,\delta }+ (f_{I,iL})_{n,\delta }\cdot \frac{ \overline{z_{i}}}{2a^{2}_{i}} \right) dz_{I}\wedge d\overline{z_{L}} \right|\right|_{L^{2}_{\left( s,t\right)  }\left( V_{\rho^{\prime } },w_{1}\right)  }\\
&\quad +\left| \left| \sum^{\prime }_{\left| I\right|  =s,\left| L\right|  =t,\max \{ I\bigcup L\} \leqslant n} \left(\sum^{n}_{i=1}\frac{c_{I,iL}}{c_{I,L}}\cdot(\partial_{i} \eta_{\rho }) \cdot (f_{I,iL})_{n,\delta }\right)dz_{I}\wedge d\overline{z_{L}}  \right|  \right|_{L^{2}_{(s,t)}\left( V_{\rho^{\prime } },w_{1}\right)  }\\
&\leqslant\sqrt{\sup_{V_{\rho^{\prime } }}e^{-w_1}}\cdot\left[\left|\left|\sum^{\prime }_{\left| I\right|  =s,\left| L\right|  =t,\max \left\{ I\bigcup L\right\}  \leqslant n} \sum^{n}_{i=1}\frac{c_{I,iL}}{c_{I,L}}\cdot\left( (f_{I,iL})_{n,\delta }\cdot \frac{ \overline{z_{i}}}{2a^{2}_{i}}\right.-\left(f_{I,iL}\cdot \frac{\overline{z_{i}} }{2a^{2}_{i}}\right)_{n,\delta }\right) dz_{I}\wedge d\overline{z_{L}} \right|\right|_{L^{2}_{\left( s,t\right)  }\left( V_{\rho^{\prime } },P\right)  }\\
&\quad +\left.\left| \left| \sum^{\prime }_{\left| I\right|  =s,\left| L\right|  =t,\max \{ I\bigcup L\} \leqslant n} \left(\sum^{n}_{i=1}\frac{c_{I,iL}}{c_{I,L}}\cdot(\partial_{i} \eta_{\rho }) \cdot (f_{I,iL})_{n,\delta }\right)dz_{I}\wedge d\overline{z_{L}}  \right|  \right|_{L^{2}_{(s,t)}\left( V_{\rho^{\prime } },P\right)  }\right].
\end{aligned}
$$
Note that
\begin{eqnarray*}
&&\left|\left|\sum^{\prime }_{\left| I\right|  =s,\left| L\right|  =t,\max \left\{ I\bigcup L\right\}  \leqslant n} \sum^{n}_{i=1}\frac{c_{I,iL}}{c_{I,L}}\cdot\left( (f_{I,iL})_{n,\delta }\cdot \frac{ \overline{z_{i}}}{2a^{2}_{i}} -\left(f_{I,iL}\cdot \frac{\overline{z_{i}} }{2a^{2}_{i}}\right)_{n,\delta }\right) dz_{I}\wedge d\overline{z_{L}} \right|\right|^2_{L^{2}_{\left( s,t\right)  }\left( V_{\rho^{\prime } },P\right)  }\\
&&=\sum^{\prime }_{\left| I\right|  =s,\left| L\right|  =t,\max \left\{ I\bigcup L\right\}  \leqslant n} c_{I,L}\cdot\int_{V_{\rho^{\prime } }} \left|\sum^{n}_{i=1}\frac{c_{I,iL}}{c_{I,L}}\cdot\left( (f_{I,iL})_{n,\delta }\cdot \frac{ \overline{z_{i}}}{2a^{2}_{i}} -\left(f_{I,iL}\cdot \frac{\overline{z_{i}} }{2a^{2}_{i}}\right)_{n,\delta }\right)\right|^2\,\mathrm{d}P\\
&&\leqslant n\cdot\sum^{\prime }_{\left| I\right|  =s,\left| L\right|  =t,\max \left\{ I\bigcup L\right\}  \leqslant n} c_{I,L}\cdot\sum^{n}_{i=1}\left(\frac{c_{I,iL}}{c_{I,L}}\right)^2\cdot\int_{V_{\rho^{\prime } }}\left| (f_{I,iL})_{n,\delta }\cdot \frac{ \overline{z_{i}}}{2a^{2}_{i}} -\left(f_{I,iL}\cdot \frac{\overline{z_{i}} }{2a^{2}_{i}}\right)_{n,\delta }\right|^2\,\mathrm{d}P,
\end{eqnarray*}
$(f_{I,iL})_{n}\cdot \overline{z_{i}} =(f_{I,iL}\cdot \overline{z_{i}} )_{n}$ for all $1\leqslant i\leqslant n$ and
\begin{eqnarray*}
&&\left(\int_{V_{\rho^{\prime } }}\left| (f_{I,iL})_{n,\delta }\cdot \frac{ \overline{z_{i}}}{2a^{2}_{i}} -\left(f_{I,iL}\cdot \frac{\overline{z_{i}} }{2a^{2}_{i}}\right)_{n,\delta }\right|^2\,\mathrm{d}P\right)^{\frac{1}{2}}\\
&&\leqslant\left(\int_{V_{\rho^{\prime } }}\left| (f_{I,iL})_{n,\delta }\cdot \frac{ \overline{z_{i}}}{2a^{2}_{i}} -(f_{I,iL})_{n}\cdot \frac{\overline{z_{i}}}{2a_i^2}\right|^2\,\mathrm{d}P\right)^{\frac{1}{2}}+\left(\int_{V_{\rho^{\prime } }}\left| \left(f_{I,iL}\cdot \frac{\overline{z_{i}}}{2a_i^2} \right)_{n}-\left(f_{I,iL}\cdot \frac{\overline{z_{i}} }{2a^{2}_{i}}\right)_{n,\delta }\right|^2\,\mathrm{d}P\right)^{\frac{1}{2}}\\
&&\leqslant\sup_{V_{\rho^{\prime } }}\left|\frac{ \overline{z_{i}}}{2a^{2}_{i}}\right|\cdot\left(\int_{V_{\rho^{\prime } }}\left| (f_{I,iL})_{n,\delta } -(f_{I,iL})_{n}\right|^2\,\mathrm{d}P\right)^{\frac{1}{2}}+\left(\int_{V_{\rho^{\prime } }}\left| \left(f_{I,iL}\cdot \frac{\overline{z_{i}}}{2a_i^2} \right)_{n}-\left(f_{I,iL}\cdot \frac{\overline{z_{i}} }{2a^{2}_{i}}\right)_{n,\delta }\right|^2\,\mathrm{d}P\right)^{\frac{1}{2}},
\end{eqnarray*}
which tends zero as $\delta\to 0+$ (by the conclusion (1) of Proposition \ref{convolution properties}). Thus, for all $n\in\mathbb{N}$,
\begin{eqnarray*}
\lim_{\delta\to 0+}\left|\left|\sum^{\prime }_{\left| I\right|  =s,\left| L\right|  =t,\max \left\{ I\bigcup L\right\}  \leqslant n} \sum^{n}_{i=1}\frac{c_{I,iL}}{c_{I,L}}\cdot\left( (f_{I,iL})_{n,\delta }\cdot \frac{ \overline{z_{i}}}{2a^{2}_{i}} -\left(f_{I,iL}\cdot \frac{\overline{z_{i}} }{2a^{2}_{i}}\right)_{n,\delta }\right) dz_{I}\wedge d\overline{z_{L}} \right|\right|^2_{L^{2}_{\left( s,t\right)  }\left( V_{\rho^{\prime } },P\right)  }=0.
\end{eqnarray*}
We also note that
\begin{eqnarray*}
&& \lim_{\delta\to 0+}\left| \left| \sum^{\prime }_{\left| I\right|  =s,\left| L\right|  =t,\max \{ I\bigcup L\} \leqslant n} \left(\sum^{n}_{i=1}\frac{c_{I,iL}}{c_{I,L}}\cdot(\partial_{i} \eta_{\rho }) \cdot (f_{I,iL})_{n,\delta }\right)dz_{I}\wedge d\overline{z_{L}}  \right|  \right|_{L^{2}_{(s,t)}\left( V_{\rho^{\prime } },P\right)  }^2\\
&&= \lim_{\delta\to 0+}\sum^{\prime }_{\left| I\right|  =s,\left| L\right|  =t,\max \{ I\bigcup L\} \leqslant n} c_{I,L}\cdot\int_{V_{\rho^{\prime } }}\left|\sum^{n}_{i=1}\frac{c_{I,iL}}{c_{I,L}}\cdot(\partial_{i} \eta_{\rho }) \cdot (f_{I,iL})_{n,\delta }\right|^2\,\mathrm{d}P\\
&&=\sum^{\prime }_{\left| I\right|  =s,\left| L\right|  =t,\max \{ I\bigcup L\} \leqslant n} c_{I,L}\cdot\int_{V_{\rho^{\prime } }}\left|\sum^{n}_{i=1}\frac{c_{I,iL}}{c_{I,L}}\cdot(\partial_{i} \eta_{\rho }) \cdot (f_{I,iL})_{n }\right|^2\,\mathrm{d}P\\
&&\leqslant\sum^{\prime }_{\left| I\right|  =s,\left| L\right|  =t,\max \{ I\bigcup L\} \leqslant n} c_{I,L}\cdot\int_{V_{\rho^{\prime } }}\left(\sum^{n}_{i=1}\frac{c_{I,iL}}{c_{I,L}}\cdot|\partial_{i} \eta_{\rho }|^2\right) \cdot \left(\sum^{n}_{i=1}\frac{c_{I,iL}}{c_{I,L}}\cdot(f_{I,iL})_{n }|^2\right)\,\mathrm{d}P\\
&&\leqslant C_2\cdot\sum^{\prime }_{\left| I\right|  =s,\left| L\right|  =t,\max \{ I\bigcup L\} \leqslant n} c_{I,L}\cdot\int_{V_{\rho^{\prime } }}|h_{\rho }'(\eta)|^2 \cdot \left(\sum^{n}_{i=1}\frac{c_{I,iL}}{c_{I,L}}\cdot|(f_{I,iL})_{n }|^2\right)\,\mathrm{d}P\\
&&=C_2\cdot(t+1)\cdot\sum^{\prime }_{\left| I\right|  =s,\left| J\right|  =t+1,\max \{ I\bigcup J\} \leqslant n} c_{I,J}\cdot\int_{V_{\rho^{\prime } }}|h_{\rho }'(\eta)|^2 \cdot |(f_{I,J})_{n }|^2\,\mathrm{d}P,
\end{eqnarray*}
where $C_2\triangleq c_1^{s,t}\cdot\sup\limits_{V_{\rho^{\prime } }}\left(\sum\limits^{\infty }_{i=1} \left| \overline{\partial_{i} } \eta \right|^{2}\right)<\infty$. Similarly to \eqref{end of STEP 2}, we have
\begin{eqnarray*}
&&\lim_{n\to\infty}\sum^{\prime }_{\left| I\right|  =s,\left| J\right|  =t+1,\max \{ I\bigcup L\} \leqslant n} c_{I,J}\cdot\int_{V_{\rho^{\prime } }}|h_{\rho }'(\eta)|^2 \cdot |(f_{I,J})_{n }|^2\,\mathrm{d}P\\
&&=\sum^{\prime }_{\left| I\right|  =s,\left| J\right|  =t+1} c_{I,J}\cdot\int_{V_{\rho^{\prime } }}|h_{\rho }'(\eta)|^2 \cdot |f_{I,J}|^2\,\mathrm{d}P=0,
\end{eqnarray*}
and hence
$$
\lim_{n\to\infty}\lim_{\delta\to 0+}\left| \left| \sum^{\prime }_{\left| I\right|  =s,\left| L\right|  =t,\max \{ I\bigcup L\} \leqslant n} \left(\sum^{n}_{i=1}\frac{c_{I,iL}}{c_{I,L}}\cdot(\partial_{i} \eta_{\rho }) \cdot (f_{I,iL})_{n,\delta }\right)dz_{I}\wedge d\overline{z_{L}}  \right|  \right|_{L^{2}_{(s,t)}\left( V_{\rho^{\prime } },P\right)  }=0.
$$
Therefore,
we obtain the desired equality (\ref{230407e3}).

\medskip

\textbf{Step 4:} In this step, we prove that $\lim\limits_{n\rightarrow \infty } \lim\limits_{\delta \rightarrow 0^{+}} \left| \left| \mathbb{I}\left( \eta_{\rho }\cdot f_{n,\delta }\right)  -\mathbb{I}f\right|  \right|_{L^{2}_{(s,t)}\left( V_{\rho^{\prime } },w_{1}\right)  }  =0$.
Since $\supp (\mathbb{I}f)\stackrel{\circ}{\subset} V_{r}^{o}$, $\eta_{\rho } \cdot(\mathbb{I}f)=\mathbb{I}f$ and
\begin{eqnarray*}
&&\left| \left| \mathbb{I}\left( \eta_{\rho }\cdot f_{n,\delta }\right)  -\mathbb{I}f\right|  \right|_{L^{2}_{(s,t)}\left( V_{\rho^{\prime } },w_{1}\right)  }\leqslant \left| \left| \mathbb{I}\left( \eta_{\rho }\cdot f_{n,\delta }\right)  -\eta_{\rho }\cdot (\mathbb{I}f)\right|  \right|_{L^{2}_{(s,t)}\left( V_{\rho^{\prime } },w_{1}\right)  } +\left| \left| \eta_{\rho }\cdot (\mathbb{I}f)-\mathbb{I}f\right|  \right|_{L^{2}_{(s,t)}\left( V_{r},w_{1}\right)  },
\end{eqnarray*}
it suffices to prove that
\begin{equation}\label{230417e1}
\lim\limits_{n\to\infty}\lim\limits_{\delta\to 0+}\left| \left| \mathbb{I}\left( \eta_{\rho }\cdot f_{n,\delta }\right)  -\eta_{\rho } \cdot\left(\mathbb{I}f\right)   \right|  \right|_{L^{2}_{(s,t)}\left( V_{\rho^{\prime } },w_{1}\right)  }=0.
\end{equation}

By the definition of $\mathbb{I}$ in (\ref{general def of I and P}), it follows that
$$
\mathbb{I}\left( \eta_{\rho }\cdot f_{n,\delta }\right)  =\sum^{\prime }_{\left| I\right|  =s,\left| L\right|  =t,\max \left\{ I\bigcup L\right\}  \leqslant n} \left(\sum^{n}_{i=1} \frac{c_{I,iL}}{c_{I,L}}\cdot\eta_{\rho }\cdot (f_{I,iL})_{n,\delta }\cdot\partial_{i} w_{2}\right)dz_{I}\wedge d\overline{z_{L}} .
$$
By the conclusion (2) of Proposition \ref{convolution properties} and noting that $\sup\limits_{V_{\rho^{\prime } }}|\eta_{\rho }\cdot (\partial_{i} w_{2})\cdot e^{-w_1}|<\infty$ for each $i=1,2,\cdots,n$, we deduce that
\begin{equation}\label{230418e01}
\lim_{\delta \rightarrow 0^{+}} \left| \left| \mathbb{I}\left( \eta_{\rho }\cdot f_{n,\delta }\right)  -\sum^{\prime }_{\left| I\right|  =s,\left| L\right|  =t,\max \left\{ I\bigcup L\right\}  \leqslant n} \left(\sum^{n}_{i=1}\frac{c_{I,iL}}{c_{I,L}}\cdot \eta_{\rho }\cdot (f_{I,iL})_n\cdot\partial_{i} w_{2}\right )dz_{I}\wedge d\overline{z_{L}}\right|  \right|_{L^{2}_{\left( s,t\right)  }\left( V_{\rho^{\prime } },w_{1}\right)  }  =0.
\end{equation}

Let $\theta \triangleq h_{\rho^{\prime }+1 } (\eta )$. Then $\theta\in C^{\infty }_{0,F}\left( V\right)$.
For any $g=\sum\limits^{\prime }_{\left| I\right|  =s,\left| J\right|=t+1}g_{I,J}dz_{I}\wedge d\overline{z_{L}}\in L^{2}_{(s,t+1)}\left( \ell^{2} ,P\right)$, similarly to the proof of (\ref{230414e3}), we have
\begin{equation}\label{general commutator estimation}
\begin{aligned}
&\Bigg|\Bigg|\sum^{\prime }_{\left| I\right|  =s,\left| L\right|  =t} \left(\sum^{\infty }_{i=1}  \frac{c_{I,iL}}{c_{I,L}}\cdot g_{I,iL}\cdot\partial_{i} \left( \theta\cdot w_{2}\right) \right)dz_{I}\wedge d\overline{z_{L}}\\
&\quad-\sum^{\prime }_{\left| I\right|  =s,\left| L\right|  =t,\max \left\{ I\bigcup L\right\}  \leqslant n}\left(\sum^{n}_{i=1} \frac{c_{I,iL}}{c_{I,L}}\cdot  (g_{I,iL})_n\cdot\partial_{i} \left( \theta\cdot w_{2}\right)  \right)dz_{I}\wedge d\overline{z_{L}} \Bigg|\Bigg|^{2}_{L^{2}_{\left( s,t\right)  }\left( V_{\rho^{\prime } },w_{1}\right)  }\\
&\leqslant 2\left| \left| \sum^{\prime }_{\left| I\right|  =s,\left| L\right|  =t} \left(\sum^{\infty }_{i=1}\frac{c_{I,iL}}{c_{I,L}}\cdot  g_{I,iL}\cdot\partial_{i} \left( \theta\cdot w_{2}\right) \right)dz_{I}\wedge d\overline{z_{L}} \right|  \right|^{2}_{L^{2}_{\left( s,t\right)  }\left( V_{\rho^{\prime } },w_{1}\right)  } \\
&\quad +2\left| \left| \sum^{\prime }_{\left| I\right|  =s,\left| L\right|  =t,\max \left\{ I\bigcup L\right\}  \leqslant n} \left(\sum^{n}_{i=1}\frac{c_{I,iL}}{c_{I,L}}\cdot  (g_{I,iL})_{n}\cdot\partial_{i} \left( \theta\cdot w_{2}\right)  \right)dz_{I}\wedge d\overline{z_{L}} \right|  \right|^{2}_{L^{2}_{\left( s,t\right)  }\left( V_{\rho^{\prime } },w_{1}\right)  } \\
&= 2\sum^{\prime }_{\left| I\right|  =s,\left| L\right|  =t} c_{I,L}\int_{V_{\rho^{\prime } }} \left| \sum^{\infty }_{i=1} \frac{c_{I,iL}}{c_{I,L}}\cdot g_{I,iL}\cdot\partial_{i} \left( \theta\cdot w_{2}\right) \right|^{2}e^{-w_1}\,\mathrm{d}P\\
&\quad+2\sum^{\prime }_{\left| I\right|  =s,\left| L\right|  =t,\max \left\{ I\bigcup L\right\}  \leqslant n} c_{I,L}\int_{V_{\rho^{\prime } }} \left| \sum^{n}_{i=1}\frac{c_{I,iL}}{c_{I,L}}\cdot  (g_{I,iL})_{n}\cdot\partial_{i} \left( \theta\cdot w_{2}\right)  \right|^{2}e^{-w_1}\,\mathrm{d}P\\
&\leqslant 2C_3\sum^{\prime }_{\left| I\right|  =s,\left| L\right|  =t} c_{I,L}\int_{V_{\rho^{\prime } }}  \sum^{\infty }_{i=1} \frac{c_{I,iL}}{c_{I,L}}\cdot |g_{I,iL}|^{2}\,\mathrm{d}P
+2C_3\sum^{\prime }_{\left| I\right|  =s,\left| L\right|  =t,\max \left\{ I\bigcup L\right\}  \leqslant n} c_{I,L}\int_{V_{\rho^{\prime } }}  \sum^{n}_{i=1}\frac{c_{I,iL}}{c_{I,L}}\cdot  |(g_{I,iL})_{n} |^{2}\,\mathrm{d}P\\
&=2C_3\cdot(t+1)\cdot\left(\sum^{\prime }_{\left| I\right|  =s,\left| J\right|=t+1} c_{I,J}\int_{V_{\rho^{\prime } }}  \sum^{\infty }_{i=1} |g_{I,J}|^{2}\,\mathrm{d}P
+ \sum^{\prime }_{\left| I\right|  =s,\left| J\right|=t+1,\max \left\{ I\bigcup L\right\}  \leqslant n} c_{I,J}\int_{V_{\rho^{\prime } }}  \sum^{n}_{i=1} |(g_{I,J})_{n} |^{2}\,\mathrm{d}P\right)\\
&\leqslant 4C_3\cdot(t+1)\cdot\left(\sum^{\prime }_{\left| I\right|  =s,\left| J\right|=t+1} c_{I,J}\int_{\ell^2}  \sum^{\infty }_{i=1} |g_{I,J}|^{2}\,\mathrm{d}P\right)=4C_3\cdot(t+1)\cdot ||g||^2_{ L^{2}_{(s,t+1)}\left( \ell^{2} ,P\right) },
\end{aligned}
\end{equation}
where $C_3\triangleq \left(\sup\limits_{V_{\rho^{\prime } }}\sum\limits^{\infty }_{i=1}  \left| \partial_{i} \left( \theta\cdot w_{2}\right) \right|^{2}\right)\cdot c_1^{s,t}<\infty$ and the third inequality follows from the conclusion (1) of Proposition \ref{Reduce diemension}.

Write
$$
\mathscr{C}_{(s,t+1)}\triangleq\bigcup^{\infty }_{m=1} \left\{ \sum^{\prime }_{\left| I\right|  =s,\left| J\right|  =t+1,\max \left\{ I\bigcup J\right\}  \leqslant m} h_{I,J}dz_{I}\wedge d\overline{z_{J}}:\;h_{I,J}\in   C^{\infty }_{c}(\mathbb{C}^m)  \right\}.
$$
Combining the sequence of cut-off functions given in \eqref{section2}, the final part of the proof of Proposition \ref{general cut-off density3} and Lemma \ref{approximation for st form}, we see that $\mathscr{C}_{(s,t+1)}$ is dense in $L^{2}_{(s,t+1)}\left( \ell^{2} ,P\right)$. As a consequence, for every $\varepsilon>0$, there exists $m\in\mathbb{N}$ and $g=\sum\limits^{\prime }_{\left| I\right|  =s,\left| J\right|  =t+1,\max \left\{ I\bigcup J\right\}  \leqslant m} g_{I,J}dz_{I}\wedge d\overline{z_{J}}\in \mathscr{C}_{(s,t+1)}$ such that $\left| \left| f-g\right|  \right|^{2}_{L^{2}_{(s,t+1)}\left( \ell^{2} ,P\right)  }  \leqslant \frac{\varepsilon}{8C_3(t+1)+1} $. Then for any $n>m$, noting that $\theta(\textbf{z})=1$ for all $\textbf{z}\in V_{\rho^{\prime } }$, we have
\begin{eqnarray*}
&&\left| \left| \mathbb{I}f-\sum^{\prime }_{\left| I\right|  =s,\left| L\right|  =t, \max \left\{I\bigcup L\right\}  \leqslant n} \left(\sum^{n}_{i=1}\frac{c_{I,iL}}{c_{I,L}}\cdot  (f_{I,iL})_{n}\cdot\partial_{i} w_{2}\right)dz_{I}\wedge d\overline{z_{L}}\right|  \right|^{2}_{L^{2}_{\left( s,t\right)  }\left( V_{\rho^{\prime } },w_{1}\right)  } \\
&&=\Bigg| \Bigg| \sum^{\prime }_{\left| I\right|  =s,\left| L\right|  =t} \left(\sum^{\infty }_{i=1}\frac{c_{I,iL}}{c_{I,L}}\cdot  f_{I,iL}\cdot\partial_{i} w_{2}\right)dz_{I}\wedge d\overline{z_{L}}\\
&&\quad-\sum^{\prime }_{\left| I\right|  =s,\left| L\right|  =t, \max \left\{I\bigcup L\right\}  \leqslant n} \left(\sum^{n}_{i=1}\frac{c_{I,iL}}{c_{I,L}}\cdot  (f_{I,iL})_{n}\cdot\partial_{i} w_{2}\right)dz_{I}\wedge d\overline{z_{L}}\Bigg| \Bigg|^{2}_{L^{2}_{\left( s,t\right)  }\left( V_{\rho^{\prime } },w_{1}\right)  } \\
&&=\Bigg| \Bigg|\sum^{\prime }_{\left| I\right|  =s,\left| L\right|  =t} \left(\sum^{\infty }_{i=1}\frac{c_{I,iL}}{c_{I,L}}\cdot  f_{I,iL}\cdot\partial_{i} \left( \theta\cdot w_{2}\right) \right)dz_{I}\wedge d\overline{z_{L}} \\
&&\quad-\sum^{\prime }_{\left| I\right|  =s,\left| L\right|  =t,\max \left\{ I\bigcup L\right\}  \leqslant n} \left(\sum^{n}_{i=1}\frac{c_{I,iL}}{c_{I,L}}\cdot  (f_{I,iL})_{n}\cdot\partial_{i} \left( \theta\cdot w_{2}\right) \right)dz_{I}\wedge d\overline{z_{L}}\Bigg| \Bigg|^{2}_{L^{2}_{\left( s,t\right)  }\left( V_{\rho^{\prime } },w_{1}\right)  } \\
&&\leqslant  2\Bigg| \Bigg| \sum^{\prime }_{\left| I\right|  =s,\left| L\right|  =t} \left(\sum^{\infty }_{i=1}\frac{c_{I,iL}}{c_{I,L}}\cdot  (f_{I,iL}-g_{I,iL})\cdot\partial_{i} \left( \theta\cdot w_{2}\right)\right)dz_{I}\wedge d\overline{z_{L}}\\
&&\quad-\sum^{\prime }_{\left| I\right|  =s,\left| L\right|  =t,\max \left\{ I\bigcup L\right\}  \leqslant n} \left(\sum^{n}_{i=1} \frac{c_{I,iL}}{c_{I,L}}\cdot ((f_{I,iL})_{n}-(g_{I,iL})_{n})\cdot\partial_{i} \left( \theta\cdot w_{2}\right)\right)dz_{I}\wedge d\overline{z_{L}} \Bigg| \Bigg|^{2}_{L^{2}_{\left( s,t\right)  }\left( V_{\rho^{\prime } },w_{1}\right)  }  \\
&&\quad+2\Bigg| \Bigg| \sum^{\prime }_{\left| I\right|  =s,\left| L\right|  =t} \left(\sum^{\infty }_{i=1}\frac{c_{I,iL}}{c_{I,L}}\cdot  g_{I,iL}\cdot\partial_{i} \left( \theta\cdot w_{2}\right)\right)dz_{I}\wedge d\overline{z_{L}}\\
&&\quad-\sum^{\prime }_{\left| I\right|  =s,\left| L\right|  =t,\max \left\{ I\bigcup L\right\}  \leqslant n} \left(\sum^{n}_{i=1}\frac{c_{I,iL}}{c_{I,L}}\cdot  (g_{I,iL})_{n}\cdot\partial_{i} \left( \theta\cdot w_{2}\right) \right)dz_{I}\wedge d\overline{z_{L}} \Bigg| \Bigg|^{2}_{L^{2}_{\left( s,t\right)  }\left( V_{\rho^{\prime } },w_{1}\right)  }\\
&& \leqslant 8C_3\cdot(t+1)\cdot\left| \left| f-g\right|  \right|^{2}_{L^{2}_{\left( s,t+1\right)  }\left( \ell^{2} ,P\right)  }   \leqslant  \varepsilon,
\end{eqnarray*}
where the second inequality follows from \eqref{general commutator estimation} and the fact that $g=\sum\limits^{\prime }_{\left| I\right|  =s,\left| J\right|  =t+1,\max \left\{ I\bigcup J\right\}  \leqslant m} g_{I,J}dz_{I}\wedge d\overline{z_{J}}\in \mathscr{C}_{(s,t+1)}$, and hence
$$
	\begin{array}{ll}
\displaystyle \sum^{\prime }_{\left| I\right|  =s,\left| L\right|  =t} \left(\sum^{\infty }_{i=1}\frac{c_{I,iL}}{c_{I,L}}\cdot   g_{I,iL}\cdot\partial_{i} \left( \theta\cdot w_{2}\right)  \right)dz_{I}\wedge d\overline{z_{L}} \\[3mm]
\displaystyle \quad-\sum^{\prime }_{\left| I\right|  =s,\left| L\right|  =t,\max \left\{ I\bigcup L\right\}  \leqslant n} \left(\sum^{n}_{i=1}\frac{c_{I,iL}}{c_{I,L}}\cdot   (g_{I,iL})_{n}\cdot\partial_{i} \left( \theta\cdot w_{2}\right) \right)dz_{I}\wedge d\overline{z_{L}}  \\[3mm]
\displaystyle =\sum^{\prime }_{\left| I\right|  =s,\left| L\right|  =t,\max \left\{ I\bigcup L\right\}  \leqslant m}\left(\sum^{m}_{i=1}\frac{c_{I,iL}}{c_{I,L}}\cdot   g_{I,iL}\cdot\partial_{i} \left( \theta\cdot w_{2}\right) \right)dz_{I}\wedge d\overline{z_{L}}\\[3mm]
\displaystyle \quad-\sum^{\prime }_{\left| I\right|  =s,\left| L\right|  =t,\max \left\{ I\bigcup L\right\}  \leqslant m} \left(\sum^{m}_{i=1}\frac{c_{I,iL}}{c_{I,L}}\cdot   (g_{I,iL})_{n}\cdot\partial_{i} \left( \theta\cdot w_{2}\right) \right)dz_{I}\wedge d\overline{z_{L}} \\[3mm]
\displaystyle =\sum^{\prime }_{\left| I\right|  =s,\left| L\right|  =t,\max \left\{ I\bigcup L\right\}  \leqslant m} \left(\sum^{m}_{i=1}\frac{c_{I,iL}}{c_{I,L}}\cdot   g_{I,iL}\cdot\partial_{i} \left( \theta\cdot w_{2}\right) \right)dz_{I}\wedge d\overline{z_{L}}\\[3mm]
\displaystyle \quad-\sum^{\prime }_{\left| I\right|  =s,\left| L\right|  =t,\max \left\{ I\bigcup L\right\}  \leqslant m}\left(\sum^{m}_{i=1} \frac{c_{I,iL}}{c_{I,L}}\cdot  g_{I,iL}\cdot\partial_{i} \left( \theta\cdot w_{2}\right) \right)dz_{I}\wedge d\overline{z_{L}} \\[3mm]
\displaystyle =0.
	\end{array}
$$
By the arbitrariness of $\varepsilon$, we have
\begin{equation}\label{230418e02}
\lim_{n\rightarrow \infty } \left| \left|  \mathbb{I}f-\sum^{\prime }_{\left| I\right|  =s,\left| L\right|  =t,\max \left\{ I\bigcup L\right\}  \leqslant n} \left(\sum^{n}_{i=1}\frac{c_{I,iL}}{c_{I,L}}\cdot  (f_{I,iL})_{n}\cdot\partial_{i} w_{2}\right)dz_{I}\wedge d\overline{z_{L}} \right|  \right|_{L^{2}_{(s,t)}\left( V_{\rho^{\prime } },w_{1}\right)  }  =0.
\end{equation}
Since
\begin{eqnarray*}
&&\left| \left| \eta_{\rho } \cdot(\mathbb{I}f)-\mathbb{I}\left( \eta_{\rho }\cdot f_{n,\delta }\right)  \right|  \right|_{L^{2}_{(s,t)}\left( V_{\rho^{\prime } },w_{1}\right)  } \\
&&\leqslant \left| \left|   \eta_{\rho }\cdot (\mathbb{I}f)-\sum^{\prime }_{\left| I\right|  =s,\left| L\right|  =t,\max \left\{ I\bigcup L\right\}  \leqslant n} \left(\sum^{n}_{i=1}  \eta_{\rho }\cdot\frac{c_{I,iL}}{c_{I,L}}\cdot  (f_{I,iL})_{n}\cdot\partial_{i} w_{2}\right)dz_{I}\wedge d\overline{z_{L}} \right|  \right|_{L^{2}_{(s,t)}\left( V_{\rho^{\prime } },w_{1}\right)  }\\
&&\quad+\left| \left| \mathbb{I}\left( \eta_{\rho }\cdot f_{n,\delta }\right)  -\sum^{\prime }_{\left| I\right|  =s,\left| L\right|  =t,\max \left\{ I\bigcup L\right\}  \leqslant n} \left(\sum^{n}_{i=1} \eta_{\rho }\cdot\frac{c_{I,iL}}{c_{I,L}}\cdot  (f_{I,iL})_n\cdot\partial_{i} w_{2}\right )dz_{I}\wedge d\overline{z_{L}}\right|  \right|_{L^{2}_{\left( s,t\right)  }\left( V_{\rho^{\prime } },w_{1}\right)  }\\
&&\leqslant \left| \left| \mathbb{I}f-\sum^{\prime }_{\left| I\right|  =s,\left| L\right|  =t,\max \left\{ I\bigcup L\right\}  \leqslant n} \left(\sum^{n}_{i=1} \frac{c_{I,iL}}{c_{I,L}}\cdot  (f_{I,iL})_{n}\cdot\partial_{i} w_{2}\right)dz_{I}\wedge d\overline{z_{L}} \right|  \right|_{L^{2}_{(s,t)}\left( V_{\rho^{\prime } },w_{1}\right)  }\\
&&\quad+\left| \left| \mathbb{I}\left( \eta_{\rho }\cdot f_{n,\delta }\right)  -\sum^{\prime }_{\left| I\right|  =s,\left| L\right|  =t,\max \left\{ I\bigcup L\right\}  \leqslant n} \left(\sum^{n}_{i=1} \eta_{\rho }\cdot \frac{c_{I,iL}}{c_{I,L}}\cdot  (f_{I,iL})_n\cdot\partial_{i} w_{2}\right )dz_{I}\wedge d\overline{z_{L}}\right|  \right|_{L^{2}_{\left( s,t\right)  }\left( V_{\rho^{\prime } },w_{1}\right)  },
\end{eqnarray*}
by (\ref{230418e01}) and (\ref{230418e02}), we obtain the desired equality (\ref{230417e1}).

\medskip

\textbf{Step 5:} Finally, we come to show that
$$
\begin{aligned}
		&
\lim\limits_{n\rightarrow \infty } \lim\limits_{\delta \rightarrow 0^{+}}\left( \left| \left| T^{\ast }\left( \eta_{\rho }\cdot f_{n,\delta }\right)  -T^{\ast }f\right|  \right|_{L^{2}_{(s,t)}\left( V,w_{1}\right)  }  +\left| \left| \eta_{\rho }\cdot f_{n,\delta }-f\right|  \right|_{L^{2}_{(s,t+1)}\left( V,w_{2}\right)  }  +\left| \left|  S\left( \eta_{\rho }\cdot f_{n,\delta }\right)  -Sf\right|  \right|_{L^{2}_{(s,t+2)}\left( V,w_{3}\right)  }\right) \\
& = 0.
\end{aligned}
$$
Combining Steps 3 and 4, and noting $T^*\left( \eta_{\rho }\cdot f_{n,\delta }\right)=(-1)^{s}e^{w_1-w_2}\cdot\left(\mathbb{I}\left( \eta_{\rho }\cdot f_{n,\delta }\right)-\mathbb{P}\left( \eta_{\rho }\cdot f_{n,\delta }\right)\right)$, $T^*f=(-1)^{s}e^{w_1-w_2}\cdot(\mathbb{I}f-\mathbb{P}f)$ and $\sup\limits_{V_{\rho^{\prime } }} e^{w_{1}-w_{2}}<\infty$, we have
\begin{equation}
\begin{array}{ll}
\displaystyle \lim_{n\rightarrow \infty } \lim_{\delta \rightarrow 0^{+}} \left| \left| T^{\ast }\left( \eta_{\rho }\cdot f_{n,\delta }\right)  -T^{\ast }f\right|  \right|_{L^{2}_{(s,t)}\left( V_{\rho^{\prime } },w_{1}\right)  }  \\[3mm]
\displaystyle =\lim_{n\rightarrow \infty } \lim_{\delta \rightarrow 0^{+}} \left| \left| \left( -1\right)^{s}  e^{w_{1}-w_{2}}\cdot\left(\mathbb{I}\left( \eta_{\rho }\cdot f_{n,\delta }\right)  - \mathbb{I}f \right)-\left( -1\right)^{s}  e^{w_{1}-w_{2}}\cdot\left(\mathbb{P}\left( \eta_{\rho }\cdot f_{n,\delta }\right)-\mathbb{P}f\right)\right|  \right|_{L^{2}_{(s,t)}\left( V_{\rho^{\prime } },w_{1}\right)  }\\[3mm]
\displaystyle \leqslant \left(\sup\limits_{V_{\rho^{\prime } }} e^{w_{1}-w_{2}}\right)\cdot\left(\lim_{n\rightarrow \infty } \lim_{\delta \rightarrow 0^{+}} \left| \left| \mathbb{I}\left( \eta_{\rho }\cdot f_{n,\delta }\right)  - \mathbb{I}f \right|  \right|_{L^{2}_{(s,t)}\left( V_{\rho^{\prime } },w_{1}\right)  }+\lim_{n\rightarrow \infty } \lim_{\delta \rightarrow 0^{+}}\left| \left| \mathbb{P}\left( \eta_{\rho }\cdot f_{n,\delta }\right)-\mathbb{P}f\right|  \right|_{L^{2}_{(s,t)}\left( V_{\rho^{\prime } },w_{1}\right)  }\right)\\[3mm]
\displaystyle  =0.\label{general 11111}
\end{array}
\end{equation}
By Steps 1 and 2, and noting $\eqref{general 11111}$, we have
\begin{equation}\label{230418e5}
\begin{aligned}
		&\lim\limits_{n\rightarrow \infty } \lim\limits_{\delta \rightarrow 0^{+}}\left( \left| \left| T^{\ast }\left( \eta_{\rho }\cdot f_{n,\delta }\right)  -T^{\ast }f\right|  \right|_{L^{2}_{(s,t)}\left( V_{\rho^{\prime } },w_{1}\right)  }  +\left| \left| \eta_{\rho }\cdot f_{n,\delta }-f\right|  \right|_{L^{2}_{(s,t+1)}\left( V_{\rho^{\prime } },w_{2}\right)  }  \right.\\
&\qquad\qquad\left.+\left| \left|  S\left( \eta_{\rho }\cdot f_{n,\delta }\right)  -Sf\right|  \right|_{L^{2}_{(s,t+2)}\left( V_{\rho^{\prime } },w_{3}\right)  }\right)\\
&= 0.
\end{aligned}
\end{equation}
By the fact that $\supp \eta_{\rho }\subset V^{o}_{\rho^{\prime } }$ and  Proposition \ref{support argument}, we have
\begin{eqnarray*}
&&\supp (\eta_{\rho }\cdot f_{n,\delta }) \stackrel{\circ}{\subset}  V^{o}_{\rho^{\prime } }, \quad\supp \left(S\left( \eta_{\rho }\cdot f_{n,\delta }\right)\right) \stackrel{\circ}{\subset}  V^{o}_{\rho^{\prime } },\quad \supp \left(T^{\ast }\left( \eta_{\rho }\cdot f_{n,\delta }\right)  \right) \stackrel{\circ}{\subset}  V^{o}_{\rho^{\prime } },\\
&&\supp f \stackrel{\circ}{\subset}  V^{o}_{\rho^{\prime } }, \quad\supp (Sf) \stackrel{\circ}{\subset}  V^{o}_{\rho^{\prime } }, \quad\supp (T^{\ast }f) \stackrel{\circ}{\subset}  V^{o}_{\rho^{\prime } }.
\end{eqnarray*}
Hence
$$
	\begin{aligned}
	&\left| \left| T^{\ast }\left( \eta_{\rho }\cdot f_{n,\delta }\right)  -T^{\ast }f\right|  \right|_{L^{2}_{(s,t)}\left( V,w_{1}\right)  }  +\left| \left| \eta_{\rho }\cdot f_{n,\delta }-f\right|  \right|_{L^{2}_{(s,t+1)}\left( V,w_{2}\right)  }  +\left| \left| S\left( \eta_{\rho }\cdot f_{n,\delta }\right)  -Sf\right|  \right|_{L^{2}_{(s,t+2)}\left( V,w_{3}\right)  }  \\&=\left| \left| T^{\ast }\left( \eta_{\rho }\cdot f_{n,\delta }\right)  -T^{\ast }f\right|  \right|_{L^{2}_{(s,t)}\left( V_{\rho^{\prime } },w_{1}\right)  }  +\left| \left| \eta_{\rho }\cdot f_{n,\delta }-f\right|  \right|_{L^{2}_{(s,t+1)}\left( V_{\rho^{\prime } },w_{2}\right)  }  +\left| \left| S\left( \eta_{\rho }\cdot f_{n,\delta }\right)  -Sf\right|  \right|_{L^{2}_{(s,t+2)}\left( V_{\rho^{\prime } },w_{3}\right)  } .
	\end{aligned}
$$
Combining this with (\ref{230418e5}), we obtain the desired equality (\ref{230418e03}), which completes the proof of Theorem \ref{general density4}.
\end{proof}

\begin{remark}
We should note that $f_{n,\delta }$ may be not in $D_{T^{\ast }}\cap D_S$, but $\eta_{\rho }\cdot f_{n,\delta }\in D_{T^{\ast }}\cap D_S$.
\end{remark}

In the sequel, we need the following two assumptions on the coefficients in (\ref{defnition of general st froms}) (Recall (\ref{gener111}) for $c_{I,iJ}$).
\begin{condition}\label{230423ass2}
$\displaystyle c_0^{s,t}\triangleq \inf_{|I|=s,|J|=t,i\in\mathbb{N}}\frac{ c_{I,iJ} }{c_{I,J}}>0$.
\end{condition}

\begin{condition}\label{230423ass3}
The coefficients in (\ref{defnition of general st froms}) satisfy the following condition
\begin{eqnarray}\label{multiplictive condition for coefficient}
c_{I,J}\cdot c_{I,J'}=c_{I,L}\cdot c_{I,K}
\end{eqnarray}
for all strictly increasing multi-indices $I,J,J',L$ and $K$ with $|I|=s,|J|=t+1,|J'|=t+1,|L|=t,|K|=t+2, J\cup J'=K$ and $J\cap J'=L$.
\end{condition}

We also need the following assumption:

\begin{condition}\label{230423ass4}
The real-valued function $\varphi$ in \eqref{weight function} satisfies that for each $n\in\mathbb{N}$, the following inequality holds on $V$,
\begin{eqnarray}\label{230419e12}
\sum_{1\leqslant i,j\leqslant n} (\partial_{i} \overline{\partial_{j} } \varphi) \cdot\zeta_{i}\cdot\overline{\zeta_{j}}
\geqslant \left( 2\sum\limits^{n}_{i=1} |\partial_{i} \psi |^{2}+2e^{\psi }-\frac{1}{2} \right)\cdot\left(  \sum^{n}_{i=1} \left| \zeta_{i}\right|^{2}\right),\quad \forall\; (\zeta_{1},\cdots,\zeta_{n})\in\mathbb{C}^n.
\end{eqnarray}
\end{condition}

We are in a position to establish an infinite-dimensional version of \cite[Lemma 4.2.1, p. 84]{Hor90} as follows (Recall Condition \ref{230423ass2} for $c_0^{s,t}$):
\begin{lemma}\label{general estimation similar to Lemma 4.2.1}
Under Conditions \ref{230424c1}, \ref{230423ass1}, \ref{230423ass2}, \ref{230423ass3} and \ref{230423ass4}, it holds that
\begin{eqnarray}\label{230419e13}
\left| \left| T^{\ast }f\right|  \right|^{2}_{L^{2}_{\left( s,t\right)  }\left( V,w_{1}\right)  }  +\left| \left| Sf\right|  \right|^{2}_{L^{2}_{\left( s,t+2\right)  }\left( V,w_{3}\right)  }\geqslant
c_0^{s,t}\cdot\left| \left| f\right|  \right|^{2}_{L^{2}_{\left( s,t+1\right)  }\left( V,w_{2}\right)  }
\end{eqnarray}
for any $f=\sum\limits^{\prime }_{\left| I\right|  =s,\left| J\right|  =t+1 } f_{I,J}dz_{I}\wedge d\overline{z_{J}}\in D_{S}\bigcap D_{T^{\ast }}$ satisfying that there exists $m\in\mathbb{N}$ such that $f_{I,J}=0$ for all strictly increasing multi-indices $I$ and $J$ with $\max{\left\{ I\bigcup J\right\}  }>m$ and $f_{I,J}\in C^{2}_{0,F}\left( V\right)$ for all strictly increasing multi-indices $I$ and $J$ with $\max{\left\{ I\bigcup J\right\}  }\leqslant m$.
\end{lemma}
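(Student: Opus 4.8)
The plan is to prove the infinite-dimensional Bochner--Kodaira--Nakano--H\"ormander a priori estimate directly for the given ``nice'' form $f$; it will later be transferred to all of $D_{S}\cap D_{T^{\ast}}$ through the approximation in Theorem \ref{general density4}. First I would record the explicit expressions. Since the only nonzero $f_{I,J}$ are those with $\max\{I\cup J\}\le m$, the component $f_{I,iL}=\sum^{\prime}_{\left|J\right|=t+1}\varepsilon^{J}_{iL}f_{I,J}$ of (\ref{gener111}) vanishes for all but finitely many $i$, so every sum over $i$ below is actually finite. By Remark \ref{230407r1}, $Sf=\overline{\partial}f$ has components $(Sf)_{I,K}=(-1)^{s}\sum_{i}\sum^{\prime}_{\left|J\right|=t+1}\varepsilon^{K}_{iJ}\overline{\partial_i}f_{I,J}$, and by Proposition \ref{general formula of T*}, $f\in D_{T^{\ast}}$ with $(T^{\ast}f)_{I,L}=(-1)^{s+1}e^{w_{1}-w_{2}}\sum_{i}\frac{c_{I,iL}}{c_{I,L}}\big(\delta_i f_{I,iL}-f_{I,iL}\,\partial_i w_{2}\big)$. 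The decisive preliminary remark is that, because $w_{1}-2w_{2}=-\varphi=-w_{3}$ by (\ref{weight function}), both $\|T^{\ast}f\|^{2}_{w_{1}}$ and $\|Sf\|^{2}_{w_{3}}$ are integrals against the \emph{single} weighted measure $e^{-\varphi}\,\mathrm{d}P$.

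Next I would disentangle the three weights. Writing $\sigma_i=\delta_i-\partial_i\varphi$ as in (\ref{230419e11}) and using $\partial_i w_{2}=\partial_i\varphi-\partial_i\psi$, I split $\delta_i f_{I,iL}-f_{I,iL}\partial_i w_2=\sigma_i f_{I,iL}+(\partial_i\psi)f_{I,iL}$. Thus, up to the scalar factor $(-1)^{s+1}e^{w_1-w_2}$, the components of $T^{\ast}f$ are $G_{I,L}+H_{I,L}$ with $G_{I,L}=\sum_i\frac{c_{I,iL}}{c_{I,L}}\sigma_i f_{I,iL}$ the ``$\varphi$-adjoint'' part and $H_{I,L}=\sum_i\frac{c_{I,iL}}{c_{I,L}}(\partial_i\psi)f_{I,iL}$ the pure $\psi$-gradient part. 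Using $\|G+H\|^{2}\ge\tfrac12\|G\|^{2}-\|H\|^{2}$ together with $\|Sf\|^{2}\ge\tfrac12\|Sf\|^{2}$, I obtain
\begin{equation*}
\|T^{\ast}f\|^{2}_{w_{1}}+\|Sf\|^{2}_{w_{3}}\ \ge\ \tfrac12\big(\mathcal{Q}(G)+\|Sf\|^{2}_{w_{3}}\big)\ -\ \|H\|^{2},
\end{equation*}
where $\mathcal{Q}(G)=\sum^{\prime}_{\left|I\right|=s,\left|L\right|=t}c_{I,L}\int_{V}e^{-\varphi}\left|G_{I,L}\right|^{2}\mathrm{d}P$ is exactly the squared norm of the adjoint one would obtain in the single weight $\varphi$ (i.e.\ Proposition \ref{general formula of T*} with $w_1=w_2=\varphi$).

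The core is then the clean single-weight Bochner--Kodaira identity for $\mathcal{Q}(G)+\|Sf\|^{2}_{w_{3}}$. Expanding both squares, integrating by parts via Lemma \ref{integration by Parts} (legitimate since $f_{I,J}\in C^{2}_{0,F}(V)$, so all the products involved lie in $C^{1}_{0}(V)$ and the gradient sums are locally bounded), and invoking the commutator identity (\ref{commutaor formula}), which for the present $\varphi$ reads $\overline{\partial_i}\sigma_j-\sigma_j\overline{\partial_i}=-(\overline{\partial_i}\partial_j\varphi)-\frac{\delta_{ij}}{2a_j^{2}}$ because $\overline{\partial_i}(\overline{z_j})=\delta_{ij}$, the off-diagonal ($i\ne k$) terms of $\mathcal{Q}(G)$ cancel those of $\|Sf\|^{2}_{w_{3}}$. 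The algebra that aligns the two families of coefficients is precisely the multiplicative Condition \ref{230423ass3}, which gives $c_{I,iL}c_{I,kL}=c_{I,L}c_{I,K}$ for $K=\{i,k\}\cup L$. What survives is a manifestly nonnegative $\overline{\partial}$-gradient term plus the Hermitian curvature form, so that $\mathcal{Q}(G)+\|Sf\|^{2}_{w_{3}}\ge\sum^{\prime}_{\left|I\right|=s,\left|L\right|=t}c_{I,L}\int_{V}\sum_{i,k}\frac{c_{I,iL}c_{I,kL}}{c_{I,L}^{2}}\big(\partial_i\overline{\partial_k}\varphi+\frac{\delta_{ik}}{2a_i^{2}}\big)f_{I,iL}\overline{f_{I,kL}}\,e^{-\varphi}\mathrm{d}P$.

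Finally I would extract positivity and absorb the correction. Applying Condition \ref{230423ass4}, i.e.\ (\ref{230419e12}), to the curvature form with the choice $\zeta_i=\frac{c_{I,iL}}{c_{I,L}}f_{I,iL}$ bounds it below by $\big(2\sum_i|\partial_i\psi|^{2}+2e^{\psi}-\tfrac12\big)\sum_i|\zeta_i|^{2}$; the term $-\tfrac12$ is dominated by the Gaussian diagonal $\frac{1}{2a_i^{2}}\ge\tfrac12$ (as $a_i<1$), the term $2\sum_i|\partial_i\psi|^{2}$ is calibrated to absorb $\|H\|^{2}$ after a Cauchy--Schwarz bound of $\|H\|^{2}$ against $\sum_i|\zeta_i|^2$, and the decisive factor $2e^{\psi}$ converts the weight $e^{-\varphi}$ into $e^{-w_{2}}=e^{\psi-\varphi}$. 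This leaves a lower bound of order $\sum^{\prime}_{I,L}\sum_i\frac{c_{I,iL}^{2}}{c_{I,L}}\int_V|f_{I,iL}|^{2}e^{-w_2}\mathrm{d}P$; since $\frac{c_{I,iL}}{c_{I,L}}\ge c_0^{s,t}$ by Condition \ref{230423ass2} and the counting identity (\ref{230414e3}) gives $\sum^{\prime}_{I,L}\sum_i c_{I,iL}|f_{I,iL}|^{2}=(t+1)\sum^{\prime}_{I,J}c_{I,J}|f_{I,J}|^{2}$, this majorizes $c_0^{s,t}\|f\|^{2}_{w_{2}}$ with slack to spare, which is (\ref{230419e13}); Condition \ref{230423ass1} keeps the ratios $c_{I,iL}/c_{I,L}$ uniformly bounded throughout. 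The main obstacle is the third paragraph: establishing the single-weight identity rigorously in $\ell^{2}$ (the integrations by parts, the finite-index reorganization, and the exact off-diagonal cancellation depending on Condition \ref{230423ass3}), and then verifying that the three-weight correction $-\|H\|^{2}$ together with the Gaussian constants $\frac{1}{2a_i^2}$ is absorbed with the \emph{favorable} sign — this is exactly what the seemingly ad hoc constants $2e^{\psi}$, $2\sum|\partial_i\psi|^{2}$ and $-\tfrac12$ in Condition \ref{230423ass4} are designed to guarantee.
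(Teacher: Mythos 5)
Your proposal is correct and follows essentially the same route as the paper's own proof: the splitting $T^{\ast}f=e^{-\psi}\bigl((-1)^{s+1}\sum_i\tfrac{c_{I,iL}}{c_{I,L}}\sigma_i f_{I,iL}+(-1)^{s+1}\sum_i\tfrac{c_{I,iL}}{c_{I,L}}(\partial_i\psi)f_{I,iL}\bigr)$, the elementary absorption $\|G+H\|^2\geqslant\tfrac12\|G\|^2-\|H\|^2$, the single-weight Bochner--Kodaira computation via Lemma \ref{integration by Parts} and the commutator formula (\ref{commutaor formula}) with Condition \ref{230423ass3} aligning $c_{I,iJ}$ with $c_{I,iL}c_{I,jL}/c_{I,L}$, and the final application of (\ref{230419e12}) with $\zeta_i\propto c_{I,iL}f_{I,iL}$, the bound $\tfrac{1}{2a_i^2}\geqslant\tfrac12$, Condition \ref{230423ass2}, and the counting identity (\ref{230414e3}) are exactly the steps carried out in the paper. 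The only technical point you defer (the rigorous off-diagonal reorganization and integration by parts in $\ell^2$) is legitimate for the given cylindrical $C^2_{0,F}$ forms, which is precisely how the paper justifies it.
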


\begin{proof}
We proceed as in the proof of \cite[Lemma 4.2.1, p. 84]{Hor90}.
Since $w_3=\varphi$,
\begin{eqnarray*}
\left| \left| Sf\right|  \right|^{2}_{L^{2}_{\left( s,t+2\right)  }\left( V,w_{3}\right)  }
&=&\sum^{\prime }_{\left| I\right|  =s}\sum^{\prime }_{\left| K\right|  =t+2} c_{I,K}\int_{V} \left| \sum^{\infty }_{i=1} \sum^{\prime }_{\left| J\right|  =t+1} \varepsilon^{K}_{i,J}\cdot \overline{\partial_{i} } f_{I,J }\right|^{2}  e^{-w_{3}}\,\mathrm{d}P\\
&=&\sum^{\prime }_{\left| I\right|  =s}\sum^{\prime }_{\left| K\right|  =t+2} \int_{V} \sum_{1\leqslant i,j<\infty } \sum^{\prime }_{\left| J\right|=t+1  ,\left| J^{\prime }\right|  =t+1} c_{I,K}\cdot\varepsilon^{K}_{i,J}\cdot \varepsilon^{K}_{j,J^{\prime }}\cdot \overline{\partial_{j} } f_{I,J^{\prime }}\cdot\partial_{i} \overline{f_{I,J}}\cdot e^{-w_{3}}\,\mathrm{d}P
\\
&=&\sum^{\prime }_{\left| I\right|  =s}\sum^{\prime }_{\left| J\right|=t+1  ,\left| J^{\prime }\right|  =t+1}\sum_{1\leqslant i,j<\infty }\int_{V}   c_{I,iJ}\cdot\varepsilon^{j,J^{\prime }}_{i,J}\cdot \overline{\partial_{j} } f_{I,J^{\prime }}\cdot\partial_{i} \overline{f_{I,J}}\cdot e^{-w_{3}}\,\mathrm{d}P,
\end{eqnarray*}
where $c_{I,iJ}$ is defined as that in (\ref{gener111}), and we have used the fact that $\varepsilon^{K}_{i,J}\cdot \varepsilon^{K}_{j,J^{\prime }}\not=0$ if and only if $i\not\in J$, $j\not\in J^{\prime }$ and $K=\{i\}\cup J=\{j\}\cup J^{\prime }$ (in this case $\varepsilon^{K}_{i,J}\cdot \varepsilon^{K}_{j,J^{\prime }}=\varepsilon^{j,J^{\prime }}_{i,J}$, and $\varepsilon^{j,J^{\prime }}_{i,J}$ is the sign of the permutation from $j J^{\prime }$ to $i J$). We shall rearrange the terms in the above sum. Note that if $i=j$, then $\varepsilon^{j,J^{\prime }}_{i,J}\neq 0$ if and only if $j\notin J$ and $J^{\prime }=J$. If $i\neq j$, then $\varepsilon^{j,J^{\prime }}_{i,J}\neq 0$ if and only if there exists a strictly increasing multi-index $L$ with $|L|=t$ so that $i\not\in L$, $j\not\in L$, $J^{\prime }=L\cup \{i\}$ and $J=L\cup \{j\}$, in which case,
\begin{eqnarray*}
\varepsilon^{j,J^{\prime }}_{i,J}=\varepsilon^{j,J^{\prime }}_{j,i,L}\cdot\varepsilon^{j,i,L}_{i,j,L}\cdot\varepsilon^{i,j,L}_{i,J}=-\varepsilon^{j,J^{\prime }}_{j,i,L}\cdot\varepsilon^{i,j,L}_{i,J}=-\varepsilon^{J^{\prime }}_{i,L}\cdot\varepsilon^{j,L}_{J}.
\end{eqnarray*}
Combining \eqref{multiplictive condition for coefficient} in Condition \ref{230423ass3}, and noting that by (\ref{gener111}), $c_{I,iL}=0$ for $i\in L$, $c_{I,jL}=0$ for $j\in L$ and $c_{I,iJ}=0$ for $i\in J$, we then have
\begin{eqnarray*}
\left| \left| Sf\right|  \right|^{2}_{L^{2}_{\left( s,t+2\right)  }\left( V,w_{3}\right)  }  &=&\sum^{\prime }_{\left| I\right|  =s}\sum_{i\notin J} \sum^{\prime }_{\left| J\right|  =t+1}c_{I,iJ}\cdot \int_{V} |\overline{\partial_{i} } f_{I,J}|^{2}\cdot e^{-\varphi }\,\mathrm{d}P\\
&&-\sum^{\prime }_{\left| I\right|  =s}\sum_{i\neq j} \sum^{\prime }_{\left| J\right|  ,\left| J^{\prime }\right|  =t+1} \sum^{\prime }_{\left| L\right|  =t} \frac{c_{I,iL}\cdot c_{I,jL}}{c_{I, L}}\int_{V}\varepsilon^{J^{\prime }}_{i,L}\cdot\varepsilon^{j,L}_{J}\cdot \partial_{i} \overline{f_{I,J}} \cdot \overline{\partial_{j} } f_{I,J^{\prime }}\cdot e^{-\varphi }\,\mathrm{d}P\\
&=&\sum^{\prime }_{\left| I\right|  =s}\sum_{i\notin J} \sum^{\prime }_{\left| J\right|  =t+1} c_{I,iJ}\int_{V} |\overline{\partial_{i} } f_{I,J}|^{2}\cdot e^{-\varphi }\,\mathrm{d}P\\
&&-\sum^{\prime }_{\left| I\right|  =s}\sum_{i\neq j} \sum^{\prime }_{\left| L\right|  =t} \frac{c_{I,iL}\cdot c_{I,jL}}{c_{I, L}}\int_{V} \partial_{i} \overline{f_{I,jL}}\cdot\overline{\partial_{j} } f_{I,iL}\cdot e^{-\varphi }\,\mathrm{d}P\\
&\geqslant&\sum^{\prime }_{\left| I\right|  =s}\sum^{\infty }_{i=1} \sum^{\prime }_{\left| J\right|  =t+1}c_{I,iJ} \int_{V} |\overline{\partial_{i} } f_{I,J}|^{2}\cdot e^{-\varphi }\,\mathrm{d}P\\
&&-\sum^{\prime }_{\left| I\right|  =s}\sum_{1\leqslant i,j<\infty } \sum^{\prime }_{\left| L\right|  =t}\frac{c_{I,iL}\cdot c_{I,jL}}{c_{I, L}} \int_{V} \partial_{i} \overline{f_{I,jL}}\cdot \overline{\partial_{j} } f_{I,iL}\cdot e^{-\varphi }\,\mathrm{d}P,
\end{eqnarray*}
where the last inequality follows from the fact that
\begin{eqnarray*}
\sum^{\prime }_{\left| I\right|  =s}\sum_{ i=1}^{\infty } \sum^{\prime }_{\left| L\right|  =t}\frac{c_{I,iL}^2}{c_{I, L}} \int_{V} |\overline{\partial_{i} } f_{I,iL}|^2\cdot e^{-\varphi }\,\mathrm{d}P\geqslant 0.
\end{eqnarray*}
Recalling (\ref{230419e11}) for the operator $\sigma_{i}$, by Lemma \ref{integration by Parts}, we obtain that
\begin{equation}\label{middle equality}
\begin{aligned}
	&\sum^{\prime }_{\left| I\right|  =s}  \sum_{1\leqslant i,j<\infty } \sum^{\prime }_{\left| L\right|  =t} \frac{c_{I,iL}\cdot c_{I,jL}}{c_{I, L}}\int_{V} \left((\sigma_{i} f_{I,iL})\cdot\overline{(\sigma_{j} f_{I,jL})}-\overline{(\overline{\partial_{i}}f_{I,jL})}\cdot(\overline{\partial_{j}}f_{I,iL})\right)\cdot e^{-\varphi }\,\mathrm{d}P\\
&=-\sum^{\prime }_{\left| I\right|  =s}  \sum_{1\leqslant i,j<\infty } \sum^{\prime }_{\left| L\right|  =t}\frac{c_{I,iL}\cdot c_{I,jL}}{c_{I, L}} \int_{V}  f_{I,iL} \cdot \left(\overline{\overline{\partial_{i} } (\sigma_{j} f_{I,jL})- \sigma_{j} (\overline{\partial_{i} } f_{I,jL})}\right)\cdot e^{-\varphi }\,\mathrm{d}P\\
&=\sum^{\prime }_{\left| I\right|  =s}  \sum_{1\leqslant i,j<\infty } \sum^{\prime }_{\left| L\right|  =t} \frac{c_{I,iL}\cdot c_{I,jL}}{c_{I, L}}\int_{V} f_{I,iL}\cdot \overline{f_{I,jL}}\cdot \left( \partial_{i} \overline{\partial_{j} } \varphi +\frac{ \overline{\partial_i }(\overline{z_{j}})}{2a^{2}_{j}}  \right)  \cdot e^{-\varphi }\,\mathrm{d}P,
\end{aligned}
\end{equation}
where the last equality follows from \eqref{commutaor formula}.

By Proposition \ref{general formula of T*} , we have $T^{\ast }f =e^{-\psi }\cdot X+e^{-\psi }\cdot Y $, where
$$
\begin{aligned}
	&X\triangleq(-1)^{s+1}\sum^{\prime }_{\left| I\right|  =s} \sum^{\prime }_{\left| L\right|  =t} \sum^{\infty }_{i=1} \frac{c_{I,iL}}{c_{I,L}}\cdot \sigma_{i} f_{I,iL}dz_{I}\wedge d\overline{z_{L}} ,\\
&Y\triangleq(-1)^{s+1}\sum^{\prime }_{\left| I\right|  =s} \sum^{\prime }_{\left| L\right|  =t} \sum^{\infty }_{i=1} \frac{c_{I,iL}}{c_{I,L}}\cdot f_{I,iL}\cdot\partial_{i}\psi dz_{I}\wedge d\overline{z_{L}}.
\end{aligned}
$$
Then,
$$
\left| \left| e^{-\psi }\cdot X\right|  \right|^{2}_{L^{2}_{\left( s,t\right)  }\left( V,w_{1}\right)  }  \leqslant 2\left| \left| e^{-\psi }\cdot Y\right|  \right|^{2}_{L^{2}_{\left( s,t\right)  }\left( V,w_{1}\right)  }  +2\left| \left| T^{\ast }f\right|  \right|^{2}_{L^{2}_{\left( s,t\right)  }\left( V,w_{1}\right)  },
$$
$$
		\begin{aligned}
			\left| \left| e^{-\psi }\cdot X\right|  \right|^{2}_{L^{2}_{\left( s,t\right) }\left( V,w_{1}\right)  }  &=\sum^{\prime }_{\left| I\right|  =s} \sum^{\prime }_{\left| L\right|  =t} \int_{V} \sum_{1\leqslant i,j<\infty }\frac{c_{I,iL}\cdot c_{I,jL}}{c_{I,L}}\cdot \overline{ (\sigma_{i} f_{I,iL})}\cdot(\sigma_{j} f_{I,jL})\cdot e^{-w_{1}-2\psi}\,\mathrm{d}P\\
&=\sum^{\prime }_{\left| I\right|  =s} \sum^{\prime }_{\left| L\right|  =t} \int_{V} \sum_{1\leqslant i,j<\infty } \frac{c_{I,iL}\cdot c_{I,jL}}{c_{I,L}}\cdot \overline{(\sigma_{i} f_{I,iL})}\cdot(\sigma_{j} f_{I,jL})\cdot e^{-\varphi}\,\mathrm{d}P
		\end{aligned}
$$
and
$$
		\begin{aligned}
			\left| \left| e^{-\psi }\cdot Y\right|  \right|^{2}_{L^{2}_{\left( s,t\right) }\left( V,w_{1}\right)  }  &=\sum^{\prime }_{\left| I\right|  =s} \sum^{\prime }_{\left| L\right|  =t} \frac{1}{c_{I,L}}\int_{V} \bigg|\sum^{\infty }_{i=1} c_{I,iL}\cdot f_{I,iL}\cdot\partial_{i}\psi \bigg|^{2}\cdot e^{-w_{1}-2\psi }\,\mathrm{d}P\\
&\leqslant \sum^{\prime }_{\left| I\right|  =s}  \sum^{\prime }_{\left| L\right|  =t}\int_{V} \left(\sum^{m}_{i=1}\frac{|c_{I,iL}|^2}{c_{I,L}}\cdot \left| f_{I,iL}\right|^{2} \right) \cdot \left(\sum^{m }_{i=1} |\partial_{i}\psi|^{2}\right)e^{-\varphi }\,\mathrm{d}P,
		\end{aligned}
$$
where the inequality follows from Cauchy-Schwarz inequality and the assumption that $f_{I,J}=0$ for all strictly increasing multi-indices $I$ and $J$ with $\left| I\right|  =s$, $\left| L\right|  =t$ and $\max{\left\{ I\bigcup J\right\}  }>m$. Combining the above inequalities and equalities we have
$$
\begin{aligned}
& 2\left| \left| T^{\ast }f\right|  \right|^{2}_{L^{2}_{\left( s,t\right)  }\left( V,w_{1}\right)  }  +\left| \left| Sf\right|  \right|^{2}_{L^{2}_{\left( s,t+2\right)  }\left( V,w_{3}\right)  }\\
&\geqslant \sum^{\prime }_{\left| I\right|  =s} \sum^{\prime }_{\left| L\right|  =t} \sum_{1\leqslant i,j<\infty }\frac{c_{I,iL}\cdot c_{I,jL}}{c_{I,L}}\cdot  \int_{V}\overline{(\sigma_{i} f_{I,iL})}\cdot(\sigma_{j} f_{I,jL})\cdot e^{-\varphi}\,\mathrm{d}P\\
&\quad-2\sum^{\prime }_{\left| I\right|  =s}  \sum^{\prime }_{\left| L\right|  =t} \int_{V} \left(\sum^{m}_{i=1} \frac{|c_{I,iL}|^2}{c_{I,L}}\cdot \left| f_{I,iL}\right|^{2} \right) \cdot \left(\sum^{m }_{i=1} |\partial_{i}\psi |^{2}\right)\cdot e^{-\varphi }\,\mathrm{d}P\\
&\quad+\sum^{\prime }_{\left| I\right|  =s}  \sum^{\infty }_{i=1} \sum^{\prime }_{\left| J\right|  =t+1}c_{I,iJ} \int_{V} |\overline{\partial_{i} } f_{I,J}|^{2}\cdot e^{-\varphi }\,\mathrm{d}P-\sum^{\prime }_{\left| I\right|  =s}  \sum_{1\leqslant i,j<\infty } \sum^{\prime }_{\left| L\right|  =t} \frac{c_{I,iL}\cdot c_{I,jL}}{c_{I, L}}\int_{V} \partial_{i} \overline{f_{I,jL}}\cdot \overline{\partial_{j} } f_{I,iL}\cdot e^{-\varphi }\,\mathrm{d}P\\
&=\sum^{\prime }_{\left| I\right|  =s}  \sum^{\prime }_{\left| L\right|  =t} \int_{V} \sum_{1\leqslant i,j<\infty }\frac{c_{I,iL}\cdot c_{I,jL}}{c_{I, L}}\left( \overline{(\sigma_{i} f_{I,iL})}\cdot(\sigma_{j} f_{I,jL})-\partial_{i} \overline{f_{I,jL}}\cdot \overline{\partial_{j} } f_{I,iL}\right)\cdot e^{-\varphi}\,\mathrm{d}P
\\
&\quad+\sum^{\prime }_{\left| I\right|  =s}\sum^{\infty }_{i=1} \sum^{\prime }_{\left| J\right|  =t+1} c_{I,iJ}\int_{V} |\overline{\partial_{i} } f_{I,J}|^{2}\cdot e^{-\varphi }\,\mathrm{d}P- 2\sum^{\prime }_{\left| I\right|  =s} \sum^{\prime }_{\left| L\right|  =t} \int_{V} \left(\sum^{m}_{i=1} \frac{|c_{I,iL}|^2}{c_{I,L}}\cdot\left| f_{I,iL}\right|^{2}\right)  \cdot \left(\sum^{m }_{i=1} |\partial_{i}\psi |^{2}\right)\cdot e^{-\varphi }\,\mathrm{d}P\\
&=\sum^{\prime }_{\left| I\right|  =s} \sum^{\prime }_{\left| L\right|  =t}\frac{c_{I,iL}\cdot c_{I,jL}}{c_{I, L}}\int_{V} \sum_{1\leqslant i,j<\infty }f_{I,iL}\cdot\overline{f_{I,jL}}\cdot \left( \partial_{i} \overline{\partial_{j} } \varphi +\frac{\overline{\partial_i} (\overline{z_j})}{2a^{2}_{j}} \right)\cdot e^{-\varphi}\,\mathrm{d}P
\\
&\quad+\sum^{\prime }_{\left| I\right|  =s}\sum^{\infty }_{i=1} \sum^{\prime }_{\left| J\right|  =t+1} c_{I,iJ}\int_{V} |\overline{\partial_{i} } f_{I,J}|^{2}\cdot e^{-\varphi }\,\mathrm{d}P- 2\sum^{\prime }_{\left| I\right|  =s} \sum^{\prime }_{\left| L\right|  =t} \int_{V} \left(\sum^{m}_{i=1}\frac{|c_{I,iL}|^2}{c_{I,L}}\cdot \left| f_{I,iL}\right|^{2} \right) \cdot \left(\sum^{m }_{i=1} |\partial_{i}\psi |^{2}\right)\cdot e^{-\varphi }\,\mathrm{d}P\\
&=\sum^{\prime }_{\left| I\right|  =s}  \sum^{\prime }_{\left| L\right|  =t} \int_{V} \sum_{1\leqslant i,j\leqslant m }\frac{c_{I,iL}\cdot c_{I,jL}}{c_{I, L}}\cdot f_{I,iL}\cdot\overline{f_{I,jL}}\cdot \left( \partial_{i} \overline{\partial_{j} } \varphi +\frac{\overline{\partial_i} (\overline{z_j})}{2a^{2}_{j}}  \right)\cdot e^{-\varphi}\,\mathrm{d}P
\\
&\quad+\sum^{\prime }_{\left| I\right|  =s}  \sum^{\infty }_{i=1} \sum^{\prime }_{\left| J\right|  =t+1} c_{I,iJ}\int_{V} |\overline{\partial_{i} } f_{I,J}|^{2}\cdot e^{-\varphi }\,\mathrm{d}P- 2\sum^{\prime }_{\left| I\right|  =s}  \sum^{\prime }_{\left| L\right|  =t} \int_{V} \left(\sum^{m}_{i=1} \frac{|c_{I,iL}|^2}{c_{I,L}}\cdot \left| f_{I,iL}\right|^{2} \right) \cdot\left( \sum^{m }_{i=1} |\partial_{i}\psi |^{2}\right)\cdot e^{-\varphi }\,\mathrm{d}P,
\end{aligned}
$$
where the second equality follows from \eqref{middle equality}. Thus
\begin{equation}
	\begin{aligned}
				&2\left| \left| T^{\ast }f\right|  \right|^{2}_{L^{2}_{\left( s,t\right)  }\left( V,w_{1}\right)  }+\left| \left| Sf\right|  \right|^{2}_{L^{2}_{\left( s,t+2\right)  }\left( V,w_{3}\right)  }+2\sum^{\prime }_{\left| I\right|  =s}  \sum^{\prime }_{\left| L\right|  =t} \int_{V} \left(\sum^{m}_{i=1} \frac{|c_{I,iL}|^2}{c_{I,L}}\cdot \left| f_{I,iL}\right|^{2} \right) \cdot \left(\sum^{m }_{i=1} |\partial_{i}\psi |^{2}\right)\cdot e^{-\varphi }\,\mathrm{d}P \\
&\geqslant \sum^{\prime }_{\left| I\right|  =s}\sum^{\infty }_{i=1} \sum^{\prime }_{\left| J\right|  =t+1} c_{I,iJ}\int_{V} |\overline{\partial_{i} } f_{I,J}|^{2}\cdot e^{-\varphi }\,\mathrm{d}P\\
&\quad+\sum^{\prime }_{\left| I\right|  =s}  \sum_{1\leqslant i,j\leqslant m } \sum^{\prime }_{\left| L\right|  =t}\frac{c_{I,iL}\cdot c_{I,jL}}{c_{I, L}} \int_{V} f_{I,iL}\cdot\overline{f_{I,jL}} \cdot\left( \partial_{i} \overline{\partial_{j} } \varphi +\frac{\overline{\partial_i} (\overline{z_j})}{2a^{2}_{j}}  \right)\cdot  e^{-\varphi }\,\mathrm{d}P\\
&= \sum^{\prime }_{\left| I\right|  =s}\sum^{\infty }_{i=1} \sum^{\prime }_{\left| J\right|  =t+1} c_{I,iJ}\int_{V} |\overline{\partial_{i} } f_{I,J}|^{2}\cdot e^{-\varphi }\,\mathrm{d}P+\sum^{\prime }_{\left| I\right|  =s}  \sum_{1\leqslant i,j\leqslant m } \sum^{\prime }_{\left| L\right|  =t}\frac{c_{I,iL}\cdot c_{I,jL}}{c_{I, L}} \int_{V} f_{I,iL}\cdot\overline{f_{I,jL}} \cdot\left( \partial_{i} \overline{\partial_{j} } \varphi \right)\cdot  e^{-\varphi }\,\mathrm{d}P\\
&\quad+\sum^{\prime }_{\left| I\right|  =s}  \sum^{\prime }_{\left| L\right|  =t} \int_{V}\left( \sum_{i=1}^{m}\frac{|c_{I,iL}|^2}{c_{I, L}}\cdot|f_{I,iL}|^2\cdot\frac{1}{2a^{2}_{i}}\right)\cdot  e^{-\varphi }\,\mathrm{d}P.\label{general inequality for smooth functions}
	\end{aligned}
\end{equation}
By \eqref{general inequality for smooth functions} and the assumption (\ref{230419e12}) in Condition \ref{230423ass4}, we have (Recall Condition \ref{230423ass2} for $c_0^{s,t}$)
$$
\begin{aligned}
				&2\left| \left| T^{\ast }f\right|  \right|^{2}_{L^{2}_{\left( s,t+1\right)  }\left( V,w_{1}\right)  }  +\left| \left| Sf\right|  \right|^{2}_{L^{2}_{\left( s,t+2\right)  }\left( V,w_{3}\right)  }\\
&\geqslant \sum^{\prime }_{\left| I\right|  =s}\sum^{\infty }_{i=1} \sum^{\prime }_{\left| J\right|  =t+1} c_{I,iJ}\int_{V} |\overline{\partial_{i} } f_{I,J}|^{2}\cdot e^{-\varphi }\,\mathrm{d}P+
 \sum^{\prime }_{\left| I\right|  =s}  \sum^{\prime }_{\left| L\right|  =t} \sum^{m}_{i=1} \frac{|c_{I,iL}|^2}{c_{I,L}} \int_{V} \left| f_{I,iL}\right|^{2}\cdot\left( 2e^{\psi }+\frac{1}{2a_i^2}-\frac{1}{2} \right)\cdot e^{-\varphi } \,\mathrm{d}P \\
&\geqslant 2\sum^{\prime }_{\left| I\right|  =s} \sum^{\prime }_{\left| L\right|  =t} \sum^{m}_{i=1} \frac{|c_{I,iL}|^2}{c_{I,L}}\cdot\int_{V} \left| f_{I,iL}\right|^{2}\cdot e^{\psi-\varphi } \,\mathrm{d}P\\
&\geqslant 2 c_0^{s,t}\cdot\sum^{\prime }_{\left| I\right|  =s} \sum^{\prime }_{\left| L\right|  =t} \sum^{m}_{i=1}  |c_{I,iL}| \cdot\int_{V} \left| f_{I,iL}\right|^{2}\cdot e^{-w_2} \,\mathrm{d}P\\
&= 2c_0^{s,t}\cdot(t+1)\cdot\left| \left| f\right|  \right|^{2}_{L^{2}_{\left( s,t+1\right)  }\left( V,w_{2}\right)  },
\end{aligned}
$$
which gives the desired inequality (\ref{230419e13}). This completes the proof of Lemma \ref{general estimation similar to Lemma 4.2.1}.
\end{proof}

As an immediate consequence of Proposition \ref{general cut-off density3}, Theorem \ref{general density4} and Lemma \ref{general estimation similar to Lemma 4.2.1}, we have the following main result in this section.
\begin{theorem}\label{general key inequality}
Under Conditions \ref{230424c1}, \ref{230423ass1}, \ref{230423ass2}, \ref{230423ass3} and \ref{230423ass4}, it holds that
$$
\left| \left| T^{\ast }f\right|  \right|^{2}_{L^{2}_{\left( s,t\right)  }\left( V,w_{1}\right)  }  +\left| \left| Sf\right|  \right|^{2}_{L^{2}_{\left( s,t+2\right)  }\left( V,w_{3}\right)  }
\geqslant  c_0^{s,t}\cdot \left| \left| f\right|  \right|^{2}_{L^{2}_{\left( s,t+1\right)  }\left( V,w_{2}\right)  },\quad \forall \;f\in D_{S}\bigcap D_{T^{\ast }}.
$$
\end{theorem}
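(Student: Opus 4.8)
The plan is to establish the inequality first on a dense, well-behaved class of forms, where Lemma \ref{general estimation similar to Lemma 4.2.1} directly applies, and then to propagate it to all of $D_{S}\cap D_{T^{\ast}}$ by means of the two successive approximation results proved above. Throughout I abbreviate $\|\cdot\|_{w_i}$ for the norm of $L^{2}_{(s,t+i-1)}(V,w_i)$, as in Proposition \ref{general cut-off density3}, and I introduce the functional
$$
Q(g)\triangleq \|T^{\ast}g\|^{2}_{w_1}+\|Sg\|^{2}_{w_3}-c_0^{s,t}\|g\|^{2}_{w_2},
$$
whose nonnegativity on $D_{S}\cap D_{T^{\ast}}$ is exactly the assertion to be proved.

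First I would fix an arbitrary $f\in D_{S}\cap D_{T^{\ast}}$ and reduce to forms with uniformly included support via the cut-off sequence $\{X_k\}$ from \eqref{section2}. Since $\supp X_k\subset V_{k+1}$, Proposition \ref{properties on pseudo-convex domain}(2) gives $\supp(X_k\cdot f)\stackrel{\circ}{\subset}V_{k+2}^{o}$, so by Proposition \ref{general cut-off density3} each $X_k\cdot f$ lies in $D_{S}\cap D_{T^{\ast}}$ and has uniformly included support. Combining that proposition with the pointwise convergence $X_k\to 1$, the bound $|X_k|\leqslant 1$, and Lebesgue's dominated convergence theorem (applied to $T^{\ast}f\in L^{2}_{(s,t)}(V,w_1)$, $Sf\in L^{2}_{(s,t+2)}(V,w_3)$ and to $f$ itself), I obtain
$$
\|T^{\ast}(X_k f)-T^{\ast}f\|_{w_1}+\|S(X_k f)-Sf\|_{w_3}+\|X_k f-f\|_{w_2}\xrightarrow[k\to\infty]{}0,
$$
hence $Q(X_k f)\to Q(f)$. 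It therefore suffices to prove $Q(g)\geqslant 0$ for every $g\in D_{S}\cap D_{T^{\ast}}$ with $\supp g\stackrel{\circ}{\subset}V_r^{o}$ for some $r$.

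For such a $g$ I would invoke Theorem \ref{general density4} to produce a sequence $\{h_j\}\subset D_{S}\cap D_{T^{\ast}}$ in which each $h_j$ has only finitely many nonzero components, all belonging to $C^{\infty}_{0,F}(V)\subset C^{2}_{0,F}(V)$, and for which $\|T^{\ast}h_j-T^{\ast}g\|_{w_1}+\|h_j-g\|_{w_2}+\|Sh_j-Sg\|_{w_3}\to 0$. Each $h_j$ meets precisely the hypotheses of Lemma \ref{general estimation similar to Lemma 4.2.1}, so $Q(h_j)\geqslant 0$ for all $j$; letting $j\to\infty$ and using that $Q$ is continuous for the three-norm convergence (by the reverse triangle inequality on each term) yields $Q(g)\geqslant 0$. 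Applying this to $g=X_k f$ and letting $k\to\infty$ via the convergences of the previous paragraph gives $Q(f)\geqslant 0$, as desired.

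Since the three cited results carry all the analytic weight, I expect no genuine obstacle in this assembly; the only points demanding care are the two elementary dominated-convergence steps showing $X_k\cdot T^{\ast}f\to T^{\ast}f$ and $X_k\cdot Sf\to Sf$ (which supplement the conclusions of Proposition \ref{general cut-off density3} but follow immediately from $X_k\to 1$ and $|X_k|\leqslant 1$), the inclusion $C^{\infty}_{0,F}(V)\subset C^{2}_{0,F}(V)$ that licenses the use of Lemma \ref{general estimation similar to Lemma 4.2.1}, and the routine verification that $g\mapsto Q(g)$ is continuous under the relevant mode of convergence. The genuinely difficult work — the integration-by-parts identity leading to the Bochner–Kodaira–Hörmander type estimate, and the delicate double approximation that removes the smoothness and finite-support restrictions — has already been discharged in Lemma \ref{general estimation similar to Lemma 4.2.1} and Theorem \ref{general density4} respectively.
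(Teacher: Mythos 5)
Your proposal is correct and follows essentially the same route as the paper, which derives Theorem \ref{general key inequality} precisely as an immediate consequence of Proposition \ref{general cut-off density3}, Theorem \ref{general density4} and Lemma \ref{general estimation similar to Lemma 4.2.1}. Your write-up merely makes explicit the assembly steps the paper leaves tacit (the dominated-convergence passage from $X_k\cdot T^{\ast}f$ to $T^{\ast}f$ and from $X_k\cdot Sf$ to $Sf$, the inclusion $C^{\infty}_{0,F}(V)\subset C^{2}_{0,F}(V)$, and the continuity of the quadratic functional under the three-norm convergence), and these are all handled correctly.
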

\begin{remark}
Recall that for any strictly increasing multi-indices $I$ and $J$, it was chosen in \cite{YZ}
$$C_{I,J}=2^{|I|+|J|}\cdot \left(\prod\limits_{i\in I}a_i^2\right)\cdot \left(\prod\limits_{j\in J}a_j^2\right)$$ for the working space (See \cite[p. 530]{YZ}). In that case,
$$
c_0^{s,t}=\inf\limits_{|I|=s,\,|L|=t,\,i\in \mathbb{N}}\frac{c_{I,iL} }{c_{I,L}}=2\inf\limits_{i\in \mathbb{N}}a_i^2=0.
$$
Therefore, if we choose the working space as that in \cite{YZ}, then the conclusion in Theorem \ref{general key inequality} becomes $
\left| \left| T^{\ast }f\right|  \right|^{2}_{L^{2}_{\left( s,t\right)  }\left( V,w_{1}\right)  }  +\left| \left| Sf\right|  \right|^{2}_{L^{2}_{\left( s,t+2\right)  }\left( V,w_{3}\right)  }
\geqslant  0$ for all $f\in D_{S}\bigcap D_{T^{\ast }}$, which is a trivial result.
\end{remark}

\section{Solving the $\overline{\partial}$ equations on pseudo-convex domains in $\ell^{2}$}\label{sec5}

In this section, we shall solve the $\overline{\partial}$ equations on pseudo-convex domains in $\ell^{2}$.

Combining Proposition \ref{Pre-exactness}, Theorem \ref{general key inequality} and \cite[Corollary 2.1, p. 522]{YZ}, we have the following result immediately.

\begin{proposition}
\label{general Solving d-bar on V}
Under Conditions \ref{230424c1}, \ref{230423ass1}, \ref{230423ass2}, \ref{230423ass3} and \ref{230423ass4}, it holds that $R_T=N_S$, and for each $f\in N_S$ there exists $g\in D_T$ so that $Tg=f$ and $ \sqrt{c_0^{s,t}}\cdot \left| \left| g\right|  \right|_{L^{2}_{(s,t)}\left( V,w_{1}\right)  }  \leqslant \left| \left| f\right|  \right|_{L^{2}_{(s,t+1)}\left( V,w_{2}\right)  }$.
\end{proposition}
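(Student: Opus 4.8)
The plan is to recognize this statement as the standard functional-analytic consequence of a fundamental $L^2$-estimate in the three-operator (H\"ormander) framework, so that essentially all the analytic work has already been carried out in the preceding results; what remains is to verify the hypotheses of the abstract duality lemma \cite[Corollary 2.1, p. 522]{YZ} and to invoke it. Concretely, I would first record the three required inputs: $T$ and $S$ are densely defined and closed (Lemma \ref{densly defined closed}); $R_T\subset N_S$ (Proposition \ref{Pre-exactness}); and the basic inequality
$$\|T^{\ast}f\|_{L^{2}_{(s,t)}(V,w_1)}^{2}+\|Sf\|_{L^{2}_{(s,t+2)}(V,w_3)}^{2}\geqslant c_0^{s,t}\,\|f\|_{L^{2}_{(s,t+1)}(V,w_2)}^{2}$$
holds for every $f\in D_S\cap D_{T^{\ast}}$ (Theorem \ref{general key inequality}). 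Since $c_0^{s,t}>0$ under Condition \ref{230423ass2}, these are exactly the data the abstract lemma consumes.

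For completeness I would then sketch the duality argument that the cited corollary encapsulates. Fix $f\in N_S$. For an arbitrary $g\in D_{T^{\ast}}$, decompose it orthogonally in the middle space $L^{2}_{(s,t+1)}(V,w_2)$ as $g=g'+g''$ with $g'\in N_S$ and $g''\in N_S^{\perp}$; this is legitimate because $N_S$ is closed ($S$ being a closed operator). Since $R_T\subset N_S$, taking orthogonal complements gives $N_S^{\perp}\subset R_T^{\perp}=N_{T^{\ast}}$, so $g''\in N_{T^{\ast}}\subset D_{T^{\ast}}$, whence $g'=g-g''\in D_{T^{\ast}}$ and $T^{\ast}g'=T^{\ast}g$ (as $T^{\ast}g''=0$). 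Moreover $g'\in N_S\subset D_S$ with $Sg'=0$, so $g'\in D_S\cap D_{T^{\ast}}$ and the fundamental inequality applies to $g'$, giving $c_0^{s,t}\,\|g'\|^{2}\leqslant\|T^{\ast}g'\|^{2}=\|T^{\ast}g\|^{2}$. Because $f\in N_S$ and $g''\in N_S^{\perp}$ we have $(f,g)=(f,g')$, and therefore
$$|(f,g)|=|(f,g')|\leqslant\|f\|\,\|g'\|\leqslant (c_0^{s,t})^{-1/2}\,\|f\|\,\|T^{\ast}g\|,\qquad\forall\,g\in D_{T^{\ast}}.$$

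Finally I would realize the solution by representation. The estimate above shows that $L\colon T^{\ast}g\mapsto (g,f)$ is a well-defined bounded linear functional on $R_{T^{\ast}}$ of norm at most $(c_0^{s,t})^{-1/2}\|f\|$, the well-definedness following because $T^{\ast}g_1=T^{\ast}g_2$ forces $g_1'=g_2'$ and hence $(f,g_1)=(f,g_2)$. Extending $L$ by zero to $\overline{R_{T^{\ast}}}^{\perp}$ and applying the Riesz representation theorem produces $u\in L^{2}_{(s,t)}(V,w_1)$ with $(T^{\ast}g,u)=(g,f)$ for all $g\in D_{T^{\ast}}$ and $\|u\|\leqslant (c_0^{s,t})^{-1/2}\|f\|$; by the defining property of the adjoint together with $T^{\ast\ast}=T$ (valid since $T$ is densely defined and closed) this means $u\in D_T$, $Tu=f$, and $\sqrt{c_0^{s,t}}\,\|u\|\leqslant\|f\|$. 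Combined with $R_T\subset N_S$ this yields $R_T=N_S$. The step I expect to be essentially free is precisely this entire functional-analytic passage: all the genuine difficulty of the problem is already absorbed into Theorem \ref{general key inequality} and its approximation apparatus (Proposition \ref{general cut-off density3} and Theorem \ref{general density4}), so the only thing to watch is the bookkeeping---checking that $g'$ really lies in $D_{T^{\ast}}\cap D_S$, that $N_S^{\perp}\subset N_{T^{\ast}}$, and that the Riesz/adjoint identification is correctly oriented---each of which is routine from the closedness and dense definability of $T$ and $S$.
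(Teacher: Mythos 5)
Your proposal is correct and takes essentially the same route as the paper: the paper's proof is exactly the one-line combination of Proposition \ref{Pre-exactness}, Theorem \ref{general key inequality} and the abstract duality result \cite[Corollary 2.1, p. 522]{YZ}, and the functional-analytic argument you unfold (orthogonal decomposition along the closed subspace $N_S$, the inclusion $N_S^{\perp}\subset R_T^{\perp}=N_{T^{\ast}}$, the resulting bound $|(f,g)|\leqslant (c_0^{s,t})^{-1/2}\,\|f\|\,\|T^{\ast}g\|$, then Riesz representation together with $T^{\ast\ast}=T$) is precisely the content of that cited corollary. Your bookkeeping steps are all accurate, so nothing is missing.
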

In order to solve the $\overline{\partial}$ equation on a more general working space, we need the following result from \cite[Lemma 7.2.10, p. 179]{Tu} (This result was implicitly used in \cite[Proof of Theorem 4.2.2 at p. 85]{Hor90}).
\begin{lemma}\label{smooth convext increasing function}
Suppose that $g_0$ is an increasing nonnegative function on $[0,+\infty )$. Then there exists a convex, increasing, and real analytic function $g$ on $[0,+\infty )$ such that
$$
g^{\prime \prime } (x)\geqslant g^{\prime } (x) \geqslant g (x)\geqslant g_0 (x),\quad\forall\; x\in[0,+\infty).
$$
\end{lemma}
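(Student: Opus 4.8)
The plan is to build $g$ explicitly as a series of exponentials, exploiting the fact that for $\lambda\geq 1$ one has $\lambda^{2}e^{\lambda x}\geq \lambda e^{\lambda x}\geq e^{\lambda x}$, so that the required chain $g''\geq g'\geq g$ holds \emph{termwise}. First I would reduce the continuum of inequalities $g(x)\geq g_{0}(x)$ to a countable family: since $g_{0}$ is increasing, setting $b_{n}\triangleq\max\{g_{0}(n+1),1\}$ for $n\in\mathbb{N}_{0}$, it suffices to produce an increasing $g$ with $g(n)\geq b_{n}$ for every $n$, because then on each interval $[n,n+1]$ one has $g(x)\geq g(n)\geq b_{n}\geq g_{0}(x)$. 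I would then seek $g$ of the form
\begin{equation*}
g(x)=\sum_{k=1}^{\infty}c_{k}e^{kx},\qquad c_{k}>0,
\end{equation*}
for which convexity and monotonicity are obvious and, by termwise differentiation, $g''(x)=\sum_{k\ge1}k^{2}c_{k}e^{kx}\geq\sum_{k\ge1}kc_{k}e^{kx}=g'(x)\geq\sum_{k\ge1}c_{k}e^{kx}=g(x)\geq0$, so the entire chain of inequalities is automatic.

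Second I would secure real analyticity. Writing $w=e^{x}$ turns $g$ into the power series $\phi(w)=\sum_{k\ge1}c_{k}w^{k}$; if the coefficients are chosen so that $c_{k}^{1/k}\to 0$, then $\phi$ is entire, hence real analytic on $[1,\infty)$, and since $x\mapsto e^{x}$ is real analytic the composition $g=\phi\circ\exp$ is real analytic on $[0,\infty)$. The same decay condition gives $(kc_{k})^{1/k}\to0$ and $(k^{2}c_{k})^{1/k}\to0$, so the differentiated series converge locally uniformly and the termwise identities above are legitimate.

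The core of the argument---and the main obstacle---is choosing $(c_{k})$ to reconcile two competing demands: convergence of $\sum c_{k}e^{kx}$ for \emph{every} $x$ (forcing $c_{k}^{1/k}\to0$) against domination of a $g_{0}$ that may grow arbitrarily fast. I would resolve this by a diagonal choice: pick strictly increasing indices $k_{1}<k_{2}<\cdots$ with each $k_{n}\geq 2$ so large that $b_{n}^{1/k_{n}}\leq 2$ (possible since $b_{n}^{1/k}\to1$ as $k\to\infty$), and set $c_{k_{n}}\triangleq b_{n}e^{-n k_{n}}$, $c_{k}\triangleq e^{-k^{2}}$ for the remaining indices, finally enlarging $c_{1}$ if necessary so that $c_{1}\geq b_{0}$. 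Then $c_{k_{n}}^{1/k_{n}}=b_{n}^{1/k_{n}}e^{-n}\leq 2e^{-n}\to0$ along the diagonal and $c_{k}^{1/k}=e^{-k}\to0$ off it, so $\limsup_{k}c_{k}^{1/k}=0$ and $g$ is analytic; meanwhile $g(n)\geq c_{k_{n}}e^{k_{n}n}=b_{n}$ for $n\geq1$ and $g(0)\geq c_{1}\geq b_{0}$, which is exactly the domination needed. I expect the only delicate points to be the verification that the diagonal assignment really yields $\limsup_{k}c_{k}^{1/k}=0$ and the harmless upward adjustment of the single coefficient $c_{1}$ to cover the base case $n=0$; both become routine once the exponential ansatz has reduced everything to the growth of positive Taylor coefficients of an entire function.
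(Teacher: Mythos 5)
Your proposal is correct, and it takes a genuinely different route from the paper. The paper also builds $g$ as a series with positive coefficients and also dominates $g_0$ via single terms evaluated near integer points, but its ansatz is a power series in $x$ itself, $g(x)=\sum_{n\ge 0}\bigl(\prod_{i=0}^{n}a_i\bigr)x^{n}$: there the chain $g''\geqslant g'\geqslant g$ is \emph{not} automatic and forces the coefficient condition $n a_n\geqslant 1$ (i.e.\ $a_n\geqslant 1/n$), which competes directly with the decay $a_n\to 0$ needed for entirety; reconciling the two requires the paper's rather delicate inductive choice of exponents $N_k$ and coefficients $a_l=\frac{1}{k}\,(g_0(k+1))^{1/N_k}e^{1/\sqrt{l}}$. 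Your ansatz $g(x)=\sum_{k\ge 1}c_k e^{kx}$ decouples these two demands: since every frequency satisfies $k\geqslant 1$, the inequalities $k^{2}c_k e^{kx}\geqslant k c_k e^{kx}\geqslant c_k e^{kx}$ make the chain $g''\geqslant g'\geqslant g$ hold termwise for \emph{any} positive coefficients, so the only remaining constraints are entirety of $\phi(w)=\sum c_k w^k$ (i.e.\ $c_k^{1/k}\to 0$) and the domination $g(n)\geqslant b_n$, which your diagonal assignment $c_{k_n}=b_n e^{-nk_n}$ with $b_n^{1/k_n}\leqslant 2$ handles cleanly (indeed $c_{k_n}^{1/k_n}\leqslant 2e^{-n}$ and $c_k^{1/k}=e^{-k}$ off the diagonal, so $\limsup_k c_k^{1/k}=0$, and $g(n)\geqslant c_{k_n}e^{k_n n}=b_n$). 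The price is the extra (standard) step that $g=\phi\circ\exp$ is real analytic as a composition of an entire function with $\exp$, rather than being a power series outright; what you gain is a more modular argument in which monotonicity/convexity are structural and all the analysis is concentrated in one elementary coefficient choice. One small point worth making explicit when writing this up: the reduction to integer points uses that $g$ is increasing (so $g(x)\geqslant g(n)$ on $[n,n+1]$), which in your setup follows from $g'>0$, and the base interval $[0,1]$ is covered by the adjustment $c_1\geqslant b_0$ — both are present in your sketch and are harmless.
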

\begin{proof}
Since we have not found an exact reference (in English) for Lemma \ref{smooth convext increasing function}, for the readers' convenience, following \cite[Lemma 7.2.10, p. 179]{Tu}, we shall give below a detailed proof.
Without loss of generality, we assume that $g_0\geqslant1$.
Let $N_{1}=1$, for $k\geqslant 2$, we assume that $N_ {1},N_ {2},\cdots, N_ {k-1}\in\mathbb{N}$ have been chosen such that
$$
N_{l}> \max\left\{ \frac{\ln(g_0(l+1))}{\ln\frac{l}{l-1}},N_{l-1}\right\},\quad l=2,\cdots,k-1.
$$
Choose $N_k\in\mathbb{N}$ such that
$$
N_{k}> \max\left\{ \frac{\ln(g_0(k+1))}{\ln\frac{k}{k-1}},N_{k-1}\right\},
$$
then,
$$
\frac{1}{k}(g_0(k+1))^{\frac{1}{N_{k}}}\in \bigg[\frac{1}{k},\frac{1}{k-1}\bigg).
$$
By induction, we obtain a sequence $\{N_{k}\}_{k=1}^{\infty}$ of positive integers. Let
$$
a_{0}\triangleq  g_0(2)\cdot e,\quad a_{l}\triangleq\frac{1}{k}(g_0(k+1))^{\frac{1}{N_{k}}}\cdot e^{\frac{1}{\sqrt{l}}} \;\hbox{ for }\; l=N_{k},\cdots, N_{k+1}-1,\quad\forall\; k,l\in\mathbb{N}.
$$

Note that $\{a_{l}\}_{l=1}^{\infty} $ is a decreasing sequence and for $l=N_{k},\cdots, N_{k+1}-1$, we have
$$
\frac{1}{l}\leqslant\frac{1}{N_k}\leqslant\frac{1}{k}\leqslant\frac{e^{\frac{1}{\sqrt{N_{k+1}}}}}{k}\leqslant a_l\leqslant \frac{e^{\frac{1}{\sqrt{N_{k}}}}}{k-1}\leqslant \frac{e^{\frac{1}{\sqrt{k}}}}{k-1}
$$
which implies that $\lim\limits_{l\rightarrow \infty } a_{l}=0.$
Hence, $\sum\limits_{n=0}^{\infty }\bigg(\prod\limits_{i=0}^{n}a_{i}\bigg)\cdot|x|^{n}<\infty$ for all $x\in\mathbb{R}$. Let
$$
g(x)\triangleq \sum_{n=0}^{\infty }\bigg(\prod_{i=0}^{n}a_{i}\bigg)\cdot x^{n},\quad\forall\; x\in\mathbb{R}.
$$
Then $g$ is an increasing and real analytic function on $[0,\infty)$.

Since $a_{l}\geqslant \frac{1}{l} $ for all $l\in\mathbb{N}$, for each $x\geqslant 0$, we have
\begin{eqnarray*}
g^{\prime \prime } \left( x\right)
&=&\sum_{n=2}^{\infty }a_n\cdot n\cdot a_{n-1}\cdot(n-1)\cdot\bigg(\prod_{i=0}^{n-2}a_{i}\bigg)\cdot x^{n-2}\\
&\geqslant &\sum_{n=2}^{\infty } a_{n-1}\cdot(n-1)\cdot\bigg(\prod_{i=0}^{n-2}a_{i}\bigg)\cdot x^{n-2}
=\sum_{n=1}^{\infty } a_{n}\cdot n\cdot\bigg(\prod_{i=0}^{n-1}a_{i}\bigg)\cdot x^{n-1}
=g^{\prime } \left(x\right)\\
&\geqslant&\sum_{n=1}^{\infty } \bigg(\prod_{i=0}^{n-1}a_{i}\bigg)\cdot x^{n-1}
=\sum_{n=0}^{\infty } \bigg(\prod_{i=0}^{n}a_{i}\bigg)\cdot x^{n}
=g(x)>0.
\end{eqnarray*}
Thus $g$ is a convex function on $[0,+\infty)$.

For $x\in[0,2]$, it follows that
$$
g \left(x\right)  \geqslant a_{0}=g_0\left( 2\right)\cdot  e\geqslant g_0\left( 2\right)  \geqslant g_0 \left( x\right).
$$
For $x\in \left( 2,+\infty \right)  $, we have
$$
\begin{array}{ll}
\displaystyle
g\left( x\right)  \!\!\!&\geqslant g\left( \left[ x\right]  \right)  \geqslant \prod^{N_{\left[ x\right]  }}_{i=0} a_{i}\cdot \left[ x\right]^{N_{\left[ x\right]  }}  \geqslant \left( a_{N_{\left[x\right]  }}\cdot \left[x\right]  \right)^{N_{\left[x\right]  }} =\left( \left(g_0\left( 1+\left[x\right]  \right)\right)^{\frac{1}{N_{\left[x\right]  }} }\cdot  e^{\frac{1}{\sqrt{N_{\left[x\right]  }} } }\right)^{N_{\left[x\right]  }} \\
&\displaystyle \geqslant g_0\left( 1+\left[x\right]  \right)  \geqslant g_0\left(x\right),
\end{array}
$$
where $\left[x\right] $ represents the maximum integer that does not exceed $x$. Thus $g(x)\geqslant g_0(x)$ for all $x\in[0,+\infty)$. This completes the proof of Lemma \ref{smooth convext increasing function}.
\end{proof}

The following theorem is an infinite-dimensional version of \cite[Theorem 4.2.2, p. 84]{Hor90}.
\begin{theorem}\label{general LL}
Under Conditions \ref{230424c1}, \ref{230423ass1}, \ref{230423ass2} and \ref{230423ass3}, for any $f=\sum\limits^{\prime }_{\left| I\right|  =s,\left| J\right|  =t+1 } f_{I,J}dz_{I}\wedge d\overline{z_{J}}\in L^{2}_{(s,t+1)}\left( V,loc \right)$ with $\overline{\partial}f=0$, there exists $u\in L^{2}_{(s,t)}\left( V,loc\right)$ such that $\overline{\partial}u=f$.
\end{theorem}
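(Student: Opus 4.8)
The plan is to deduce the theorem from the weighted existence result in Proposition~\ref{general Solving d-bar on V} by manufacturing a single weight function $\varphi$ that simultaneously renders $f$ globally square-integrable and validates the curvature hypothesis Condition~\ref{230423ass4}. Conditions~\ref{230424c1}, \ref{230423ass1}, \ref{230423ass2} and \ref{230423ass3} are already in force, so the only missing ingredient for applying Proposition~\ref{general Solving d-bar on V} is Condition~\ref{230423ass4}, which constrains $\varphi$, together with the requirement $f\in N_S$, i.e. $f\in L^{2}_{(s,t+1)}(V,w_2)$ with $Sf=0$. Since $\overline{\partial}f=0$ is assumed, once $f$ lies in $L^{2}_{(s,t+1)}(V,w_2)$ we automatically have $f\in D_S$ and $Sf=0$; thus the entire difficulty is local-to-global, namely to choose $\varphi$ decaying fast enough toward $\partial V$ that the merely locally square-integrable datum $f$ becomes integrable against $e^{-w_2}$, while retaining enough plurisubharmonicity for the basic estimate.

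First I would normalise $\eta$: by Remark~\ref{230422r1} we may assume $\eta\geqslant 0$ and that the strengthened positivity \eqref{regular condition for eta} holds. I then seek $\varphi$ of the form $\varphi=\chi(\eta)$ for a convex, increasing $\chi\in C^{\infty}([0,+\infty))$ to be specified. A termwise chain-rule computation in the partial-derivative calculus of this paper gives
\[
\partial_i\overline{\partial_j}\varphi=\chi'(\eta)\,\partial_i\overline{\partial_j}\eta+\chi''(\eta)\,(\partial_i\eta)\,\overline{\partial_j\eta},
\]
so that, using $\chi''\geqslant 0$ together with \eqref{regular condition for eta},
\[
\sum_{1\leqslant i,j\leqslant n}(\partial_i\overline{\partial_j}\varphi)\,\zeta_i\overline{\zeta_j}\geqslant \chi'(\eta)\sum_{i=1}^{n}|\zeta_i|^{2}
\]
for every $(\zeta_1,\cdots,\zeta_n)\in\mathbb{C}^n$. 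Hence Condition~\ref{230423ass4}, i.e. \eqref{230419e12}, will follow as soon as $\chi'(\eta)\geqslant 2\sum_{i=1}^{\infty}|\partial_i\psi|^{2}+2e^{\psi}$ on $V$. Because $\psi\in C^{\infty}_{F}(V)$, the function $2\sum_{i=1}^{\infty}|\partial_i\psi|^{2}+2e^{\psi}$ is locally bounded on $V$, so by Lemma~\ref{majority funtion} it is majorised by an increasing function of $\eta$; this is the first demand on $\chi'$. Moreover $\varphi=\chi(\eta)\in C^{2}_{F}(V)$, since $\eta\in C^{\infty}_{F}(V)$ is bounded on every $E\stackrel{\circ}{\subset}V$, so the weights \eqref{weight function} are legitimate.

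The second demand is integrability. Writing $V_j=\{\eta\leqslant j\}$ and using that $f\in L^{2}_{(s,t+1)}(V,loc)$ with $\psi$ locally bounded, the quantities $b_j\triangleq \big(\sup_{V_{j+1}}e^{\psi}\big)\sum^{\prime}_{|I|=s,|J|=t+1}c_{I,J}\int_{V_{j+1}}|f_{I,J}|^{2}\,\mathrm{d}P$ are finite. On $V_{j+1}\setminus V_j$ one has $\eta>j$, hence $e^{-\varphi}=e^{-\chi(\eta)}\leqslant e^{-\chi(j)}$ for increasing $\chi$, giving $\|f\|^{2}_{L^{2}_{(s,t+1)}(V,w_2)}=\sum^{\prime}_{I,J}c_{I,J}\int_V|f_{I,J}|^{2}e^{\psi-\chi(\eta)}\,\mathrm{d}P\leqslant \sum_{j=0}^{\infty}e^{-\chi(j)}b_j$, which is finite as soon as $\chi(j)\geqslant \ln b_j+2\ln(j+1)$. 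I would then let $g_0$ be any increasing nonnegative function on $[0,+\infty)$ dominating both demands, apply Lemma~\ref{smooth convext increasing function} to obtain a convex, increasing, real-analytic $g$ with $g''\geqslant g'\geqslant g\geqslant g_0$, and set $\chi=g$: then $g'\geqslant g_0$ secures the curvature inequality while $g\geqslant g_0$ secures integrability. With this $\varphi$ one has $f\in L^{2}_{(s,t+1)}(V,w_2)$ and, since $\overline{\partial}f=0\in L^{2}_{(s,t+2)}(V,w_3)$, also $f\in N_S$. Proposition~\ref{general Solving d-bar on V} then yields $u\in D_T\subset L^{2}_{(s,t)}(V,w_1)$ with $Tu=\overline{\partial}u=f$; as $w_1$ is locally bounded, $L^{2}_{(s,t)}(V,w_1)\subset L^{2}_{(s,t)}(V,loc)$, so $u$ is the desired solution.

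The main obstacle is precisely this weight construction: reconciling, within a single $\chi$, the lower bound that forces plurisubharmonicity (Condition~\ref{230423ass4}) with the fast growth that forces global integrability of a datum which is only locally $L^{2}$ — this is exactly where the hierarchy $g''\geqslant g'\geqslant g\geqslant g_0$ from Lemma~\ref{smooth convext increasing function} is indispensable. Two infinite-dimensional points require care and should be recorded explicitly: verifying that the chain rule for $\partial_i\overline{\partial_j}(\chi(\eta))$ and the membership $\chi(\eta)\in C^{2}_{F}(V)$ hold in the partial-derivative framework of Sections~\ref{secx2}--\ref{secx3} (rather than in the Fr\'echet calculus), and checking that the weak $\overline{\partial}$-closedness hypothesis on $f$ coincides with $f\in D_S$, $Sf=0$ once $f$ is placed in the weighted space. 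Both are routine given the established machinery, but are what make the passage from the weighted estimate to the $L^{2}_{loc}$ solvability legitimate.
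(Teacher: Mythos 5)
Your proposal is correct and takes essentially the same route as the paper's own proof: both normalise $\eta$ via Remark \ref{230422r1}, build the weight as $\varphi=g(\eta)$ with $g$ supplied by Lemma \ref{smooth convext increasing function} applied to an increasing $g_0$ that simultaneously encodes the curvature demand of Condition \ref{230423ass4} (through $g'\geqslant g_0$) and the growth along the exhaustion needed to force $f\in L^{2}_{(s,t+1)}(V,w_2)$ (through $g\geqslant g_0$), and then conclude via Proposition \ref{general Solving d-bar on V} and local boundedness of $w_1$. The only difference is bookkeeping: you define the $b_j$ directly as the shell integrals (weighted by $\sup_{V_{j+1}}e^{\psi}$) and demand $\chi(j)\geqslant \ln b_j+2\ln(j+1)$, whereas the paper first picks a sequence $\{b_j\}$ making $\sum_j b_j\int_{V_{j+1}\setminus V_j}\sum' c_{I,J}|f_{I,J}|^2\,\mathrm{d}P$ converge and then forces $e^{-(\varphi-\psi)}\leqslant b_j$ on each shell — the two devices are equivalent.
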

\begin{proof}
By Remark \ref{230422r1}, we may assume that the plurisubharmonic exhaustion function $\eta$ satisfies $\eta\geqslant 0$ and (\ref{regular condition for eta}).
Since $V_{j+1}\setminus V_{j}\subset V_{j+1}\stackrel{\circ}{\subset}V$ (Recall (\ref{230117e1}) for $V_j$, where $j\in \mathbb{N}$), we have
$$
0\leqslant\int_{V_{j+1}\setminus V_{j}} \sum^{\prime }_{\left| I\right|  =s,\left| J\right|  =t+1} c_{I,J}\cdot\left| f_{I,J}\right|^{2}\,\mathrm{d}P<\infty,\quad \forall\;j\in\mathbb{N}.
$$
Choosing a sequence $\{ b_{j}\}^{\infty }_{j=1}$ of positive numbers such that
$$
\sum^{\infty }_{j=1}b_{j} \int_{V_{j+1}\setminus V_{j}} \sum^{\prime }_{\left| I\right|  =s,\left| J\right|  =t+1} c_{I,J}\cdot\left| f_{I,J}\right|^{2}\,\mathrm{d}P<\infty  .
$$
Let
$$
h\left( x\right)  \triangleq \begin{cases}\sum\limits^{j}_{i=1} \left| \ln \frac{1}{b_{i} } +\sup\limits_{V_{i+1}\setminus V_{i}} \psi \right| , &j<x\leqslant j+1,j\in\mathbb{N}, \\ 0,&0\leqslant x\leqslant 1,\end{cases}
$$
where the function $\psi\in C^{\infty}_{F}(V)$ is given between \eqref{section2} and \eqref{majority function}. Set
$$
g_0\left( x\right)  \triangleq1+h\left( x\right)  +\sup_{V_{x}} \left( 2\sum\limits^{\infty }_{i=1} \left| \partial_{i}\psi \right|^{2}  +2e^{\psi }\right),\quad\forall\; x\in[0,+\infty)   .
$$
Obviously, $g_0$ is an increasing nonnegative function on $[0,+\infty )$. By Lemma \ref{smooth convext increasing function}, there exists a convex, increasing, and real analytic function $g$ on $[0,+\infty )$ such that
$$
g^{\prime \prime} \left( x\right)  \geqslant g^{\prime } \left( x\right)  \geqslant g \left( x\right)  \geqslant g_0\left( x\right),\quad\forall\; x\in [0,+\infty).
$$
Let
\begin{equation}\label{230503e1}
\varphi   \triangleq g \left( \eta  \right).
 \end{equation}
By the properties of $g$ and $\eta$, noting (\ref{230503e1}), it is easy to see that $\varphi\in C^{2}_{F}\left( V\right)  $. Let $w_{1} =\varphi -2\psi $, $w_{2} =\varphi -\psi$ and $w_{3} =\varphi$ as that in \eqref{weight function}.
Note that
\begin{eqnarray}\label{condtion for D g eta}
g^{\prime } \left( \eta \left( \textbf{z}\right)  \right)  \geqslant g_0\left( \eta \left( \textbf{z}\right)  \right)  \geqslant \sup_{V_{\eta \left( \textbf{z}\right)  }} \left( 2\sum\limits^{\infty }_{i=1} \left| \partial_{i}\psi \right|^{2}  +2e^{\psi }\right)  \geqslant 2\sum\limits^{\infty }_{i=1} \left|\partial_{i}\psi \left( \textbf{z}\right)  \right|^{2}  +2e^{\psi \left( \textbf{z}\right)  } ,\quad\forall\;\textbf{z}\in V,
\end{eqnarray}
and
$$
g \left( \eta \left( \textbf{z}\right)  \right)  \geqslant h\left( \eta \left( \textbf{z}\right)  \right)  \geqslant \ln \frac{1}{b_{j} } +\sup_{  V_{j+1}\setminus V_{j}} \psi   \geqslant \ln \frac{1}{b_{j} } +\psi \left( \textbf{z}\right),\quad  \forall\; \textbf{z}\in V_{j+1}\setminus V_{j},
$$
and hence
$$
\sup_{ V_{j+1}\setminus V_{j}} e^{-\left( \varphi -\psi \right)  }\leqslant b_{j}.
$$
Then,
\begin{eqnarray*}
&&\sum^{\prime }_{\left| I\right|  =s}\sum^{\prime }_{\left| J\right|  =t+1}c_{I,J} \cdot\int_{V} \left| f_{I,J}\right|^{2}  e^{-w_{2}}\,\mathrm{d}P\\
&&=\sum^{\prime }_{\left| I\right|  =s}  \sum^{\prime }_{\left| J\right|  =t+1} c_{I,J} \cdot\int_{V_{1}} \left| f_{I,J}\right|^{2} \cdot e^{-\left( \varphi -\psi \right) }\,\mathrm{d}P+\sum^{\infty }_{j=1} \sum^{\prime }_{\left| I\right|  =s} \sum^{\prime }_{\left| J\right|  =t+1} c_{I,J} \cdot\int_{V_{j+1}\setminus V_{j}} \left| f_{I,J}\right|^{2}\cdot  e^{-\left( \varphi -\psi \right)  }\,\mathrm{d}P\\
&&\leqslant \left(\sup_{V_1}e^{-\left( \varphi -\psi \right) }\right)\cdot\sum^{\prime }_{\left| I\right|  =s} \sum^{\prime }_{\left| J\right|  =t+1}c_{I,J} \cdot \int_{V_{1}} \left| f_{I,J}\right|^{2}  \,\mathrm{d}P+\sum^{\infty }_{j=1} b_{j} \int_{V_{j+1}\setminus V_{j}} \sum^{\prime }_{\left| I\right|  =s,\left| J\right|  =t+1} c_{I,J} \cdot\left| f_{I,J}\right|^{2}\,\mathrm{d}P<\infty,
\end{eqnarray*}
which implies that $f\in L^{2}_{(s,t+1)}\left( V,w_{2}\right)$.

On the other hand, for each $n\in\mathbb{N}$, by \eqref{regular condition for eta} and \eqref{condtion for D g eta}, we have
$$
		\begin{aligned}
			\sum_{1\leqslant i,j\leqslant n} \left(\partial_{i} \overline{\partial_{j} }\left( g\left( \eta \right)  \right) \right)\cdot \zeta_{i}\cdot\overline{\zeta_{j}}&=g^{\prime \prime } \left( \eta \right)  \cdot\left| \sum^{n}_{i=1}\zeta_{i}\cdot \partial_{i}\eta\right|^{2}  +g^{\prime } \left( \eta \right)\cdot  \sum_{1\leqslant i,j\leqslant n}\left( \partial_{i} \overline{\partial_{j} }\eta\right)\cdot \zeta_{i}\cdot\overline{\zeta_{j}}\\
&\geqslant g^{\prime } \left( \eta \right) \cdot \sum_{1\leqslant i,j\leqslant n}\left(\partial_{i} \overline{\partial_{j} }\eta \right)\cdot\zeta_{i}\cdot\overline{\zeta_{j}}\\
&\geqslant \left( 2\sum\limits^{\infty }_{i=1} \left| \partial_{i} \psi \right|^{2}  +2e^{\psi }\right) \cdot\left(  \sum^{n}_{i=1} \left| \zeta_{i} \right|^{2}\right) ,\quad\forall\;(\zeta_{1},\cdots,\zeta_n)\in \mathbb{C}^n,
		\end{aligned}
$$
which indicates that the function $\varphi $ in (\ref{230503e1}) verifies Condition \ref{230423ass4}.
Recall Definition \ref{definition of T S} for the operators $T$ and $S$. Since $\overline{\partial}f=0$, we have $Sf=0$. Hence by Proposition \ref{general Solving d-bar on V}, there exists $u\in L^{2}_{(s,t)}\left( V,w_{1}\right)$ such that $Tu=f$ and
\begin{eqnarray}\label{L2 estimation for u}
\sqrt{c_0^{s,t}}\cdot \left| \left| u\right|  \right|_{L^{2}_{(s,t)}\left( V,w_{1}\right)  }  \leqslant \left| \left| f\right|  \right|_{L^{2}_{(s,t+1)}\left( V,w_{2}\right)  }.
\end{eqnarray}
Note that for any $r>0$,
\begin{eqnarray*}
\sum^{\prime }_{\left| I\right|  =s}\sum^{\prime }_{\left| L\right|  =t}c_{I,L} \cdot\int_{V_r} \left|u_{I,L}\right|^{2}  \,\mathrm{d}P
\!\!&\leqslant&\!\!\sum^{\prime }_{\left| I\right|  =s}\sum^{\prime }_{\left| L\right|  =t}c_{I,L} \cdot\int_{V_r} \left|u_{I,L}\right|^{2} \cdot e^{-w_{1}}\cdot e^{w_{1}}\,\mathrm{d}P\\
&\leqslant&\!\!\left(\sup_{V_r}e^{w_{1}}\right)\cdot\left(\sum^{\prime }_{\left| I\right|  =s}\sum^{\prime }_{\left| L\right|  =t}c_{I,L} \cdot\int_{V_r} \left|u_{I,L}\right|^{2} \cdot e^{-w_{1}}\,\mathrm{d}P\right)\\
&<&\!\!\infty,
\end{eqnarray*}
which implies that $u\in L^{2}_{(s,t)}\left( V_{r},P\right)$. By the conclusion (3) of Proposition \ref{properties on pseudo-convex domain} and the definition of $L^{2}_{(s,t)}\left( V,loc\right)$, we have $u\in L^{2}_{(s,t)}\left( V,loc\right)$. Recalling again the definition of the operator $T$ in Definition \ref{definition of T S}, we conclude that $\overline{\partial}u=f$. The proof of Theorem \ref{general LL} is completed.
\end{proof}

In order to extend the \emph{a priori} estimate in \eqref{L2 estimation for u}, we need the following technical lemma which is motivated by \cite[Lemma 41.5, p. 304]{Mujica}.

\begin{lemma}\label{calculus lemma}
If $0<x_1<x_2<+\infty$ and $g$ is a non-decreasing real-valued function on $[0,+\infty)$ so that $g(x)=0$ for all $x\in[0,x_2]$, then there exists $G\in C^2(\mathbb{R})$ such that $G(x)=0$ for all $x\leqslant x_1$, $G^{''}(x)\geqslant 0$ for all $x\in[0,+\infty)$, $G(x)\geqslant g(x)$ and $G'(x)\geqslant g(x)$ for all $x\in[0,+\infty)$.
\end{lemma}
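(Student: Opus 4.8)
The plan is to construct $G$ explicitly by a double integration of a suitably smoothed version of $g$, mimicking the classical trick of convolving with a mollifier and then taking antiderivatives. First I would note the key reductions. Since $g$ is non-decreasing and vanishes on $[0,x_2]$, it is bounded below by $0$ everywhere and, on each interval $[0,N]$, bounded above by the finite value $g(N)$. The difficulty is that $g$ need not be continuous, so I cannot integrate it directly and expect $C^2$ regularity; I must first produce a $C^1$ (in fact $C^\infty$) majorant of $g$ that still vanishes near $x_1$.

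The main construction proceeds in three steps. First I would fix a point $x_0$ with $x_1<x_0<x_2$ and build a non-decreasing function $\widetilde g\in C^{\infty}(\mathbb{R})$ with $\widetilde g(x)=0$ for all $x\le x_0$ and $\widetilde g(x)\ge g(x)$ for all $x\in[0,+\infty)$. This can be done by a standard mollification argument: on each dyadic-type window choose $\widetilde g$ to dominate the step values of $g$ while remaining monotone and smooth, exactly in the spirit of the construction of $\{N_k\}$ and $\{a_l\}$ in the proof of Lemma \ref{smooth convext increasing function} above. Since $g$ vanishes on $[0,x_2]$ and $x_0<x_2$, I have room to keep $\widetilde g$ flat at zero on the whole of $[0,x_0]$. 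Second, I would set
$$
G(x)\triangleq \int_{x_1}^{x}\left(\int_{x_1}^{s}\widetilde g(\tau)\,\mathrm{d}\tau\right)\mathrm{d}s,\qquad x\in\mathbb{R},
$$
so that $G'(x)=\int_{x_1}^{x}\widetilde g(\tau)\,\mathrm{d}\tau$ and $G''(x)=\widetilde g(x)$. Because $\widetilde g$ is continuous, $G\in C^2(\mathbb{R})$ and in fact $G''=\widetilde g\ge 0$ on all of $\mathbb{R}$, giving convexity. Since $\widetilde g$ vanishes on $[0,x_0]\supset[0,x_1]$, both integrals are empty for $x\le x_1$, whence $G(x)=0$ for $x\le x_1$.

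The third step is to verify the two domination inequalities $G(x)\ge g(x)$ and $G'(x)\ge g(x)$ for $x\in[0,+\infty)$, and this is where the quantitative content lies. For $x\in[0,x_2]$ both sides vanish or $g(x)=0$, so the inequalities are trivial there; the work is for large $x$. I would exploit monotonicity: for $x\ge x_2$ one has $G'(x)=\int_{x_1}^{x}\widetilde g\ge \int_{x-1}^{x}\widetilde g\ge \widetilde g(x-1)$, and by choosing $\widetilde g$ in the first step to grow fast enough (e.g. arranging $\widetilde g(x-1)\ge g(x)$, which costs nothing since I control the smoothing window and can always overshoot $g$), the estimate $G'(x)\ge g(x)$ follows; a similar one-step-longer comparison handles $G(x)\ge g(x)$. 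The expected main obstacle is precisely the bookkeeping in Step 1: ensuring simultaneously that $\widetilde g$ is smooth, non-decreasing, flat on $[0,x_0]$, and large enough that after the shift-by-one losses in the integration it still dominates $g$ at every point. This is a routine but delicate iterative choice, entirely analogous to the inductive selection of $N_k$ carried out in Lemma \ref{smooth convext increasing function}, and I would simply reuse that template rather than reprove monotone smoothing from scratch.
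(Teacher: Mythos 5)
Your overall strategy (produce a smooth non-decreasing majorant and integrate it) is viable and is in fact the same strategy as the paper's, but the quantitative step you lean on in Step 3 fails, and it fails precisely in a regime the lemma must cover. You require $\widetilde g$ to vanish on $[0,x_0]$ with $x_1<x_0<x_2$, and then obtain domination from $G'(x)\ge\widetilde g(x-1)\ge g(x)$. When $x_2<x_0+1$ (e.g.\ $x_1=\tfrac12$, $x_2=\tfrac34$, allowed by the hypotheses), every $x\in[x_2,\,x_0+1)$ has $x-1<x_0$, hence $\widetilde g(x-1)=0$ by your own normalization, and the bound degenerates to $G'(x)\ge 0$. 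Worse, no admissible $\widetilde g$ can satisfy the condition you claim ``costs nothing'': with $g=\chi_{(x_2,\infty)}$, the requirement $\widetilde g(x-1)\ge g(x)=1$ for $x\in(x_2,\,x_0+1)$ contradicts $\widetilde g(x-1)=0$, which is forced. The missing domination on $(x_2,\,x_0+1)$ cannot come from the growth of $\widetilde g$ at infinity; it must come from the mass of $\widetilde g$ on the fixed short interval $(x_0,x_2)$, i.e.\ one needs $\int_{x_0}^{x_2}\widetilde g\ \ge\ \lim_{x\downarrow x_2}g(x)$, and the same issue hits $G\ge g$ harder still, since continuity forces $G(x_2)=\int_{x_1}^{x_2}\!\int_{x_1}^{s}\widetilde g\,\mathrm{d}\tau\,\mathrm{d}s\ \ge\ \lim_{x\downarrow x_2}g(x)$, a condition on the double integral over $(x_0,x_2)$ that your shift argument never produces.

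The repair needs an extra idea, not just the ``routine bookkeeping'' you defer to: either replace the unit shift by some $\epsilon<x_2-x_0$ and demand $\epsilon\,\widetilde g(y)\ge g(y+\epsilon)$ for $y\ge x_2-\epsilon$ (now consistent with vanishing on $[0,x_0]$), together with enough mass of $\widetilde g$ and of its first integral on $(x_0,x_2)$ to cover $x$ near $x_2$; or argue as the paper does, which avoids shifts entirely. The paper builds a $C^2$ staircase $\Phi=\sum_{j\ge 3}(\lambda_{j+1}-\lambda_j)\varphi_j$ with $\lambda_j=\sup_{[0,r_j]}g$ and transition nodes $r_2=\tfrac{x_1+x_2}{2}$, $r_3=x_2$, so that $\Phi$ has already climbed to the level $\lambda_4\ge\sup_{[0,r_4]}g$ by the time $x$ reaches $x_2$; it then takes $G'$ to be a second staircase $h$ majorizing $\max\{\Phi',g\}$ and integrates once, getting $G'=h\ge g$ directly and $G\ge\int_0^{x}\Phi'=\Phi\ge g$ with no second integration and no comparison across a shifted window. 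Anchoring the rise of the majorant inside $(x_1,x_2]$ is exactly the point your argument misses.
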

\begin{proof}
 Choose $r_0\triangleq0<r_1\triangleq x_1<r_2\triangleq\frac{x_1+x_2}{2}<r_3\triangleq x_2<\cdots<r_j<r_{j+1}<\cdots$ for $j\in\{4,5,\cdots,\}$ so that $\lim\limits_{j\to\infty}r_j=\infty$. For each $j\in\mathbb{N}$, let $\lambda_j\triangleq \sup\limits_{x\in[0,r_j]}g(x)$. Meanwhile, we choose $\{\varphi_j\}_{j=1}^{\infty}\subset C^2(\mathbb{R})$ such that $0\leqslant\varphi_j\leqslant 1$, $\varphi_j'(x)\geqslant 0$ for all $x\in\mathbb{R}$, $\varphi_j(x)= 0$ for all $x \leqslant r_{j-1}$, while  $\varphi_j(x)= 1$ for all $x \geqslant r_{j}$.  Then $\lambda_1=\lambda_2=\lambda_3=0$. Write
\begin{eqnarray*}
\Phi \triangleq \lambda_1+\sum_{j=1}^{\infty}(\lambda_{j+1}-\lambda_j)\varphi_j=\sum_{j=3}^{\infty}(\lambda_{j+1}-\lambda_j)\varphi_j.
\end{eqnarray*}
Then $\Phi(x)=0$ for all $x\leqslant r_2$. If $x\in[r_j,r_{j+1}]$ for some $j\in\mathbb{N}$, then $\varphi_1(x)=\cdots=\varphi_j(x)=1$, $0=\varphi_{j+2}(x)=\cdots $, and hence
\begin{eqnarray*}
\Phi(x)=\lambda_1+\sum_{k=1}^{j+1}(\lambda_{k+1}-\lambda_k)\varphi_k(x)\geqslant \lambda_1+\sum_{k=1}^{j}(\lambda_{k+1}-\lambda_k)=\lambda_{j+1}\geqslant g(x).
\end{eqnarray*}
Obviously, $\Phi\in C^2(\mathbb{R})$, $\Phi'(x)\geqslant 0$ and $\Phi(x)\geqslant g(x)$ for all $x\in[0,+\infty)$. Set $d_j\triangleq \sup\limits_{[0,r_j]}\max\{\Phi',g\}$ for each $j\in\mathbb{N}$ and
\begin{eqnarray*}
h\triangleq d_1+\sum_{j=1}^{\infty}(d_{j+1}-d_j)\varphi_j .
\end{eqnarray*}
Similarly to the above, we can show that $h\in C^2(\mathbb{R})$, $h(x)\geqslant \max\{\Phi'(x),g(x)\}$ and $h'(x)\geqslant 0$ for all $x\in[0,+\infty)$, $d_1=d_2=0$, and $h(x)=0$ for all $x\leqslant x_1$. Let
\begin{eqnarray*}
G(x)\triangleq \int_{-\infty}^{x}h(\tau)\,\mathrm{d}\tau,\quad \forall\; x\in \mathbb{R}.
\end{eqnarray*}
Then, $G\in C^2(\mathbb{R})$, $G(x)\geqslant \int_0^x \Phi'(\tau)\,\mathrm{d}\tau=\Phi(x)\geqslant g(x)$, $G'(x)=h(x)\geqslant g(x),\,G^{''}(x)=h'(x)\geqslant 0$ for all $x\in[0,+\infty)$ and $G(x)=0$ for all $x\leqslant x_1$. The proof of Lemma \ref{calculus lemma} is completed.
\end{proof}
The following theorem is an infinite-dimensional version of \cite[Lemma 4.4.1, p. 92]{Hor90} (Recall Condition \ref{230423ass2} for $c_0^{s,t}$).
\begin{theorem}\label{general L}
Suppose that Conditions \ref{230424c1}, \ref{230423ass1}, \ref{230423ass2} and \ref{230423ass3} hold, and the real-valued function $\varphi$ in \eqref{weight function} is chosen so that for each $n\in\mathbb{N}$,
$$
\sum_{1\leqslant i,j\leqslant n}\left( \partial_{i} \overline{\partial_{j} } \varphi\left(\textbf{z}\right) \right)\cdot\zeta_{i}\cdot\overline{\zeta_{j}} \geqslant c \cdot\sum_{1\leqslant i\leqslant n} \left| \zeta_{i}\right|^{2},\quad\forall\; (\textbf{z},\zeta_{1},\cdots,\zeta_n)\in V\times\mathbb{C}^n,
$$
where $c$ is a positive, continuous function on $V$ such that $\sup\limits_{ E} c  <\infty$ for all $E\stackrel{\circ}{\subset} V $. Then, for any $f=\sum\limits^{\prime }_{\left| I\right|  =s,\left| J\right|  =t+1 } f_{I,J}dz_{I}\wedge d\overline{z_{J}}\in L^{2}_{(s,t+1)}\left( V,\varphi \right)  $ with $\overline{\partial}f=0$ and $\sum^{\prime }\limits_{\left| I\right|  =s}\sum^{\prime }\limits_{\left| J\right|  =t+1} c_{I,J}\int_{V} \frac{\left| f_{I,J}\right|^{2}  }{c} \cdot e^{-\varphi }\,\mathrm{d}P<\infty$, there exists $u\in L^{2}_{(s,t)}\left( V,\varphi \right)$ such that $\overline{\partial}u=f$  and
\begin{equation}\label{230423e1}
\sum^{\prime }_{\left| I\right|  =s}\sum^{\prime }_{\left| L\right|  =t} c_{I,L}\int_{V} \left| u_{I,L}\right|^{2}\cdot  e^{-\varphi }\,\mathrm{d}P\leqslant \frac{ 2\sum\limits^{\prime }_{\left| I\right|  =s,\left| J\right|  =t+1} c_{I,J}\int_{V} \frac{\left| f_{I,J}\right|^{2}  }{c} \cdot e^{-\varphi }\,\mathrm{d}P }{c_0^{s,t}\cdot(t+1)}.
\end{equation}
\end{theorem}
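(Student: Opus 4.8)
The plan is to transplant H\"ormander's solvability-with-estimate argument into the staggered three-weight framework of this paper, with the sharp factor $1/c$ recovered through an exhaustion. First I would normalize via Remark \ref{230422r1}, so that $\eta\geq 0$ and $\sum_{1\leqslant i,j\leqslant n}(\partial_i\overline{\partial_j}\eta)\zeta_i\overline{\zeta_j}\geqslant\sum_i|\zeta_i|^2$. The obstruction to invoking Theorem \ref{general key inequality} directly is that the given $\varphi$ need not satisfy Condition \ref{230423ass4}: its complex Hessian is bounded below only by the (possibly small) function $c$, whereas the cut-off/commutator terms produced by the infinite-dimensional approximation cost a factor governed by $\psi$ (recall $\psi$ from \eqref{majority function} and the weights \eqref{weight function}). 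I would therefore correct $\varphi$ by a convex function of $\eta$: for each $\nu\in\mathbb{N}$, using Lemma \ref{calculus lemma} (with the corrector made to degenerate on the exhausting core $V_\nu$) applied to a nondecreasing majorant along the sublevel sets of the locally bounded function $z\mapsto c(e^{\psi}-1)+2\sum_i|\partial_i\psi|^2$, I would build $G_\nu\in C^2$ with $G_\nu''\geqslant 0$ and set $\varphi_\nu\triangleq\varphi+G_\nu(\eta)$, $w_1^\nu\triangleq\varphi_\nu-2\psi$, $w_2^\nu\triangleq\varphi_\nu-\psi$, $w_3^\nu\triangleq\varphi_\nu$. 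Because $G_\nu(\eta)$ contributes the nonnegative curvature $G_\nu''(\eta)\,|\sum_i\zeta_i\partial_i\eta|^2+G_\nu'(\eta)\sum_{i,j}(\partial_i\overline{\partial_j}\eta)\zeta_i\overline{\zeta_j}$, the modified weight keeps the lower bound $c$ while gaining the extra term $G_\nu'(\eta)$ needed to absorb the $\psi$-loss.

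The analytic heart is a sharpened version of Lemma \ref{general estimation similar to Lemma 4.2.1}. Instead of invoking Condition \ref{230423ass4}, I would re-run the Bochner-type identity of that proof keeping the Hessian pointwise: for a test form $h$ (finite in $\max\{I\cup J\}$, with $C^2_{0,F}(V)$ coefficients) the same integration by parts (Lemma \ref{integration by Parts}) and the commutator formula \eqref{commutaor formula} give
$$
2\|T^{*}h\|^2_{w_1^\nu}+\|Sh\|^2_{w_3^\nu}\geqslant\int_V\Big(c+G_\nu'(\eta)-2\sum_i|\partial_i\psi|^2+\tfrac{1}{2a_i^2}\Big)\sum^{\prime}_{|I|=s,|L|=t}\sum_i\tfrac{|c_{I,iL}|^2}{c_{I,L}}|h_{I,iL}|^2\,e^{-\varphi_\nu}\,\mathrm{d}P.
$$
With $G_\nu$ chosen so that $G_\nu'(\eta)-2\sum_i|\partial_i\psi|^2\geqslant c(e^{\psi}-1)$, the coefficient dominates $c\,e^{\psi}$, and since $e^{\psi}e^{-\varphi_\nu}=e^{-w_2^\nu}$, Condition \ref{230423ass2} (for $c_0^{s,t}$) together with the identity \eqref{230414e3} yields
$$
2\|T^{*}h\|^2_{w_1^\nu}+\|Sh\|^2_{w_3^\nu}\geqslant c_0^{s,t}(t+1)\sum^{\prime}_{|I|=s,|J|=t+1}c_{I,J}\int_V c\,|h_{I,J}|^2\,e^{-w_2^\nu}\,\mathrm{d}P.
$$
I would then extend this from test forms to all $h\in D_S\cap D_{T^{*}}$ with $\supp h\stackrel{\circ}{\subset}V$ by the two-step approximation already established (Proposition \ref{general cut-off density3} followed by Theorem \ref{general density4}), exactly as these were used to pass from Lemma \ref{general estimation similar to Lemma 4.2.1} to Theorem \ref{general key inequality}.

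With this estimate in hand, solvability for the fixed weight $\varphi_\nu$ is the abstract H\"ormander argument (the cited \cite[Corollary 2.1]{YZ}), now tracking $c$. Given $f$ with $\overline{\partial}f=0$, hence $f\in N_S$, and $\sum'c_{I,J}\int_V\frac{|f_{I,J}|^2}{c}e^{-\varphi}\,\mathrm{d}P<\infty$, I would split any $g\in D_{T^{*}}\cap D_S$ as $g=g_1+g_2$ in $L^2_{(s,t+1)}(V,w_2^\nu)$ with $g_1\in N_S$ and $g_2\perp N_S$. By Proposition \ref{Pre-exactness}, $N_S^{\perp}\subset\ker T^{*}$, so $T^{*}g=T^{*}g_1$ and $Sg_1=0$, while $(f,g_2)_{w_2^\nu}=0$; applying Cauchy--Schwarz through the splitting $f_{I,J}=\tfrac{1}{\sqrt{c}}\cdot\sqrt{c}\,f_{I,J}$ and then the sharpened estimate to $g_1$ gives
$$
|(f,g)_{w_2^\nu}|^2\leqslant\Big(\sum^{\prime}c_{I,J}\int_V\tfrac{|f_{I,J}|^2}{c}e^{-w_2^\nu}\,\mathrm{d}P\Big)\cdot\frac{2\,\|T^{*}g\|^2_{w_1^\nu}}{c_0^{s,t}(t+1)}.
$$
The Riesz representation together with the closedness of $T$ then produces $u_\nu\in D_T$ with $\overline{\partial}u_\nu=f$ and $\|u_\nu\|^2_{w_1^\nu}\leqslant\frac{2}{c_0^{s,t}(t+1)}\sum'c_{I,J}\int_V\frac{|f_{I,J}|^2}{c}e^{-w_2^\nu}\,\mathrm{d}P$. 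Finally I would pass to the limit $\nu\to\infty$: arranging in addition $G_\nu(\eta)\geqslant\psi$ on $\supp f$ makes every right-hand side $\leqslant M\triangleq\frac{2}{c_0^{s,t}(t+1)}\sum'c_{I,J}\int_V\frac{|f_{I,J}|^2}{c}e^{-\varphi}\,\mathrm{d}P$, while on the core $V_\nu$, where $\varphi_\nu=\varphi$ and $w_1^\nu=\varphi-2\psi\leqslant\varphi$, one gets $\int_{V_\nu}\sum'c_{I,L}|(u_\nu)_{I,L}|^2e^{-\varphi}\,\mathrm{d}P\leqslant\|u_\nu\|^2_{w_1^\nu}\leqslant M$; a diagonal weak-compactness extraction over the sublevel sets $V_R$ (each $L^2_{(s,t)}(V_R,e^{-\varphi}P)$ being Hilbert) yields a limit $u$ satisfying \eqref{230423e1} by weak lower semicontinuity, with $\overline{\partial}u=f$ obtained by passing $(u_\nu,T^{*}\phi)=(f,\phi)$ to the weak limit and $u\in L^2_{(s,t)}(V,loc)$ via Proposition \ref{properties on pseudo-convex domain}.

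The main obstacle I expect is the simultaneous calibration of the convex corrector $G_\nu$: it must be steep enough (large $G_\nu'$, everywhere) that the sharpened a priori estimate keeps a clean pointwise factor $c$ after the $\psi$-commutator loss is absorbed and the weight is converted from $e^{-\varphi_\nu}$ to $e^{-w_2^\nu}$, yet it must degenerate on the exhausting cores so that the weight comparison $w_1^\nu\leqslant\varphi$ there keeps the constant in \eqref{230423e1} sharp in the limit. Reconciling these competing demands is exactly what the vanishing-on-$[0,x_1]$ slack of Lemma \ref{calculus lemma} (with $x_1<x_2$) is designed to buy, and carrying out the careful bookkeeping of the factors $c_0^{s,t}$, $(t+1)$ and the constant $2$ through the Bochner identity and the weight conversions is where the real work lies; once the uniform local bounds are secured, the weak-limit step is routine.
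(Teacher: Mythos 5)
Your proposal follows the paper's strategy in outline: normalize $\eta$ via Remark \ref{230422r1}, add a convex corrector built from Lemma \ref{calculus lemma} that vanishes on an exhausting core, re-run the Bochner-type inequality \eqref{general inequality for smooth functions} (which, as you correctly observe, does not use Condition \ref{230423ass4}) to keep the pointwise factor $c$, extend it by Proposition \ref{general cut-off density3} and Theorem \ref{general density4}, solve by the duality/Riesz argument, and finish with a weak-compactness exhaustion. However, there is a genuine gap at the calibration step, and it is not a bookkeeping issue: you keep the \emph{global} majorant $\psi$ of \eqref{majority function} inside your weights $w_1^{\nu}=\varphi+G_{\nu}(\eta)-2\psi$ and $w_2^{\nu}=\varphi+G_{\nu}(\eta)-\psi$, whereas the paper's proof hinges on replacing $\psi$ itself by a localized version that vanishes on the core.

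Concretely, two steps fail on the core $V_{\nu}$, where $G_{\nu}(\eta)=0$ and $G_{\nu}'(\eta)=0$ but $\psi$ and its derivatives need not vanish (by Lemma \ref{majority funtion}, $\psi$ is an increasing function of $\eta$ with $\psi\geqslant 0$, and generically $\psi>0$). First, your sharpened a priori estimate needs the pointwise inequality $c+G_{\nu}'(\eta)-2\sum_{i}|\partial_{i}\psi|^{2}\geqslant c\,e^{\psi}$; on $V_{\nu}$ this reads $c\,(1-e^{\psi})\geqslant 2\sum_{i}|\partial_{i}\psi|^{2}$, which forces $\psi\equiv 0$ and $\partial_{i}\psi\equiv 0$ there and is generally false (relatedly, Lemma \ref{calculus lemma} requires its input to vanish on an initial interval $[0,x_2]$, which your majorant of $c\,(e^{\psi}-1)+2\sum_{i}|\partial_{i}\psi|^{2}$ does not). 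Second, even granting that estimate, your Riesz bound gives $\left|\left|u_{\nu}\right|\right|^{2}_{w_1^{\nu}}\leqslant\frac{2}{c_0^{s,t}(t+1)}\sum^{\prime}_{\left|I\right|=s,\left|J\right|=t+1}c_{I,J}\int_{V}\frac{\left|f_{I,J}\right|^{2}}{c}\,e^{-w_2^{\nu}}\,\mathrm{d}P$, and on $V_{\nu}$ one has $e^{-w_2^{\nu}}=e^{-\varphi}e^{\psi}$, so the right-hand side is only bounded by $e^{\sup_{V_{\nu}}\psi}$ times your $M$, a factor that blows up as $\nu\to\infty$; your attempted repair ``$G_{\nu}(\eta)\geqslant\psi$ on $\supp f$'' is incompatible with $G_{\nu}=0$ on $V_{\nu}$, since $\supp f$ meets the core for a general $f\in L^{2}_{(s,t+1)}(V,\varphi)$. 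The paper resolves both failures at once by localizing $\psi$ rather than only the corrector: it sets $\psi_{a}=\varphi_{a}(\eta)\cdot\Psi$, which vanishes on $V_{a_1}$ yet still satisfies \eqref{majority function} for every cut-off $X_{k}$ with $k>a_2$ (because $\overline{\partial_{j}}X_{k}\equiv 0$ on $\{\eta<a_2\}$), so the whole $T$--$S$ machinery applies to the weights built from $\varphi+H(\eta)$ and $\psi_{a}$; then the input $h(x)=2\sup_{V_{x}}\psi_{a}+2\sup_{V_{x}}\sum_{i}|\partial_{i}\psi_{a}|^{2}$ does vanish near $0$ so Lemma \ref{calculus lemma} applies, all commutator losses vanish identically on the core, and $H(\eta)\geqslant 2\psi_{a}$ yields $\widetilde{w_2}\geqslant\varphi$ everywhere with equality on $V_{a}$, which is exactly what makes the right-hand side uniformly bounded by $M$ and the left-hand side comparable to $\int_{V_{a}}\left|u_{a}\right|^{2}e^{-\varphi}\,\mathrm{d}P$. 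Your argument goes through essentially verbatim once this replacement is made; without it, both the a priori estimate and the uniform bound needed for the weak limit fail on the cores, and the sharp constant in \eqref{230423e1} cannot be recovered.
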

\begin{proof}
As in the proof of Theorem \ref{general LL}, we may assume that the plurisubharmonic exhaustion function $\eta$ satisfies $\eta\geqslant 0$ and (\ref{regular condition for eta}).

Similarly to the construction of $h_{\rho}(\cdot)$ at the beginning of the proof of Theorem \ref{general density4}, one can find a sequence of functions $\{h_k\}_{k=1}^{\infty}\subset C^{\infty }\left( \mathbb{R}\right)$ such that for each $k\in\mathbb{N}$, $0\leqslant h_k\leqslant 1$, $h_{k} \left( x\right)  =1$ for $ x<k $ and $h_{k } \left( x\right)  =0$ for $ x>k+1$, and $\left| h^{\prime }_{k} \right|  \leqslant C$ for a positive number $C$ which is independent of $k$.

Let $X_{k}\triangleq h_{k}(\eta)$ for $k\in\mathbb{N}$. Then $X_{k}\in C^{\infty}_{0,F}\left( V\right)$, and
$$
\sup_{k\in\mathbb{N}} \;\ln \bigg(1+\sum^{\infty }_{j=1} \left| \overline{\partial_{j} } X_{k }\right|^{2} \bigg) =\sup_{k\in\mathbb{N}} \; \ln \left(1+|h_{k}^{'} (\eta)|^2\cdot\left(\sum^{\infty }_{j=1} \left| \overline{\partial_{j} } \eta\right|^{2}\right)\right)
$$
is a locally bounded function on $V$. By Lemma \ref{majority funtion}, there exists $\Psi \in C^{\infty }_{F}\left( V\right)$ such that
$$
\ln \left(1+\sum\limits^{\infty }_{j=1} \left| \overline{\partial_{j} } X_{k }\right|^{2} \right)\leqslant \Psi,\quad\forall\; k\in\mathbb{N}
$$
and hence $\sum\limits^{\infty }_{j=1} \left| \overline{\partial_{j} } X_{k}\right|^{2}  \leqslant e^{\Psi }$ for all $k\in\mathbb{N}$.

For each $a>0$, choose $a_2>a_1>a$ and $\varphi_{a}\in C^{\infty }\left( \mathbb{R} \right)$ such that  $0\leqslant \varphi_{a}\leqslant 1 $, $\varphi_{a} \left( x\right)  =0$ for $x<a_1$ and $\varphi_{a} \left( x\right)  =1$ for $x>a_2$. Let $\eta_{a}  \triangleq\varphi_{a} \left( \eta    \right)$ and $\psi_a\triangleq \eta_{a} \cdot \Psi $. It is easy to see that $\psi_a\in C^{2}_{F}\left( V\right)$. For every $k >a_2$, we have $X_{k}\left( \textbf{z}\right)  =1$ for all $\textbf{z}\in V_{a_2}$ and $\psi_a \left( \textbf{z}\right)  =0$ for all $\textbf{z}\in V_{a_1}$. When $\eta(\textbf{z})\geqslant a_2$, we have $\eta_a(\textbf{z})=1$ and hence $e^{\psi_a (\textbf{z})}=e^{\Psi (\textbf{z})}\geqslant\sum\limits^{\infty }_{j=1} \left| \overline{\partial_{j} } X_{k }(\textbf{z})\right|^{2}$. While when $\eta(\textbf{z})< a_2$, we have $e^{\psi_a (\textbf{z})}\geqslant 0=\sum\limits^{\infty }_{j=1} \left| \overline{\partial_{j} } X_{k }(\textbf{z})\right|^{2}$. Hence, for each $k >a_2$,
$$e^{\psi_a (\textbf{z})}\geqslant\sum\limits^{\infty }_{j=1} \left| \overline{\partial_{j} } X_{k }(\textbf{z})\right|^{2},\quad \forall\;\textbf{z}\in V,
$$
and $\psi_a$ has the same property as $\psi$ in \eqref{majority function}.

Let
$$
h\left( x\right)  \triangleq 2\sup_{V_{x}} \psi_a +2\sup_{V_{x}}\left( \sum^{\infty }_{i=1} \left| \partial_{i}\psi_a \right|^{2}\right),\quad x\in[0,+\infty).
$$
Then $h\left( x\right)  =0$ for all $0\leqslant x\leqslant a_1$ and $h$ is a non-decreasing nonnegative function on $[0,+\infty )$. 
By Lemma \ref{calculus lemma}, there exists $H\in C^2(\mathbb{R})$ such that $H(x)=0$ for all $x\leqslant a$,
$H^{''}(x)\geqslant 0$ for all $x\in[0,+\infty)$, and $H(x)\geqslant h(x)$ and $H'(x)\geqslant h(x)$ for all $x\in[0,+\infty)$.
Note that for each $\textbf{z}\in V$, we have
\begin{eqnarray*}
H \left( \eta \left( \textbf{z}\right)  \right)  \geqslant 2\sup_{V_{\eta \left( \textbf{z}\right)  }} \psi_a   \geqslant 2\psi_a \left(  \textbf{z} \right),\quad H^{\prime } \left( \eta \left( \textbf{z}\right)  \right)  \geqslant 2\sup_{V_{\eta \left( \textbf{z}\right)  }} \left(\sum^{\infty }_{i=1} \left| \partial_{i}\psi_a \right|^{2}\right)  \geqslant 2\sum^{\infty }_{i=1} \left| \partial_{i}\psi_a \left( \textbf{z}\right)  \right|^{2},
\end{eqnarray*}
and hence, by (\ref{regular condition for eta}), for each $n\in\mathbb{N}$, the following inequality holds on $V$,
$$
H^{\prime } \left( \eta   \right) \cdot \sum_{1\leqslant i,j\leqslant n}\left( \partial_{i}\overline{\partial_{j}}\eta\right)\cdot \zeta_{i}\cdot\overline{\zeta_{j}}\geqslant 2\left(\sum^{\infty }_{i=1} \left| \partial_{i}\psi_a  \right|^{2} \right)\cdot\left( \sum^{n}_{i=1} \left| \zeta_{i}\right|^{2}\right),\quad \forall\;(\zeta_1,\cdots,\zeta_n)\in\mathbb{C}^n.
$$
Then $H(\eta)\in C^2_F(V)$ and $\varphi +H\left( \eta \right)$ enjoys the same property as $\varphi$. Moreover, for each $n\in\mathbb{N}$, the following inequality holds on $V$,
\begin{equation}\label{230424e1}
\begin{aligned}
&\sum_{1\leqslant i,j\leqslant n} \left(\partial_{i} \overline{\partial_{j} } (\varphi+H\left( \eta \right)) \right)\cdot\zeta_{i}\cdot\overline{\zeta_{j}}\\
&=\sum_{1\leqslant i,j\leqslant n} \left(\partial_{i} \overline{\partial_{j} } \varphi\right)\cdot\zeta_{i}\cdot\overline{\zeta_{j}}
+H^{\prime \prime }\left( \eta \right) \cdot\sum_{1\leqslant i,j\leqslant n}\partial_{i} \eta \cdot\overline{\partial_{j} } \eta\cdot \zeta_{i}\cdot\overline{\zeta_{j}}
+H^{\prime }\left( \eta\right) \cdot \sum_{1\leqslant i,j\leqslant n} \left(\partial_{i}\overline{\partial_{j}}\eta \right)\cdot\zeta_{i}\cdot\overline{\zeta_{j}}\\
&\geqslant c\cdot\sum^{n}_{i=1} \left| \zeta_{i}\right|^{2}+ 2\cdot\left(\sum^{\infty }_{i=1} \left| \partial_{i}\psi_a \right|^{2}  \right)\cdot\left(\sum^{n}_{i=1} \left| \zeta_{i}\right|^{2}\right),\quad \forall\;(\zeta_1,\cdots,\zeta_n)\in\mathbb{C}^n.
\end{aligned}
\end{equation}

Let
$$
\widetilde{w_{1}}\triangleq\varphi +H\left( \eta \right)  -2\psi_a,\quad\widetilde{w_{2}}\triangleq\varphi +H\left( \eta \right)  -\psi_a,\quad\widetilde{w_{3}}\triangleq\varphi +H\left( \eta \right).
$$
Suppose that $\widetilde{T}$ and $\widetilde{S}$ are the corresponding operators in Definition \ref{definition of T S} with $w_1, w_2$ and $w_3$ replaced respectively by $\widetilde{w_1},\widetilde{w_2}$ and $\widetilde{w_3}$. Then all properties for $T$ and $S$ also hold for $\widetilde{T}$ and $\widetilde{S}$. Now, in view of \eqref{general inequality for smooth functions} (and noting that the condition (\ref{230419e12}) is NOT used in the proof of \eqref{general inequality for smooth functions}) and using (\ref{230424e1}), we obtain that
\begin{equation}\label{general formula 14}
c_0^{s,t}\cdot(t+1)\cdot\sum^{\prime }_{\left| I\right|  =s,\left| J\right|  =t+1} c_{I,J}\int_{V} c\cdot\left| g_{I,J}\right|^{2} \cdot e^{-\left( \varphi +H\left( \eta \right)  \right)  }dP\leqslant 2\left| \left| \widetilde{T}^{\ast }g\right|  \right|^{2}_{L^{2}_{\left( s,t\right)  }\left( V,\widetilde{w_{1}}\right)  }  +\left| \left| \widetilde{S}g\right|  \right|^{2}_{L^{2}_{\left( s,t+2\right)  }\left( V,\widetilde{w_{3}}\right)  }
\end{equation}
for every $g=\sum\limits^{\prime }_{\left| I\right|  =s,\left| J\right|  =t+1 }g_{I,J}dz_{I}\wedge d\overline{z_{J}}\in D_{\widetilde{T}^{\ast}}\bigcap D_{\widetilde{S}} $ satisfying that there exists $k\in \mathbb{N}$ such that $g_{I,J}=0$ for all strictly increasing multi-indices $I$ and $J$ with $\left| I\right|  =s,\left| J\right|  =t+1$ and $\max \left\{ I\bigcup J\right\}  >k$, and $g_{I,J}\in C^{2}_{0,F}\left( V\right)$ for all strictly increasing multi-indices $I$ and $J$ with $\left| I\right|  =s,\left| J\right|  =t+1$ and $\max \left\{ I\bigcup J\right\} \leqslant k$. By Proposition \ref{general cut-off density3} and Theorem \ref{general density4}, for every $g\in D_{\widetilde{S}}\cap D_{\widetilde{T}^{\ast }}$, there exists
$$
\{g_{n}\}_{n=1}^{\infty}\subset D_{\widetilde{T}^{\ast }}\cap D_{\widetilde{S}}
$$
satisfying that, for each $n\in\mathbb{N}$, there exists $k_{n}\in \mathbb{N}$ such that $(g_{n})_{I,J}=0$ for all strictly increasing multi-indices $I$ and $J$ with $\left| I\right|  =s,\left| J\right|  =t+1$ and $\max \left\{ I\bigcup J\right\}  >k_{n}$, $(g_{n})_{I,J}\in C^{2}_{0,F}\left( V\right)$ for all strictly increasing multi-indices $I$ and $J$ with $\left| I\right|  =s,\left| J\right|  =t+1,\max \left\{ I\bigcup J\right\}\leqslant k_{n}$, and
$$
\begin{aligned}
&\lim_{n\rightarrow \infty }\left( \left| \left| \sqrt{c}\cdot e^{-\frac{\psi_a }{2} }\cdot g_{n}-\sqrt{c}\cdot e^{-\frac{\psi_a }{2} }\cdot g\right|  \right|_{L^{2}_{\left( s,t+1\right)  }\left( V_{r},\widetilde{w_{2}}\right)  }  +\left| \left| \widetilde{T}^{\ast }g_{n}-\widetilde{T}^{\ast }g\right|  \right|_{L^{2}_{\left( s,t\right)  }\left( V_{r},\widetilde{w_{1}}\right)}  +\left| \left| \widetilde{S}g_{n}-\widetilde{S}g\right|  \right|_{L^{2}_{\left( s,t+2\right)  }\left( V_{r},\widetilde{w_{3}}\right)}\right) \\
& =0,\qquad \forall\;r>0.
\end{aligned}
 $$
By \eqref{general formula 14}, for each $n\in\mathbb{N}$, we have
$$
c_0^{s,t}\cdot(t+1)\cdot\sum^{\prime }_{\left| I\right|  =s,\left| J\right|  =t+1} c_{I,J}\int_{V_r} c\cdot\left| (g_n)_{I,J}\right|^{2} \cdot e^{-\left( \varphi +H\left( \eta \right)  \right)  }\,\mathrm{d}P\leqslant 2\left| \left| \widetilde{T}^{\ast }g_n\right|  \right|^{2}_{L^{2}_{\left( s,t\right)  }\left( V,\widetilde{w_{1}}\right)  }  +\left| \left| \widetilde{S}g_n\right|  \right|^{2}_{L^{2}_{\left( s,t+2\right)  }\left( V,\widetilde{w_{3}}\right)  }.
$$
Letting $n\to\infty$ in the above inequality,  we obtain that
$$
\begin{aligned}
&c_0^{s,t}\cdot(t+1)\cdot\sum^{\prime }_{\left| I\right|  =s,\left| J\right|  =t+1} c_{I,J}\int_{V_{r}} c\cdot\left| g_{I,J}\right|^{2} \cdot e^{-\left( \varphi +H \left( \eta \right)  \right)  }\,\mathrm{d}P\\
&\leqslant 2\left| \left|\widetilde{T}^{\ast }g\right|  \right|^{2}_{L^{2}_{\left( s,t\right)  }\left( V,\widetilde{w_{1}}\right)  }  +\left| \left| \widetilde{S}g\right|  \right|^{2}_{L^{2}_{\left( s,t+2\right)  }\left( V,\widetilde{w_{3}}\right)  },\quad \forall\; g\in D_{\widetilde{S}}\cap D_{\widetilde{T}^{\ast }}.
\end{aligned}
$$
By the arbitrariness of $r$, we then have
\begin{equation}\label{a priori estimate}
\begin{aligned}
&c_0^{s,t}\cdot(t+1)\cdot\sum^{\prime }_{\left| I\right|  =s,\left| J\right|  =t+1} c_{I,J}\int_{V} c\cdot\left| g_{I,J}\right|^{2} \cdot e^{-\left( \varphi +H \left( \eta \right)  \right)  }\,\mathrm{d}P\\
&\leqslant 2\left| \left|\widetilde{T}^{\ast }g\right|  \right|^{2}_{L^{2}_{\left( s,t\right)  }\left( V,\widetilde{w_{1}}\right)  }  +\left| \left|\widetilde{S}g\right|  \right|^{2}_{L^{2}_{\left( s,t+2\right)  }\left( V,\widetilde{w_{3}}\right)  },\quad \forall\; g\in D_{\widetilde{S}}\cap D_{\widetilde{T}^{\ast }}.
\end{aligned}
\end{equation}
By $H \left( \eta \right) \geqslant 2\psi_a$, we have $e^{-\varphi -H \left( \eta \right)  +\psi_a }\leqslant e^{-\varphi -\frac{H\left( \eta \right)  }{2} }$. Since $\widetilde{w_{2}}=\varphi+H(\eta)-\psi_a\geqslant \varphi$ and $f\in L^{2}_{(s,t+1)}\left( V,\varphi \right)$ with $\overline{\partial}f=0$, we have $f\in L^{2}_{(s,t+1)}\left( V,\widetilde{w_{2}} \right)$ and $f\in N_{\widetilde{S}}$. Hence, for every $g=\sum\limits^{\prime }_{\left| I\right|  =s,\left| J\right|  =t+1 }g_{I,J}dz_{I}\wedge d\overline{z_{J}}\in D_{\widetilde{S}}\bigcap D_{\widetilde{T}^{\ast }}$, it holds that
 \begin{equation}
		\begin{aligned}
			&\left| \left( g,f\right)_{L^{2}_{(s,t+1)}\left( V,\widetilde{w_{2}}\right)  }  \right|^{2}  =\bigg|\sum^{\prime }_{\left| I\right|  =s,\left| J\right|  =t+1} c_{I,J}\int_{V} f_{I,J}\cdot\overline{g_{I,J}}\cdot e^{-\widetilde{w_{2}}}\,\mathrm{d}P\bigg|^{2}\\
&\leqslant \bigg|\sum^{\prime }_{\left| I\right|  =s,\left| J\right|  =t+1} c_{I,J}\int_{V} |f_{I,J}\cdot\overline{g_{I,J}}| \cdot e^{-\varphi -H \left( \eta \right)  +\psi_a }\,\mathrm{d}P\bigg|^{2}\\
&\leqslant \bigg|\sum^{\prime }_{\left| I\right|  =s,\left| J\right|  =t+1} c_{I,J}\int_{V} |f_{I,J}\cdot\overline{g_{I,J}}|\cdot e^{-\varphi -\frac{H\left( \eta \right)  }{2} }\,\mathrm{d}P\bigg|^{2}\\
&\leqslant \left(\sum^{\prime }_{\left| I\right|  =s,\left| J\right|  =t+1} c_{I,J}\int_{V} \frac{\left| f_{I,J}\right|^{2}  }{c}\cdot e^{-\varphi }\,\mathrm{d}P\right)\cdot \left(\sum^{\prime }_{\left| I\right|  =s,\left| J\right|  =t+1} c_{I,J}\int_{V} c\cdot\left| g_{I,J}\right|^{2}\cdot  e^{-\left( \varphi +H\left( \eta \right)  \right)  }\,\mathrm{d}P\right)\\
&\leqslant \left(2\sum^{\prime }_{\left| I\right|  =s,\left| J\right|  =t+1} c_{I,J}\int_{V} \frac{\left| f_{I,J}\right|^{2}  }{c} \cdot e^{-\varphi }\,\mathrm{d}P\right)\cdot \frac{\left(\left| \left|\widetilde{T}^{\ast }g\right|  \right|^{2}_{L^{2}_{\left( s,t\right)  }\left( V,\widetilde{w_{1}}\right)  }  +\left| \left|\widetilde{S}g\right|  \right|^{2}_{L^{2}_{\left( s,t+2\right)  }\left( V,\widetilde{w_{3}}\right)  }\right)}{c_0^{s,t}\cdot(t+1)},\label{general meidiation inequality}
		\end{aligned}
\end{equation}
where the third inequality follows from the Cauchy-Schwarz inequality and the fourth inequality follows from \eqref{a priori estimate}.

If $g\in N^{\bot }_{\widetilde{S}}$, then by $R_{\widetilde{T}}\subset N_{\widetilde{S}}$, we have $ N_{\widetilde{S}}^{\perp}\subset R_{\widetilde{T}}^{\perp}=N_{\widetilde{T}^{\ast }}$ (where the equality $R_{\widetilde{T}}^{\perp}=N_{\widetilde{T}^{\ast }}$ follows from \cite[Exercise 2, p. 178]{Kra}), and hence $g\in N_{\widetilde{T}^{\ast }}$, which gives $\widetilde{T}^{\ast }g=0$. Recall that $f\in N_{\widetilde{S}}$, which, combined with $g\in N^{\bot }_{\widetilde{S}}$, gives $\left( g,f\right)_{L^{2}_{(s,t+1)}\left( V,\widetilde{w_{2}}\right)  }=0$.

If $g\in N_{\widetilde{S}} \bigcap D_{\widetilde{T}^{\ast }}$, then by \eqref{general meidiation inequality}, we have
\begin{eqnarray}
\left| \left( g,f\right)_{L^{2}_{(s,t+1)}\left( V,\widetilde{w_{2}}\right)  }  \right|^{2}  \leqslant \left(2\sum^{\prime }_{\left| I\right|  =s,\left| J\right|  =t+1} c_{I,J}\int_{V} \frac{\left| f_{I,J}\right|^{2}  }{c}\cdot e^{-\varphi }\,\mathrm{d}P\right)\cdot \frac{\left| \left|\widetilde{T}^{\ast }g\right|  \right|^{2}_{L^{2}_{\left( s,t\right)  }\left( V,\widetilde{w_{1}}\right)  }}{c_0^{s,t}\cdot(t+1)}.\label{general second inequality}
\end{eqnarray}

For any $g\in D_{\widetilde{T}^{\ast }}$, write $g=g_{1}+g_{2}$, where $ g_{1}\in N^{\bot }_{\widetilde{S}}$ and $g_{2}\in N_{\widetilde{S}}$. Since $N^{\bot }_{\widetilde{S}}\subset R_{\widetilde{T}}^{\perp}=N_{{\widetilde{T}}^{\ast }}\subset D_{{\widetilde{T}}^*}$ and $g\in D_{{\widetilde{T}}^{\ast }}$, we have $g_{1},g_{2}\in D_{{\widetilde{T}}^{\ast }}$ and hence
$$
	\begin{aligned}
		\left| \left( g,f\right)_{L^{2}_{(s,t+1)}\left( V,\widetilde{w_{2}}\right)  }  \right|^{2}  &=\left| \left( g_{2},f\right)_{L^{2}_{(s,t+1)}\left( V,\widetilde{w_{2}}\right)  }  \right|^{2}  \\&\leqslant \left(2\sum^{\prime }_{\left| I\right|  =s,\left| J\right|  =t+1} c_{I,J}\int_{V} \frac{\left| f_{I,J}\right|^{2}  }{c}\cdot e^{-\varphi }\,\mathrm{d}P\right)\cdot \frac{\left| \left| \widetilde{T}^{\ast }g_2\right|  \right|^{2}_{L^{2}_{\left( s,t\right)  }\left( V,\widetilde{w_{1}}\right)}  }{c_0^{s,t}\cdot(t+1)}  \\
&=\left(2\sum^{\prime }_{\left| I\right|  =s,\left| J\right|  =t+1} c_{I,J}\int_{V} \frac{\left| f_{I,J}\right|^{2}  }{c}\cdot e^{-\varphi }\,\mathrm{d}P\right)\cdot \frac{\left| \left| \widetilde{T}^{\ast }g-\widetilde{T}^{\ast }g_1\right|  \right|^{2}_{L^{2}_{\left( s,t\right)  }\left( V,\widetilde{w_{1}}\right)  }}{c_0^{s,t}\cdot(t+1)} \\
&=\left(2\sum^{\prime }_{\left| I\right|  =s,\left| J\right|  =t+1} c_{I,J}\int_{V} \frac{\left| f_{I,J}\right|^{2}  }{c}\cdot e^{-\varphi }\,\mathrm{d}P\right)\cdot \frac{\left| \left| \widetilde{T}^{\ast }g\right|  \right|^{2}_{L^{2}_{\left( s,t\right)  }\left( V,\widetilde{w_{1}}\right)  }}{c_0^{s,t}\cdot(t+1)} ,
	\end{aligned}
$$
where the first inequality follows from \eqref{general second inequality} and the last equality follows from the fact that $\widetilde{T}^{\ast }g_{1}=0$.
If  $\sum^{\prime }\limits_{\left| I\right|  =s}\sum^{\prime }\limits_{\left| J\right|  =t+1}c_{I,J} \int_{V} \frac{\left| f_{I,J}\right|^{2}  }{c}\cdot e^{-\varphi }\,\mathrm{d}P<\infty$, then by the Hahn-Banach theorem, there exists $u_a\in L^{2}_{(s,t)}\left( V,\widetilde{w_{1}}\right)$ such that $\left( g,f\right)_{L^{2}_{(s,t+1)}\left( V,\widetilde{w_{2}}\right)  } =\left( \widetilde{T}^*g,u_a\right)_{L^{2}_{(s,t)}\left( V,\widetilde{w_{1}}\right)  } $ for all $g\in D_{\widetilde{T}^*}$ and
$$
\sum^{\prime }_{\left| I\right|  =s,\left| L\right|  =t} c_{I,L}\int_{V} \left| (u_a)_{I,L}  \right|^{2} \cdot e^{-\widetilde{w_{1}}}\,\mathrm{d}P\leqslant    \frac{ 2\sum^{\prime }\limits_{\left| I\right|  =s,\left| J\right|  =t+1} c_{I,J}\int_{V} \frac{\left| f_{I,J}\right|^{2}  }{c}\cdot e^{-\varphi }\,\mathrm{d}P }{c_0^{s,t}\cdot(t+1)}.
$$
Since $\widetilde{T}^{**}=\widetilde{T}$ (e.g., \cite[Exercise 2, p. 178]{Kra}), we conclude that $\overline{\partial}u_a=\widetilde{T}u_a=f$. Since  $\widetilde{w_{1}}(\textbf{z})=\varphi(\textbf{z})$ for all $\textbf{z}\in V_{a}$, we have
$$
\sum^{\prime }_{\left| I\right|  =s,\left| L\right|  =t} c_{I,L}\int_{V_{a}} \left| \left( u_{a}\right)_{I,L}  \right|^{2} \cdot e^{-\varphi }\,\mathrm{d}P\leqslant \frac{ 2\sum^{\prime }\limits_{\left| I\right|  =s,\left| J\right|  =t+1} c_{I,J}\int_{V} \frac{\left| f_{I,J}\right|^{2}  }{c} \cdot e^{-\varphi }\,\mathrm{d}P }{c_0^{s,t}\cdot(t+1)} .
$$
For every $b\geqslant a$, by a similar procedure, we can find $ u_{b}\in L^{2}_{(s,t)}\left( V,\widehat{w_{1}} \right)$ (where $\widehat{w_{1}}$ is the corresponding weight function for $b$) such that $\overline{\partial}u_{b}=f$ and
\begin{eqnarray}\label{general estimation for ub}
\sum^{\prime }_{\left| I\right|  =s,\left| L\right|  =t} c_{I,L}\int_{V_{b}} \left| \left( u_{b}\right)_{I,L}  \right|^{2} \cdot e^{-\varphi }\,\mathrm{d}P\leqslant \frac{ 2\sum^{\prime }\limits_{\left| I\right|  =s,\left| J\right|  =t+1} c_{I,J}\int_{V} \frac{\left| f_{I,J}\right|^{2}  }{c} \cdot e^{-\varphi }\,\mathrm{d}P }{c_0^{s,t}\cdot(t+1)}.
\end{eqnarray}
Note that for each $b\in[a,+\infty)$, by Corollary \ref{cylinder function is dense}, $L^ {2}_ {(s, t)} \left (V_{b}, \varphi \right) $ is a separable Hilbert space.

By the Banach-Alaoglu Theorem (e.g., \cite[Theorem 3.15 at p. 66 and Theorem 3.17 at p. 68]{Rud91}), there exists $\{ b_{j}\}^{\infty }_{j=1}  \subset \left( a,+\infty \right)$ and $\tilde{u}_{a}\in L^{2}_{(s,t)}\left( V_{a},\varphi \right)$ such that $\lim\limits_{j\rightarrow \infty } b_{j}=\infty $ and $\lim\limits_{j\rightarrow \infty } u_{b_{j}}\rightharpoonup \tilde{u}_{a} $ weakly in $L^{2}_{(s,t)}\left( V_{a},\varphi \right)$.
For $a+1$, by the same argument, there exists a subsequence $\{b_{j_i}\}_{i=1}^{\infty}$ of $\{ b_{j}\}^{\infty }_{j=1}$ and $\tilde{u}_{a+1}\in L^{2}_{(s,t)}\left( V_{a+1},\varphi \right)$  such that $\lim\limits_{i\rightarrow \infty }u_{b_{j_i}}\rightharpoonup \tilde{u}_{a+1} $ weakly in $L^{2}_{(s,t)}\left( V_{a+1},\varphi \right)$.
By induction and the diagonal process, there exists $\{\tilde{b}_j\}_{j=1}^{\infty}\subset (a,+\infty)$ and $\tilde{u}_{a+k}\in L^{2}_{(s,t)}\left( V_{a+k},\varphi \right)$ for each nonnegative integer $k$ such that
\begin{itemize}
\item[(1)]$\lim\limits_{j\rightarrow \infty } \tilde{b}_{j}=\infty $;
\item[(2)]$\lim\limits_{j\rightarrow \infty }u_{\tilde{b}_{j}}= \tilde{u}_{a+k} $ weakly in $L^{2}_{(s,t)}\left( V_{a+k},\varphi \right)$ for each nonnegative integer $k$;
\item[(3)]$(\tilde{u}_{a+k_1})_{I,L}(\textbf{z}) =(\tilde{u}_{a+k_2})_{I,L}(\textbf{z})$ for all $\textbf{z}\in V_{a+k_2}$, positive integers $k_1$ and $k_2$ with $k_1\geqslant k_2$, and strictly increasing multi-indices $I$ and $L$ with $\left| I\right|  =s$ and $\left| L\right|  =t$.
\end{itemize}
Define $u_{I,L}(\textbf{z})\triangleq (\tilde{u}_{a+k})_{I,L}(\textbf{z})$ for $\textbf{z}\in V_{a+k}$, $k\in\mathbb{N}$ and strictly increasing multi-indices $I$ and $L$ with $\left| I\right|  =s$ and $\left| L\right|  =t$. Set $u=\sum\limits^{\prime }_{\left| I\right|  =s,\left| L\right|  =t }u_{I,L}dz_{I}\wedge d\overline{z_{L}}$. Then $u\in L^{2}_{(s,t)}\left( V,loc\right)$.

We claim that $\overline{\partial}u=f$. Indeed, since $\overline{\partial}u_{\tilde{b}_{j}}=f$ for each $j\in\mathbb{N}$, for every $\phi \in C^{\infty }_{0,F}\left( V\right)$ and strictly increasing multi-indices $I$ and $J$ with $\left| I\right|  =s$ and $\left| J\right|  =t+1$, we have
$$
\left( -1\right)^{s+1}  \int_{V} \sum^{\infty }_{i=1} \sum^{\prime }_{\left| L\right|  =t} \varepsilon^{J}_{iL}\cdot  ( u_{\tilde{b}_{j}} )_{I,L}  \cdot\overline{\delta_{i} \phi }\,\mathrm{d}P=\int_{V} f_{I,J}\cdot\overline{\phi }\,\mathrm{d}P.
$$
Since $\supp \phi \stackrel{\circ}{\subset} V$, by the conclusion (3) of Proposition \ref{properties on pseudo-convex domain}, there exists $j_{0}\in\mathbb{N}$ such that $\supp \phi \subset V_{a+j_{0}}$. Hence, for $j>j_{0}$, we have
$$
\int_{V} f_{I,J}\cdot\overline{\phi }\,\mathrm{d}P=\left( -1\right)^{s+1}  \int_{V_{a+j_{0}}} \sum^{\infty }_{i=1} \sum^{\prime }_{\left| L\right|  =t} \varepsilon^{J}_{iL}\cdot ( u_{\tilde{b}_{j}})_{I,L}\cdot  \overline{\delta_{i} \phi } \,\mathrm{d}P.
$$
Letting $j\rightarrow \infty$ in the above equality, we arrive at
$$
\int_{V} f_{I,J}\cdot\overline{\phi }\,\mathrm{d}P=\left( -1\right)^{s+1}  \int_{V_{a+j_{0}}} \sum^{\infty }_{i=1} \sum^{\prime }_{\left| L\right|  =t} \varepsilon^{J}_{iL}\cdot u_{I,L}\cdot\overline{\delta_{i} \phi } \,\mathrm{d}P=\left( -1\right)^{s+1}  \int_{V} \sum^{\infty }_{i=1} \sum^{\prime }_{\left| L\right|  =t} \varepsilon^{J}_{iL}\cdot u_{I,L}\cdot\overline{\delta_{i} \phi } \,\mathrm{d}P,
$$
which implies that $\overline{\partial}u=f$.

Now, since $\lim\limits_{j\rightarrow \infty }u_{\tilde{b}_{j}}= \tilde{u}_{a+k} $ weakly in $L^{2}_{(s,t)}\left( V_{a+k},\varphi \right)$ for each $k\in\mathbb{N}$, using \eqref{general estimation for ub}, we conclude that
$$
\left| \left| u\right|  \right|^{2}_{L^{2}_{(s,t)}\left( V_{a+k},\varphi \right)  }  \leqslant \varliminf_{j\rightarrow \infty } || u_{\tilde{b}_{j}}||^{2}_{L^{2}_{(s,t)}\left( V_{a+k},\varphi \right)  }  \leqslant\frac{ 2\sum^{\prime }\limits_{\left| I\right|  =s,\left| J\right|  =t+1} c_{I,J}\int_{V} \frac{\left| f_{I,J}\right|^{2}  }{c}\cdot e^{-\varphi }\,\mathrm{d}P }{c_0^{s,t}\cdot(t+1)}.
$$
Letting $k\to\infty$ in the above, we obtain
the desired estimate (\ref{230423e1}).
The proof of Theorem \ref{general L} is completed.
\end{proof}

Finally, we show the following infinite-dimensional analogue of \cite[Theorem 4.4.2, p. 94]{Hor90}.
\begin{corollary}\label{general Theorem 4.4.2Hor90}
Suppose that Conditions \ref{230424c1}, \ref{230423ass1}, \ref{230423ass2} and \ref{230423ass3} hold, and the real-valued function $\varphi$ in \eqref{weight function} is chosen so that for each $n\in\mathbb{N}$,
$$
\sum_{1\leqslant i,j\leqslant n}\left( \partial_{i} \overline{\partial_{j} } \varphi\left(\textbf{z}\right)\right)\cdot\zeta_{i}\cdot\overline{\zeta_{j}} \geqslant 0,\quad\forall\; (\textbf{z},\zeta_{1},\cdots,\zeta_n)\in V\times\mathbb{C}^n.
$$
Then, for any $f=\sum\limits^{\prime }_{\left| I\right|  =s,\left| J\right|  =t+1 }f_{I,J}dz_{I}\wedge d\overline{z_{J}}\in L^{2}_{(s,t+1)}\left( V,\varphi \right)$ with $\overline{\partial}f=0$, there exists $u=\sum\limits^{\prime }_{\left| I\right|  =s,\left| L\right|  =t }u_{I,L}dz_{I}\wedge d\overline{z_{L}}$ $\in L^{2}_{(s,t)}\left( V,\varphi \right)$ such that $\overline{\partial}u=f$ and
\begin{equation}\label{230423e2}
\sum^{\prime }_{\left| I\right|  =s,\left| L\right|  =t} c_{I,L}\int_{V} \left( 1+\left| \left| \textbf{z}\right|  \right|^{2}_{\ell^{2} }  \right)^{-2} \cdot \left| u_{I,L}\right|^{2}\cdot  e^{-\varphi }\,\mathrm{d}P\leqslant  \frac{\sum^{\prime }\limits_{\left| I\right|  =s,\left| J\right|  =t+1} c_{I,J}\int_{V} \left| f_{I,J}\right|^{2} \cdot e^{-\varphi }\,\mathrm{d}P}{c_0^{s,t}\cdot(t+1)}.
\end{equation}
Furthermore, if $V$ is bounded, then
\begin{equation}\label{230423e31}
\sum^{\prime }_{\left| I\right|  =s,\left| L\right|  =t} c_{I,L}\int_{V} \left| u_{I,L}\right|^{2}\cdot  e^{-\varphi }\,\mathrm{d}P\leqslant  \left( 1+\sup\limits_{\textbf{z}\in V} \left| \left| \textbf{z}\right|  \right|^{2}_{\ell^{2} }  \right)^{2}  \cdot  \frac{\sum^{\prime }\limits_{\left| I\right|  =s,\left| J\right|  =t+1} c_{I,J}\int_{V} \left| f_{I,J}\right|^{2} \cdot e^{-\varphi }\,\mathrm{d}P}{c_0^{s,t}\cdot(t+1)}.
\end{equation}
\end{corollary}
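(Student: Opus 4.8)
The plan is to deduce Corollary~\ref{general Theorem 4.4.2Hor90} from Theorem~\ref{general L} by a change of weight, mimicking the way \cite[Theorem 4.4.2, p. 94]{Hor90} is derived from \cite[Lemma 4.4.1, p. 92]{Hor90}. I would introduce $\chi(\textbf{z})\triangleq\log\big(1+||\textbf{z}||^2_{\ell^2}\big)$ and set $\widetilde{\varphi}\triangleq\varphi+2\chi$, together with the companion positivity weight $c(\textbf{z})\triangleq 2\big(1+||\textbf{z}||^2_{\ell^2}\big)^{-2}$. The idea is to apply Theorem~\ref{general L} with $\varphi$ replaced by $\widetilde{\varphi}$ and with this particular $c$, and then to translate the resulting estimate \eqref{230423e1} back into \eqref{230423e2} by unwinding the weights.

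First I would check the two structural requirements on $\widetilde{\varphi}$. For the regularity, on any $E\stackrel{\circ}{\subset}V$ one has $E\subset B_R$, so $|\chi|\leqslant\log(1+R^2)$ while $\sum_{i=1}^\infty\big(|D_{x_i}\chi|^2+|D_{y_i}\chi|^2\big)=\dfrac{4||\textbf{z}||^2_{\ell^2}}{(1+||\textbf{z}||^2_{\ell^2})^2}\leqslant 1$, with analogous locally uniform bounds for the higher-order partial derivatives; hence $\chi\in C^\infty_F(V)$ and $\widetilde{\varphi}\in C^2_F(V)$. For the Levi form, a coordinatewise (two-variable) computation gives $\partial_i\overline{\partial_j}\chi=\dfrac{\delta_{ij}(1+||\textbf{z}||^2_{\ell^2})-z_j\overline{z_i}}{(1+||\textbf{z}||^2_{\ell^2})^2}$, so that for every $n\in\mathbb{N}$ and $(\zeta_1,\cdots,\zeta_n)\in\mathbb{C}^n$,
\begin{equation*}
\sum_{1\leqslant i,j\leqslant n}(\partial_i\overline{\partial_j}\chi)\zeta_i\overline{\zeta_j}=\frac{(1+||\textbf{z}||^2_{\ell^2})\sum_{i=1}^n|\zeta_i|^2-\big|\sum_{i=1}^n\overline{z_i}\zeta_i\big|^2}{(1+||\textbf{z}||^2_{\ell^2})^2}\geqslant\frac{\sum_{i=1}^n|\zeta_i|^2}{(1+||\textbf{z}||^2_{\ell^2})^2},
\end{equation*}
where the inequality uses the Cauchy--Schwarz bound $\big|\sum_{i=1}^n\overline{z_i}\zeta_i\big|^2\leqslant||\textbf{z}||^2_{\ell^2}\sum_{i=1}^n|\zeta_i|^2$. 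Adding twice this to the hypothesis $\sum_{1\leqslant i,j\leqslant n}(\partial_i\overline{\partial_j}\varphi)\zeta_i\overline{\zeta_j}\geqslant 0$ yields $\sum_{1\leqslant i,j\leqslant n}(\partial_i\overline{\partial_j}\widetilde{\varphi})\zeta_i\overline{\zeta_j}\geqslant c\sum_{i=1}^n|\zeta_i|^2$, with $c$ positive, continuous and satisfying $\sup_E c\leqslant 2$ for all $E\stackrel{\circ}{\subset}V$; this is precisely the hypothesis of Theorem~\ref{general L}.

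Next I would verify the integrability hypotheses and invoke Theorem~\ref{general L}. Since $e^{-\widetilde{\varphi}}=\big(1+||\textbf{z}||^2_{\ell^2}\big)^{-2}e^{-\varphi}\leqslant e^{-\varphi}$, the assumption $f\in L^2_{(s,t+1)}(V,\varphi)$ gives $f\in L^2_{(s,t+1)}(V,\widetilde{\varphi})$, while $\overline{\partial}f=0$ is preserved. The key cancellation is that $c^{-1}e^{-\widetilde{\varphi}}=\tfrac12\big(1+||\textbf{z}||^2_{\ell^2}\big)^2\cdot\big(1+||\textbf{z}||^2_{\ell^2}\big)^{-2}e^{-\varphi}=\tfrac12 e^{-\varphi}$, so the finiteness requirement of Theorem~\ref{general L} becomes $\sum'_{|I|=s,|J|=t+1}c_{I,J}\int_V c^{-1}|f_{I,J}|^2e^{-\widetilde{\varphi}}\,\mathrm{d}P=\tfrac12\sum'_{|I|=s,|J|=t+1}c_{I,J}\int_V|f_{I,J}|^2e^{-\varphi}\,\mathrm{d}P<\infty$. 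Theorem~\ref{general L} then produces $u\in L^2_{(s,t)}(V,\widetilde{\varphi})\subset L^2_{(s,t)}(V,loc)$ with $\overline{\partial}u=f$ and the estimate \eqref{230423e1}; substituting $e^{-\widetilde{\varphi}}=\big(1+||\textbf{z}||^2_{\ell^2}\big)^{-2}e^{-\varphi}$ on its left-hand side and the value just computed on its right-hand side (the factor $2$ in \eqref{230423e1} cancelling the $\tfrac12$) turns it into exactly \eqref{230423e2}. Finally, for bounded $V$ I would use $\big(1+||\textbf{z}||^2_{\ell^2}\big)^{-2}\geqslant\big(1+\sup_{\textbf{z}\in V}||\textbf{z}||^2_{\ell^2}\big)^{-2}$ on $V$; multiplying \eqref{230423e2} by $\big(1+\sup_{\textbf{z}\in V}||\textbf{z}||^2_{\ell^2}\big)^2$ then bounds $\sum'_{|I|=s,|L|=t}c_{I,L}\int_V|u_{I,L}|^2e^{-\varphi}\,\mathrm{d}P$ and gives \eqref{230423e31}, which also confirms $u\in L^2_{(s,t)}(V,\varphi)$ in this case. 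I expect the main obstacle to be the two infinite-dimensional checks on $\chi=\log\big(1+||\cdot||^2_{\ell^2}\big)$: that it belongs to $C^\infty_F(V)$, and that its Levi form admits the lower bound $\big(1+||\textbf{z}||^2_{\ell^2}\big)^{-2}\sum_{i=1}^n|\zeta_i|^2$ \emph{uniformly in} $n$. The Cauchy--Schwarz step is exactly what renders this bound dimension-free, which is the feature that makes the finite-dimensional argument survive in $\ell^2$; the remaining work is routine bookkeeping in transporting the weights.
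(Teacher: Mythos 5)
Your proposal is correct and takes essentially the same approach as the paper's own proof: you introduce the modified weight $\tilde{\varphi}=\varphi+2\ln\left(1+\left|\left|\textbf{z}\right|\right|^{2}_{\ell^{2}}\right)$, use the Cauchy--Schwarz inequality to show its Levi form dominates $2\left(1+\left|\left|\textbf{z}\right|\right|^{2}_{\ell^{2}}\right)^{-2}\sum_{i}\left|\zeta_{i}\right|^{2}$, apply Theorem \ref{general L} with $c=2\left(1+\left|\left|\textbf{z}\right|\right|^{2}_{\ell^{2}}\right)^{-2}$, and unwind the weights to obtain \eqref{230423e2} and, for bounded $V$, \eqref{230423e31}, exactly as the paper does. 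Your explicit verification that $\ln\left(1+\left|\left|\cdot\right|\right|^{2}_{\ell^{2}}\right)\in C^{\infty}_{F}(V)$ and your Levi-form computation (with the correct minus sign in $\partial_{i}\overline{\partial_{j}}\ln\left(1+\left|\left|\textbf{z}\right|\right|^{2}_{\ell^{2}}\right)$) are slightly more careful than the paper's displayed intermediate step, but the argument is the same.
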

\begin{proof}
Let $\tilde{\varphi } \left( \textbf{z}\right)\triangleq\varphi \left(\textbf{z}\right)  +2\ln \left( 1+\left| \left| \textbf{z}\right|  \right|^{2}_{\ell^{2} }  \right)$ for $\textbf{z}\in V$.
Then $\tilde{\varphi }\in C^{2}_{F}\left( V\right)$ and for each $n\in\mathbb{N}$, it hold that
$$
	 		\begin{aligned}
	 			\sum_{1\leqslant i,j\leqslant n}\left( \partial_{i}\overline{\partial_{j}}\tilde{\varphi }\left( \textbf{z}\right) \right)\cdot \zeta_{i}\cdot\overline{\zeta_{j}}&\geqslant 2\sum_{1\leqslant i,j\leqslant n} \left(\partial_{i} \overline{\partial_{j} } \ln \left( 1+\left| \left| \textbf{z}\right|  \right|^{2}_{\ell^{2} }  \right) \right)\cdot\zeta_{i}\cdot\overline{\zeta_{j}} \\
&= 2\sum_{1\leqslant i,j\leqslant n} \frac{z_{j}\cdot\overline{z_{i}} }{\left( 1+\left| \left| \textbf{z}\right|  \right|^{2}_{\ell^{2} }  \right)^{2}  } \cdot\zeta_{i}\cdot\overline{\zeta_{j}} + \frac{2}{1+\left| \left| \textbf{z}\right|  \right|^{2}_{\ell^{2} }  } \cdot\left(\sum_{i=1}^{n}\left| \zeta_{i}\right|^{2} \right) \\
&\geqslant \frac{2}{\left( 1+\left| \left| \textbf{z}\right|  \right|^{2}_{\ell^{2}}  \right)^{2}  }\cdot\left( \sum^{n}_{i=1} \left| \zeta_{i}\right|^{2}\right),\quad\forall\; (\zeta_{1},\cdots,\zeta_n)\in \mathbb{C}^n,\,\textbf{z}=(z_i)_{i=1}^{\infty}\in V,
	 		\end{aligned}
$$
and
\begin{eqnarray*}
\sum^{\prime }_{\left| I\right|  =s,\left| J\right|  =t+1} c_{I,J}\int_{V}   \left| f_{I,J}\right|^{2} \cdot e^{-\tilde{\varphi } }\,\mathrm{d}P
&\leqslant &\sum^{\prime }_{\left| I\right|  =s,\left| J\right|  =t+1} c_{I,J}\int_{V} \left( 1+\left| \left| \textbf{z}\right|  \right|^{2}_{\ell^2}  \right)^{2} \cdot \left| f_{I,J}\right|^{2} \cdot e^{-\tilde{\varphi } }\,\mathrm{d}P\\
&=& \sum^{\prime }_{\left| I\right|  =s,\left| J\right|  =t+1} c_{I,J}\int_{V} \left| f_{I,J}\right|^{2} \cdot e^{-\varphi }\,\mathrm{d}P\\
&<&\infty.
\end{eqnarray*}
Thus $f=\sum\limits^{\prime }_{\left| I\right|  =s,\left| J\right|  =t+1 }f_{I,J}dz_{I}\wedge d\overline{z_{J}}\in L^{2}_{(s,t+1)}\left( V,\tilde{\varphi } \right)  $. By Theorem \ref{general L}, there exists $u=\sum\limits^{\prime }_{\left| I\right|  =s,\left| L\right|  =t }u_{I,L}dz_{I}\wedge d\overline{z_{L}}\in L^{2}_{(s,t)}\left( V,\tilde{\varphi} \right)\subset L^{2}_{(s,t)}\left( V,loc\right)$ such that $\overline{\partial}u=f$ and
$$
\sum^{\prime }_{\left| I\right|  =s,\left| L\right|  =t} c_{I,L}\int_{V} \left| u_{I,L}\right|^{2}\cdot  e^{-\tilde{\varphi } }\,\mathrm{d}P\leqslant \frac{\sum^{\prime }\limits_{\left| I\right|  =s,\left| J\right|  =t+1} c_{I,L}\int_{V} \left( 1+\left| \left| \textbf{z}\right|  \right|^{2}_{\ell^2}  \right)^{2} \cdot \left| f_{I,J}\right|^{2}\cdot  e^{-\tilde{\varphi } }\,\mathrm{d}P}{c_0^{s,t}\cdot(t+1)}<\infty,
$$
which gives (\ref{230423e2}).

Clearly, if $V$ is bounded, then we have the estimate (\ref{230423e31}).
This completes the proof of Corollary \ref{general Theorem 4.4.2Hor90}.
\end{proof}

\section*{References}

\end{document}